\numberwithin{equation}{section}
\definecolor{linkblue}{rgb}{0,0,.6}
\definecolor{citered}{rgb}{.7,0,0}
\DeclareMathOperator{\Span}{span}
\newtheorem{theorem}{Theorem}[section]
\newtheorem{proposition}[theorem]{Proposition}
\newtheorem{corollary}[theorem]{Corollary}
\newtheorem{lemma}[theorem]{Lemma}
\theoremstyle{definition}
\newtheorem{definition}[theorem]{Definition}
\newtheorem{remark}[theorem]{Remark}
\theoremstyle{plain}
\def\N{{\mathbb N}}
\def\R{{\mathbb R}}
\newcommand{\purge}[1]{} 
\newcommand{\vungoc}{V\~u Ng\k{o}c}
\begin{document}
\pagestyle{fancy}

\fancyhead{}
\fancyhead[CO]{(Non)displaceability of semitoric fibers}
\fancyhead[CE]{Sonja Hohloch $\&$ Pedro Santos}

\title{(Non)displaceability in semitoric systems}
\author{Sonja Hohloch \quad $\&$ \quad Pedro Santos}
\date{\today}

\begin{abstract}
We adapt and generalize McDuff's method of probes from toric systems to semitoric integrable systems and apply it to study the (non)displaceability properties of the fibers of $3$ examples of semitoric integrable systems.  
\noindent

MSC codes. Primary: 53D12, 57R17, 70H06. Secondary: 53D40, 53D20.
\end{abstract}

\maketitle

\tableofcontents

\section{Introduction}
Let $(M,\omega)$ be a closed symplectic manifold. The Arnold conjecture, formulated in the $1960$s, can be stated in many different ways: for instance via the fixed points of a Hamiltonian diffeomorphism $\phi$ or, equivalently, looking at the intersection points of the graph of $\phi$ and the diagonal in $M\times M$. When one endows $M\times M$ with the symplectic form $\omega \oplus -\omega$ the graph of $\phi$ and the diagonal become Lagrangian submanifolds. Therefore the question about the existence of fixed points is turned into a so called Lagrangian intersection problem. This was the approach taken by Floer \cite{floer1988morse,floer1988unregularized,floer1989witten} towards the end of the 1980s to prove the Arnold conjecture under certain assumptions. Roughly speaking, for a Lagrangian submanifold $L\subset M$ with $\pi_2(M,L)=\{0\}$, Floer developed a homology theory generated by the intersection points of $L\cap \phi(L)$ where $\phi$ is a Hamiltonian diffeomorphism. 

Moreover, at the beginning of the 1980s, Gromov's non-squeezing theorem promoted an interest in questions of symplectic rigidity. A Lagrangian intersection problem can be seen as an example of a symplectic rigidity phenomenon, since $\phi(L)\cap L \stackrel{?}{=} \emptyset$ can be interpreted as if $L$ can be moved away from itself or not.

More formally, given a closed symplectic manifold $(M,\omega)$ with a subset $A$ we say that $A$ is \textbf{displaceable} if there exists a Hamiltonian diffeomorphism $\phi$ such that $A\cap \phi(A)=\emptyset$. Otherwise it is called \textbf{nondisplaceable}. 

Therefore, the interest in (non)displaceability is at least twofold: on the one hand motivated by the computation of Lagrangian Floer homology and on the other hand by symplectic rigidity. Over the years, research on (non)displaceability developed into an area of its own, for instance, just to mention a few, it is essential for Fukaya categories (see Fukaya \& Oh \& Ohta \& Ono \cite{fukayabook1,fukayabook2}), it motivated the development of symplectic quasi-states by Entov \& Polterovich \cite{polterovich2006quasi}, and the development of explicit methods, see for example McDuff \cite{mcduff2011displacing} and Abreu \& Borman \& McDuff \cite{abreu2014displacing}, that give sufficient conditions for a Lagrangian torus fiber to be displaceable. This last aspect is in fact the starting point for the present paper, as we will explain now.

Briefly summarized, McDuff's \cite{mcduff2011displacing} explicit method (called \textit{the method of probes}) makes use of the Delzant polytope (= momentum polytope) of a symplectic toric manifold. In the world of integrable systems, toric systems are the simplest examples and are classified by their corresponding Delzant polytope. The next natural, more general class of systems are so called semitoric systems (see Definition \ref{d.semitoric}) which were classified by Pelayo \& \vungoc \ \cite{pelayo2009semitoric, pelayo2011constructing} and Palmer \& Pelayo \& Tang \cite{palmer2019semitoric} in terms of $5$ invariants. One of these invariants, the so called polytope invariant, is the generalization of the Delzant polytope of a toric system. In the present paper we will generalize the method of probes from toric systems to semitoric systems and apply it to several examples.

\subsection{Our results}
Let $(M,\omega,F=(J,H))$ be a semitoric system (see Definition \ref{d.semitoric}) on a $4$-dimensional symplectic manifold. Intuitively semitoric systems are more general than toric systems by requiring only an $\mathbb{S}^1$-symmetry instead of a $\mathbb{T}^2$-symmetry and allowing for focus-focus singularities in addition to the ones appearing in the toric case. As in the toric situation the fibers are connected. A fiber with $k$ focus-focus points is often seen as a torus with $k$ pinches. Alternatively the fiber $F^{-1}(c)$ can be seen as a closed chain of Lagrangian $k$ spheres joint at the poles. If $k=1$ the sphere is immersed, and if $k>1$ the spheres are embedded. Our first result is about the (non)displaceability of focus-focus fibers with more than one focus-focus point:
\begin{theorem}
    A focus-focus fiber containing more than one focus-focus point is \textbf{nondisplaceable}.
\end{theorem}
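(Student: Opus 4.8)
The plan is to avoid Floer theory entirely and argue by a soft homological obstruction, exploiting the description of the fiber as a chain of Lagrangian spheres. Write $L = F^{-1}(c)$ for the focus-focus fiber, where $c$ is the common image of the $k > 1$ focus-focus points. By hypothesis $L$ decomposes as a closed chain $S_1 \cup \cdots \cup S_k$ of \emph{embedded} Lagrangian $2$-spheres joined at the poles. The key point is that a single one of these spheres already obstructs displaceability, so I would fix one component $S := S_1$ and work only with it.

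First I would compute the self-intersection of $S$ in $M$. By the Weinstein neighborhood theorem, a tubular neighborhood of the Lagrangian $S$ is symplectomorphic to a neighborhood of the zero section in $T^*S$, so its normal bundle satisfies $\nu(S) \cong T^*S$. Hence the self-intersection number is the Euler number
\[
[S] \cdot [S] = e(\nu(S)) = e(T^*S^2) = -\chi(S^2) = -2,
\]
where the sign reflects the standard orientation conventions on the cotangent bundle (equivalently, $S$ is a $(-2)$-sphere, as for the vanishing cycle of an $A_1$-singularity or the antidiagonal in $S^2 \times S^2$). In particular $[S] \cdot [S] \neq 0$, so $[S] \neq 0$ in $H_2(M;\Q)$.

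Next, since any Hamiltonian diffeomorphism $\phi$ is isotopic to the identity, it acts trivially on homology, giving $[\phi(S)] = [S]$. Therefore the homological intersection number satisfies $[\phi(S)] \cdot [S] = [S] \cdot [S] = -2 \neq 0$, and a nonzero homological intersection number forces the cycles to meet, i.e.\ $\phi(S) \cap S \neq \emptyset$. Since $S \subseteq L$ and hence $\phi(S) \subseteq \phi(L)$, this yields $\phi(L) \cap L \supseteq \phi(S) \cap S \neq \emptyset$, so no Hamiltonian diffeomorphism displaces $L$; that is, $L$ is \textbf{nondisplaceable}.

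The one genuinely delicate point is the self-intersection computation together with the passage from homological to geometric intersection, which I would carry out using that the ambient manifold is closed, so that Poincaré duality identifies $e(\nu(S))$ with the homological pairing $[S]\cdot[S]$. It is worth noting where the hypothesis $k > 1$ enters: only for $k > 1$ are the chain components genuine \emph{embedded} spheres. For $k = 1$ the fiber is a single immersed sphere homologous to a nearby regular torus fiber $T$, for which $[T] \cdot [T] = -\chi(T^2) = 0$, so the homological obstruction vanishes and the present argument breaks down, consistent with the statement being claimed only for more than one focus-focus point.
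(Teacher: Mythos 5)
Your proposal is correct and follows essentially the same route as the paper: the paper likewise reduces to a single embedded Lagrangian sphere component of the chain (embeddedness requiring $k>1$), applies the Weinstein neighborhood theorem to identify the self-intersection number with $\pm\chi(\mathbb{S}^2)=\pm 2\neq 0$, and concludes nondisplaceability from homotopy invariance of the intersection number; your phrasing via $[\phi(S)]=[S]$ in homology versus the paper's contradiction argument with the intersection number $\#(g,L)$ of a map $g\simeq i$ is only a cosmetic difference. One small repair: you invoke closedness of $M$ for Poincar\'e duality, but the theorem covers noncompact semitoric manifolds as well (e.g.\ systems like the coupled spin-oscillator live on $\mathbb{S}^2\times\mathbb{R}^2$); this is harmless, since the differential-topological intersection number of compact cycles---the formulation the paper uses, following Hirsch---is defined and homotopy-invariant in any oriented ambient $4$-manifold, and Hamiltonian diffeomorphisms here are compactly supported and smoothly isotopic to the identity, so your argument goes through verbatim without the closedness hypothesis.
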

This is proven in Section \ref{s.nondisplaceabilitybigk}. It is a consequence of the topological nondisplaceability of Lagrangian spheres. An explicit example of a semitoric system containing focus-focus fibers with $2$ pinches can be found in Section \ref{s.octagon}.

Therefore the question of (non)displaceability of focus-focus fibers is reduced to considering fibers containing a single focus-focus point. In Section \ref{s.displacinggeneralff} we give conditions, in terms of the invariants of the semitoric system, to displace a focus-focus fiber. 

The so called semitoric polytope invariant of $(M,\omega,F)$ arises from $F(M)$ by introducing vertical cuts, from above or below, to the focus-focus values and then "straightening" it to obtain a convex polytope, as illustrated in Figure \ref{f.straighteninghomeomorphism}. This yields a representative of the polytope invariant, for the detailed procedure see Section \ref{s.semitoric}.
\begin{figure}
\begin{center}
    \includegraphics[width=0.8\textwidth]{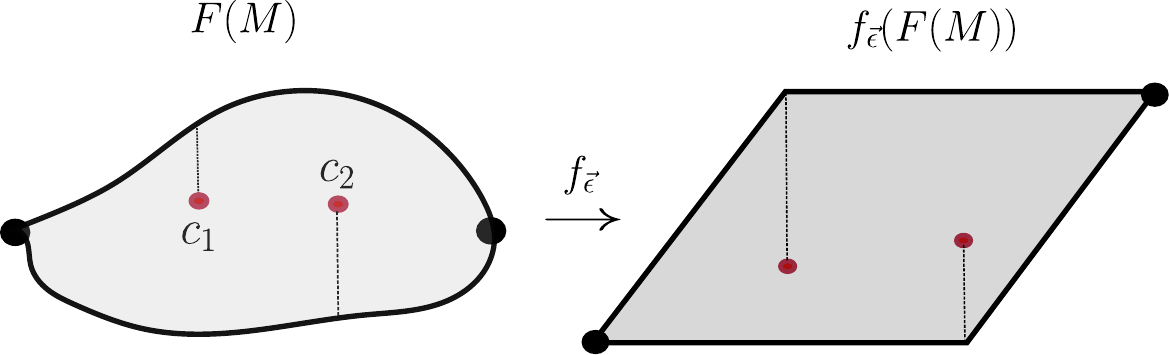}
    \caption{Straightening homeomorphism $f_{\vec{\epsilon}}$ applied to $F(M)$ to obtain a representative of the polytope invariant $f_{\vec{\epsilon}}(F(M))$. The focus-focus values of $F(M)$ are $c_1$ and $c_2$.}
    \label{f.straighteninghomeomorphism}
\end{center}
\end{figure}
McDuff's method of probes first embeds a disk of large enough radius times a circle in an area preserving way in $(M,\omega)$. In the Delzant polytope, this corresponds to choosing a suitable ray that starts from an edge of the polytope. We apply this idea in a representative of the polytope invariant of a semitoric system to displace semitoric fibers. Compared to the toric situation, we have to deal with additional obstructions coming from the presence of focus-focus values: the cuts introduced in $F(M)$ to obtain a representative of the polytope invariant act as barriers for the ray. Also different from the toric situation, in order to displace a focus-focus fiber we displace an open neighborhood of it, which requires "more space", i.e., it creates the need for more conditions. The idea is to symplectically embed the product of two disks such that one disk has big enough radius to be able to apply McDuff's ideas. In order to do this procedure one takes into consideration the distance to the boundary of the focus-focus value in a representative of the polytope invariant while paying attention to the cuts introduced by the other focus-focus values, see Figure \ref{f.intuitiondisplacingfffibers}.
\begin{figure}
\begin{center}
    \includegraphics[width=0.5\textwidth]{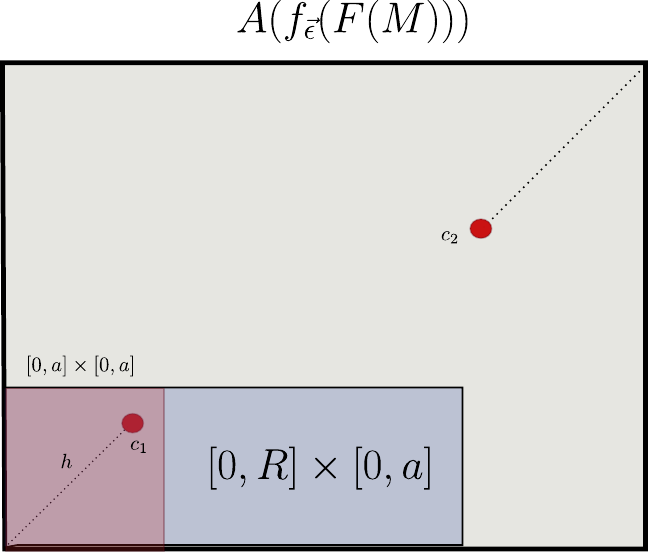}
    \caption{$A(f_{\vec{\epsilon}}(F(M)))$ is the result of applying a suitable integral affine transformation $A$ to the representative of the polytope invariant $f_{\vec{\epsilon}}(F(M))$. Let $0<h<a<\frac{R}{2}$. The square $[0,a]\times [0,a]$ fits inside the rectangle $[0,R]\times[0,a]$. The rectangle is inside $A(f_{\vec{\epsilon}}(F(M)))$ and does not intersect the cut associated with the focus-focus value $c_2$. The rectangle $[0,R]\times [0,a]$ represents the symplectic embedding of the product of disks $D^2(R)\times D^2(a)$. Under these conditions the focus-focus fiber given by $c_1$ is displaceable, since we can displace $D^2(a)\times D^2(a)$ inside $D^2(R)\times D^2(a)$, due to the fact that $a<\frac{R}{2}$, and we can "torically smoothen" the corner introduced by $c_1$.}
    \label{f.intuitiondisplacingfffibers}
\end{center}
\end{figure}
\begin{theorem}
    In a semitoric system a focus-focus fiber can be displaced as long as there exists a symplectic chart with "enough space", in the sense of Figure \ref{f.intuitiondisplacingfffibers}, to accommodate the product of the aforementioned disks. 
\end{theorem}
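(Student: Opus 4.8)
The plan is to realize McDuff's probe displacement inside a product-of-disks chart adapted to the focus-focus fiber, exactly along the geometry of Figure \ref{f.intuitiondisplacingfffibers}. By the preceding theorem a fiber carrying more than one focus-focus point is already nondisplaceable, so I would first reduce to the case of a fiber $F^{-1}(c_1)$ with a single focus-focus point of value $c_1$, while other focus-focus values $c_2,\dots$ may sit on other fibers. I would then pass to a representative $f_{\vec\epsilon}(F(M))$ of the polytope invariant and apply an integral affine transformation $A$ so that the straightened image of $c_1$ becomes a corner placed at the origin, with a rectangle $[0,R]\times[0,a]$ lying inside $A(f_{\vec\epsilon}(F(M)))$ and disjoint from the cut attached to every other focus-focus value $c_2$. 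This affine bookkeeping (choice of $\vec\epsilon$ and $A$, checking that the rectangle fits and avoids the cut) is the combinatorial content of the ``enough space'' hypothesis.

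The heart of the argument is to convert this combinatorial picture into a symplectic one, i.e.\ to build a symplectic embedding $\Phi\colon D^2(R)\times D^2(a)\hookrightarrow (M,\omega)$ whose image contains a neighborhood of $F^{-1}(c_1)$ and whose action coordinates match the rectangle $[0,R]\times[0,a]$. Over the regular part of the rectangle this is nothing but Arnold--Liouville action-angle coordinates, so the cut of $c_2$ genuinely acts as a barrier keeping the chart away from the second singular fiber, and the only delicate point is the corner, where the Lagrangian fibration carries a focus-focus (rather than an elliptic-elliptic) singularity. There I would invoke the Eliasson--\vungoc{} normal form to identify a neighborhood of the pinched-torus fiber with the standard focus-focus model, and then \emph{torically smoothen} the corner, replacing a neighborhood of the singular corner by the standard toric corner of $D^2(R)\times D^2(a)$; the parameter $h$ with $0<h<a$ measures how much room the fiber neighborhood requires, so the hypothesis guarantees that $R$ and $a$ are large enough to contain it.

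Once the chart is in place the displacement is McDuff's probe argument along the first factor. Since $a<R/2$, the inner bidisk $D^2(a)\times D^2(a)$ occupies, in the first factor, strictly less than half of $D^2(R)$, which is precisely the half-length condition under which a probe along the first factor displaces the enclosed fibers. Concretely there is a compactly supported Hamiltonian $G\colon D^2(R)\to\R$ whose time-one flow displaces the central sub-disk $D^2(a)\subset D^2(R)$, and the product Hamiltonian $G\circ\mathrm{pr}_1$ then displaces $D^2(a)\times D^2(a)$. Pulling this back through $\Phi$, cutting it off inside the chart and extending it by the identity produces a Hamiltonian diffeomorphism $\phi$ of $M$ with $\phi(D^2(a)\times D^2(a))\cap(D^2(a)\times D^2(a))=\emptyset$; as $F^{-1}(c_1)\subset\Phi(D^2(a)\times D^2(a))$, the focus-focus fiber is displaced.

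The step I expect to be the main obstacle is the construction and corner-smoothing of the chart $\Phi$: reconciling the focus-focus normal form with the toric product model so that the singular corner fiber is replaced by an honest toric corner without tearing the symplectic form, while simultaneously controlling the global behaviour of the embedding so that it stays inside $A(f_{\vec\epsilon}(F(M)))$, respects the barrier imposed by the cut of $c_2$, and so that the cut-off of $G\circ\mathrm{pr}_1$ introduces no intersections outside the chart. By comparison, the final displacement and the affine normalization are routine.
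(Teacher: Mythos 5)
Your outline matches the paper's proof in its endpoints: normalize a representative of the polytope invariant by an integral affine map so that the focus-focus corner sits at the standard position with a rectangle $[0,R]\times[0,h]$, $2h<R$, inside it and clear of the other cuts, and then displace via a compactly supported Hamiltonian pulled back through an action-angle bidisk chart --- this last step is exactly the paper's Lemma \ref{l.generalizedprobes}, proved with the Abreu--Borman--McDuff disk map just as you describe. The genuine gap is the step you yourself flag as the main obstacle, and it is not a technicality one can defer: taken literally, your plan to build a \emph{fibered} toric chart $D^2(R)\times D^2(a)$ whose action coordinates match the rectangle and which contains a neighborhood of the pinched torus cannot work, because the affine monodromy around the node is the nontrivial matrix $T^k$, which obstructs extending action-angle coordinates (equivalently, a toric fibration) across the focus-focus fiber. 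What you call ``torically smoothening the corner without tearing the symplectic form'' is therefore a surgery on the Lagrangian fibration, not a coordinate change glued from the Eliasson--\vungoc{} normal form, and your proposal offers no construction of it. The paper closes precisely this gap by citing Symington's \emph{nodal trade} (Lemma \ref{l.nodaltrade} together with Remark \ref{l.conditionsfornodaltrade}): under the hypothesis --- missing from your write-up --- that the corner introduced by the cut is a \emph{Delzant corner}, i.e.\ that the vanishing and collapsing covectors span $\Lambda_b^*$, there is a new almost toric base with one node fewer, isomorphic to the old one away from the eigenray, inducing an isotopic symplectic structure on the same manifold. This yields an honest toric fibration $\pi:(M,\omega)\to\mathbb{R}^2$ in which the \emph{old} focus-focus fiber satisfies $F^{-1}(c)\subset\pi^{-1}([0,h]\times[0,h])$ (the height invariant $h$ enters exactly here, locating the eigenray segment), and Lemma \ref{l.generalizedprobes} then finishes the argument as in Lemma \ref{l.displaceaingff}.

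In short: your affine bookkeeping and the displacement core are correct and coincide with the paper's, including the cut-off of $G\circ\mathrm{pr}_1$ near the second-factor boundary, which the paper treats at the same level of detail. But the substance of the theorem is the smoothing step, and without invoking the nodal trade (or re-proving Symington's Lemma 6.3 and Theorem 6.5 from the normal forms, which is essentially what your sketch would amount to) and without the Delzant-corner integrality condition that makes the trade admissible, the proof is incomplete at its central point.
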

This is the content of Section \ref{s.generalization}
and Section \ref{s.displacinggeneralff}. 

In Section \ref{s.examples} we apply our results to study the (non)displaceability properties of semitoric fibers in $3$ examples:

First, in Section \ref{s.spin} we study the coupled spin-oscillator which is a semitoric system $(M,\omega,F)$ on $M=\mathbb{S}^2\times \mathbb{R}^2$, for which the main result is:
\begin{theorem}
All of the fibers of a coupled spin-oscillator are \textbf{displaceable}. 
\end{theorem}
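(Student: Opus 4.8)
The plan is to apply the displaceability criterion established in the previous theorem (the method of probes adapted to semitoric systems, as summarized in Figure \ref{f.intuitiondisplacingfffibers}) to every fiber of the coupled spin-oscillator. The fibers split into three types, and I would handle each separately: the two elliptic-elliptic (or toric-type) fibers lying over vertices and edges of the momentum image, the regular torus fibers over interior points, and the single focus-focus fiber. Since the coupled spin-oscillator has exactly one focus-focus point, the nondisplaceability obstruction of Theorem~1 (which required more than one focus-focus point) never applies, so there is no topological obstruction to overcome. The whole argument therefore reduces to checking that the geometric ``enough space'' condition holds for a suitable symplectic chart at each fiber.

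First I would recall the explicit description of the coupled spin-oscillator: $M=\mbS^2\times\R^2$ with its product symplectic form, and $F=(J,H)$ given by the standard formulas, so that $J$ generates the $\mbS^1$-action and $H$ couples the spin and oscillator components. I would then compute (or cite) the momentum image $F(M)$, its single focus-focus value, and a representative $f_{\vec\epsilon}(F(M))$ of the polytope invariant obtained by introducing a vertical cut at that value and straightening. The key quantitative input is the location of the cut together with the affine-geometric dimensions of the straightened polytope, because the displaceability criterion demands, after applying a suitable integral affine transformation $A$, a rectangle $[0,R]\times[0,a]$ inside $A(f_{\vec\epsilon}(F(M)))$ avoiding the (here absent) second cut and satisfying $0<h<a<\tfrac{R}{2}$.

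The main verification then divides into two regimes. For the regular and elliptic fibers, I would produce, for each fiber, an edge of the straightened polytope and an integral-affine direction along which a probe of the required length exists; because the polytope of the spin-oscillator is unbounded in the oscillator direction, there is in fact arbitrarily much room, so the ratio condition $a<\tfrac{R}{2}$ is easily met. For the single focus-focus fiber, I would invoke the general displacement result of Section \ref{s.displacinggeneralff}, checking by direct inspection of $A(f_{\vec\epsilon}(F(M)))$ that the rectangle fits with $a<\tfrac{R}{2}$; the unboundedness of the polytope again supplies an arbitrarily large $R$ while $a$ and $h$ are controlled by the fixed distance from the focus-focus value to the finite boundary edges.

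The hard part will be the focus-focus fiber: I must confirm that after straightening and applying $A$, the finite-distance obstruction (the distance from $c_1$ to the nearest genuine boundary edge of the polytope, which bounds $a$ from above) is still small enough relative to the available $R$ to satisfy $a<\tfrac{R}{2}$, and that the chart genuinely embeds $D^2(R)\times D^2(a)$ symplectically with the corner introduced by the cut ``torically smoothened'' as in the statement of Theorem~2. Since $R$ may be taken arbitrarily large in the noncompact oscillator direction while the relevant distances stay bounded, I expect this inequality to hold with room to spare, completing the proof that \emph{all} fibers are displaceable.
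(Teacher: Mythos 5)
Your overall route is the same as the paper's: probes (Lemma \ref{l.probes}) for the generic fibers, exploiting the unboundedness of the representative of the polytope invariant, and the generalized method of Section \ref{s.displacinggeneralff} (Lemma \ref{l.displaceaingff}) for the focus-focus fiber. However, your case trichotomy --- elliptic fibers, regular fibers over interior points, focus-focus fiber --- has a genuine gap: it silently lumps the regular torus fibers lying \emph{on the cut} (the segment of the eigenline above the focus-focus value in Figure \ref{f.CSO}) in with the other regular fibers, and for those the probe argument fails. Any probe through a point of the cut $l^{\vec{\epsilon}}$ necessarily meets the cut, and across the cut the straightening homeomorphism $f_{\vec{\epsilon}}$ is not smooth (Theorem \ref{th.straigheningHomeo}); the system is a toric fibration only away from the cut, so Lemma \ref{l.probes} does not apply to them. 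Nor does your treatment of the focus-focus fiber cover them as written: you control $a$ and $h$ by ``the fixed distance from the focus-focus value to the finite boundary edges,'' which only displaces the fibers in a bounded neighborhood of the corner, whereas the cut carries fibers arbitrarily far up. The paper handles exactly this point by displacing the focus-focus fiber \emph{and} all fibers on the eigenline segment with the Section \ref{s.displacinggeneralff} technique: since one edge has infinite affine length, the rectangle $[0,R]\times[0,h]$ can be taken with $h$ arbitrarily large and $R>2h$, so $\pi^{-1}\left([0,h]\times[0,h]\right)$ eventually contains any prescribed fiber over the cut.

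A second, related omission: even for fibers off the cut, the paper's horizontal probe with direction $(1,0)$ starting at the facet $y=x+\rho_1$ would cross the cut whenever the fiber sits at a height above the focus-focus value. The paper removes this obstruction by first performing a nodal trade (Lemma \ref{l.nodaltrade}, cf.\ Remark \ref{l.conditionsfornodaltrade}); since the trade changes the fibration only over a neighborhood of the eigenray and $(B\setminus R,\mathcal{A}',\mathcal{S}')$ is isomorphic to $(B\setminus R,\mathcal{A},\mathcal{S})$, the fibers away from the eigenline are untouched and Lemma \ref{l.probes} then applies to all of them at once. Your proposal never mentions this step, and without it (or an explicit choice of probes avoiding the cut) the probe argument is incomplete. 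A minor factual slip: the coupled spin-oscillator has a single elliptic-elliptic point, not two. With these two repairs --- treat the eigenline fibers via Lemma \ref{l.displaceaingff} with $h$ large, and insert the nodal trade before probing --- your outline coincides with the paper's proof of Theorem \ref{t.CSO}.
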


In Section \ref{s.coupledangularmomenta} we study the coupled angular momenta and its variations. Let $R_1,R_2\in \mathbb{R}^{>0}$ and $t\in [0,1]$ a parameter. Consider the product manifold $M:=\mathbb{S}^2\times \mathbb{S}^2$ with symplectic form $\omega:=-(R_1\omega_{\mathbb{S}^2}\bigoplus R_2\omega_{\mathbb{S}^2})$ where $\omega_{\mathbb{S}^2}$ is the standard symplectic form on $\mathbb{S}^2$.
The \textbf{coupled angular momenta system} is a family of $4$-dimensional completely integrable systems $(M,\omega,F_t:= (L,H_t))$,
with $R_1<R_2$. The case $R_1>R_2$ is called the \textbf{reverse coupled angular momenta} and the case $R_1=R_2$ is called the \textbf{Kepler problem}. For an appropriate range of the parameter $t$ a focus-focus point always exists.

\begin{proposition}
Let $R_1  \neq R_2$ and $t$ be in the parameter range such that a focus-focus point exists. Then
\begin{itemize}
\item there only exists one nondisplaceable fiber (for which we give an explicit formula in Section \ref{s.coupledangularmomenta}) in the coupled angular momenta system. In particular it is nondisplaceable by symplectomorphisms. 
\item All other fibers are displaceable. 
\item In particular, the focus-focus fiber is displaceable.
\end{itemize}
\end{proposition}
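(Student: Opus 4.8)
The plan is to establish all three bullet points at once by feeding the generalized method of probes into the theory of stems and superheavy sets of Entov and Polterovich. First I would write the momentum map $F_t=(L,H_t)$ of the coupled angular momenta explicitly, locate the unique focus-focus value as a function of $R_1,R_2,t$, and run the straightening procedure of Section~\ref{s.semitoric} to obtain a concrete representative $f_{\vec\epsilon}(F_t(M))$ of the polytope invariant together with the single vertical cut issuing from the focus-focus value, as in Figure~\ref{f.straighteninghomeomorphism}. From here on everything is read off this planar picture: distances to the edges, primitive integral directions of admissible probes, and the position of the cut all become explicit functions of the parameters.

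The second step is the displaceability of every fiber but one. For a regular value $c$ lying on the side of the cut away from the focus-focus value I would exhibit an admissible probe, starting from a suitable edge, that avoids the cut and satisfies the midpoint inequality $a<\tfrac R2$ of Figure~\ref{f.intuitiondisplacingfffibers}; the criterion of Section~\ref{s.generalization} then displaces $F_t^{-1}(c)$. Applying the same construction after torically smoothing the corner created by the focus-focus value displaces the focus-focus fiber itself, which is already the third bullet; the boundary fibers (circles over edges and points over vertices) are displaceable for elementary reasons. The only value for which no admissible probe survives is a single interior value $c_0$, corresponding to the balanced pair of latitude circles, i.e.\ the center of the rectangle of Figure~\ref{f.intuitiondisplacingfffibers} in every admissible chart; I would record its closed form in Section~\ref{s.coupledangularmomenta}.

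For the remaining fiber I would convert ``all other fibers are displaceable'' into nondisplaceability. Since every fiber of $F_t$ except $X:=F_t^{-1}(c_0)$ has just been displaced, $X$ is by definition a stem of the Poisson-commutative algebra generated by $L$ and $H_t$; by the stem theorem of Entov and Polterovich it is superheavy, hence nondisplaceable. As an independent check, in the toric limit $t=0$ the fiber $X$ is the monotone product of the two equators, whose Lagrangian Floer homology is nonzero, confirming the identification of $c_0$. To upgrade to nondisplaceability by arbitrary symplectomorphisms I would use that $H^1(\mathbb{S}^2\times\mathbb{S}^2;\R)=0$, so $\mathrm{Symp}_0=\mathrm{Ham}$ and superheaviness already forbids displacement inside the identity component; since $R_1\neq R_2$ there is no factor-swapping component, and the remaining components are represented by involutions preserving the symmetric torus $X$, so they cannot displace it either.

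The main obstacle is the geometric bookkeeping of the second step. One must check that for every interior $c\neq c_0$ there is a probe that simultaneously (i) starts on an edge with primitive integral direction, (ii) does not cross the vertical cut of the focus-focus value, and (iii) leaves room for the embedding $D^2(a)\times D^2(a)\hookrightarrow D^2(R)\times D^2(a)$ with $a<R/2$. The interplay between the cut, which blocks probes from one side, and the enough-space inequality is exactly what forces $c_0$ to be unique, so the delicate point is to prove that the obstruction occurs at a single value and to pin down its coordinates in closed form, together with verifying that the focus-focus corner can be torically smoothed without violating the inequality.
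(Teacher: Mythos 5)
There is a genuine gap: your plan tries to carry the whole proposition with probes and the toric-smoothing trick of Lemma \ref{l.displaceaingff}, but that machinery cannot displace the focus-focus fiber (nor the fibers over the cut) uniformly in the parameter range, and the paper does not attempt to. The paper's key idea, which your proposal is missing, is the explicit symmetry $\Psi(x_1,y_1,z_1,x_2,y_2,z_2)=(-x_1,y_1,-z_1,x_2,-y_2,-z_2)$ of Equation \eqref{eq.HamDiffeoPsi}: it is Hamiltonian (time-$1$ map of a normalization of $G=y_1+x_2$) and satisfies $L\circ\Psi=-L$, so it displaces \emph{every} fiber over $(a,b)$ with $a\neq 0$ at one stroke (Proposition \ref{p.general}) --- in particular the focus-focus fiber, whose value is $(R_1-R_2,1-2t)$ with first coordinate nonzero precisely because $R_1\neq R_2$. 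This is what makes the third bullet robust. Your probe-based alternative requires the ``enough space'' inequality of Lemma \ref{l.displaceaingff} (a rectangle $[0,R]\times[0,h]$ with $2h<R$ fitting in the straightened polytope), and this is a closed condition on the polytope geometry that demonstrably fails in part of the parameter range: as $R_1\to R_2$ the system converges to the Kepler problem, where for $t\geq t_0$ the paper shows (Proposition \ref{pr.keplerOneNondispl} versus Lemma \ref{l.chekanov} and Theorem \ref{T.Keplert0}) that the fibers over the cut are Chekanov-type tori which are genuinely \emph{nondisplaceable}; hence for $R_1$ close to $R_2$ and $t$ above the corresponding threshold no probe or smoothing construction of the kind you describe can exist, even though the fibers are still displaceable via $\Psi$. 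Your worry in the last paragraph about ``the interplay between the cut and the enough-space inequality'' is thus not mere bookkeeping --- it is an obstruction that defeats the approach, and your claim that the obstruction occurs at a single value $c_0$ is exactly what you cannot prove by probes alone.

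Once $\Psi$ is available, the paper's remaining steps match yours in outline: vertical probes with direction $(0,\pm1)$ along the line $x=R_2-R_1$ (the image of $L^{-1}(0)$ in the representative of Figure \ref{f.1}) displace all fibers there except the one over $(R_2-R_1,-h_1+R_1)$; Theorem \ref{t.existsnondisplaceable} forces that last fiber to be nondisplaceable; and since it is then a stem, Theorem \ref{t.stem} gives nondisplaceability by \emph{all} symplectomorphisms directly. Your separate upgrade via $H^1(\mathbb{S}^2\times\mathbb{S}^2;\mathbb{R})=0$ and an analysis of $\pi_0(\mathrm{Symp})$ is therefore unnecessary, and as written it is also unjustified (the assertion that the non-identity components ``are represented by involutions preserving $X$'' is not argued). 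Two smaller inaccuracies: for $R_1\neq R_2$ the product of equators is \emph{not} monotone, so your sanity check at $t=0$ needs the non-monotone Floer/quasi-state technology rather than plain monotone Floer homology; and the nondisplaceable fiber is not literally ``the center of the rectangle of Figure \ref{f.intuitiondisplacingfffibers}'' but the halfway point of the segment $\{x=R_2-R_1\}$ in the straightened polytope, whose closed form $(F_t\circ f^{-1})^{-1}(R_2-R_1,-h_1+R_1)$ involves the height invariant $h_1$.
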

Note that the symplectic rigidity problem is sensitive to the choice of $R_1$ and $R_2$. For the Kepler problem, concerning the question of (non)displaceability, we get a different behavior:
\begin{proposition}
    Let $R_1=R_2$ and $t$ be in the parameter range such that a focus-focus point exists. There exists a parameter value $t_0$ such that:
    \begin{itemize}
        \item If $t<t_0$ there exists only a single nondisplaceable fiber, which is nondisplaceable by symplectomorphisms. In particular the focus-focus fiber is displaceable. 
        \item If $t=t_0$ the focus-focus fiber is nondisplaceable. In particular all other fibers are displaceable, and the focus-focus fiber is nondisplaceable by symplectomorphisms. 
        \item If $t>t_0$ there exist an infinite amount of nondisplaceable fibers. In particular the focus-focus fiber is nondisplaceable. 
    \end{itemize}
\end{proposition}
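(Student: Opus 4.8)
The plan is to translate everything into a representative $f_{\vec{\epsilon}}(F(M))$ of the polytope invariant of the Kepler system, computed as in Section~\ref{s.coupledangularmomenta}, and to isolate the feature that distinguishes $R_1=R_2$ from the case $R_1\neq R_2$ treated just above: namely that $(M,\omega)=(\mathbb{S}^2\times\mathbb{S}^2,-R(\omega_{\mathbb{S}^2}\oplus\omega_{\mathbb{S}^2}))$ is then \emph{monotone}, the two factors having equal area. First I would record, as functions of $t$, the focus-focus value $c_{ff}(t)$ and the vertical cut issuing from it, so that the position of $c_{ff}(t)$ relative to the central locus of the polytope can be tracked as $t$ varies.

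For the displaceability I would run the generalized method of probes from Section~\ref{s.generalization} and Section~\ref{s.displacinggeneralff}, exactly as in the $R_1\neq R_2$ case and as in \cite{mcduff2011displacing}. For all but a distinguished locus $S(t)$ of values one finds a symplectic chart with ``enough space'' in the sense of Figure~\ref{f.intuitiondisplacingfffibers}, the only obstruction being the cut, so the corresponding fibers are displaceable. This supplies the upper bound and reduces the problem to understanding $S(t)$: I expect $S(t)$ to be a single point for $t\le t_0$ and a nondegenerate vertical segment for $t>t_0$, with $t_0$ the sharp value at which this single point first coincides with the focus-focus value.

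The lower bound, i.e. genuine nondisplaceability of the fibers over $S(t)$, is where the two regimes differ. When $S(t)$ is a single point, that fiber is a stem in the sense of Entov--Polterovich — all other fibers are displaceable — hence it is nondisplaceable by the quasi-state machinery of \cite{polterovich2006quasi}; the stronger statement that it is nondisplaceable even by symplectomorphisms I would obtain as in the $R_1\neq R_2$ case, using the anti-symplectic involution (complex conjugation in each $\mathbb{S}^2$-factor) that preserves $F_t$ and hence fixes the fiber setwise, together with the resulting homological rigidity. When $S(t)$ is a segment a single stem no longer suffices, and here I would invoke the nondisplaceability of the continuum of Lagrangian tori available in \emph{monotone} $\mathbb{S}^2\times\mathbb{S}^2$, detected by bulk-deformed Floer theory building on Fukaya--Oh--Ohta--Ono \cite{fukayabook1,fukayabook2}, matching our semitoric fibers over $S(t)$ with this family. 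Reading off the position of $c_{ff}(t)$ then yields the trichotomy: for $t<t_0$ the focus-focus value lies off $S(t)$ and is probe-displaceable while the unique stem is nondisplaceable by symplectomorphisms; at $t=t_0$ the stem coincides with the focus-focus fiber; and for $t>t_0$ the focus-focus value lies on the nondisplaceable segment, producing infinitely many nondisplaceable fibers.

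The main obstacle is the lower bound, not the probe estimates. Proving that \emph{every} fiber over the segment $S(t)$ is nondisplaceable for $t>t_0$ requires the full force of the monotone continuum result and a careful identification of our semitoric fibers with the known nondisplaceable tori, while establishing the sharp threshold $t_0$ — verifying that $S(t)$ genuinely jumps from a point to a segment rather than varying continuously — demands a precise quantitative comparison of the probe lengths available from the two facets against the location of the cut $c_{ff}(t)$. The ``by symplectomorphisms'' strengthening in the two single-fiber cases is the remaining delicate point.
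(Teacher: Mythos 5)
Your overall architecture matches the paper's quite closely: the threshold $t_0$ is pinned down by the height invariant reaching the monotone position (the paper defines it by $\tilde h(t_0)=1$, i.e.\ $h=R$, using the explicit formula from Alonso--Dullin--Hohloch); displaceability away from the critical locus is done by (generalized) probes; the unique nondisplaceable fiber for $t\le t_0$ is detected exactly as you say, as a stem forced by the existence result for nondisplaceable fibers (Theorem \ref{t.existsnondisplaceable}, Proposition \ref{pr.keplerOneNondispl}, Theorem \ref{T.Keplert0}); and for $t>t_0$ the fibers over the segment are identified with Chekanov-type tori whose nondisplaceability comes from bulk-deformed potential functions (Lemma \ref{l.chekanov}, via Auroux and Vianna--Tonkonog and Fukaya--Oh--Ohta--Ono), which is precisely your monotone-continuum step. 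Two points of technique differ: the paper displaces all fibers with $a\neq 0$ not by probes but by the explicit Hamiltonian symmetry $\Psi$ of Equation \eqref{eq.HamDiffeoPsi} (Proposition \ref{p.general}), and it obtains the needed probe length \emph{above} the focus-focus value by a nodal slide that shrinks the eigenline segment (Proposition \ref{p.displacingEP}); these two devices, together with the affine transformation $A\circ T_{0,h-2R}$ and Lemma \ref{l.displaceaingff}, close exactly the quantitative estimates you flag as the main obstacle. In particular there is no jump of a locus $S(t)$ to be verified abstractly: the trichotomy is read off the explicit monotone function $\tilde h(t)$.

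There are two genuine gaps in your lower-bound discussion. First, your route to ``nondisplaceable by symplectomorphisms'' via an anti-symplectic involution fixing the fiber is not what the $R_1\neq R_2$ case does and does not work as stated: an involution preserving $F_t$ gives no control over an arbitrary symplectomorphism, and homological rigidity is unavailable for a Lagrangian torus (its class vanishes in $H_2(M)$). What is actually used, in both the $R_1\neq R_2$ case and here, is Theorem \ref{t.stem} (Polterovich--Rosen): \emph{any} stem is nondisplaceable by the full symplectomorphism group, so once every other fiber is displaced you are done with no further geometry. Second, for $t>t_0$ the continuum result covers only the \emph{regular} torus fibers over the open segment; the focus-focus fiber is a pinched torus, not one of these tori, so ``the focus-focus value lies on the nondisplaceable segment'' does not by itself conclude. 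The paper closes this by the openness of displaceability: a Hamiltonian isotopy displacing the nodal fiber would displace a neighborhood of it, hence displace nearby Chekanov-type fibers, contradicting Lemma \ref{l.chekanov}. Both gaps are repairable in a line each, but as written they are the places where your proof is incomplete.
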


In Section \ref{s.octagon} we study the (non)displaceability of the fibers of the toric system induced by the octagon $\Delta$ with vertices
\begin{equation*}
    \{(1,0),(0,1),(2,0),(0,2),(1,3),(2,3),(3,1),(3,2)\}
\end{equation*}
as a Delzant polytope. Then we consider the semitoric perturbation of this system, introduced in De Meulenaere \& Hohloch \cite{de2021family}, and study the (non)displaceability properties of the semitoric fibers of this perturbation. Let $(M,\omega,F_t)$ be the semitoric perturbation of the toric system. For $t=\frac{1}{2}$ the system has two focus-focus fibers with two focus-focus points and otherwise $4$ focus-focus fibers with a single focus-focus point. In Appendix \ref{s.computingpolytope} we show that a representative of the polytope invariant, for $t\neq \frac{1}{2}$, is given by the octagon $\Delta$.
\begin{proposition}
    Let $(M,\omega,F_t)$ be the semitoric system induced by the octagon. Then
    \begin{itemize}
        \item For $t=\frac{1}{2}$
        \begin{enumerate}
            \item The fiber over the point $(\frac{3}{2},\frac{3}{2})$ in any representative of the polytope invariant and the focus-focus fibers are nondisplaceable.
            \item All other fibers are displaceable.
        \end{enumerate}
        \item For $t\neq \frac{1}{2}$
        \begin{enumerate}
            \item If the image of the focus-focus values in the representative of the polytope invariant given by the octagon are contained in the region $\{y>2\}\cup \{y<1\}$ then the fibers over the points $\{(1,1),(2,1),(\frac{3}{2},\frac{3}{2}),(1,2),(2,2)\}$ are nondisplaceable. All other fibers are displaceable. In particular the focus-focus fibers are displaceable.
            \item If the image of the focus-focus values in the representative of the polytope invariant given by the octagon are contained in the region $\{y\leq 2\}\cap \{y\geq 1\}$ then the focus-focus fibers and the fiber over the point $\{(\frac{3}{2},\frac{3}{2})\}$ are nondisplaceable. All other fibers are displaceable.
        \end{enumerate}
    \end{itemize}
\end{proposition}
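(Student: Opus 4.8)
The plan is to prove the displaceable and nondisplaceable assertions by two different mechanisms, working throughout in a representative of the polytope invariant given by the octagon $\Delta$ (legitimate for $t\neq\frac12$ by Appendix~\ref{s.computingpolytope}) and exploiting the dihedral symmetry of $\Delta$ about its center $(\tfrac32,\tfrac32)$ to reduce the casework to a single fundamental domain. The organizing principle I expect to govern the whole statement is a duality between the focus-focus fibers and the fibers over the central square $[1,2]\times[1,2]$: when the focus-focus values, hence their vertical cuts, lie outside the horizontal strip $\{1\le y\le 2\}$, the focus-focus fibers acquire the room required by the displacement criterion of Section~\ref{s.displacinggeneralff} and become displaceable, while the four corners of the central square lose the mechanism that could move them and revert to being nondisplaceable; when the focus-focus values sit inside the strip the roles are exactly exchanged.

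For the displaceability claims I would apply the generalized method of probes of Section~\ref{s.generalization}. Concretely I would cover $\Delta$ minus the asserted nondisplaceable set by explicit probes of three kinds: vertical probes from the edges $\{y=0\}$ and $\{y=3\}$, horizontal probes from $\{x=0\}$ and $\{x=3\}$, and the admissible slanted probes (primitive directions $(a,b)$ with $a+b=\pm1$) from the four corner-cut edges. For each probe I must check that the target lies strictly before its midpoint and---the genuinely new ingredient compared with the toric case---that the initial segment of the probe meets none of the four cuts. A direct computation shows that the five points $\{(1,1),(2,1),(\tfrac32,\tfrac32),(1,2),(2,2)\}$ are reached by no admissible probe of the octagon at all, being either vertex-adjacent or central; hence in the regime $\{y>2\}\cup\{y<1\}$ they stay uncovered. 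In the regime $\{1\le y\le 2\}$, and at $t=\frac12$, the four corners (but never the center) instead become displaceable: passing to a representative $f_{\vec\epsilon}$ adapted to the focus-focus values now lying in the strip, the corresponding fibers are cleared by the focus-focus displacement of Section~\ref{s.displacinggeneralff}, whose ``enough space'' criterion, as in Figure~\ref{f.intuitiondisplacingfffibers}, is met precisely because a focus-focus value sits next to the corner.

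For nondisplaceability there are three contributions. The focus-focus fibers carrying two focus-focus points, which occur exactly at $t=\frac12$, are nondisplaceable directly by the nondisplaceability of multi-pinch fibers established in Section~\ref{s.nondisplaceabilitybigk}. The center $(\tfrac32,\tfrac32)$, in every regime, and the four corners of the central square, in the regime $\{y>2\}\cup\{y<1\}$, are regular toric fibers for which probe-failure is not a proof; for these I would import a positive obstruction, namely the Floer-homological nondisplaceability of the corresponding fibers of the toric manifold presented by $\Delta$ (a symmetric four-point blow-up of $S^2\times S^2$) computed through the Fukaya-Oh-Ohta-Ono potential with bulk deformation \cite{fukayabook1,fukayabook2}, and then argue that once the focus-focus values leave the strip the semitoric perturbation is supported away from these central fibers and therefore preserves the obstruction. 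Finally, the single focus-focus fibers in the regime $\{1\le y\le 2\}$ are nondisplaceable despite having only one pinch; here the probe method is again silent, so I would supply an independent obstruction, for instance nonvanishing immersed Floer homology of the once-pinched Lagrangian sphere or heaviness in the sense of Entov-Polterovich \cite{polterovich2006quasi}.

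I expect the nondisplaceability half to be the main obstacle, for two reasons. First, the method of probes is intrinsically a displaceability tool, so the obstructions for the central, corner and single focus-focus fibers must be imported from Floer theory and then matched on the nose with the set of probe-survivors; it is exactly this matching that forces the two regimes to have opposite answers, and it is where an error is most likely to hide. Second, the cut bookkeeping in the probe construction is delicate: I must confirm that in each regime every point outside the claimed nondisplaceable set admits at least one admissible, cut-avoiding probe, possibly only after passing to a more favorable representative $f_{\vec\epsilon}$ or to an extended probe, and that the threshold behavior at $y=1$ and $y=2$, where the regime switches, is treated correctly.
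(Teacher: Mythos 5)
Your probe-based displaceability scheme, the use of Section~\ref{s.nondisplaceabilitybigk} for the doubly pinched fibers at $t=\frac12$, and the use of the criterion of Section~\ref{s.displacinggeneralff} to displace the focus-focus fibers when the cuts leave the strip all match the paper. But the central nondisplaceability claim of the regime $\{1\le y\le 2\}$ --- that a \emph{single-pinch} focus-focus fiber is nondisplaceable --- is exactly where your proposal has no proof: you defer to ``nonvanishing immersed Floer homology of the once-pinched sphere or heaviness,'' neither of which you construct, and neither is routine here (these fibers are not monotone in general, so the known potential-function computations do not apply off the shelf). The paper never computes any invariant of the pinched torus. Instead it argues by \emph{elimination}: Theorem~\ref{t.quasistatesoctagon} produces, via Borman's reduction of quasi-states (Theorem~\ref{t.reductionquasistates}) from $\mathbb{CP}^2\times(\mathbb{CP}^1)^4$, quasi-states for which the five toric fibers are superheavy; since the quasi-states live on $(M,\omega)$ independently of the system, Kawasaki--Orita (Theorem~\ref{t.pseudo}) gives a pseudoheavy fiber of $F_t$ for each of them, which by Theorem~\ref{t.properties} must intersect the corresponding superheavy set and hence lies in $J^{-1}(1)$, $J^{-1}(2)$ or $J^{-1}(\tfrac32)$ (Lemma~\ref{l.nondisplaceableexistslevelset}); then probes in \emph{three different} representatives $\vec\epsilon=(1,1,1,1)$, $(-1,-1,-1,-1)$, $(-1,1,-1,1)$ (with nodal slides when $1<h\le\tfrac32$) displace every non-focus-focus fiber in those level sets (Lemma~\ref{l.allotherlevelsetdisplaceable}), so the surviving nondisplaceable fiber is forced to be the focus-focus fiber. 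This soft pseudoheavy-plus-elimination mechanism is the missing idea in your write-up, and without it the $\{1\le y\le 2\}$ case does not close.

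Two further points would fail as written. First, your mechanism for displacing the fibers over $(1,1),(2,1),(1,2),(2,2)$ in the strip regime --- invoking the ``enough space'' displacement of Section~\ref{s.displacinggeneralff} ``because a focus-focus value sits next to the corner'' --- is self-defeating: Lemma~\ref{l.displaceaingff} displaces the preimage of a whole square \emph{containing the focus-focus fiber}, so if it applied it would also displace the focus-focus fiber, contradicting the nondisplaceability you are simultaneously claiming in this regime; the paper instead displaces these fibers by ordinary probes after moving the cuts out of the way with a different choice of $\vec\epsilon$ and nodal slides (Proposition~\ref{p.nodalslide}). Second, your transfer of the toric obstruction to the semitoric system (``the perturbation is supported away from these fibers and therefore preserves the obstruction'') is not an argument: the fibers of $F_t$ are different submanifolds for different $t$, and Floer-theoretic invariants do not automatically persist. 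The paper handles this either by a nodal trade (Lemma~\ref{l.nodaltrade}), which realizes the semitoric fibers away from the eigenrays as genuine toric fibers of a toric system on a symplectomorphic manifold to which Theorem~\ref{t.quasistatesoctagon} applies, or again by the pseudoheavy-intersection argument, which needs no identification of fibers at all. Your proposed FOOO bulk-deformation computation for the five toric fibers could plausibly replace Theorem~\ref{t.quasistatesoctagon}, but it is asserted rather than done, and on its own it would still not yield parts (1) of either case without the elimination step above.
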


If one wants to study (non)displaceability questions in integrable systems the next natural step is to consider so called hypersemitoric systems, or even more general integrable systems. This would mean dealing with fibers containing hyperbolic or degenerate singularities which is beyond the scope of this paper. 

\subsection{Structure of the paper}
\begin{itemize}
\item In Section \ref{s.sectionPrelim} we give the necessary definitions and conventions and recall essential results from the literature. 
\item In Section \ref{s.displacingfocusofocusfibersiwthgeneralized} we generalize McDuff's method of probes and give enough conditions in order to be able to displace focus-focus fibers in semitoric systems. Furthermore, we prove that a focus-focus fiber containing more than a single focus-focus points is nondisplaceable.
\item In Section \ref{s.examples} we study the (non)displaceability properties of $3$ semitoric systems, namely the coupled spin-oscillator, the coupled angular momenta and the semitoric system induced by the octagon. 
\item In the Appendix \ref{s.computingpolytope} we compute a representative of the polytope invariant of the semitoric system induced by the octagon. 
\end{itemize}

\subsection*{Acknowledgments}
The first author was partially and second author fully supported by the FWO-EoS project {\em Beyond symplectic geometry} with UA Antigoon number 45816. Moreover, the first author was also partially supported by the grant "Francqui onderzoekshoogleraar 2023-2026" of the Francqui Foundation with UA Antigoon number 49741.

\section{Preliminaries and background}
\label{s.sectionPrelim}
\subsection{Notions and conventions from symplectic geometry}
A \textbf{symplectic manifold} is a pair $(M,\omega)$ where $M$ is a smooth manifold and $\omega$ is a closed non-degenerate $2$-form. A submanifold $Y$ of $M$ is a $\textbf{Lagrangian submanifold}$ if, at each $p\in Y$, the tangent space $T_pY$ is a Lagrangian subspace of $T_pM$, i.e., $\omega_p|_{T_pY}\equiv 0$ and $\dim(T_pY)=\frac{1}{2}\dim(T_pM)$.

Let $H:M\rightarrow \mathbb{R}$ be a smooth function. Due to the non-degeneracy of $\omega$, there exists a unique vector field $X_H$ on $M$ such that $\omega(X_H,\cdot)=dH$. We call $X_H$ the \textbf{Hamiltonian vector field} of $H$, and its flow $\phi^t_H$ the \textbf{Hamiltonian flow} of $H$.
The \textbf{Poisson bracket} of two functions $f,g\in C^{\infty}(M,\mathbb{R})$ induced by $\omega$ is
    \begin{equation*}
        \{f,g\}:=\omega(X_f,X_g).
    \end{equation*}
Now let $(M,\omega)$ be a symplectic manifold.
We say that a subset $X \subset M$ \textbf{cannot be displaced} from a subset $Y \subset M$ if
    \begin{equation*}
        \phi^1_H(X)\cap \overline{Y}\neq \emptyset,\quad \forall H\in C^{\infty}_{c}(M,\mathbb{R}).
    \end{equation*}
where $\overline{Y}$ denotes the closure of $Y$. If $X$ cannot be displaced from itself we call it \textbf{nondisplaceable} and otherwise \textbf{displaceable}.


\subsection{Integrable systems}

Let $(M,\omega)$ be a $2n$-dimensional symplectic manifold, and $\{\cdot\ , \cdot\}$ be the Poisson bracket induced by $\omega$.
An \textbf{integrable system} is a triple $(M,\omega,F)$ where $F:M\rightarrow \mathbb{R}^n$ is given by a collection of $n$ real valued smooth functions $f_1,...,f_n$ on $M$ such that:
    \begin{itemize}
        \item $f_1,...,f_n$ are in involution, i.e., $\{f_i,f_j\}=0$ for all $1\leq i,j\leq n$;
        \item $f_1,...,f_n$ are independent, i.e., $X_{f_1},...,X_{f_n}$ are linearly independent almost everywhere on $M$.
    \end{itemize}
We call the map $F:=(f_1,...,f_n):M\rightarrow \mathbb{R}^n$ the \textbf{momentum map} of the integrable system.

Two integrable systems $(M,\omega,F)$ and $(M',\omega',F')$ are isomorphic if there exists a diffeomorphism $\psi:M\rightarrow M'$ such that $\psi^*\omega'=\omega$ and a diffeomorphism $g:F(M)\rightarrow F'(M')$ such that $F' \circ \psi = g\circ F$.

\begin{quotation}
{\em Henceforth we assume $M$ to be connected and all integrable systems $F=(f_1, \dots, f_n): M \to \mathbb R^n$ are proper with connected fibers.}
\end{quotation}

A point $m\in M$ in which $X_{f_1}(m),...,X_{f_n}(m)$ of $T_mM$ are linearly dependent is said to be \textbf{singular} or a \textbf{singularity} of $F$. Otherwise we call it \textbf{regular}.
The \textbf{rank} of $m$ is given by $\dim \Span_\R \{ X_{f_1}(m),...,X_{f_n}(m)\}$.
If a fiber $F^{-1}(c)$ for some $c \in \R^m$ contains a singularity we call it \textbf{singular} and otherwise \textbf{regular}. Denote a $k$-dimensional torus briefly by $\mathbb{T}^k$. By the Liouville-Arnold-Mineur Theorem, see Arnold \cite{arnol2013mathematical}, a pre-image of a regular value is diffeomorphic to $\mathbb{T}^n$. 
Moreover, $\omega$ vanishes along regular fibers $F^{-1}(c)$, hence $F^{-1}(c)$ is a Lagrangian submanifold, traditionally called a \textbf{Liouville torus}. One often refers to an integrable system $(M,\omega,F)$ as a singular Lagrangian fibration where {\em singular} emphasizes that $F$ may have singular fibers of various kinds: for example tori of dimension $m\in \{0,...,n-1\}$, pinched-tori etc.

\subsection{Action-angle coordinates}
One important theorem about the symplectic geometry of integrable systems is the existence of so-called {\em action-angle coordinates}, see Arnold \cite{arnol1963theorem}: each regular fiber, in addition to being diffeomorphic to $\mathbb{T}^n$, can be seen as sitting inside of the cotangent bundle $T^*\mathbb{T}^n$ as the zero section and, in a neighborhood of the fiber, the integrable system can be written in a symplectic normal form $F:T^*\mathbb{T}^n\rightarrow \mathbb{R}^n$:
\begin{theorem}
\label{t.AL}
({Liouville-Arnold-Mineur Theorem}) Let $(M,\omega,F)$ be an integrable system and let $c\in F(M)$ be a regular value. If $\Lambda_c:=F^{-1}(c)$ is a regular, compact and connected fiber, then there exist neighborhoods $U\subset F(M)$ of $c$ and $V\subset \mathbb{R}^n$ of the origin, such that for
\begin{equation*}
\mathcal{U}:= \coprod_{r\in U}F^{-1}(r)\quad  \text{and} \quad \mathcal{V}:=\mathbb{T}^n\times V \subset T^*\mathbb{T}^n
\end{equation*}
we have that $(\mathcal{U},\omega|_{\mathcal{U}},F|_{\mathcal{U}})$ and $(\mathcal{V},\omega_0|_{\mathcal{V}},F|_{V})$ are isomorphic integrable systems, where $\omega_0$ is the standard symplectic form on $T^*\mathbb{T}^n$.
\end{theorem}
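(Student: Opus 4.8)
The plan is to recover the classical construction of action--angle coordinates in three stages: first exhibit the fiber as a torus via the commuting flows of the $X_{f_i}$, then trivialize a neighborhood into angle coordinates, and finally produce the conjugate action coordinates and check that the symplectic form becomes standard. The isomorphism asserted in the statement will then be realized by the change of coordinates $(\text{angles},\text{values of }F)\mapsto(\text{angles},\text{actions})$, with $g$ the action map.

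First I would exploit the involutivity. Since $\{f_i,f_j\}=0$, the Hamiltonian vector fields commute, $[X_{f_i},X_{f_j}]=X_{\{f_i,f_j\}}=0$, and each $X_{f_i}$ is tangent to every fiber because $df_j(X_{f_i})=\omega(X_{f_j},X_{f_i})=\{f_j,f_i\}=0$. Hence the joint flow defines a smooth $\R^n$-action that preserves $F$ and acts locally freely on the regular locus. On $\Lambda_c$ the fields $X_{f_1}(m),\dots,X_{f_n}(m)$ are linearly independent and span $T_m\Lambda_c$, so the action is locally transitive; as $\Lambda_c$ is compact and connected it is a single orbit, and therefore $\Lambda_c\cong\R^n/\Gamma_c$ where $\Gamma_c\subset\R^n$ is the isotropy (period) lattice. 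Compactness forces $\Gamma_c$ to have full rank $n$, giving $\Lambda_c\cong\mathbb{T}^n$.

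Next I would spread this structure over a neighborhood. The same argument applies to every regular fiber $F^{-1}(r)$ for $r$ near $c$, and the period lattice $\Gamma_r$ depends smoothly on $r$. Shrinking $F(M)$ to a simply connected $U$, I can choose a smooth frame $\xi_1(r),\dots,\xi_n(r)$ of $\Gamma_r$; feeding these directions into the joint flow starting from a smooth local section of $F|_{\mathcal U}$ produces angle coordinates $\theta\in\mathbb{T}^n$ and a diffeomorphism $\mathcal U\cong\mathbb{T}^n\times U$ under which $F$ becomes the projection to $U$. To obtain the action coordinates I would use that the fibers are Lagrangian, so that $\omega$ restricts to $0$ on them, together with a primitive $\alpha$ of $\omega$ on $\mathcal U$, and set
\begin{equation*}
a_i(r)=\frac{1}{2\pi}\oint_{\gamma_i(r)}\alpha,
\end{equation*}
where $\gamma_i(r)$ is the $1$-cycle in $F^{-1}(r)$ generated by $\xi_i(r)$. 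Stokes' theorem, combined with the vanishing of $\omega$ along the fibers, shows that each $a_i$ is a function of $r$ alone, so $a=(a_1,\dots,a_n)\colon U\to\R^n$ is well defined; normalizing $a(c)=0$ singles out the target neighborhood $V$.

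The last and most delicate step is to verify that $(\theta,a)$ are Darboux coordinates, i.e. that $\psi\colon\mathcal U\to\mathbb{T}^n\times V$, $(\theta,r)\mapsto(\theta,a(r))$, satisfies $\psi^*\omega_0=\omega$. One first checks that $da_1,\dots,da_n$ are linearly independent, so that $a$ is a local diffeomorphism onto $V$ and $(\theta,a)$ is a genuine coordinate system; then one expands $\omega$ in these coordinates. The $da_i\wedge da_j$ terms vanish because the fibers $\{a=\mathrm{const}\}$ are Lagrangian, and the crux is to show that the $d\theta_i\wedge d\theta_j$ terms vanish as well and that $da_i\wedge d\theta_j$ occurs precisely with coefficient $\delta_{ij}$. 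This is exactly where the definition of the $a_i$ through the period lattice and the primitive $\alpha$ is used, namely that the Hamiltonian flow of $a_i$ is $2\pi$-periodic and sweeps out precisely the $i$-th angle circle. I expect this bookkeeping---pinning down the off-diagonal coefficients from the period data---to be the main obstacle; once it is settled, $\psi$ is the required symplectomorphism, it intertwines $F$ with projection to $V$, and taking $g:=a$ exhibits $(\mathcal U,\omega|_{\mathcal U},F|_{\mathcal U})$ and $(\mathcal V,\omega_0|_{\mathcal V},F|_V)$ as isomorphic integrable systems.
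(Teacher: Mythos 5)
The paper does not prove this statement: it is quoted as the classical Liouville--Arnold--Mineur theorem with a citation to Arnold, so there is no in-paper argument to compare against, and your sketch is precisely the standard classical construction that the citation points to (commuting flows exhibit the compact connected fiber as $\mathbb{T}^n$, the smoothly varying period lattice gives angle coordinates, periods of a primitive of $\omega$ give the actions $a_i$, and the period-lattice bookkeeping pins down $\psi^*\omega_0=\omega$). The plan is sound as written; the one step worth making explicit is why a primitive $\alpha$ of $\omega$ exists on $\mathcal{U}$ at all, which holds because $\mathcal{U}\cong\mathbb{T}^n\times U$ deformation retracts onto the Lagrangian fiber $\Lambda_c$, on which $\omega$ vanishes, so $[\omega]=0$ in $H^2_{dR}(\mathcal{U})$.
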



\subsection{Linearization of non-degenerate singularities}
\label{sec:locNormalForm}

    A singular point $p\in M$ of rank zero of an integrable system $(M,\omega,F=(f_1,...,f_n))$ is \textbf{non-degenerate} if the Hessians $d^2f_1(p),...,d^2f_n(p)$ span a so called Cartan subalgebra of the Lie algebra of quadratic forms on $T_pM$. We refer to Bolsinov \& Fomenko \cite{bolsinov2004integrable} for the precise definition of non-degenerate points of higher rank.

Non-degenerate singularities are in fact linearizable, more precisely, there exists a local normal form which is based on the works of R\"ussmann \cite{russmann1964verhalten}, Vey \cite{vey1978certains}, Colin de Verdi\'ere $\&$ Vey \cite{de1979lemme}, Eliasson \cite{eliasson1990normal,eliassonphdthesis}, Dufour and Molino \cite{dufour1991compactification}, Miranda \cite{miranda2014integrable,mirandaphdthesis}, Miranda $\&$ Zung \cite{miranda2004equivariant}, Miranda $\&$ \vungoc \ \cite{miranda2005singular}, \vungoc\ $\&$ Wacheux \cite{vu2013smooth}, and Chaperon \cite{chaperon2013normalisation}: there are symplectic local coordinates $(x_1,...,x_n,\xi_1,...,\xi_n)$ near the singular point $m$ in which $m=(0,...,0)$ corresponds to the origin, the symplectic form assumes the expression $\omega=\sum_{i=1}^n dx_i\wedge d\xi_i$ and there exist functions $q_1,...,q_n$ of $(x_1,...,x_n,\xi_1,...,\xi_n)$ such that the integrable system $F=(f_1,...,f_n)$ satisfies  $\{f_i,q_j\}=0$ for all indices $1 \leq i,j \leq n$, where $q_i$ is one of the following functions:
\begin{itemize}
    \item \textbf{elliptic} type: $q_i=\frac{x_i^2+\xi_i^2}{2}$,
    \item \textbf{hyperbolic} type: $q_i:=x_i\xi_1$,
    \item \textbf{regular} type: $q_i=\xi_i$,
    \item \textbf{focus-focus} type: $q_i=x_i\xi_{i+1}-x_{i+1}\xi_i$ and $q_{i+1}=x_i\xi_1+x_{i+1}\xi_{i+1}$.
\end{itemize}
If there are no components of hyperbolic type then
\begin{equation*}
    (F-F(m))\circ \phi = g\circ (q_1,...,q_n)
\end{equation*}
where $\phi=(x_1,...,x_n,\xi_1,...,\xi_n)^{-1}$ and $g$ is a diffeomorphism from a small neighborhood of $(0,...,0)$ into another such neighborhood such that $g(0,...,0)=(0,...,0)$. In this paper, we will mostly work with singularities without components of hyperbolic type.

\begin{quotation}
Throughout this paper, we always assume that the singularities of $F:M\rightarrow \mathbb{R}^n$ are \textbf{non-degenerate}.
\end{quotation}

\subsection{Toric systems}
\label{s.toric}

Very `easy' integrable systems are the following ones.

\begin{definition}
An integrable system $(M,\omega,F=(f_1,...,f_n))$ on a symplectic $2n$-dimensional manifold $(M, \omega)$ is \textbf{toric} if the Hamiltonian vector fields $X_{f_1},...,X_{f_n}$ generate periodic flows of the same period (in our convention $2\pi$) and the action of $\mathbb{T}^n$ on $M$ induced by these flows is effective (i.e. only the identity acts trivially).
\end{definition}


\begin{quotation}
 {\em Unless otherwise stated, we will usually consider toric integrable systems on closed connected manifolds.}
\end{quotation}

The periodicity condition implies that the singularities of toric integrable systems cannot have focus-focus or hyperbolic components, that is, if $m=(0,...,0)$ and $\omega=\sum_{i=1}^n dx_i\wedge dy_i$, then the integrable system is locally in a neighborhood of $m$ of the form
\begin{equation*}
    F(x_1,...,x_n,\xi_1,...,\xi_n)=\left(\frac{x_1^2+\xi_1^2}{2},...,\frac{x_k^2+\xi_k^2}{2},\xi_{k+1},...,\xi_n\right)
\end{equation*}
for some $0 \leq k \leq n$.
Toric integrable systems have connected fibers, see Atiyah \cite{atiyah1982convexity}.
In particular, all fibers of a toric system $F$ are diffeomorphic to tori of varying dimensions $\mathbb{T}^k$ with $ k\in \{0,...,n\}$. Note that this is not the case for general integrable systems.

One of the fundamental theorems of equivariant symplectic geometry, due to Atiyah \cite{atiyah1982convexity} and Guillemin \& Sternberg \cite{guillemin1982convexity} says that, if $M$ is compact and connected, the image $F(M)$ is a convex polytope in $\mathbb{R}^n$, obtained as the convex hull of the images of the fixed points of the Hamiltonian action of the $n$-torus on $M$. This polytope has the property that if two toric integrable systems are isomorphic then their associated images coincide, possibly up to translations and composition with a matrix in $\text{GL}(n,\mathbb{Z})$. Furthermore, Delzant showed that the convex polytopes obtained as images of toric integrable systems are of a special type: they are {\bf simple}, {\bf rational}, and {\bf smooth}. Such polytopes are called \textbf{Delzant polytopes} and being simple, rational, and smooth means that there are precisely $n$ edges meeting at each vertex, that the slopes of the edges are rational, and that the normal vectors to the facets meeting at each vertex form a basis of the integral lattice. Moreover, Delzant proved that such polytopes are in bijective correspondence with toric integrable systems on compact manifolds in the following sense:
\begin{itemize}
    \item Uniqueness: two systems $(M,\omega,F)$ and $(M',\omega',F')$ are isomorphic if and only if they have the same convex polytope as image of the momentum map (up to translations and $\text{GL}(n,\mathbb{Z})$ transformations).
    \item Existence: from any Delzant polytope $\Delta$ in $\mathbb{R}^n$ one can construct a toric integrable system $(M,\omega,F)$ on a compact connected symplectic $2n$-dimensional manifold with $F(M)=\Delta$. This construction uses the method of symplectic reduction recalled in Section \ref{s.reduction}.
\end{itemize}
Since $\Delta=F(M)$ classifies $(M,\omega,F)$, one can learn everything about $F$, up to isomorphisms, from $\Delta$. In particular the fiber structure of $F$ can be read off from the polytope $\Delta$: the fiber of $F$ over $p\in \Delta$ is diffeomorphic to a $k$-dimensional torus, where $k$ is the dimension of lowest dimensional face of $\Delta$ such that $p\in \Delta$.

\subsection{Almost toric manifolds}
\label{s.almosttoricmanifolds}
So called almost toric manifolds, as introduced by Symington \cite{symington2002four}, are a generalization of closed toric manifolds and will be defined in more detail below.
\begin{definition}
    A locally trivial fibration of a $2n$-dimensional symplectic manifold $(M,\omega)$ is called a \textbf{regular Lagrangian fibration} if the fibers are Lagrangian submanifolds. More generally, a projection $\pi:(M,\omega)\rightarrow B$, where $B$ is an $n$-dimensional manifold with boundary, is a \textbf{Lagrangian fibration} if it restricts to a regular Lagrangian fibration over the open dense set $B_r\subset B$, defined as the set of regular values of $\pi$. 
\end{definition}
\begin{definition}
     By a lattice in a vector bundle of rank $n$ we mean a smooth varying lattice isomorphic to $\mathbb{Z}^n$ in each fiber of the bundle. An \textbf{integral affine structure} $\mathcal{A}$ on a $n$-manifold $B$ is a lattice in the tangent bundle of $B$.
A manifold admitting such a structure is called an \textbf{integral affine manifold}.
\end{definition}

The integral affine structure on the base of a regular Lagrangian fibration arises from a natural action of the cotangent bundle of the base on the total space: any $\alpha \in T^*B$ defines a vertical vector field $X_{\alpha}$ symplectically dual to $\alpha$, so for $x\in M$ the action of $\alpha \in T^*B$ is given by $\alpha \cdot x =\phi_{\alpha}(x)$,
where $\phi_{\alpha}$ is the time-1 map of $X_{\alpha}$. The elements of the cotangent bundle of $B$ that act trivially, i.e., the isotropy group of this action, form a lattice $\Lambda^*$.
The dual lattice $\Lambda$ in the tangent bundle defines the integral affine structure. We denote by $(\mathbb{R}^n,\mathcal{A}_0)$ the standard integral affine structure of $\mathbb{R}^n$ induced by $\mathbb{Z}^n$.

\begin{definition}
   Let $B$ be an integral affine manifold with integral affine structure $\mathcal{A}$ and $(M,\omega)$ a $2n$-dimensional symplectic manifold. A Lagrangian fibration $\pi:(M,\omega)\rightarrow B$ is a \textbf{toric fibration} if there exists a Hamiltonian $n$-torus action and an immersion $\Phi:(B,\mathcal{A})\rightarrow (\mathbb{R}^n,\mathcal{A}_0)$ such that $\Phi\circ \pi$ is the corresponding momentum map. Denote by $\mathcal{S}$ the following stratification: the $l$-stratum  of $(B,\mathcal{S})$ is the set of points $b\in B$ such that $\pi^{-1}(b)$ is a torus of dimension $l$.  We call the triple $(B,\mathcal{A},\mathcal{S})$ a \textbf{toric base}.  The \textbf{reduced boundary} of a toric base $(B,\mathcal{A},\mathcal{S})$, denoted by $\partial_RB$, is the set of points in $B$ that belong to the lower-dimensional strata, i.e., $k$-strata for $k<n$.
\end{definition}

From a toric base $(B,\mathcal{A},\mathcal{S})$ it is possible to reconstruct a toric fibration and a symplectic manifold by starting with a regular Lagrangian fibration over the base $(B,\mathcal{A})$ and then collapse certain fibers to lower dimensional tori so as to obtain the stratification $\mathcal{S}$. The collapsing of the fibers is achieved via \textbf{boundary reduction}, see Symington \cite[Proposition $3.8$]{symington2002four} for more details.





\begin{definition}
    Let $\pi:(M,\omega)\rightarrow (B,\mathcal{A},\mathcal{S})$ be a toric fibration and $\gamma$ the image in $B$ of a compact embedded curve with one endpoint $b_1$ in the $(n-1)$-dimensional stratum of $\partial_RB$ and such that $\gamma\setminus\{b_1\}\subset B_r$.
    Let $b_2\in B$ be the other endpoint of $\gamma$. The \textbf{collapsing class} with respect to $\gamma$ for the smooth component of $\partial_RB$ containing $b_1$ is defined as the class $a\in H_1(F_{b_2},\mathbb{Z})$ that spans the kernel of $i_*:H_1(F_{b_2},\mathbb{Z})\rightarrow H_1(\pi^{-1}(\gamma),\mathbb{Z})$ where $i$ is the inclusion map $i:F_{b_2}\rightarrow \pi^{-1}(\gamma)$.
    Accordingly, the \textbf{collapsing covector} with respect to $\gamma$ is the covector $v^*\in T^*_{b_2}B$ that determines vectors $v(x)\in T^{vert}_xM:=\ker(d_x\pi)$ for each $x\in \pi^{-1}(B_r)$ such that the flow lines of this vector field represent $a$. (Notice $a$ and $v^*$ are well-defined up to sign and have coefficients in $\mathbb{Z}$.)
\end{definition}

\begin{remark}
    If $v$ is a primitive integral vector that is normal with respect to the standard Euclidean metric to a smooth component of the reduced boundary of $(B,\mathcal{A}_0,\mathcal{S})\subset (\mathbb{R}^2,\mathcal{A}_0)$ then $v^*$ is a collapsing covector for that component.
\end{remark}

\begin{definition}
    An \textbf{almost toric fibration} of a symplectic $2n$-manifold $(M,\omega)$ is a Lagrangian fibration $\pi:(M,\omega)\rightarrow B$ such that any critical point of $\pi$ has a Darboux neighborhood (i.e.\ local coordinates as in Section \ref{sec:locNormalForm}) in which the components of the projection $\pi=(\pi_1,...,\pi_k,\pi_{k+1},...,\pi_{n})$ are of the form $\pi_j(x,y)=x_j$ for $j\leq k$ for some $k\leq 2n$ and otherwise are of the following two forms:
    \begin{itemize}
        \item \textbf{elliptic} (or \textbf{toric}) : $\pi_j(x,y)=(x_j^2+y_j^2)$,
        \item \textbf{nodal} (or \textbf{focus-focus}): $(\pi_i,\pi_{i+1})(x,y)=(x_iy_{i+1}-x_{i+1}y_i,\ x_iy_i+x_jy_j)$.
    \end{itemize}
    An \textbf{almost toric manifold} is a symplectic manifold equipped with an almost toric fibration.
\end{definition}


Assigning points in the base of an almost toric fibration to strata according to the dimension of their preimage yields a \textbf{stratification} $\mathcal{S}$ of the base. When $(M,\omega)$ has dimension $4$ the images of nodal singular points (nodes) are isolated points that belong to the top dimensional stratum. Let $\Sigma$ be the codimension two set of points containing a nodal singularity in their preimage.

By the integral affine structure $\mathcal{A}$ on the base of an almost toric fibration we mean the affine structure defined on $B\setminus\Sigma$.
If an affine structure $\mathcal{A}$ and stratification $\mathcal{S}$ are induced from an almost toric fibration, we call the triple $(B,\mathcal{A},\mathcal{S})$ an \textbf{almost toric base}.


Let $\text{Aff}(n,\mathbb{Z})$ be the group of integral affine transformations, i.e., maps $T:\mathbb{R}^n\rightarrow \mathbb{R}^n$ of the form $T(x)=A x+ b$, with $A\in GL(n,\mathbb{Z})$ and $b\in \mathbb{R}^n$.

The \textbf{affine monodromy} of the associated integral affine manifold $B_r$ is defined as follows. Let $\Lambda$ be the lattice in $TB_r$. Choose a point $b\in B_r$, identify $(T_bB_r,\Lambda_b)$ with $(\mathbb{R}^n,\mathbb{Z}^n)$ and for each element $\alpha \in \pi_1(b,B_r)$ choose a representative $\gamma_{\alpha}:I:=[0,1] \rightarrow B_r$.
The monodromy representation relative to these choices is denoted by $\Psi_{B}:\pi_1(b,B_r)\rightarrow \text{Aff}(n,\mathbb{Z})$ and defined as follows: for all $\alpha \in \pi_1(b,B_r)$, one defines $\Psi_{B_r}(\alpha)$ as the automorphism of $(\mathbb{R}^n,\mathbb{Z}^n)$ such that $\gamma_{\alpha}^*(TB_r,\Lambda)$ is isomorphic to $I\times (\mathbb{R}^n,\mathbb{Z}^n)/\left((0,p)\simeq (1,\Psi_{B_r}(\alpha)(p))\right)$ for $ p\in \mathbb{R}^n$.
The monodromy is the equivalence class of monodromy representations relative to different points in $B$ and choices of identifications of $(T_b{B_r},\Lambda_b) $ with $(\mathbb{R}^n,\mathbb{Z}^n)$. Analogously, for a almost toric fibration $\pi:(M,\omega)\rightarrow B$ one can define the \textbf{topological monodromy} of $\alpha\in \pi_1(b,B_r)$ by considering $\pi_1(\pi^{-1}(b),\mathbb{Z})\cong \mathbb{Z}^n$ instead of $\Lambda_b$. 

\begin{definition}
    \label{d.vanishingclass}
    Let $\pi:(M,\omega)\rightarrow B$ be an almost toric fibration that has a node at $s\in B$.
    Let $\eta$ be the image in $B$ of an embedded curve with endpoints at $s$ and at a regular point $b\in B_r$ such that $\eta\setminus\{s\}\subset B_r$ contains no other nodes. Associated to $s$ and $\eta$ is the (well defined up to sign) \textbf{vanishing class} in $H_1(F_b,\mathbb{Z})$, namely the class whose representatives bound a disk in $\pi^{-1}(\eta)$.
    The \textbf{vanishing covector} $v^*\in T_b^*B$ is the covector (defined up to sign) that determines vectors $v(x)\in T_x^{vert}M$ for each $x\in \pi^{-1}(b)$ such that the flow lines of this vector field represent the vanishing class.
\end{definition}

\begin{lemma}
    ({Symington, \cite[Lemma 4.10]{symington2002four}}) Using the notation from Definition \ref{d.vanishingclass} suppose $\gamma:\mathbb{S}^1\rightarrow B$ is a positively oriented loop based at $b\in B$ that is the boundary of a closed neighborhood of $s\in B$ containing $\eta\subset B$. Then the vanishing class, up to rescaling, is the unique class that is preserved by the monodromy along $\gamma$.
\end{lemma}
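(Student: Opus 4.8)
The plan is to reduce the statement to the monodromy of a single focus-focus point and then to a short linear-algebra observation about transvections. Since $\gamma$ bounds a closed neighborhood $N$ of $s$ whose only node is $s$ (the curve $\eta$, and hence $N$, contains no other nodes), the restriction of $\pi$ over $N\setminus\{s\}$ is a regular Lagrangian torus fibration, so the topological monodromy $T:=\Psi_{B_r}(\gamma)$ depends only on the germ of the fibration at $s$. Fix the regular fiber $F_b\cong\mathbb{T}^2$, so that $H_1(F_b,\mathbb{Z})\cong\mathbb{Z}^2$ carries the unimodular skew-symmetric intersection pairing $\cdot$, and let $\delta\in H_1(F_b,\mathbb{Z})$ be the vanishing class attached to $(s,\eta)$ as in Definition \ref{d.vanishingclass}. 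Complete $\delta$ to a basis $\{\delta,\mu\}$ with $\delta\cdot\mu=1$.

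First I would compute $T$ acting on $H_1(F_b,\mathbb{Z})$. Using the focus-focus normal form of Section \ref{sec:locNormalForm} — where, after the diffeomorphism $g$, the system is $(q_1,q_2)=(x_1\xi_2-x_2\xi_1,\,x_1\xi_1+x_2\xi_2)$ — one transports the basis $\{\delta,\mu\}$ once around $\gamma$. The outcome is the classical fact that $T$ is the transvection in the direction of the vanishing cycle,
\begin{equation*}
    T(a)=a-(a\cdot\delta)\,\delta,\qquad a\in H_1(F_b,\mathbb{Z}),
\end{equation*}
equivalently $T\delta=\delta$ and $T\mu=\mu+\delta$ in the chosen basis. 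Geometrically $\delta$ is fixed because it bounds the vanishing thimble over $\eta$ and this disk persists as $b$ runs around $\gamma$, while the transverse cycle $\mu$ picks up one copy of $\delta$ from encircling the node; this is the integrable-systems incarnation of the Picard--Lefschetz formula.

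It then remains to read off the invariant classes. A class $a=p\delta+q\mu$ satisfies $Ta=a$ exactly when $(a\cdot\delta)\,\delta=0$; since $\delta\cdot\delta=0$ and $\mu\cdot\delta=-1$ we have $a\cdot\delta=-q$, so the condition reads $q\,\delta=0$, i.e. $q=0$. Hence the monodromy-invariant classes are precisely the integer multiples of $\delta$, which is the vanishing class up to rescaling. This simultaneously shows that the vanishing class is preserved and that it is the unique such class up to scale.

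The main obstacle is the monodromy computation in the middle step: establishing that $T$ is exactly a single transvection with fixed line $\delta$. Everything else is formal. This is the content of the focus-focus monodromy theorem (Duistermaat, Zung and \vungoc); it can be carried out either by explicitly integrating the action $\oint_\mu\alpha$ over the fiber near $s$ and exhibiting its logarithmic, hence multivalued, behavior, which forces the shear, or by appealing to Picard--Lefschetz after complexifying the local model. The unimodularity of $T$, together with the presence of a single node, pins down the multiplicity to $1$, so that the transvection adds exactly one copy of $\delta$.
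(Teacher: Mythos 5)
This lemma is quoted in the paper directly from Symington \cite[Lemma 4.10]{symington2002four} with no proof given, so there is no in-paper argument to compare against; judged on its own, your proposal is correct and follows the standard route (and essentially Symington's): reduce to the monodromy germ at the node, invoke the focus-focus monodromy theorem of Duistermaat/Zung/\vungoc\ to identify the monodromy as the Picard--Lefschetz transvection $T(a)=a-(a\cdot\delta)\,\delta$ along the vanishing cycle, and then observe by the unimodularity of the intersection pairing that the fixed set of a transvection is exactly $\mathbb{Z}\delta$. The reduction step (the loop $\gamma$ is homotopic in $N\setminus\{s\}$ to a small loop around $s$, since $N$ is a disk and $s$ its only node, so monodromy is determined by the germ at $s$) and the linear algebra ($a=p\delta+q\mu$ is fixed iff $q=0$) are both sound.

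One sentence of yours is wrong, though harmlessly so: you claim that ``the unimodularity of $T$, together with the presence of a single node, pins down the multiplicity to $1$.'' Unimodularity pins down nothing here --- every power
\begin{equation*}
T_k(a)=a-k\,(a\cdot\delta)\,\delta,\qquad k\geq 1,
\end{equation*}
is unimodular, and a node $s\in B$ in this paper's (and Symington's) setting may well have $k>1$ focus-focus points in its fiber; indeed the octagon system of Section \ref{s.octagon} at $t=\tfrac12$ has nodal fibers with two pinches, and the paper records that the monodromy is then $\bigl(\begin{smallmatrix}1&0\\k&1\end{smallmatrix}\bigr)$ with $k$ the number of focus-focus points. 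Fortunately your argument never needs $k=1$: the fixed set of $T_k$ is $\mathbb{Z}\delta$ for every $k\geq 1$ (your computation gives $T_k a=a$ iff $kq\,\delta=0$ iff $q=0$), so the conclusion --- the vanishing class is, up to rescaling, the unique monodromy-invariant class --- holds verbatim. I would simply delete the multiplicity claim and state the transvection with general $k$.
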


Let $\pi:(U,\omega)\rightarrow B$ be a Lagrangian fibered neighborhood of a nodal fiber in a $4$-manifold where $B$ is connected and (the value of) the node is $s\in B$. Let $\mathcal{A}$ be the affine structure on $B\setminus\{s\}$.
The affine monodromy is non-trivial, due to the presence of the node (Symington, \cite{symington2002four} Section $4.3$). Hence there is no immersion of $(B\setminus\{s\},\mathcal{A})$ into $(\mathbb{R}^2,\mathcal{A}_0)$. However, there is an affine immersion if one removes a curve from $B^2$ that connects the (value of the) node to the boundary since the remaining set is simply connected.
We define the base diagram for the neighborhood of a node, as the image of an affine immersion of the complement of such a curve.

\begin{definition}Let $\pi:(M,\omega)\rightarrow B$ be a an almost toric fibration. A $1$-dimensional submanifold is \textbf{affine linear} if at every point it has a tangent vector in $\Lambda$. 
The \textbf{eigenline} $L\subset B$ through a node $s$ is the unique maximal affine linear immersed one-manifold through the node for which there is a regular point $b\in L$, arbitrarily close to $s$, such that the affine monodromy along an arbitrarily small loop around $s$ and based at $b$ preserves $T_bL\subset T_bB$. An \textbf{eigenray} is either of the two maximal affine linear submanifolds that has an endpoint at the node and is a subset of the eigenline.
\end{definition}

\begin{remark}
    Consider a base diagram for an almost toric base $(B,\mathcal{A},\mathcal{S})$ of an almost toric fibration. If a primitive integral vector $v$ is perpendicular to the eigenline through a node then $v^*$ is a covector that defines the vanishing class of the fiber, with respect to the eigenline.
\end{remark}


\subsection{Nodal trades and slides}
\label{s.nodaltrade}
Henceforth we work with $4$-dimensional symplectic manifolds $(M,\omega)$, therefore appearing bases $B$ are $2$-dimensional in this subsection.
Nodal trading and sliding are surgery operations that change an almost toric fibration of a symplectic $4$-manifold into another almost toric fibration of the same symplectic manifold. In a nodal trade the essential idea is that one can trade an elliptic-elliptic fiber for a nodal fiber, and vice versa under the appropriate conditions. A nodal slide should be thought of as a one-parameter family of almost toric bases in which a node moves in the base along its eigenline.

\begin{definition}
\label{d.nodalslide}
    Two almost toric bases $(B,\mathcal{A}_i,\mathcal{S}_i), i=1,2$, are related by a \textbf{nodal slide} if there is a curve with image $\gamma \subset B$ such that $(B\setminus\gamma,\mathcal{A}_1,\mathcal{S}_1)$ and $(B\setminus\gamma,\mathcal{A}_2,\mathcal{S}_2)$ are isomorphic (i.e.\ there exists a map $\phi:(B\setminus\gamma,\mathcal{A}_1,\mathcal{S}_1)\rightarrow (B\setminus\gamma,\mathcal{A}_2,\mathcal{S}_2)$ that preserves the affine structures and stratifications), and for each $i$, the curve $\gamma$ contains one node of $(B,\mathcal{A}_i,\mathcal{S}_i)$ and $\gamma$ belongs to the eigenline through that node.
\end{definition}

\begin{proposition}
\label{p.nodalslide}
    ({Symington, \cite[Proposition 6.2]{symington2002four}}) If two bases are related by a nodal slide then they define the same manifold equipped with isotopic symplectic structures. Furthermore, the manifolds are fiberwise symplectomorphic on the complement of a compact set belonging to the preimage of the eigenline. 
\end{proposition}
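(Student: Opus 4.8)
The plan is to reconstruct each of the two almost toric manifolds from its base, exploiting that the affine--stratified data agree away from $\gamma$, and to control the neighborhood of the sliding node by the standard focus--focus germ. The second assertion (fiberwise symplectomorphism on the complement of a compact set) will essentially be the reconstruction away from $\gamma$, and the first assertion (same manifold, isotopic forms) will come from gluing this with a Moser argument near the node.

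First I would dispose of the exterior region. By Definition \ref{d.nodalslide} there is an isomorphism $\phi\colon (B\setminus\gamma,\mathcal{A}_1,\mathcal{S}_1)\to(B\setminus\gamma,\mathcal{A}_2,\mathcal{S}_2)$ of integral affine, stratified bases. Over the regular locus the integral affine structure is exactly the period lattice of the Lagrangian fibration, so by the Liouville--Arnold--Mineur Theorem \ref{t.AL} the fibration is determined up to fiberwise symplectomorphism over each simply connected piece; over the lower strata the elliptic fibers are recovered by boundary reduction (Symington \cite[Proposition $3.8$]{symington2002four}). Patching these local identifications along $\phi$ produces a fiberwise symplectomorphism $\Phi$ between $\pi_1^{-1}(B\setminus N(\gamma))$ and $\pi_2^{-1}(B\setminus N(\gamma))$, where $N(\gamma)$ is a small tubular neighborhood of the eigenline segment $\gamma$. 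Taking $K\supset \gamma$ to be a compact segment of the eigenline containing the node, this is precisely the claimed fiberwise symplectomorphism on the complement of $\pi_i^{-1}(K)$.

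Next I would treat the neighborhood of the node. By the focus--focus normal form recorded in Section \ref{sec:locNormalForm} (equivalently the nodal model in the definition of an almost toric fibration), a Lagrangian fibered neighborhood $U$ of the nodal fiber carries a standard symplectic model, and by Symington \cite[Lemma 4.10]{symington2002four} the eigenline is intrinsically the monodromy--invariant line through the node. Sliding the node thus amounts to translating its value $s$ along $\gamma$ inside this fixed local model. I would realize the slide as a smooth family $(B,\mathcal{A}_t,\mathcal{S}_t)$, $t\in[0,1]$, with node at $s_t\in\gamma$, interpolating the two bases while keeping $\mathcal{A}_t$ and $\mathcal{S}_t$ fixed on $B\setminus\gamma$. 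The induced family of symplectic forms $\omega_t$ on the fixed smooth neighborhood $U$ then exhibits the node translation as an ambient deformation supported in $U$, agreeing with $\Phi$ on $\partial U$.

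Finally I would glue and conclude. Using $\Phi$ to identify the exteriors, the two systems become a single smooth manifold $M$ carrying forms $\omega_1,\omega_2$ that coincide outside $U$. The relevant periods, namely the symplectic areas of the vanishing cycle (Definition \ref{d.vanishingclass}) and of the adjacent toric faces, are computed from $\mathcal{A}_t$ and are pinned down by the unchanged affine data on $\partial U$, so the family $\omega_t$ stays in a fixed cohomology class rel boundary. A Moser argument applied to $\omega_t$ then yields an isotopy from $\omega_1$ to $\omega_2$ fixed outside $U$, giving the same manifold with isotopic symplectic structures. I expect the main obstacle to be exactly this local step: establishing that the symplectic germ of the nodal fiber is rigid enough that translating the node along the eigenline is realized by a symplectic isotopy matching the prescribed fibration on $\partial U$. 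This is where the symplectic classification of focus--focus neighborhoods and the Moser stability of $\omega_t$ do the real work, whereas the affine reconstruction away from the node is comparatively formal.
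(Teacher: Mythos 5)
The paper does not prove this proposition at all: it is quoted directly from Symington \cite[Proposition 6.2]{symington2002four}, so your proposal can only be compared with Symington's original argument and its standard modern variants, not with anything in this paper. Your global skeleton --- identify the two fibrations fiberwise over $B\setminus\gamma$, then handle a fixed neighborhood $U$ of the nodal fiber by a Moser argument rel boundary --- is indeed the right strategy and matches that literature in outline. But there is a genuine gap at the central step, and you have located it yourself: the smooth family $\omega_t$ on the fixed $U$, realizing the moving node while agreeing with the given structures at $t=0,1$ and being standard near $\partial U$, is \emph{postulated}, not constructed. Constructing that family \emph{is} the content of the nodal slide: one must exhibit, in the focus-focus local model, an explicit family of Lagrangian fibrations whose node travels along the eigenline while the fibration is unchanged near $\partial U$ (in Symington-type treatments this is done by an explicit modification, with cutoff, of the generating data of the local model). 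Worse, the fallback you lean on --- that ``the symplectic classification of focus-focus neighbourhoods'' provides the needed rigidity --- is not available in the fiberwise sense: by \vungoc's semiglobal classification (the Taylor series invariant recalled in Section \ref{s.semitoric}), the fiberwise symplectic germ at a focus-focus fiber is \emph{not} determined by the base, so two fibered neighborhoods with affine-isomorphic base diagrams need not be fiberwise symplectomorphic. This is precisely why the proposition only claims a fiberwise symplectomorphism away from a compact set; near the node one can only expect a non-fiberwise identification, and it must come from the constructed family plus Moser, not from a classification theorem.

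Two further steps are asserted where a proof is owed. First, patching the local action-angle identifications over $B\setminus\gamma$ into one global fiberwise symplectomorphism is not automatic from Theorem \ref{t.AL}: the local identifications differ by fiberwise translations, and gluing them requires the vanishing of a \v{C}ech-type obstruction (compatible Lagrangian reference sections), together with compatibility with boundary reduction along the elliptic strata; this uniqueness-of-the-fibration-over-a-fixed-base statement is a separate lemma in Symington's paper and should be cited or proved, not dismissed as ``patching''. Note also that your construction only gives the identification over $B\setminus N(\gamma)$, which is weaker than the stated conclusion (complement of a \emph{compact} subset of the preimage of the eigenline); since the bases agree off $\gamma$ itself, you should run the identification over $B\setminus\gamma$ directly. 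Second, your cohomological claim for the Moser step should be made precise, and in fact comes out cleaner than ``areas of adjacent toric faces'': $U$ retracts onto the nodal fiber, which is Lagrangian and generates $H_2(U;\mathbb{R})$, so $[\omega_t]=0$ in $H^2(U;\mathbb{R})$, and constancy of the class in $H^2(U,\partial U;\mathbb{R})$ follows from the action coordinates being fixed near $\partial U$; only then does the Moser vector field vanish near $\partial U$ so that the isotopy glues with your exterior identification. With the explicit local family supplied and these two points filled in, your outline becomes the standard proof.
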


\begin{lemma}
\label{l.nodaltrade}
    (Symington, \cite[Lemma 6.3]{symington2002four}) Let $(B,\mathcal{A},\mathcal{S})$ be an almost toric base and $R\subset B$ be the image of an embedded eigenray that connects a node $s\in B$ with a point $b$ in an edge $E\subset \partial_{R}B$ such that there are no other nodes on $R$.
    Let $v^*$, $w^*\in T_b(B)^*$ be the vanishing and collapsing covectors. If $v^*$ and $w^*$ span $\Lambda_b^*$ then there is an almost toric base $(B,\mathcal{A}',\mathcal{S}')$ such that:
    \begin{itemize}
        \item $(B,\mathcal{A}',\mathcal{S}')$ contains one node less than $(B,\mathcal{A},\mathcal{S})$,
        \item $(B\setminus R,\mathcal{A}',\mathcal{S}')$ is isomorphic to $(B\setminus R,\mathcal{A},\mathcal{S})$,
        \item in $(B,\mathcal{A}',\mathcal{S}')$, the intersection of $R$ and $\partial_RB$ is a vertex.
    \end{itemize}
    In this situation we say that the two almost toric basis $(B,\mathcal{A},\mathcal{S})$ and $(B,\mathcal{A}',\mathcal{S}')$ are related by a \textbf{nodal trade}.
\end{lemma}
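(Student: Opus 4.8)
The plan is to reduce the assertion to a purely local statement near the eigenray $R$ and then to produce $\mathcal{A}'$ and $\mathcal{S}'$ by an explicit cut-and-reglue of the base diagram. Since the required isomorphism $(B\setminus R,\mathcal{A}',\mathcal{S}')\cong(B\setminus R,\mathcal{A},\mathcal{S})$ forces the new data to coincide with $(\mathcal{A},\mathcal{S})$ outside any fixed neighborhood of $R$, all of the work takes place in an arbitrarily small neighborhood $N\subset B$ of $R$. By hypothesis $N$ contains the single node $s$, meets $\partial_R B$ only in the smooth edge $E$ through $b$, and is otherwise fibered by regular (Liouville) fibers.

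First I would normalize the affine data at $b$. As $R\setminus\{s\}$ is simply connected, there is an affine immersion (base diagram) $\Phi\colon (N\setminus R,\mathcal{A})\to(\R^2,\mathcal{A}_0)$ sending $E$ and $R$ to straight segments. Under $\Phi$ the vanishing covector $v^*$ (perpendicular to $R$) and the collapsing covector $w^*$ (perpendicular to $E$) become integral covectors at $\Phi(b)$, and the hypothesis that they span $\Lambda_b^*$ lets me post-compose with an element of $\text{Aff}(2,\Z)$ so that $w^*=(0,1)$ and $v^*=(1,0)$. In this frame $E$ lies on a horizontal line, $R$ is the vertical ray issuing from $b$, and the monodromy around $s$ is the transvection $T=\bigl(\begin{smallmatrix}1&0\\ 1&1\end{smallmatrix}\bigr)$ fixing the eigenline direction, in accordance with Symington's lemma recalled above.

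Next I would compare two local models on $N$. On one side is the node model: a base diagram of a Lagrangian-fibered neighborhood of a nodal fiber whose eigenray meets $E$, carrying the monodromy $T$. On the other side is the corner model: the standard Delzant corner, i.e.\ the toric base diagram of $\C^2$, whose two edges have primitive conormals exactly $w^*=(0,1)$ and $v^*=(1,0)$. The heart of the nodal trade is that these two base diagrams agree on $N\setminus R$: straightening the branch cut along $R$ absorbs the shear $T$ into the missing wedge of the corner, and the determinant condition $\det(v^*\mid w^*)=\pm 1$ coming from the spanning hypothesis is precisely what makes the resulting vertex \emph{smooth}. At the level of symplectic geometry this is the classical fact that a Darboux ball in $\C^2$ admits both a toric fibration with a corner and an almost toric fibration with a single focus--focus point, and that the two are fiberwise symplectomorphic away from the preimage of $R$.

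Finally I would set $\mathcal{A}'$ and $\mathcal{S}'$ on $N$ to be those of the corner model and glue them to $(\mathcal{A},\mathcal{S})$ on $B\setminus N$; the gluing is consistent because both restrict to the same structure on the overlap $N\setminus R$ by the previous step, and the underlying symplectic manifold is unchanged by the fiberwise symplectomorphism just described. The three bullet points then follow immediately: the node $s$ has been converted into the $0$-stratum vertex $R\cap\partial_R B$, so there is one fewer node; the isomorphism on $B\setminus R$ holds by construction; and $R\cap\partial_R B$ is a vertex of the corner model. I expect the main obstacle to be the third step, namely writing down the explicit affine identification of the node and corner base diagrams on $N\setminus R$ and checking that the monodromy/shear bookkeeping is consistent, with the lattice-spanning hypothesis on $(v^*,w^*)$ ensuring a smooth vertex rather than an orbifold corner. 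The remaining steps are gluing and bookkeeping.
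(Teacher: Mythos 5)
The paper does not prove this lemma at all---it is imported verbatim from Symington \cite[Lemma 6.3]{symington2002four}---and your cut-and-reglue of local base models is essentially Symington's original argument: normalize so that $E$ is horizontal, the eigenline is vertical, and the monodromy is the transvection $T=\bigl(\begin{smallmatrix}1&0\\1&1\end{smallmatrix}\bigr)$; check that cutting the node diagram along $R$ absorbs the shear into the missing wedge of a toric corner; and glue, with the hypothesis that $v^*,w^*$ span $\Lambda_b^*$ ensuring the vertex is smooth rather than an orbifold corner. One detail in your normalization is off and matters for the explicit identification you defer: the two edges of the corner model do \emph{not} have conormals $v^*$ and $w^*$, but rather $w^*$ and $w^*\pm v^*$ (for the standard corner of $\C^2$ with conormals $(1,0),(0,1)$ the eigenline is diagonal, so $v^*=(1,-1)$ and $w^*=(0,1)$); since $\{v^*,w^*\}$ spans $\Lambda_b^*$ if and only if $\{w^*,w^*\pm v^*\}$ does, your smoothness conclusion is unaffected, but the shear bookkeeping must be done with the sheared conormal. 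Finally, remember that $B$ and $\partial_R B$ are unchanged as sets---only $\mathcal{A}$ and $\mathcal{S}$ are replaced---so ``gluing in the corner model on $N$'' means transporting its affine data by a homeomorphism of $N$ that is the identity near $\partial N\setminus R$ and agrees with your diagram identification on $N\setminus R$; with that understood, the three bullet points follow exactly as you say.
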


\begin{theorem}
    ({Symington \cite[Theorem 6.5]{symington2002four}})
    Suppose two almost toric bases that are related by a nodal trade. Then the symplectic manifolds induced by the almost toric bases are symplectomorphic. In fact, their symplectic structures are isotopic.
\end{theorem}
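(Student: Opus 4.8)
The plan is to reduce the global statement to a local model computation in a neighborhood of the eigenray $R$ of Lemma \ref{l.nodaltrade}, and then upgrade the resulting symplectomorphism to an isotopy of symplectic forms by a Moser argument.

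First I would localize. By Lemma \ref{l.nodaltrade} the two bases $(B,\mathcal{A},\mathcal{S})$ and $(B,\mathcal{A}',\mathcal{S}')$ become isomorphic once we remove the eigenray $R$: there is a map $\phi : (B\setminus R, \mathcal{A}, \mathcal{S}) \to (B\setminus R, \mathcal{A}', \mathcal{S}')$ preserving affine structure and stratification. Since an isomorphism of almost toric bases lifts to a fiberwise symplectomorphism of the associated Lagrangian fibrations (the affine structure determines the action coordinates and the stratification determines which fibers collapse, so that the reconstruction via boundary reduction is determined up to symplectomorphism), we obtain a symplectomorphism $\Phi : M|_{B \setminus U} \to M'|_{B \setminus U}$ over the complement of any chosen neighborhood $U$ of $R$. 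It remains to match this with a symplectomorphism over $U$ that agrees on the overlap $\partial U$.

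The main work is the local model over $U$. Here the two pictures differ: in $(B,\mathcal{A},\mathcal{S})$ the preimage contains a nodal (focus--focus) fiber over $s$ sitting on the interior eigenray $R$ terminating at the edge point $b \in \partial_R B$, while in $(B,\mathcal{A}',\mathcal{S}')$ the preimage contains an elliptic--elliptic fiber over the new vertex $R \cap \partial_R B$. Rather than build the interpolating symplectomorphism by hand, I would realize both as a limit of nodal slides: by Proposition \ref{p.nodalslide}, sliding the node along its eigenline toward the edge produces the same symplectic manifold with isotopic symplectic form, and the fibrations are fiberwise symplectomorphic away from a compact subset of the preimage of the eigenline. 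Pushing the node all the way onto the edge, the vanishing covector $v^*$ and the collapsing covector $w^*$ align; the hypothesis that $v^*$ and $w^*$ span $\Lambda_b^*$ is precisely the smoothness condition guaranteeing that the limiting configuration is a genuine smooth toric corner, i.e.\ locally modeled on $(\mathbb{C}^2, \omega_0)$ with momentum map $(\tfrac12|z_1|^2, \tfrac12|z_2|^2)$. This identifies $M|_U$ with the standard elliptic--elliptic neighborhood $M'|_U$ symplectically, compatibly with $\Phi$ on $\partial U$ because the affine structures already agree there.

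Gluing $\Phi$ with this local symplectomorphism yields the desired symplectomorphism $M \cong M'$. Finally, to obtain the isotopy of symplectic structures, I would assemble the one-parameter family of symplectic forms coming from the nodal slides as the node approaches the boundary: this family is constant outside a fixed compact set in the preimage of the eigenline and fixed in cohomology, so Moser's stability theorem upgrades the endpoint symplectomorphism to an isotopy $\omega_\tau$ connecting $\omega$ and the pullback of $\omega'$. The delicate point---and the step I expect to be the main obstacle---is controlling the degeneration of the nodal fiber into the elliptic--elliptic fiber uniformly enough that the family of forms stays smooth and nondegenerate across the instant when the node hits the boundary; this is exactly where the lattice-spanning condition on $v^*$ and $w^*$ is essential, since it ensures the emerging corner is smooth and no singularity worse than elliptic--elliptic appears in the limit.
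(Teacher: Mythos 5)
The paper itself gives no proof of this statement---it is quoted directly from Symington \cite[Theorem 6.5]{symington2002four}---so your attempt must be measured against her argument. Your outer architecture (localize away from the eigenray via Lemma \ref{l.nodaltrade}, supply a local model over a neighborhood $U$ of $R$, glue, then upgrade to an isotopy by Moser) is reasonable and broadly consistent with hers. The genuine gap is in the central step: you produce the local identification by ``pushing the node all the way onto the edge'' as a limit of nodal slides, but Proposition \ref{p.nodalslide} only applies when both endpoints of the slide are honest almost toric bases, i.e.\ when the node stays in the interior of the base away from $\partial_R B$. As the node reaches the boundary the fibration degenerates---the vanishing cycle of the node and the collapsing cycle of the edge coalesce---and nothing in the results you invoke guarantees that the family of total spaces, let alone the family of symplectic forms, extends smoothly and nondegenerately across the limit. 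You flag this yourself as ``the main obstacle,'' but it is not a uniformity issue to be controlled by more care: proving that the limiting configuration exists and is a smooth toric corner is essentially the entire content of the theorem, so as written the argument is circular precisely at its key point.

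Symington's proof avoids any limit: she exhibits one fixed local model carrying \emph{both} fibrations. In the formulation that has become standard (cf.\ Auroux \cite{auroux2007mirror}), near the origin of $(\mathbb{C}^2,\omega_0)$ one has the standard toric fibration $\left(\tfrac{1}{2}|z_1|^2,\tfrac{1}{2}|z_2|^2\right)$ with a Delzant corner, and also the almost toric fibration built from $\tfrac{1}{2}\left(|z_1|^2-|z_2|^2\right)$ together with $|z_1z_2-\epsilon|$, which has exactly one nodal fiber; the two base diagrams agree away from a ray, and uniqueness of (almost) toric fibrations over a common base then gives the fiberwise symplectomorphism over the complement of the ray, with the lattice-spanning hypothesis on $v^*$ and $w^*$ ensuring the corner in the traded base is smooth. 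Two smaller points in your write-up also need repair: your parenthetical claim that an isomorphism of almost toric bases lifts to a fiberwise symplectomorphism is exactly where the Duistermaat-type obstructions (the Chern and Lagrangian classes of the fibration) must be shown to vanish---true here because the relevant bases are contractible, but it must be said; and gluing your two symplectomorphisms along $\partial U$ requires matching germs near $\partial U$ (after a further fiber-preserving isotopy), not merely agreement on the hypersurface itself. The concluding Moser step is fine once these are in place, since the forms agree outside a compact set and the cohomology class is constant.
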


\subsection{Semitoric systems}
\label{s.semitoric}
The class of toric systems can be generalized to semitoric systems, which are a special case of an almost toric fibration.
\begin{definition}
\label{d.semitoric}
    An integrable system 
    \begin{equation*}
        (M,\omega,F=(f_1,...,f_n):M\rightarrow \mathbb{R}^n)
    \end{equation*}
    is \textbf{semitoric} if the Hamiltonian vectors fields $X_{f_1},...,X_{f_{n-1}}$ generate periodic flows of the same period (in our convention $2\pi$), if the action of $\mathbb{T}^{n-1}$ induced on $M$ by these flows is effective and if the singularities of $F$ are non-degenerate and do not have hyperbolic components. If $M$ is not compact the integrals $f_1,...,f_{n-1}$ are required to be proper.
\end{definition}
The singular fibers of these systems are either points, circles or tori with a finite number of pinches. Note that this last type of fiber does not appear in toric systems.

If $M$ is $4$-dimensional, semitoric systems were classified, first under some conditions, by Pelayo $\&$ \vungoc\ \cite{pelayo2009semitoric, pelayo2011constructing} and then in full generality by Palmer \& Pelayo \& Xiudi \cite{palmer2019semitoric}. The distinguishing property of a semitoric system is that all but maybe one of the integrals generate periodic flows. So toric systems are a special case of semitoric ones. We will consider examples in Sections \ref{s.spin} \& \ref{s.coupledangularmomenta} \& \ref{s.octagon}.

A simple semitoric system is a semitoric system where each fiber contains at most one focus-focus singularity. Such systems are determined up to isomorphism, according to Pelayo $\&$ \vungoc\ \cite{pelayo2009semitoric}, by the following $5$ symplectic invariants:
\begin{enumerate}
    \item The \textbf{number of focus-focus singularities}, denoted by $n_{FF}$.
    \item The \textbf{Taylor series invariant}, $n_{FF}$ formal Taylor series in two variables describing the foliation around each focus-focus singular fiber.
    \item The \textbf{polytope invariant}, a family of weighted rational convex polytopes (generalizing the Delzant polytope of toric systems).
    \item The \textbf{height invariant}, given by $n_{FF}$ numbers corresponding to the height of the focus-focus critical values in the rational convex polytopes of the polytope invariant.
    \item The \textbf{twisting index invariant}, given by $n_{FF}$ integers measuring how twisted the system is around singularities from a `toric point of view'.
\end{enumerate}

Let $(M,\omega,(f_1, f_2))$ and $(\tilde{M},\tilde{\omega},(\tilde{f}_1, \tilde{f}_2))$ be two semitoric systems. We say that they are \textbf{isomorphic} as semitoric systems if there exists a symplectomorphism $\psi:M\rightarrow \tilde{M}$ and a smooth map $g:\mathbb{R}^2\rightarrow \mathbb{R}$ such that $(\tilde{f}_1, \tilde{f}_2) \circ \psi =(f_1,g(f_1, f_2))$ and $\partial_2 g >0$. 

The classification result for semitoric systems has two aspects:
\begin{itemize}
\item Two semitoric systems are isomorphic if and only if they have the same list of symplectic invariants
\item Given any admissible list of invariants, a semitoric system with such invariants can be constructed.
\end{itemize}

Since we will later work with (representatives of) the polytope invariant, let us consider it now in more detail. Let
$$\{c_i=(x_i,y_i) \mid i=1,...,m_f\}\in \mathbb{R}^2$$
be the set of focus-focus critical values, ordered in such a way that $x_1\leq x_2\leq ... \leq x_{m_f}$ and let $B_r$ be the set of regular values in $B:=F(M)$. For $i \in \{1, \dots, m_f\}$ and $\epsilon \in \{-1,+1\}$, define $\mathcal{L}_i^{\epsilon}$ to be the vertical ray starting at $c_i$ and going to $ \pm {\infty}$ depending on the sign of $\epsilon$, i.e., $\mathcal{L}_i^{\epsilon}=\{(x_i,y) \mid \epsilon y\geq \epsilon y_i\}$.
Given $\vec{\epsilon}=(\epsilon_1,...,\epsilon_{m_f})\in \{-1,+1\}^{m_f}$, we define the line segment $l_i:=F(M)\cap \mathcal{L}_i^{\epsilon_i}$ and we set
\begin{equation*}
    l^{\vec{\epsilon}}=\cup_{i}l_i
\end{equation*}
where in addition we decorate each $l_i$ with the multiplicity $\epsilon_ik_i$, where $k_i$ is the number of focus-focus points in the fiber $F^{-1}(c_i)$. If several $c_i$ have the same $x_i$ coordinate, then $l_i$ is the union of all corresponding segments. Given $c\in l_i$, define $k(c):=\sum_{c_j}\epsilon_jk_j$ where the sum runs over all focus-focus values $c_j$ such that $c_i\in l_j$.

Denote by $\mathcal{T}$ the subgroup of $\text{Aff}(2,\mathbb{Z})$ which leaves a vertical line, with orientation, invariant. In other words an element of $\mathcal{T}$ is a composition of a vertical translation and an element of $\{T^k,k\in \mathbb{Z}\}\subset GL(2,\mathbb{Z})$, where
\begin{equation*}
T^k:= \begin{bmatrix}
1 & 0 \\
k & 1 
\end{bmatrix}.
\end{equation*}

\begin{theorem}({\vungoc, \cite[Theorem 3.8]{vu2007moment}})
\label{th.straigheningHomeo}
Using the notation from above for a semitoric system $(M,\omega,(f_1,f_2))$, for all $\vec{\epsilon}\in \{-1,+1\}^{m_f}$, there exists a homeomorphism $f_{\vec{\epsilon}}=(f_{\vec{\epsilon}}^{(1)}, f_{\vec{\epsilon}}^{(2)})$ from $B$ to $f_{\vec{\epsilon}}(B) \subseteq \R^2$ such that
\begin{itemize}
    \item $f_{\vec{\epsilon}}|_{(B \setminus l^{\vec{\epsilon}})}$ is a diffeomorphism onto its image;
    \item $f_{\vec{\epsilon}}|_{(B \setminus l^{\vec{\epsilon}})}$ is affine;
    \item $f_{\vec{\epsilon}}$ preserves $f_1$, i.e., $f_{\vec{\epsilon}}(x,y)=(x,f_{\vec{\epsilon}}^{(2)}(x,y))$;
    \item For all $i \in \{1,...,m_f\}$ and all $c\in \text{Int}(l_i)$, 
    $f_{\vec{\epsilon}}|_{(B \setminus l^{\vec{\epsilon}})}$ extends to a smooth map in the domain $\{(x,y)\in D| \ x\leq x_i\}$ and $\{(x,y)\in D| \ x\geq x_i\}$, where $D$ is an open ball around $c$. Furthermore,
    \begin{equation*}
        \lim_{\stackrel{(x,y)\rightarrow c}{x<x_i}} df_{\vec{\epsilon}}(x,y)=T^{k(c)}\lim_{\stackrel{(x,y)\rightarrow c}{ x>x_i}} df_{\vec{\epsilon}}(x,y)
    \end{equation*}
\end{itemize}
where
\begin{equation*}
    T^{k(c)}= \begin{bmatrix}
1 & 0\\
k(c) & 1
\end{bmatrix}.
\end{equation*}
Such an $f_{\vec{\epsilon}}$ is unique modulo a left composition by a transformation in $\mathcal{T}$, and its image is a representative of the polytope invariant.
\end{theorem}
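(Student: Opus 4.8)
The plan is to construct $f_{\vec{\epsilon}}$ as a developing map for the integral affine structure on the regular values $B_r$, using the global $\mathbb{S}^1$-symmetry to pin down the first coordinate and using the vertical cuts to kill the monodromy. First I would exploit that $f_1$ generates a global $2\pi$-periodic Hamiltonian flow. By Theorem \ref{t.AL} the system admits action coordinates near any regular value, and since the flow of $X_{f_1}$ is already $2\pi$-periodic, $f_1$ is itself a global first action variable. Consequently the integral affine structure on $B_r$ carries a distinguished invariant covector $df_1$, and one may always normalize the local action charts so that their first component equals $f_1$; this forces every transition map and every monodromy matrix to lie in $\mathcal{T}$, i.e.\ to be lower triangular and to preserve the oriented vertical direction. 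Around a focus-focus value $c_i$ carrying $k_i$ pinch points, the monodromy is $T^{\pm k_i}$, with sign fixed by orientation; this follows from the focus-focus normal form of Section \ref{sec:locNormalForm} together with the vanishing-class description recalled above (the vanishing class being the unique monodromy-invariant direction).

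Next I would cut. The complement $B \setminus l^{\vec{\epsilon}}$ is obtained by deleting from each focus-focus value a vertical ray running to $\pm\infty$, and the remaining region is simply connected; hence the developing map of the flat affine connection is single valued there. Setting $f_{\vec{\epsilon}}^{(1)} := f_1$ and integrating the conjugate (closed) action one-form produces an affine immersion $f_{\vec{\epsilon}} = (f_1, f_{\vec{\epsilon}}^{(2)})$ on $B \setminus l^{\vec{\epsilon}}$ that is automatically smooth, affine, and $f_1$-preserving, giving the first three bullets. Because each cut $l_i$ is vertical and the monodromy preserves vertical lines, the first coordinate $f_1$ agrees on the two sides of $l_i$, so $f_{\vec{\epsilon}}$ extends to a globally continuous map on $B$; the jump of its differential across $l_i$ at a point $c$ is exactly the accumulated monodromy $T^{k(c)}$, where the multiplicities of all cuts passing through $c$ are summed, which is the fourth bullet.

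The main obstacle is upgrading this immersion to a homeomorphism onto its image and identifying the image as a genuine weighted rational convex polytope. For this I would run a convexity argument in the spirit of Atiyah and Guillemin--Sternberg \cite{atiyah1982convexity, guillemin1982convexity}, adapted to the cut base: properness of $F$, hence of $f_{\vec{\epsilon}}$, controls the boundary behaviour; the elliptic boundary strata map to straight edges by the local elliptic normal form; and local convexity near each stratum, combined with the simple-connectivity of $B \setminus l^{\vec{\epsilon}}$, forces global injectivity and convexity of the image. This is the delicate step, mirroring Delzant-type theory for toric systems but complicated by the cuts and the nontrivial monodromy, and it is where I expect the real work to lie.

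Finally I would address uniqueness. Any two developing maps with the listed properties differ only through the choice of base point and of an integral frame at it, subject to preserving both the vertical foliation determined by $f_1$ and its orientation; this residual freedom is by definition the group $\mathcal{T}$, so $f_{\vec{\epsilon}}$ is unique up to left composition with an element of $\mathcal{T}$. The resulting $\mathcal{T}$-equivalence class of images is then, by the Pelayo--V\~u Ng\k{o}c definition \cite{pelayo2009semitoric}, precisely a representative of the polytope invariant.
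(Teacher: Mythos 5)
This statement is not proved in the paper at all: it is imported verbatim from \vungoc\ \cite{vu2007moment}, so your proposal can only be measured against that cited proof. Your overall strategy does match it — $f_1$ as a global first action, normalizing charts so that transitions and monodromy lie in $\mathcal{T}$, monodromy $T^{\pm k_i}$ around a focus-focus value with the vertical direction as the invariant direction, cutting along the vertical rays $l^{\vec{\epsilon}}$ to obtain a simply connected domain on which the developing map is single valued, and recording the jump $T^{k(c)}$ across each cut. But two steps fall genuinely short. First, continuity across the cuts: you argue the glued map is continuous because $f_1$ agrees on both sides and the monodromy preserves vertical lines. That only gives agreement of the \emph{first} coordinates. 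The two one-sided determinations of $f_{\vec{\epsilon}}^{(2)}$ may still differ by an additive constant along each cut (an affine map with linear part $T^{k}$ restricted to the line $x=x_i$ is a translation in the second coordinate). Killing this constant requires knowing that the geometric action integral extends continuously to the focus-focus value $c_i$ itself, which is part of the semi-global analysis of the singular affine structure at a focus-focus fiber in \vungoc's earlier work; the same analysis — not the monodromy computation — is what produces the one-sided smooth extensions asserted in the fourth bullet.

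Second, and more seriously, the homeomorphism property. The theorem claims $f_{\vec{\epsilon}}$ is a homeomorphism from $B$ onto its image, and you explicitly defer exactly this point, proposing to ``run a convexity argument in the spirit of Atiyah and Guillemin--Sternberg.'' As stated that is a plan, not a proof, and the cited convexity theorems \cite{atiyah1982convexity,guillemin1982convexity} do not apply: they concern momentum maps of genuine Hamiltonian torus actions on compact manifolds, whereas here there is only an $\mathbb{S}^1$-action, a possibly noncompact base, cuts, and a developing map of a flat affine structure with boundary and corners. The actual mechanism in \cite{vu2007moment} is a local-to-global (Tietze--Nakajima-type) convexity argument: properness of $F$ together with local convexity at the elliptic boundary strata, read off from the local normal forms, forces the developed image to be convex and, crucially, forces the developing map to be globally injective. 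Without carrying this out, your construction only yields an affine immersion of $B\setminus l^{\vec{\epsilon}}$, and the headline claim of the theorem remains unproven. Your uniqueness-modulo-$\mathcal{T}$ argument, by contrast, is essentially correct once existence is in place: the residual freedom is the choice of integral affine frame preserving the oriented vertical foliation, which is precisely $\mathcal{T}$.
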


The map $f_{\vec{\epsilon}}$ is sometimes referred to as {\bf straightening homeomorphism}.
In order to arrive at $f(B)$, one cuts the set $B$ along each of the vertical lines $\mathcal{L}_i^{\epsilon_i}$ to the focus-focus values. Then the resulting image becomes simply connected, and thus there exists a global $2$-torus action on the preimage of this set. A representative of the polytope invariant can thus be seen as closure of the image of a toric momentum map.

Describing all possible choices (like for instance the signs $\epsilon_j$) by means of a group action allows to write the polytope invariant as equivalence class or orbit of this group action. Since we work later always with representatives, we refer for details to \vungoc \ \cite{vu2007moment}.

\subsection{Semitoric systems and almost toric fibrations}
Let $(M,\omega,F)$ be a semitoric system. In particular, it induces an almost toric fibration and an almost toric base. 
Let $\Delta_{\vec{\epsilon}}$ be a representative of the polytope invariant for this semitoric system. Then, by \vungoc\ \cite{vu2007moment}, the eigenline of a nodal value in $\Delta_{\vec{\epsilon}}$ is the vertical line that passes through the node. The affine monodromy can be represented, with respect to an integral affine basis of $\Delta_{\vec{\epsilon}}$, as
\begin{equation*}
    \begin{bmatrix}
1 & 0\\
k & 1
\end{bmatrix}
\end{equation*}
for some $k\in \mathbb{N}$, where $k$ is the number of focus-focus critical points in the critical fiber. 

\begin{definition}
    Let $p$ be a corner of a polytope in $\mathbb{R}^n$. We say that $p$ is a \textbf{Delzant corner} if there exists an integral affine transformation $A$ that maps $p$ and its adjacent edges to the corner at $(0,0)$ with adjacent edges created by $(1,0)$ and $(0,1)$, which is called the \textbf{standard Delzant corner}. This is equivalent to the fact that there exist vectors spanning the edges of $p$ that are a $\mathbb{Z}$ basis of $\mathbb{Z}^n$.
\end{definition}

\begin{remark}
\label{l.conditionsfornodaltrade}
    Let $\Delta_{\vec{\epsilon}}$ be a representative of the polytope invariant for a simple semitoric system $(M,\omega,F)$ and $R$ be an eigenray of $\Delta_{\vec{\epsilon}}$ that connects the node to a point $p$ in the boundary which is a Delzant corner. Then Lemma \ref{l.nodaltrade} applies, and we are able to do a nodal trade.
\end{remark}


\subsection{Symplectic reduction}
\label{s.reduction}
Let $(M,\omega)$ be a symplectic toric manifold of dimension $2N$ with $\mathbb{T}^N$-action generated by a momentum map $\mu: M\rightarrow (\mathbb{R}^N)^*$ and set $P:=\mu(M)$.

In this section we recall the following symplectic reduction result from Abreu \& Macarini \cite{abreu2013remarks}: let $x\in \text{int}(P)$ and let $T_x:=\mu^{-1}(x)$ denote the corresponding $\mathbb{T}^N$ orbit (which is a Lagrangian torus in $M$). Let $K\subset \mathbb{T}^N$ be a subtorus of dimension $N-n\geq 0$ determined by an inclusion of Lie algebras $\imath:\mathbb{R}^{N-n}\rightarrow \mathbb{R}^N$. The momentum map for the induced action of $K$ on $M$ is given by
\begin{equation*}
    \mu_K=\imath^*\circ \mu: M\rightarrow (\mathbb{R}^{N-n})^*.
\end{equation*}
Let $c\in \mu_K(M)\subset (\mathbb{R}^{N-n})^*$ be a regular value and assume that $K$ acts freely on the level set $Z:=\mu_K^{-1}(c)\subset M$. Then, the reduced space $(M^{\text{red},c}:=Z/K,\omega^{\text{red},c})$ is a symplectic toric manifold of dimension $2n$ with $\mathbb{T}^{\text{red},c}:=\mathbb{T}/K$ action generated by the momentum map
\begin{equation*}
    \mu^{\text{red},c}:M^{\text{red},c}\rightarrow P^{\text{red},c}\subset(\mathbb{R}^n)^*\cong \ker(\imath ^*).
\end{equation*}
The symplectic form $\omega^{\text{red},c}$ is characterized by
\begin{equation*}
    \pi^*(\omega^{\text{red},c})=\omega|_{Z}
\end{equation*}
where $\pi:Z\rightarrow Z/K$.


\subsection{Displacing toric fibers}
In McDuff \cite{mcduff2011displacing} conditions are given to displace Lagrangian toric fibers of toric manifolds. In this section, we summarize the necessary notions and  results of McDuff \cite{mcduff2011displacing} which we will need later on.

The {\em affine distance} $d_{\text{aff}}(x,y)$ between two points $x$, $y$ on a line $L$ with rational slope is defined as the quotient of the Euclidean distance $d_E(x,y)$ and the minimum Euclidean distance from $0$ to an integral point $p \in \mathbb{Z}^n$
on the line through $0$ parallel to $L$.
Equivalently, if $\phi$ is any integral affine transformation  of $\mathbb{R}^n$ that takes $x,y$ to the first coordinate axis (the $x_1$-axis) then $d_{\text{aff}}(x,y)=d_E(\phi(x),\phi(y))$.
An affine line $L=z+\mathbb{R}v$ in $\mathbb{R}^n$ is rational if the direction vector $v$ can be taken to be a primitive integral vector in $\mathbb{Z}^n$. Given an affine rational line $L$ with primitive direction $v\in \mathbb{Z}^n$, the affine distance $d_{\text{aff}}$ between two points $x,y\in L$ is $t\in \mathbb{R}$ such that $x-y=tv\in \mathbb{R}^n$.
An affine hyperplane $A$ is called \textbf{rational} if it has a primitive integral normal vector $\eta$, i.e., if it is given by an equation of the form $\langle x,\eta \rangle=k$ where $k\in \mathbb{R}$ and $\eta$ is primitive and integral. The \textbf{affine distance} $d_{\lambda}(x,A)$ from a point $x$ to a rational affine hyperplane in the rational direction $\lambda$ is defined as
\begin{equation}
\label{eq.affinedistance}
    d_{\lambda}(x,y):=d_{\text{aff}}(x,y)
\end{equation}
where $y\in A$ lies on the ray $x+a\lambda$ with  $a\in \mathbb{R}^{+}$. If the ray does not meet $A$ we set $d_{\lambda}(x,A)=+\infty$. We say that an integral vector $\lambda$ is \textbf{integrally transverse} to $A$ if $|\langle \lambda, \eta \rangle |=1$ where $\eta$ is the normal as above. 

\begin{definition}
    Let $w$ be a point on a facet $F$, i.e., on a $(n-1)$-dimensional face of a rational polytope $\Delta$ and let $\lambda \in \mathbb{Z}^n$ be integrally transverse to $F$. The \textbf{probe} $p_{F,\lambda}(w)=:p_{\lambda}(w)$ with direction $\lambda \in \mathbb{Z}^n$ and initial point $w\in F$ is the half open line segment consisting of $w$ together with the points in $\text{int}(\Delta)$ that lie on the ray from $w$ in direction $\lambda$.
\end{definition}

In the next lemma we could use any notion of length, but the affine distance is the most natural one.

\begin{lemma}\label{l.probes}({McDuff \cite[Lemma 2.4]{mcduff2011displacing}})
    Let $\Delta$ be a smooth momentum polytope associated with a symplectic toric manifold $(M,\omega,F)$, i.e., $\Delta=F(M)$.
    Let $u\in \text{int}(\Delta)$ lie on the probe $p_{F,\lambda}(w)$. If $w$ lies in the interior of its corresponding facet and if $u$ is less than halfway measured from the boundary along $p_{\lambda}(w)$, then the fiber $F^{-1}(u)$ is displaceable.
\end{lemma}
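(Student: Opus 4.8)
The plan is to reduce the displacement of the Lagrangian $n$-torus $F^{-1}(u)\subset M$ to the displacement of a single latitude circle on a $2$-sphere, via symplectic reduction along the probe direction. First I would normalize the picture by an integral affine transformation. Since $\lambda$ is integrally transverse to the facet $F$, the vector $\lambda$ together with a lattice basis of the edge directions of $F$ is a $\mathbb{Z}$-basis of $\mathbb{Z}^n$; hence there is an $A\in GL(n,\mathbb{Z})$ (composed with a translation) after which $F\subset\{x_n=0\}$, the polytope lies locally in $\{x_n\ge 0\}$, one has $\lambda=e_n$, and $w=(w',0)$ with $w'=(w_1,\dots,w_{n-1})$ in the relative interior of $F$. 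In these coordinates the probe is the segment $\{(w',t):0\le t<\ell\}$, where $\ell$ is the affine length from $w$ to the exit point of the ray on $\partial\Delta$, and $u=(w',s)$ with $s=d_{\mathrm{aff}}(w,u)<\ell/2$ by hypothesis.

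Next I would apply the symplectic reduction recalled in Section \ref{s.reduction} to the subtorus $K=\mathbb{T}^{n-1}$ generated by $X_{f_1},\dots,X_{f_{n-1}}$, at the level $c=w'$. Because $w$ lies in the relative interior of the facet $F$ and $\Delta$ is Delzant (in particular smooth), $w'$ is a regular value of $\mu_K=(f_1,\dots,f_{n-1})$ and $K$ acts freely on $Z:=\mu_K^{-1}(w')$, the only isotropy near the interior of $F$ coming from the collapsing $e_n$-circle. The reduced space $(M^{\mathrm{red}},\omega^{\mathrm{red}})$ is then a compact $2$-dimensional toric manifold, i.e.\ a sphere $\mathbb{S}^2$, whose residual circle action has moment image exactly $[0,\ell]$, so that the total area is $2\pi\ell$. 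Under the projection $\pi:Z\to M^{\mathrm{red}}$ the residual moment map descends from $f_n$, so $F^{-1}(u)\subset Z$ maps onto the latitude circle $C_s:=(\mu^{\mathrm{red}})^{-1}(s)$ and, conversely, $\pi^{-1}(C_s)=F^{-1}(u)$.

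The two-dimensional displacement is now elementary. The circle $C_s$ bounds the polar cap $\{\mu^{\mathrm{red}}<s\}$ of area $2\pi s<\pi\ell$, i.e.\ strictly less than half the area of $\mathbb{S}^2$; hence $C_s$ is not the area-bisecting equator and is displaceable by a Hamiltonian isotopy $\phi^t_h$ of $\mathbb{S}^2$ with $\phi^1_h(C_s)\cap C_s=\emptyset$. This is exactly where the strict inequality $s<\ell/2$, i.e.\ the \emph{less than halfway} hypothesis, enters: it excludes the (nondisplaceable) equator. To conclude I would lift this displacement to $M$. Pulling back $h$ along $\pi$ yields a $K$-invariant function on $Z$, which I extend to a $K$-invariant $H\in C^{\infty}(M)=C^{\infty}_c(M)$ (using that $M$ is closed) agreeing with $\pi^*h$ on a neighborhood of $Z$, a cutoff in the transverse directions $(f_1-w_1,\dots,f_{n-1}-w_{n-1})$ making $H$ globally defined without altering the dynamics near $Z$. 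Since $H$ is $K$-invariant one has $\{H,f_i\}=0$ for $i\le n-1$, so $\phi^t_H$ preserves $Z$ and covers $\phi^t_h$ on $M^{\mathrm{red}}$; therefore $\phi^1_H(F^{-1}(u))\subset Z$ projects onto $\phi^1_h(C_s)$, which is disjoint from $C_s=\pi(F^{-1}(u))$, and since $\pi$ is fiberwise this forces $\phi^1_H(F^{-1}(u))\cap F^{-1}(u)=\emptyset$.

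I expect the main obstacle to be this final lifting step: one must check that the reduced displacing Hamiltonian genuinely extends to a compactly supported Hamiltonian on $M$ whose flow both preserves the level set $Z$ and moves the entire fiber off itself, together with the bookkeeping (freeness of the $K$-action, regularity of $w'$, and the identification $M^{\mathrm{red}}\cong\mathbb{S}^2$ with moment image $[0,\ell]$) needed to set up the reduction cleanly.
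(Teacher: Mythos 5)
Your overall strategy (reduce by the subtorus $K=\mathbb{T}^{n-1}$ transverse to the probe direction and displace a latitude circle in the two-dimensional reduced space) is viable, and the lifting step you single out as the main worry is actually the solid part of the argument: $K$-invariance of $H$ gives $\{H,f_i\}=0$, so the flow preserves $Z$, and the induced flow on $Z/K$ depends only on $H|_Z$ because two invariant extensions differ on $Z$ by a vector field tangent to the $K$-orbits. The genuine gap is at the \emph{exit} end of the probe. Your justification of the claims ``$w'$ is a regular value of $\mu_K$'', ``$K$ acts freely on $Z$'', and ``$M^{\mathrm{red}}$ is a compact $2$-dimensional toric manifold, i.e.\ a sphere'' only addresses the entry point: the hypotheses of the lemma constrain the probe \emph{only} where it starts (interior of a facet, integral transversality there), and say nothing about where it ends. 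If the probe exits through the interior of a facet whose primitive normal $\eta$ satisfies $|\langle \lambda,\eta\rangle|=|\eta_n|\geq 2$, the stabilizer in $K$ of points over the exit is cyclic of order $|\eta_n|$ and $Z/K$ is a teardrop orbifold, not $\mathbb{S}^2$ (e.g.\ the vertical probe from $(2.5,0)$ in the Hirzebruch polytope $\{x\geq 0,\ 0\leq y\leq 1,\ x+2y\leq 3\}$, which exits through the slanted facet with normal $(1,2)$). Worse, the probe may exit through a face of codimension $\geq 2$: for the rectangle $[0,2]\times[0,1]$ and the probe from $w=(1,0)$ in direction $\lambda=(1,1)$, the exit point is the vertex $(2,1)$, whose preimage is fixed by all of $\mathbb{T}^n$; there the stabilizer meets $K$ in a positive-dimensional subgroup, $w'$ is \emph{not} a regular value of $\mu_K$, and $Z$ itself is singular. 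In all these cases — which McDuff's lemma is explicitly designed to cover — the quoted fact about displacing non-equatorial circles in a smooth $\mathbb{S}^2$ is not available as stated.

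The gap is fixable, and the fix clarifies why ``less than halfway'' is really a statement about a disk rather than a sphere: excise the exit fiber and work with $Z':=Z\setminus F^{-1}((w',\ell))$. The reduction of $Z'$ is a \emph{smooth open disk} of area $2\pi\ell$ (the only remaining isotropy is the collapsing $e_n$-circle over the entry facet, which is exactly what caps off the bottom), $C_s$ encloses area $2\pi s<\pi\ell$, and a circle enclosing less than half the area of an open disk is displaced by a \emph{compactly supported} Hamiltonian of the disk — precisely the Abreu--Borman--McDuff statement recalled in Section \ref{what}. Note that you cannot instead discard the bottom cap: in the open cylinder over the interior of the probe, an essential circle is nondisplaceable by the flux argument, so the bottom cap is indispensable while the top point is dispensable. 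Taking $h$ compactly supported in this open disk, your lifting paragraph then goes through verbatim. For comparison, the paper does not reprove McDuff's lemma but its in-house analogue, Lemma \ref{l.generalizedprobes}, proceeds differently: it uses action-angle coordinates to embed a product $\mathbb{D}(R)\times\mathbb{D}(h)$ directly and pulls back the same disk Hamiltonian through the projection, avoiding symplectic reduction (and hence all the exit-end isotropy bookkeeping) altogether; your reduction route is a legitimate alternative once the exit fiber is excised as above.
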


\begin{definition}
    Let $\Delta$ be any rational polytope and $u\in \text{int}(\Delta)$. If there is a probe $p_{F,\lambda}(w)$ through $u$ that satisfies the conditions of Lemma \ref{l.probes} then $u$ is said to be \textbf{displaceable by the probe} $p_{F,\lambda}(w)$.
\end{definition}
\subsection{Special Hamiltonian diffeomorphisms of the disk}
\label{what}
In this subsection, we recall a result from Abreu $\&$ Borman $\&$ McDuff \cite{abreu2014displacing} that concerns Hamiltonian diffeomorphisms of the disk which we will need to displace focus-focus fibers in Section \ref{s.displacinggeneralff}.

For real numbers $0<A<B$, consider a smooth, non-decreasing function $a:[0,B-A]\rightarrow [A,B]$ such that the function $s \mapsto a(s)+s$ is non-decreasing for $s\in [0,B-A]$ and
\begin{equation*}
    A\leq a(s)+s\leq B \quad \mbox{and} \quad a(B-A)=A.
\end{equation*}
For our purposes, we pick $a(s):=B-s$. For $r>0$, let $\mathbb{D}(r)\subset \mathbb{R}^2$ denote the $2$-dimensional disk of radius $r$ centered at the origin.

\begin{lemma}[{Abreu $\&$ Borman $\&$ McDuff, \cite[Lemma 6.4.2]{abreu2014displacing}}]
For any function $a(s)$ as above and any $\epsilon>0$, there is a compactly supported Hamiltonian diffeomorphism $\rho: \text{Int}(\mathbb{D}(B))\rightarrow \text{Int}(\mathbb{D}(B))$ such that 
\begin{equation*}
    \rho(\mathbb{D}(s))\subset \mathbb{A}(a(s),a(s)+s+\epsilon), \qquad \forall\ \epsilon\leq s\leq B-A-\epsilon,
\end{equation*}
where $\mathbb{A}(b,c):=\mathbb{D}(b)\backslash \text{Int}(\mathbb{D}(b))$.
\end{lemma}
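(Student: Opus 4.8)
The plan is to pass to action--angle (polar) coordinates and reduce the statement to the construction of an explicit area-preserving rearrangement of the open disk. Writing a point of $\mathrm{Int}(\mathbb{D}(B))\setminus\{0\}$ as $(\theta,r)\in S^1\times(0,B)$ and introducing the action variable $\tau:=\pi r^2$, the area form becomes $\tfrac{1}{2\pi}\,d\theta\wedge d\tau$, the concentric disk $\mathbb{D}(s)$ becomes the sublevel set $\{\tau\le \pi s^2\}$ (together with the centre), and the target annulus $\mathbb{A}(a(s),a(s)+s+\epsilon)$ becomes the band $\{\pi a(s)^2\le \tau\le \pi(a(s)+s+\epsilon)^2\}$. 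In these coordinates the assertion $\rho(\mathbb{D}(s))\subset \mathbb{A}(a(s),a(s)+s+\epsilon)$ says exactly that $\rho$ must push the disk of radius $s$ outside the central disk of radius $a(s)$ while keeping it inside the $\epsilon$-enlarged collar. Since $a(s)\ge A>0$, the target band has strictly larger area than $\mathbb{D}(s)$, namely $\pi s\,(2a(s)+s)>\pi s^2$, so there is room to spare; this area surplus, together with the tolerance $\epsilon$, is what makes the construction possible.

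Next I would exhibit a geometric model and verify the inclusion for it. The ``correct'' model is the radial inversion $r\mapsto \sqrt{B^2-r^2}$, equivalently $\tau\mapsto \pi B^2-\tau$, which turns the disk inside out: it sends $\mathbb{D}(s)$ to $\{r\ge \sqrt{B^2-s^2}\}$, and the elementary inequality $\sqrt{B^2-s^2}\ge B-s=a(s)$ (valid because $0<s\le B$) shows that this image already lies in the collar $\{r\ge a(s)\}$ for the choice $a(s)=B-s$. This model, however, is orientation-reversing and singular at the centre, so it is not itself a Hamiltonian diffeomorphism; it only pins down the geometry of where the disks have to go.

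Then I would promote this model to a genuine compactly supported Hamiltonian diffeomorphism. The idea is to realize an orientation-preserving approximation of the inside-out map that is the identity on a thin outer collar $\{B-\delta\le r<B\}$ (so that it extends by the identity and is compactly supported) and a bona fide diffeomorphism across the centre, by \emph{spiralling} the inner region outward rather than reflecting it: concretely, one writes down an explicit Hamiltonian on the cylinder whose time-one flow drags the low-$\tau$ region up to high $\tau$ with an accompanying $\theta$-shear, smoothed off near $\tau=0$ and near $\tau=\pi B^2$. Because the target band is strictly fatter than $\mathbb{D}(s)$ and we are granted the $\epsilon$-margin, the inclusion that held exactly for the singular model persists for its smooth orientation-preserving replacement on the whole range $\epsilon\le s\le B-A-\epsilon$: the uncontrolled small disks $s<\epsilon$ absorb the unavoidable distortion at the centre, while the regime $s>B-A-\epsilon$ absorbs the distortion near the boundary. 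Finally, since $\mathrm{Int}(\mathbb{D}(B))$ is an open, simply connected surface with $H^1_c(\mathrm{Int}(\mathbb{D}(B)))=0$, any compactly supported area-preserving diffeomorphism isotopic to the identity has vanishing flux and is therefore Hamiltonian, which upgrades the area-preserving map just constructed to the required $\rho$.

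The main obstacle is precisely this last construction. The naive area-preserving map that achieves the inclusion is the radial inversion, which is orientation-reversing and blows the centre out to the boundary, so it can never be a Hamiltonian diffeomorphism. The technical heart is thus to produce an orientation-preserving, identity-near-the-boundary Hamiltonian diffeomorphism that reproduces the same ``disk into boundary-collar'' behaviour up to the $\epsilon$-tolerance, with the coordinate singularity at $r=0$ tamed by exploiting the freedom we are allowed for $s<\epsilon$; verifying that the spiralling flow indeed keeps $\rho(\mathbb{D}(s))$ between radii $a(s)$ and $a(s)+s+\epsilon$ for all $s$ simultaneously is the one place where an honest estimate, rather than the elementary inequality above, is needed.
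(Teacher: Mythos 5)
You should first note what the benchmark is here: the paper does not prove this lemma at all --- it imports it verbatim, with citation, from Abreu--Borman--McDuff \cite{abreu2014displacing}, so your attempt can only be compared with the construction in that source, whose entire content is the explicit ``folding'' map and the accompanying estimate. The peripheral parts of your proposal are sound: the passage to the action coordinate $\tau=\pi r^2$, the area count (modulo a slip: the band $\mathbb{A}(a(s),a(s)+s+\epsilon)$ has area $\pi(s+\epsilon)\bigl(2a(s)+s+\epsilon\bigr)$, not $\pi s\bigl(2a(s)+s\bigr)$, though the conclusion is unaffected), and the remark that $H^1_c$ of the open disk vanishes, so that compactly supported symplectic isotopies are Hamiltonian --- which is in any case redundant, since you propose to obtain $\rho$ as the time-one map of a compactly supported Hamiltonian flow, and such a map is Hamiltonian by definition. (You might also have flagged two typos you silently corrected: $a$ must be non-\emph{increasing} --- the literal hypotheses force $a\equiv A$, contradicting the paper's own choice $a(s)=B-s$ --- and $\mathbb{A}(b,c)$ should read $\mathbb{D}(c)\setminus\mathrm{Int}(\mathbb{D}(b))$.)

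The genuine gap is the step you yourself flag and then skip: nothing in the proposal constructs the spiralling Hamiltonian or verifies the simultaneous inclusions, and the claim that the inclusion ``persists'' from the inversion model to its smooth replacement has no basis. The radial inversion $r\mapsto\sqrt{B^2-r^2}$ controls only the \emph{inner} radius, and only for the special profile $a(s)=B-s$, where the outer constraint $a(s)+s+\epsilon=B+\epsilon$ is vacuous. For a general admissible profile --- take $a\equiv A$, which satisfies every hypothesis --- the outer bound is active: $\mathbb{D}(\epsilon)$ must land in the thin annulus $\mathbb{A}(A,A+2\epsilon)$ deep inside the disk, whereas the inversion throws it onto the boundary collar; so the model is simply wrong there, and no perturbation argument can start from it. Moreover the true map is not close to the inversion in any sense: the inversion is orientation-reversing, and for $s>\epsilon$ the image $\rho(\mathbb{D}(s))$ cannot even be approximately round, since a round disk of radius $s$ has radial extent $2s$ while its target annulus has width only $s+\epsilon$; the images are forced to be thin crescents wrapping around the circle of radius roughly $a(s)$, nested coherently in $s$. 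Producing one compactly supported Hamiltonian whose single time-one map realizes these nested containments simultaneously for all $s\in[\epsilon,B-A-\epsilon]$ --- say via an explicit shear in $(\tau,\theta)$ with a concrete estimate, or via a concrete nested family of crescents of the correct areas followed by a Moser-type rearrangement --- is precisely the lemma; your text names this estimate as ``the technical heart'' but does not supply it, so what you have is the right geometric picture for the case $a(s)=B-s$ together with a plausible strategy, not a proof.
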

\begin{corollary}
If $A>\frac{B}{2}$, every circle of radius less than $B-A$ is displaced by $\rho$.
\end{corollary}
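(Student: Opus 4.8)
The plan is to read the claim off directly from the Lemma, specialized to the explicit choice $a(s)=B-s$, and then to unwind what it means for a circle to be displaced. First I would fix a radius $r$ with $0<r<B-A$ and consider the circle $C_r:=\partial\mathbb{D}(r)=\{z\mid |z|=r\}$. Since $\rho$ is a (compactly supported) Hamiltonian diffeomorphism, saying that $\rho$ displaces $C_r$ means exactly that $\rho(C_r)\cap C_r=\emptyset$, so it suffices to control the Euclidean norm of the points of the image $\rho(C_r)$ and compare it with $r$.

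Next I would feed $s=r$ into the Lemma (choosing $\epsilon$ small enough that $r$ lies in the admissible range $[\epsilon,\,B-A-\epsilon]$). With $a(s)=B-s$ the target annulus $\mathbb{A}(a(s),a(s)+s+\epsilon)$ has inner radius $B-s$, so the Lemma gives
\begin{equation*}
    \rho(\mathbb{D}(r))\subset \mathbb{A}\big(B-r,\ B+\epsilon\big),
\end{equation*}
i.e.\ every point of $\rho(\mathbb{D}(r))$ has norm at least $B-r$. As $C_r\subset\mathbb{D}(r)$, this forces $|w|\ge B-r$ for every $w\in\rho(C_r)$, whereas every point of $C_r$ has norm exactly $r$.

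The remaining step is the elementary inequality that separates these two radii. From $A>\tfrac{B}{2}$ we get $B-A<\tfrac{B}{2}$, and combined with $r<B-A$ this yields $r<\tfrac{B}{2}$, hence $B-r>\tfrac{B}{2}>r$. Therefore $\rho(C_r)$, lying in $\{|w|\ge B-r\}$, cannot meet $C_r=\{|z|=r\}$, so $C_r$ is displaced by $\rho$. Since $r<B-A$ was arbitrary, every circle of radius less than $B-A$ is displaced.

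There is little genuine difficulty here; the one point demanding care is matching the admissible range $\epsilon\le s\le B-A-\epsilon$ of the Lemma to the radius $r$ one wishes to displace. This is handled by taking $\epsilon$ small and, for the remaining edge radii, choosing an $s$ with $r\le s\le B-A-\epsilon$ (possible precisely because $r<B-A$): then $C_r\subset\mathbb{D}(s)$ and the inner radius satisfies $B-s\ge A+\epsilon>\tfrac{B}{2}>r$ by the same inequality, so $\rho(C_r)\subset\rho(\mathbb{D}(s))$ still avoids $C_r$. Everything else is bookkeeping: verifying that the chosen $a(s)=B-s$ makes the inner radius of the target annulus equal to $B-s$, and translating ``$\rho(C_r)\cap C_r=\emptyset$'' into the comparison of norms above.
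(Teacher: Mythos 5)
Your argument is correct and coincides with the one the paper leaves implicit (the corollary is stated there without proof): with $a(s)=B-s$ the Lemma confines $\rho(\mathbb{D}(s))$ to $\left\{\,|w|\geq B-s\,\right\}$, and $A>\frac{B}{2}$ together with $r<B-A<\frac{B}{2}$ gives $B-s\geq A>\frac{B}{2}>r$ for admissible $s\geq r$, so $\rho(C_r)$ misses $C_r$. One caveat on your closing paragraph: for a \emph{fixed} $\epsilon$ there is no $s$ with $r\leq s\leq B-A-\epsilon$ when $r\in\ ]B-A-\epsilon,B-A[$, so the quantifiers must be read as you set them up at the start --- $\epsilon$ (hence $\rho=\rho_{\epsilon}$) is chosen after $r$ --- which is exactly what is needed both for the corollary and for its application in Lemma~\ref{l.generalizedprobes}, where only radii $\leq h<B-A$ with uniform slack occur.
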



\subsection{Symplectic quasi-states}
\label{s.states}
Symplectic quasi-states are a useful tool to find nondisplaceable subsets in a symplectic manifold, and so are the notions of superheavy, heavy and pseudoheavy sets. Entov $\&$ Polterovich \cite{entov2009rigid, polterovich2006quasi} introduced the notions of {\em heaviness} and {\em superheaviness}. Furthermore, Kawasaki \& Orita \cite{kawasaki2021existence} added the notion of {\em pseudoheaviness}. The definition of a symplectic quasi-state and pseudoheavy, heavy, superheavy subsets is based on spectral invariants in Hamiltonian Floer theory (see Oh \cite{oh2005construction}). We now recall in more detail some definitions and results from these papers that we need later on.

In what follows we require all symplectic manifolds $(M,\omega)$ to be \textbf{closed}. However we note that the theory of symplectic quasi-states could be developed for non-compact symplectic manifolds by considering compactly supported Hamiltonian functions.

\begin{definition}
    A \textbf{partial symplectic quasi-state} on a symplectic manifold $(M,\omega)$
    is a map $\zeta:C^{\infty}(M)\rightarrow \mathbb{R}$ that satisfies the following conditions:
    \begin{itemize}
        \item \textbf{Normalization:} $\zeta(a)\equiv a$ for all $a\in \mathbb{R}$.

        \item \textbf{Stability:} For all $H_1,H_2\in C^{\infty}(M)$:
        \begin{align*}
            \min_M(H_1-H_2)\leq \zeta(H_1)-\zeta(H_2)\leq \max_M(H_1-H_2).
        \end{align*}

        \item \textbf{Semi-Homogenity:} $\zeta(sH)=s\zeta(H)$ for all $H\in C^{\infty}(M)$ and all $s>0$;
        \item \textbf{Hamiltonian invariance:} $\zeta(H\circ \phi)=\zeta(H)$ for all $H\in C^{\infty}(M)$ and all $\phi \in \text{Ham}(M,\omega)$.
        \item \textbf{Vanishing:} $\zeta(H)=0$ for all $H\in C^{\infty}(M)$ with support displaceable from itself.
        \item \textbf{Quasi-subaditivity:} $\zeta(H_1+H_2)\leq \zeta(H_1)+\zeta(H_2)$ for all $H_1,H_2\in C^{\infty}(M)$ satisfying $\{H_1,H_2\}=0$, where $\{\cdot,\cdot \}$ is the Poisson bracket induced by the symplectic form.
    \end{itemize}	
\end{definition}

\begin{definition}
    \textbf{Symplectic quasi-states} are \textbf{partial symplectic quasi-states} that satisfy the following \textbf{quasi-linearity} assumption: if $\{H,K\}=0$ then $\zeta(H+aK)=\zeta(H)+a\zeta(K)$ for $a\in \mathbb{R}^{>0}$.
\end{definition}

\begin{definition}
    A map $\zeta:C^{\infty}(M,\omega)\rightarrow \mathbb{R}$ satisfies the so-called \textbf{PB-inequality} if there exists $C>0$ such that
    \begin{equation*}
        |\zeta(H+K)-\zeta(H)-\zeta(K)|\leq C\sqrt{||\{H,K\}||} \quad \forall\ H, K \in C^{\infty}(M,\omega).
    \end{equation*}
\end{definition}

\begin{definition}
    Let $\zeta:C^{\infty}(M)\rightarrow \mathbb{R}$ be a $\textbf{partial symplectic quasi-state}$ on $(M,\omega)$.
    A closed subset $X$ of $M$ is $\zeta\textbf{-heavy}$ (resp. $\zeta\textbf{-superheavy}$) if
    \begin{align*}
        \zeta(H)\geq \inf_X H \quad (resp. \hspace{2mm} \zeta(H)\leq \sup_XH)
    \end{align*}
for all $H\in C^{\infty}(M)$.
    A closed subset $X$ is said to be $\zeta \textbf{-pseudoheavy}$ if for all open neighborhood $U$ of $X$ there exists a function $F\in C^{\infty}(U)$ such that $\zeta(F)>0$.
\end{definition}

These sets have important properties and relations between them:

\begin{theorem}
(Kawasaki $\&$ Orita \cite[Proposition 1.8]{kawasaki2021existence}, Entov $\&$ Polterovich \cite[Theorem $1.4$]{entov2009rigid})
\label{t.properties}
Let $\zeta:C^{\infty}(M)\rightarrow \mathbb{R}$ be a partial symplectic quasi-state on $(M,\omega)$:
\begin{enumerate}
    \item Every $\zeta$-\textbf{superheavy} subset is $\zeta$\textbf{-heavy}.
    \item Every $\zeta$-\textbf{heavy} subset is $\zeta$\textbf{-pseudoheavy}.
    \item Every $\zeta$-\textbf{pseudoheavy} subset is nondisplaceable from itself.
    \item Every $\zeta$-\textbf{pseudoheavy} subset is nondisplaceable from every $\zeta$\textbf{-superheavy} subset.
\end{enumerate}
\end{theorem}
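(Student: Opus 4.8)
The plan is to establish the four statements in the listed order, since each isolates a different defining property of a partial symplectic quasi-state, and the later geometric statements reuse the notion of a displaceable neighborhood. For (1) I would argue purely algebraically. Assume $X$ is $\zeta$-superheavy, so $\zeta(H)\leq \sup_X H$ for every $H$. Feeding $-H$ into this inequality gives $\zeta(-H)\leq \sup_X(-H)=-\inf_X H$. The key observation is that $H$ and $-H$ Poisson-commute, so Quasi-subadditivity yields $0=\zeta(0)\leq \zeta(H)+\zeta(-H)$, where $\zeta(0)=0$ comes from Normalization. Combining the two facts, $\zeta(H)\geq -\zeta(-H)\geq \inf_X H$, which is exactly $\zeta$-heaviness.

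For (2), let $X$ be $\zeta$-heavy and let $U$ be an arbitrary open neighborhood of $X$. Since $M$ is closed, $X$ is compact, so I can choose a smooth bump function $F$ with $0\leq F\leq 1$, $F\equiv 1$ on $X$, and $\operatorname{supp}F\subset U$. Then $\inf_X F=1$, so heaviness gives $\zeta(F)\geq \inf_X F=1>0$. As $U$ was arbitrary, $X$ is $\zeta$-pseudoheavy.

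For (3) and (4) I would argue by contradiction, invoking the Vanishing property and Hamiltonian invariance respectively. For (3), if a $\zeta$-pseudoheavy set $X$ were displaceable from itself, then some $\phi=\phi^1_H$ satisfies $\phi(X)\cap X=\emptyset$; by compactness of $X$ and continuity of $\phi$ this persists on a small open neighborhood $U\supset X$, i.e.\ $\phi(U)\cap U=\emptyset$, so $U$ is displaceable. Pseudoheaviness supplies $F$ with $\operatorname{supp}F\subset U$ and $\zeta(F)>0$, but $\operatorname{supp}F$ is then displaceable, so Vanishing forces $\zeta(F)=0$, a contradiction. For (4), suppose a pseudoheavy $X$ could be displaced off a superheavy $Y$, say $\phi(X)\cap Y=\emptyset$ with $\phi=\phi^1_H$. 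Choose disjoint open sets $V\supset Y$ and $W\supset\phi(X)$ with $\overline V\cap\overline W=\emptyset$, and apply pseudoheaviness to the neighborhood $\phi^{-1}(W)$ of $X$ to get $F$ with $\operatorname{supp}F\subset\phi^{-1}(W)$ and $\zeta(F)>0$. Then $G:=F\circ\phi^{-1}$ is supported in $\overline W$, hence vanishes on $Y$, so $\sup_Y G=0$; superheaviness gives $\zeta(G)\leq 0$, while Hamiltonian invariance gives $\zeta(G)=\zeta(F)>0$, a contradiction.

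The genuinely subtle step is (1): the inequalities defining heaviness and superheaviness point in opposite directions, so one cannot pass between them for a fixed $H$ alone. The trick is to feed $-H$ into the superheavy inequality and then exploit that $H$ and $-H$ commute to invoke Quasi-subadditivity, obtaining the positivity $\zeta(H)+\zeta(-H)\geq 0$ that bridges the two bounds. The remaining parts are comparatively routine once one translates "displaceable" into "admits a displaceable open neighborhood," so that the Vanishing property and Hamiltonian invariance can be brought to bear; the only care needed is in arranging that the support of the transported function stays off the relevant superheavy set.
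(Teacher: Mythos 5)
Your proposal is correct, and it coincides with the standard arguments: the paper does not prove this theorem itself but quotes it from Kawasaki--Orita and Entov--Polterovich, whose proofs are exactly what you wrote --- the bound $\zeta(H)+\zeta(-H)\geq 0$ via quasi-subadditivity of the commuting pair $H,-H$ and Normalization for (1), a bump function equal to $1$ on the compact set $X$ for (2), and displaced open neighborhoods combined with the Vanishing axiom for (3) and with Hamiltonian invariance plus the superheavy bound $\zeta(G)\leq \sup_Y G=0$ for (4). All steps, including the passage from displacing $X$ to displacing a neighborhood $U\supset X$ by compactness, are sound.
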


One important result is that there always exists a \textbf{pseudoheavy} fiber for a partial symplectic quasi-state on an integrable system $(M,\omega,F):$

\begin{theorem}({Kawasaki $\&$ Orita, \cite[Theorem 1.7]{kawasaki2021existence}})
\label{t.pseudo}
    Let $(M,\omega,F)$ be an integrable system. 
    Let $\zeta:C^{\infty}(M)\rightarrow \mathbb{R}$ be a partial symplectic quasi-state on $(M,\omega)$. Then there exists $y_0\in F(M)$ such that $F^{-1}(y_0)$ is $\zeta$-pseudoheavy.
\end{theorem}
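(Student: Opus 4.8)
The plan is to argue by contradiction, exploiting quasi-subadditivity together with the observation that any two functions pulled back from the base Poisson-commute. Suppose no fiber $F^{-1}(y)$ is $\zeta$-pseudoheavy. Negating the definition, for each $y\in F(M)$ there is an open neighborhood $U_y\subset M$ of the compact fiber $F^{-1}(y)$ such that $\zeta(G)\le 0$ for every $G\in C^\infty(M)$ with $\operatorname{supp}(G)\subset U_y$. The goal is to combine finitely many such neighborhoods into a statement about the constant function $1$, whose $\zeta$-value is pinned down by normalization.

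First I would descend from neighborhoods in $M$ to neighborhoods in the base. Since $M$ is closed, $F$ is a closed map and $F(M)\subset\R^n$ is compact; hence $M\setminus U_y$ is compact, $F(M\setminus U_y)$ is closed and avoids $y$, and $V_y:=\R^n\setminus F(M\setminus U_y)$ is an open neighborhood of $y$ with $F^{-1}(V_y)\subset U_y$. The collection $\{V_y\}_{y\in F(M)}$ covers the compact set $F(M)$, so I extract a finite subcover $V_{y_1},\dots,V_{y_N}$ and choose a smooth partition of unity $\rho_1,\dots,\rho_N$ on a neighborhood of $F(M)$ in $\R^n$ subordinate to it, so that $\sum_{j}\rho_j\equiv 1$ on $F(M)$ while $\operatorname{supp}(\rho_j)\cap F(M)\subset V_{y_j}$.

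The heart of the argument is to pull these back by $F$. Each $G_j:=\rho_j\circ F$ lies in $C^\infty(M)$ with $\operatorname{supp}(G_j)=F^{-1}(\operatorname{supp}\rho_j)\subset F^{-1}(V_{y_j})\subset U_{y_j}$, so by construction $\zeta(G_j)\le 0$. Moreover any two functions of $F$ Poisson-commute: since $\{f_i,f_k\}=0$, one has $X_{G_j}=\sum_i(\partial_i\rho_j\circ F)\,X_{f_i}$, whence $\{G_j,G_l\}=\sum_{i,k}(\partial_i\rho_j\circ F)(\partial_k\rho_l\circ F)\{f_i,f_k\}=0$. Because the $G_j$ pairwise commute, every partial sum commutes with the next summand, so quasi-subadditivity iterates to $\zeta\bigl(\sum_{j=1}^N G_j\bigr)\le\sum_{j=1}^N\zeta(G_j)\le 0$. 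But $\sum_j G_j=\bigl(\sum_j\rho_j\bigr)\circ F\equiv 1$ on $M$, and normalization gives $\zeta(1)=1$. The resulting inequality $1\le 0$ is absurd, so some fiber must be $\zeta$-pseudoheavy.

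The main obstacle here is conceptual rather than computational: recognizing that pseudoheaviness of fibers can be certified through the single normalization value $\zeta(1)=1$, which the commuting pullbacks $\rho_j\circ F$ distribute across finitely many fibers via quasi-subadditivity. The supporting technical points—closedness of $F$ (automatic since $M$ is closed) to transport the neighborhoods $U_y$ to base neighborhoods $V_y$, compactness of $F(M)$ to finitize the cover, and the Poisson-commutativity of functions of $F$—are routine, and notably the argument needs none of the finer heavy/superheavy hierarchy of Theorem \ref{t.properties}.
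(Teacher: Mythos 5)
Your proof is correct. Note that the paper itself gives no proof of this statement---it is imported verbatim from Kawasaki--Orita---and your argument reconstructs essentially their argument: negate pseudoheaviness fiberwise, push the neighborhoods $U_y$ down to base neighborhoods $V_y$ via compactness of $M$, pull back a finite partition of unity by $F$, observe that the pullbacks $\rho_j\circ F$ Poisson-commute, and iterate quasi-subadditivity against the normalization $\zeta(1)=1$. The only cosmetic slip is the claim $\operatorname{supp}(\rho_j\circ F)=F^{-1}(\operatorname{supp}\rho_j)$, which in general is only an inclusion $\subset$, but the inclusion is all your argument uses.
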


\begin{remark}
Entov \& Polterovich \cite{polterovich2006quasi} showed the existence of a partial symplectic quasi-state for $2n$-dimensional symplectic manifolds which are rational and strongly semi-positive. Due to the work of Oh \cite{oh2009floer} and Usher \cite{dualityusher,spectralusher} the conditions can be dropped, see Entov \cite{entovclarification} for more details. Note that all symplectic 4-manifolds are strongly semi-positive.
\end{remark}

The existence of partial symplectic quasi-states on symplectic 4-manifolds thus implies:

\begin{theorem}
\label{t.existsnondisplaceable}
    (Entov $\&$ Polterovich, \cite[Theorem 2.1]{polterovich2006quasi}, Usher \cite{dualityusher,spectralusher}, Oh \cite{oh2009floer}) Any integrable system $(M,\omega,F)$ on a symplectic $4$-manifold $(M,\omega)$ has a nondisplaceable fiber.
\end{theorem}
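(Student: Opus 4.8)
The plan is to read this off from the two results of Kawasaki \& Orita recalled above, once a partial symplectic quasi-state has been produced on $M$. First I would invoke the Remark immediately preceding the statement: since $(M,\omega)$ is a symplectic $4$-manifold it is strongly semi-positive, and by the work of Entov \& Polterovich \cite{polterovich2006quasi} together with the removal of the rationality and strong semi-positivity hypotheses due to Oh \cite{oh2009floer} and Usher \cite{dualityusher,spectralusher}, there exists a partial symplectic quasi-state $\zeta:C^{\infty}(M)\rightarrow \mathbb{R}$. (As throughout Section \ref{s.states}, this step is understood for closed $M$, or else with compactly supported Hamiltonians.)

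With such a $\zeta$ fixed, I would apply Theorem \ref{t.pseudo} to the integrable system $(M,\omega,F)$, which is legitimate because $F$ is a genuine integrable system in the sense of the standing assumptions (proper, with connected fibers). That theorem furnishes a value $y_0\in F(M)$ for which the fiber $F^{-1}(y_0)$ is $\zeta$-pseudoheavy. Finally, by item $(3)$ of Theorem \ref{t.properties}, every $\zeta$-pseudoheavy subset is nondisplaceable from itself; hence $F^{-1}(y_0)$ is a nondisplaceable fiber, which is exactly the assertion.

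The whole content of the statement therefore resides in the cited inputs rather than in any new construction: the genuinely hard analytic work is hidden in the existence of the quasi-state $\zeta$, built from spectral invariants in Hamiltonian Floer theory, and in Kawasaki \& Orita's theorem producing a pseudoheavy fiber. The only thing one checks by hand is that the hypotheses of those theorems hold — namely that $M$ is a (closed) symplectic $4$-manifold, so that strong semi-positivity guarantees a partial symplectic quasi-state, and that $F$ is an integrable system — after which the conclusion is immediate. Accordingly, I do not expect a substantive obstacle beyond verifying these hypotheses; the argument is an assembly of Theorems \ref{t.pseudo} and \ref{t.properties} with the existence result for $\zeta$.
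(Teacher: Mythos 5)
Your proposal is correct and matches the paper's own derivation exactly: the paper presents Theorem \ref{t.existsnondisplaceable} as an immediate consequence of the remark guaranteeing a partial symplectic quasi-state on any (closed) symplectic $4$-manifold, combined with Theorem \ref{t.pseudo} to produce a $\zeta$-pseudoheavy fiber and item $(3)$ of Theorem \ref{t.properties} to conclude nondisplaceability. Your parenthetical caveat about closedness (or compactly supported Hamiltonians in the non-compact case) is also the same qualification the paper makes in Section \ref{s.states}.
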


\begin{definition}
Let $(M,\omega,F)$ be an integrable system and $p\in F(M)$. A fiber $F^{-1}(p)$ is called a \textbf{stem} if all other fibers of $F$ are displaceable.
\end{definition}

\begin{theorem}
\label{t.stem}
(Polterovich $\&$ Rosen, \cite[Corollary 6.1.6]{polterovich2014function}) Any stem is nondisplaceable by the group of symplectomorphisms of $(M,\omega)$.
\end{theorem}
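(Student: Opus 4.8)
The plan is to prove the stronger statement that a stem is \emph{superheavy} with respect to every partial symplectic quasi-state, and then to upgrade nondisplaceability from Hamiltonian diffeomorphisms to arbitrary symplectomorphisms by a pullback argument. Write $X:=F^{-1}(p)$ for the stem; since $F$ is proper, $X$ is compact, and by hypothesis every fiber $F^{-1}(q)$ with $q\neq p$ is displaceable. I fix a partial symplectic quasi-state $\zeta$ on the (closed) manifold $(M,\omega)$, whose existence in the relevant setting is recorded in the remark preceding Theorem~\ref{t.existsnondisplaceable}.

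\emph{Step 1 (superheaviness).} The key obstacle, and the heart of the argument, is that the complement of $X$ is a \emph{union} of displaceable fibers, and a union of displaceable sets need not be displaceable, so the Vanishing axiom cannot be applied in one stroke. I would circumvent this by only ever applying Vanishing to functions that factor through $F$, exploiting that any two such functions Poisson-commute. Concretely, I first record: if $\phi\colon F(M)\to\R$ is supported in a compact set $K\subset F(M)\setminus\{p\}$, then $\zeta(\phi\circ F)\le 0$. Indeed, each point of $K$ lies in a ball whose saturated preimage is displaceable (using properness and the fact that displaceability is an open condition on compact sets); extracting a finite subcover and a subordinate partition of unity $\{\tau_j\}$ writes $\phi\circ F=\sum_j(\tau_j\phi)\circ F$ as a finite sum of \emph{pairwise Poisson-commuting} functions, each with displaceable support. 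Iterated Quasi-subadditivity together with the Vanishing axiom then gives $\zeta(\phi\circ F)\le\sum_j\zeta((\tau_j\phi)\circ F)=0$.

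\emph{Step 1, continued (domination).} To deduce $\zeta(H)\le\sup_X H$ for an arbitrary $H\in C^\infty(M)$, it suffices, after the constant shift $H\mapsto H-\sup_X H$ (legitimate since Stability gives $\zeta(H+c)=\zeta(H)+c$), to show that $\sup_X H\le 0$ forces $\zeta(H)\le 0$. Fix $\epsilon>0$. Since $H\le 0$ on the compact fiber $X$, continuity and properness provide a $\delta>0$ with $H<\epsilon$ on $F^{-1}(B(p,\delta))$. Choosing $\phi\colon F(M)\to\R$ with $0\le\phi\le 2\max_M|H|$, vanishing on $B(p,\delta/2)$ and equal to $2\max_M|H|$ off $B(p,\delta)$, one checks directly that $H\le\epsilon+\phi\circ F$ everywhere. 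Monotonicity of $\zeta$ (a consequence of Stability) together with Step 1 then yields $\zeta(H)\le\epsilon+\zeta(\phi\circ F)\le\epsilon$, and letting $\epsilon\to0$ gives $\zeta(H)\le0$. Hence $X$ is $\zeta$-superheavy, and as $\zeta$ was arbitrary, $X$ is superheavy for every partial symplectic quasi-state.

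\emph{Step 2 (symplectomorphisms).} Let $\psi$ be any symplectomorphism; I must show $\psi(X)\cap X\neq\emptyset$. The point is that $\zeta':=\zeta\circ(\psi^{-1})^*$ is again a partial symplectic quasi-state: every axiom survives because $\psi^{-1}$ preserves the Poisson bracket (Quasi-subadditivity), sends displaceable sets to displaceable sets (Vanishing), and conjugates Hamiltonian diffeomorphisms to Hamiltonian diffeomorphisms (Hamiltonian invariance), while Normalization, Stability and Semi-Homogeneity are immediate. By Step 1, $X$ is $\zeta'$-superheavy, which unwinds precisely to the statement that $\psi(X)$ is $\zeta$-superheavy. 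Now $X$ is $\zeta$-superheavy, hence $\zeta$-pseudoheavy by Theorem~\ref{t.properties}(1) and (2), whereas $\psi(X)$ is $\zeta$-superheavy; applying Theorem~\ref{t.properties}(4) with the zero Hamiltonian gives $X\cap\overline{\psi(X)}\neq\emptyset$, and since $\psi(X)$ is closed this is $X\cap\psi(X)\neq\emptyset$. As $\psi$ was arbitrary, $X$ is nondisplaceable by symplectomorphisms. I expect Step 1, and within it the reduction to functions of $F$ that dodges the non-displaceability of the union of the remaining fibers, to be the main difficulty; Step 2 is then a formal consequence of the symplectic naturality of quasi-states.
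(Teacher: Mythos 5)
Your proposal is correct and follows essentially the same route as the proof the paper defers to (Polterovich \& Rosen, \cite[Corollary 6.1.6]{polterovich2014function}, going back to Entov--Polterovich): one shows a stem is superheavy for \emph{every} partial symplectic quasi-state via exactly your covering/partition-of-unity argument, using that functions factoring through $F$ Poisson-commute so that Quasi-subadditivity and Vanishing apply, and then upgrades to symplectomorphisms by noting that pushing a quasi-state forward by $\psi$ yields another quasi-state, so $\psi(X)$ is superheavy and must intersect the (pseudo)heavy set $X$ by Theorem~\ref{t.properties}. Both steps, including the closedness of $M$ needed for the quasi-state machinery of Section~\ref{s.states} and the unwinding $\zeta'$-superheaviness of $X$ into $\zeta$-superheaviness of $\psi(X)$, are carried out correctly.
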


Borman \cite{borman2013quasi} showed that symplectic quasi-states behave well under symplectic reduction on a superheavy subset: let $(W,\omega)$ be a $2n$-dimensional closed symplectic manifold equipped with a smooth map $\Phi=(\Phi_1,\cdot\cdot\cdot,\Phi_k):W\rightarrow \mathbb{R}^k$ and a regular level set $Z: =\Phi^{-1}(0)$. Suppose that all $\Phi_i$ mutually Poisson commute at each point in $Z$ and that $\Phi$ induces a free Hamiltonian $\mathbb{T}^k$-action on $Z$. Let $(M^{\text{red},0}:=Z/\mathbb{T}^k,\omega^{\text{red},0})$ be the reduced space when performing symplectic reduction at $0$. Denote by $\pi:Z\rightarrow M^{\text{red},0}$ the quotient map.

\begin{theorem}(Borman, \cite[Theorem 1.1]{borman2013quasi})
\label{t.reductionquasistates}
Let $(W,\omega)$ be a $2n$-dimensional closed symplectic manifold and $Z$ has above.
    If $\zeta:C^{\infty}(W,\omega)\rightarrow \mathbb{R}$ is a {symplectic quasi-state} satisfying the {PB-inequality} and $Z$ is {superheavy} for $\zeta$, then $\zeta$ induces a {symplectic quasi-state}
    \begin{equation*}
        \zeta^{\text{red},0}:C^{\infty}(M^{\text{red},0},\omega^{\text{red},0})\rightarrow \mathbb{R}
    \end{equation*}
    satisfying the {PB-inequality}. If $Y\subset Z$ is superheavy for $\zeta$ then $\pi(Y)$ is superheavy for $\zeta^{\text{red},0}.$
\end{theorem}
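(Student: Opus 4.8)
The plan is to construct $\zeta^{\text{red},0}$ by pulling functions on $M^{\text{red},0}$ back to $\mathbb{T}^k$-invariant functions on $W$, applying $\zeta$, and then checking the quasi-state axioms and the PB-inequality one at a time, using superheaviness of $Z$ at every step where the ambiguity of a lift has to be killed. First I would set up the correspondence between functions: a smooth $f$ on $M^{\text{red},0}=Z/\mathbb{T}^k$ pulls back to the $\mathbb{T}^k$-invariant function $\pi^*f$ on $Z$, and since $0$ is a regular value and the action is free, $\pi^*f$ extends to a $\mathbb{T}^k$-invariant $\tilde f\in C^\infty(W)$ (take any smooth extension and average it over $\mathbb{T}^k$). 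Define $\zeta^{\text{red},0}(f):=\zeta(\tilde f)$. Because $\tilde f|_Z=\pi^*f$, one has $\inf_{M^{\text{red},0}}f=\inf_Z\tilde f$ and $\sup_{M^{\text{red},0}}f=\sup_Z\tilde f$, and the extension may moreover be chosen so that $\tilde f$ takes values in $[\inf_Z\tilde f,\sup_Z\tilde f]$ on all of $W$; this freedom is what will later let the two-sided $\zeta$-estimates be phrased purely in terms of data on $Z$.

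The hard part will be well-definedness, i.e. independence of $\zeta(\tilde f)$ from the chosen invariant extension. Two invariant extensions differ by a $\mathbb{T}^k$-invariant function $g$ with $g|_Z\equiv 0$. Since $Z$ is superheavy, it is also heavy (Theorem \ref{t.properties}), so $\inf_Z g\le\zeta(g)\le\sup_Z g$ forces $\zeta(g)=0$; thus $\zeta$ annihilates invariant functions vanishing on $Z$. If $\zeta$ were additive we would be done, but it is only quasi-linear on Poisson-commuting pairs. Here the reduction structure helps: for two invariant lifts $\tilde f_1,\tilde f_2$ of the same $f$ one has $\{\tilde f_1,\tilde f_2\}|_Z=\pi^*\{f,f\}_{\text{red}}=0$, so the relevant Poisson bracket vanishes on $Z$. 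Combining the PB-inequality for $\zeta$ with a rescaling of the moment-map (normal) directions — keeping the restriction to $Z$ fixed while driving the sup-norm of the bracket defect to zero — yields $\zeta(\tilde f+g)=\zeta(\tilde f)+\zeta(g)=\zeta(\tilde f)$ in the limit, using that $\zeta$ is $1$-Lipschitz in the sup-norm (stability). This estimate is the technical heart of the theorem and is exactly where both the PB-inequality and superheaviness are indispensable; it is carried out in Borman \cite{borman2013quasi}.

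Once well-definedness is in hand, I would verify the remaining quasi-state axioms for $\zeta^{\text{red},0}$ by selecting convenient invariant lifts. Normalization is immediate from $\zeta(a)=a$; stability follows because, by the freedom noted above, the global $\min_W$ and $\max_W$ of a difference of lifts can be realized on $Z$; homogeneity is inherited since $s\tilde f$ lifts $sf$; Hamiltonian invariance follows because a Hamiltonian isotopy of $M^{\text{red},0}$ lifts to a $\mathbb{T}^k$-invariant isotopy whose generating Hamiltonian extends invariantly to $W$, so $\zeta(\widetilde{f\circ\psi})=\zeta(\tilde f\circ\Psi)=\zeta(\tilde f)$; and vanishing follows by choosing $\tilde f$ supported in $\pi^{-1}$ of a neighborhood of $\mathrm{supp}(f)$ and lifting a displacing isotopy. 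Quasi-linearity and the PB-inequality for $\zeta^{\text{red},0}$ descend from those of $\zeta$ through the identity $\{\tilde f_1,\tilde f_2\}|_Z=\pi^*\{f_1,f_2\}_{\text{red}}$, choosing lifts whose bracket is controlled on $W$ by its values on $Z$ (again the same rescaling device).

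Finally, superheaviness descends directly: if $Y\subset Z$ is superheavy for $\zeta$, then for any $f$ with invariant lift $\tilde f$ we have $\zeta^{\text{red},0}(f)=\zeta(\tilde f)\le\sup_Y\tilde f=\sup_{\pi(Y)}f$, since $\tilde f|_Z=\pi^*f$ and $Y\subset Z$; hence $\pi(Y)$ is superheavy for $\zeta^{\text{red},0}$. The one genuinely delicate point throughout is the bracket-defect estimate of the second paragraph; every other step is bookkeeping of invariant extensions together with the elementary consequence that $\zeta$ annihilates invariant functions vanishing on the superheavy set $Z$.
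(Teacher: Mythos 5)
You should first note what you are up against: the paper offers no proof of Theorem \ref{t.reductionquasistates} at all --- it is quoted verbatim from Borman \cite[Theorem 1.1]{borman2013quasi} and used as a black box (e.g.\ in the proof of Theorem \ref{t.quasistatesoctagon}) --- so there is no in-paper argument to compare against, and your proposal has to be judged as a reconstruction of Borman's proof. The architecture you set up is the right one: define $\zeta^{\text{red},0}(f):=\zeta(\tilde f)$ for a lift $\tilde f$ with $\tilde f|_Z=\pi^*f$; observe that superheaviness of $Z$ (hence heaviness, by Theorem \ref{t.properties}) forces $\zeta(g)=0$ for any $g$ vanishing on $Z$; use the restriction identity $\{\tilde f_1,\tilde f_2\}|_Z=\pi^*\{f_1,f_2\}_{\mathrm{red}}$; descend the axioms; and obtain superheaviness of $\pi(Y)$ from $\zeta(\tilde f)\leq \sup_Y\tilde f=\sup_{\pi(Y)}f$. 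That last step is correct exactly as you wrote it, granted well-definedness.

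There are, however, two genuine problems. First, a setup error: the hypotheses only give Poisson commutativity of the $\Phi_i$ \emph{on} $Z$, so the $\mathbb{T}^k$-action exists only on $Z$, not on $W$ or even on a neighborhood; your instruction to ``take any smooth extension and average it over $\mathbb{T}^k$'' is therefore undefined. This is repairable --- invariance off $Z$ is never actually needed, since for \emph{any} extension $F$ of $\pi^*f$ one has $\{\Phi_i,F\}|_Z=0$ (because $X_{\Phi_i}$ is tangent to $Z$ and $F|_Z$ is orbit-invariant), hence $X_F$ is tangent to $Z$ and $\{F,G\}|_Z=\pi^*\{f,g\}_{\mathrm{red}}$ --- but as written the construction is broken. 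Second, and more seriously, the step you yourself call ``the technical heart'' is only asserted, and the mechanism you propose does not work as described: if you compress a cutoff $\chi(s/t)$ into a $t$-neighborhood of $Z$ to ``drive the bracket defect to zero,'' its derivatives grow like $1/t$, while $\{\Phi_i,\Phi_j\}$ and $\{\Phi_i,F\}$ vanish only on $Z$ and are thus merely $O(t)$ on the support; the resulting bracket terms are $O(1)$, and the $\chi$--$\chi$ cross terms are $O(1/t)$, so the sup-norm of the bracket does not tend to $0$. Likewise, splitting $g=g_{\mathrm{near}}+g_{\mathrm{far}}$ leaves you needing $\zeta(\tilde f+g_{\mathrm{far}})=\zeta(\tilde f)$ for $g_{\mathrm{far}}$ vanishing only near $Z$, which neither stability, nor the PB-inequality, nor quasi-linearity delivers directly. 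Since you explicitly defer this point to the citation (``carried out in Borman''), the proposal contains a genuine hole precisely at the crux: it reproduces the scaffolding of Borman's theorem but not the estimate that makes $\zeta(\tilde f)$ independent of the lift.
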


\section{Displacing focus-focus fibers}
\label{s.displacingfocusofocusfibersiwthgeneralized}
\subsection{Generalization of the method of probes}
\label{s.generalization}
In this subsection, we generalize the method of probes, which we will apply in subsequent sections to displace focus-focus fibers. 

\begin{lemma}
\label{l.generalizedprobes}
Let $F:(M,\omega)\rightarrow B$ be a toric fibration. Suppose that there exists an affine embedding of $([0,R]\times [0,h],\mathcal{A}_0)$ into $(B,\mathcal{A})$ where $\mathcal{A}$ is the integral affine structure induced by $F$ on $B$, $R>0$ and $0<h<\frac{R}{2}$. Then the set $F^{-1}(\text{Im}([0,h]\times [0,h]))\subset M$ is \textbf{displaceable}.

\begin{proof}
Let us denote the integral normal vectors by $v_1=(1,0)$ and $v_2=(0,1)$ and notice that $\langle v_1,v_1\rangle = \langle v_2,v_2\rangle=1$ and $\langle v_1,v_2\rangle =0$. 
Since we are in a toric fibration we have well-defined action-angle coordinates $(x,\theta)\in B\times \mathbb{T}^2$. Using these action-angle coordinates and the assumptions in the statement, possibly after applying an integral affine transformation, we have an embedding
   \begin{equation*}
        \Psi: \mathbb{D}(R)\times \mathbb{D}(h)\rightarrow (M,\omega), \qquad 
        (s,\theta_1,t,\theta_2)\mapsto (sv_1+tv_2,\theta_1v_1+\theta_2v_2)\in B\times \mathbb{T}^2
    \end{equation*}
where $s$ and $t$ parametrize the radii of the disks.
Let $\pi_i$ be the projections onto the disks for $i=1,2$. Then $\Psi^*\omega= \pi_1^*(\omega_0)+\pi_2^*(\omega_0)$.

Since $h<\frac{R}{2}$ we can use the results of Section \ref{what} to obtain a compactly supported Hamiltonian $G_h:\mathbb{D}(R)\rightarrow \mathbb{R}$ that displaces every circle of radius less than or equal to $h$. This Hamiltonian allows us to define the compactly supported Hamiltonian 
\begin{equation*}
    G: \mathbb{D}(R)\times \mathbb{D}(h) \quad \rightarrow \quad \mathbb{R}, \qquad  G:=G_h\circ \pi_1.
\end{equation*}
Since this Hamiltonian does not depend on $\mathbb{D}(h)$ and $\Psi^*\omega=\pi_1^*(\omega_0)+ \pi_2^*(\omega_0)$, the flow of $G\circ \Psi^{-1}$ displaces the set $F^{-1}([0,h]\times [0,h])$ inside $\text{Im}(\Psi)$. Since $H$ is compactly supported we can extend it to the whole manifold $(M,\omega)$.
\end{proof}
\end{lemma}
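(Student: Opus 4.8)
The plan is to transport the displaceability problem from the base $B$ to a concrete model in a symplectic chart using action-angle coordinates, and then invoke the disk-displacement result of Section \ref{what}. First I would use the affine embedding of $([0,R]\times[0,h],\mathcal{A}_0)$ into $(B,\mathcal{A})$ together with the fact that a toric fibration carries action-angle coordinates $(x,\theta)\in B\times\mathbb{T}^2$. The image rectangle, being a product of intervals of affine lengths $R$ and $h$ along the standard lattice directions $v_1=(1,0)$ and $v_2=(0,1)$, is precisely the moment-map image of a product of two standard symplectic disks $\mathbb{D}(R)\times\mathbb{D}(h)$. I would record that the pullback of $\omega$ under the resulting embedding $\Psi$ splits as $\pi_1^*(\omega_0)+\pi_2^*(\omega_0)$, so the two disk factors are symplectically independent.

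The key step is then the hypothesis $h<\frac{R}{2}$, which is exactly the condition $A>\frac{B}{2}$ (with $B=R$, $A=R-h$, so $B-A=h$) needed to apply the Corollary at the end of Section \ref{what}. This produces a compactly supported Hamiltonian $G_h$ on $\mathbb{D}(R)$ whose time-one flow displaces every circle of radius $\leq h$, hence displaces the whole disk $\mathbb{D}(h)\subset\mathbb{D}(R)$. Since the first factor of the embedded region $F^{-1}(\mathrm{Im}([0,h]\times[0,h]))$ corresponds to radii $\leq h$ in $\mathbb{D}(R)$, pulling $G_h$ back via the projection $\pi_1$ gives a Hamiltonian $G:=G_h\circ\pi_1$ on $\mathbb{D}(R)\times\mathbb{D}(h)$ that ignores the second factor.

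Because $G$ depends only on the first disk and $\Psi^*\omega$ splits as above, the flow of $G$ acts on each slice $\{pt\}\times\mathbb{D}(h)$ exactly as the flow of $G_h$ on the first factor, so it displaces the subset where the first-factor radius is $\leq h$; in particular it displaces $\mathbb{D}(h)\times\mathbb{D}(h)$ inside $\mathbb{D}(R)\times\mathbb{D}(h)$, which is $\Psi^{-1}\big(F^{-1}(\mathrm{Im}([0,h]\times[0,h]))\big)$. Transporting back by $\Psi$, the flow of $G\circ\Psi^{-1}$ displaces $F^{-1}(\mathrm{Im}([0,h]\times[0,h]))$ inside $\mathrm{Im}(\Psi)\subset M$. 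Finally, since $G\circ\Psi^{-1}$ is compactly supported inside $\mathrm{Im}(\Psi)$, I would extend it by zero to a compactly supported Hamiltonian on all of $(M,\omega)$ without changing the displacement.

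The step I expect to require the most care is verifying that the affine embedding of the rectangle genuinely yields the product symplectic model $\mathbb{D}(R)\times\mathbb{D}(h)$ with the split form $\Psi^*\omega=\pi_1^*(\omega_0)+\pi_2^*(\omega_0)$. This rests on the Liouville-Arnold-Mineur normal form (Theorem \ref{t.AL}) and the compatibility of the standard lattice directions $v_1,v_2$ with the action coordinates; the orthonormality relations $\langle v_i,v_j\rangle=\delta_{ij}$ ensure the two disk factors are genuinely decoupled rather than skewed, which is what lets the single-factor Hamiltonian $G$ displace the diagonal square. The remaining analytic input is entirely packaged in Section \ref{what}, so once the model is set up the displacement is immediate.
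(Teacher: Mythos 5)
Your proposal follows essentially the same route as the paper: action-angle coordinates plus the affine embedding yield the split symplectic model $\Psi^*\omega=\pi_1^*(\omega_0)+\pi_2^*(\omega_0)$ on $\mathbb{D}(R)\times\mathbb{D}(h)$, and the Abreu--Borman--McDuff disk result of Section \ref{what} pulled back through $\pi_1$ displaces the region, extending by zero to $M$. One boundary detail: with your choice $A=R-h$ the Corollary only displaces circles of radius strictly less than $B-A=h$, whereas the set $F^{-1}(\mathrm{Im}([0,h]\times[0,h]))$ contains the circle of radius exactly $h$; since $h<\frac{R}{2}$ is strict you should instead pick $A$ with $\frac{R}{2}<A<R-h$, so that $B-A>h$ and circles of radius $\leq h$ are displaced.
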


The above proof can be adapted to obtain the following corollary:
\begin{corollary}
    Let $F:(M,\omega)\rightarrow B$ be a toric fibration. Suppose that there exists an affine embedding of $([0,R]\times [0,a],\mathcal{A}_0)$ into $(B,\mathcal{A})$ where $\mathcal{A}$ is the integral affine structure induced by $F$ on $B$, $R,a>0$ and  $0<h<\frac{R}{2}$. Then the set $F^{-1}(\text{Im}([0,h]\times [0,a]))\subset M$ is \textbf{displaceable}.
\end{corollary}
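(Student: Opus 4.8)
The plan is to mimic the proof of Lemma \ref{l.generalizedprobes} almost verbatim, the only structural change being that the second disk now has a possibly different radius $a$ rather than $h$. First I would use the affine embedding of $([0,R]\times[0,a],\mathcal{A}_0)$ into $(B,\mathcal{A})$ together with the action-angle coordinates $(x,\theta)\in B\times\mathbb{T}^2$ guaranteed by the toric fibration to produce, after an integral affine transformation aligning the normal vectors with $v_1=(1,0)$ and $v_2=(0,1)$, a symplectic embedding
\begin{equation*}
    \Psi:\mathbb{D}(R)\times\mathbb{D}(a)\rightarrow(M,\omega),\qquad (s,\theta_1,t,\theta_2)\mapsto (sv_1+tv_2,\theta_1 v_1+\theta_2 v_2)
\end{equation*}
satisfying $\Psi^*\omega=\pi_1^*(\omega_0)+\pi_2^*(\omega_0)$, exactly as before. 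The rectangle $[0,R]\times[0,a]$ now corresponds to the image of this product of disks.

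Next I would invoke the hypothesis $0<h<\tfrac{R}{2}$ to apply the results of Section \ref{what}, producing a compactly supported Hamiltonian $G_h:\mathbb{D}(R)\rightarrow\mathbb{R}$ that displaces every circle of radius at most $h$ in the \emph{first} factor. The key observation is that the condition constraining displaceability involves only the first disk of radius $R$, so the size $a$ of the second factor is irrelevant to this step; I would then set $G:=G_h\circ\pi_1$ on $\mathbb{D}(R)\times\mathbb{D}(a)$.

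Because $G$ is independent of the $\mathbb{D}(a)$ factor and $\Psi^*\omega$ splits as the sum of the two pulled-back area forms, the Hamiltonian flow of $G\circ\Psi^{-1}$ acts only in the first coordinate and therefore displaces $F^{-1}(\text{Im}([0,h]\times[0,a]))$ inside $\text{Im}(\Psi)$: every fiber over a point with first coordinate in $[0,h]$ gets moved off itself, uniformly over the whole interval $[0,a]$ in the second coordinate. Finally, since $G$ is compactly supported I would extend it by zero to all of $(M,\omega)$, concluding displaceability.

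I expect no genuine obstacle here, as this is a direct adaptation; the only point requiring a little care is to notice that widening the second factor from $h$ to $a$ does not interfere with the displacing Hamiltonian, precisely because that Hamiltonian factors through $\pi_1$ and the symplectic form is a product. Thus the same flow that worked in the square case continues to work over the full rectangle in the second coordinate.
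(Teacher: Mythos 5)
Your proposal is correct and matches the paper exactly: the paper gives no separate proof for this corollary, stating only that the proof of Lemma \ref{l.generalizedprobes} adapts, and your adaptation---keeping the displacing Hamiltonian $G_h\circ\pi_1$ on the first factor while enlarging the second disk to radius $a$, which is harmless since the Hamiltonian factors through $\pi_1$ and $\Psi^*\omega$ splits---is precisely that intended adaptation.
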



\subsection{Nondisplaceability of focus-focus fibers with higher multiplicity}
\label{s.nondisplaceabilitybigk}
Let $c$ be a focus-focus value of a semitoric system $F$ on a $4$-dimensional symplectic manifold $(M,\omega)$, and let $k$ be the number of focus-focus point on the fiber $F^{-1}(c)$, often called the multiplicity of the focus-focus fiber $F^{-1}(c)$. Intuitively, a fiber with $k$ focus-focus points is often seen as a torus with $k$ pinches. 
Alternatively the fiber $F^{-1}(c)$ can be seen as a closed chain of $k$ spheres joint at the poles, see Zung \cite{zungarnold} and \vungoc\ \cite[Section $6$]{vu2000bohr}. In particular, these are Lagrangian spheres. If $k=1$ the sphere is immersed, and if $k>1$ the spheres are embedded.

\begin{proposition}
\label{p.lagrangiansphere}
    An embedded Lagrangian sphere in a symplectic $4$-manifold is \textbf{nondisplaceable}.
\end{proposition}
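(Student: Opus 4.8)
The plan is to show that an embedded Lagrangian sphere $L \cong \mathbb{S}^2$ in a symplectic $4$-manifold $(M,\omega)$ cannot be displaced by exploiting the non-triviality of its self-intersection in homology together with the topological constraints forced by the Lagrangian condition in dimension four. The cleanest route is to argue that $[L] \in H_2(M;\mathbb{Z})$ satisfies $[L]\cdot[L] = -2$, which is nonzero, and then to invoke the principle that a homology class with nonzero self-intersection forces a geometric intersection that persists under Hamiltonian (indeed any) isotopy. First I would recall that for an embedded Lagrangian surface $\Sigma$ in a symplectic $4$-manifold the self-intersection number equals the Euler characteristic up to sign; concretely, the normal bundle of a Lagrangian $L$ is isomorphic to its cotangent bundle $T^*L$, so the self-intersection number is $[L]\cdot[L] = -\chi(L)$. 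For $L\cong \mathbb{S}^2$ this gives $[L]\cdot[L] = -2 \neq 0$.

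The key steps, in order, would be: (1) identify the normal bundle $\nu L$ with $T^*L$ via the symplectic form, so that the Euler number of $\nu L$ equals $-\chi(L) = -2$; (2) conclude $[L]\cdot[L] = -2$, hence $[L]\neq 0$ in $H_2(M;\mathbb{Z})$; (3) suppose for contradiction that $L$ is displaceable, i.e.\ there is a Hamiltonian diffeomorphism $\phi = \phi^1_H$ with $\phi(L)\cap L = \emptyset$; (4) observe that $\phi$ is isotopic to the identity (being a time-$1$ Hamiltonian flow), so $\phi(L)$ is homologous to $L$, giving $[\phi(L)]\cdot[L] = [L]\cdot[L] = -2 \neq 0$; (5) derive a contradiction from the fact that two \emph{disjoint} submanifolds have algebraic intersection number zero, while the homological self-intersection computed in step (4) is nonzero. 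This contradiction shows $L$ is nondisplaceable.

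The main obstacle I anticipate is step (1)--(2): correctly pinning down the sign and the identification of the normal bundle with the cotangent bundle, and being careful that the self-intersection is computed in the closed (or at least homologically well-behaved) setting where intersection numbers are defined. In the non-compact case one must take care that $[L]$ defines a genuine homology class and that Hamiltonian isotopies preserve the relevant class; since the paper's displaceability uses compactly supported Hamiltonians, $\phi$ is compactly supported and isotopic to the identity through such maps, so $[\phi(L)] = [L]$ holds in $H_2(M;\mathbb{Z})$. A subtlety worth flagging is that this argument proves \emph{topological} nondisplaceability (disjointness is obstructed for homological reasons), which is exactly what is invoked later in Section~\ref{s.nondisplaceabilitybigk} for focus-focus fibers with $k>1$ embedded spheres; the argument does not require any Floer-theoretic input, only the self-intersection computation and homological invariance of the class under Hamiltonian isotopy.
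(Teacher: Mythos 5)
Your proposal is correct and follows essentially the same route as the paper: both identify a neighborhood of $L$ with $T^*\mathbb{S}^2$ (Weinstein), deduce that the self-intersection number equals $\pm\chi(\mathbb{S}^2)=\pm 2\neq 0$, and derive a contradiction from the fact that a disjoint displaced copy, being homotopic/homologous to $L$, would force this intersection number to vanish. The only cosmetic difference is that the paper phrases the contradiction via the intersection number of a smooth map $g\simeq i$ with $L$, while you phrase it via the homology class $[\phi(L)]=[L]$; both yield the same topological nondisplaceability statement, which in particular rules out displacement by Hamiltonian diffeomorphisms.
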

\begin{proof}
   Recall the notion of intersection number and Euler characteristic (for a reference see Hirsch \cite[Section $5.2$]{hirsch2012differential}). 
    
    Consider an embedded Lagrangian sphere $i:L\rightarrow (M,\omega)$ where $(M,\omega)$ is a $4$-dimensional symplectic manifold. Using Weinstein's Lagrangian neighborhood theorem, we identify a neighborhood of $i(L)$ with the cotangent bundle of $\mathbb{S}^2$. Therefore the intersection number $L\cdot L$ is such that $|L\cdot L|= \chi(\mathbb{S}^2)\neq 0$. Now we prove that $i(L)$ is not topologically displaceable by a smooth map $g:L\rightarrow M$ homotopic to the inclusion $i$. Let us argue by contradiction. Assume that $g(i(L))\cap i(L)=\emptyset$ and $g\simeq i$. In particular, $g$ and $i$ are transverse, and since $g(i(L))\cap i(L)=\emptyset$ we have $L\cdot L= \#(g,L)=0$, where $\#(g,L)$ is the intersection number of the map $g$ with $L$. Therefore we obtain a contradiction. Since $L$ is not topologically displaceable, in particular it is not displaceable by a Hamiltonian diffeomorphism.
\end{proof}
\begin{corollary}
    A focus-focus fiber $F^{-1}(c)$ is \textbf{nondisplaceable} when its multiplicity is greater than or equal to $2$.
\end{corollary}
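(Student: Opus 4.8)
The plan is to deduce the corollary directly from Proposition~\ref{p.lagrangiansphere} together with the geometric description of focus-focus fibers recalled at the start of Section~\ref{s.nondisplaceabilitybigk}. The key structural fact is that when the multiplicity $k$ of a focus-focus fiber $F^{-1}(c)$ satisfies $k\geq 2$, the fiber is a closed chain of $k$ \emph{embedded} Lagrangian spheres joined at their poles (this is precisely the dichotomy stated in the text: for $k=1$ the sphere is merely immersed, but for $k>1$ the spheres are embedded). Thus $F^{-1}(c)$ contains at least one embedded Lagrangian $2$-sphere $L$ in the symplectic $4$-manifold $(M,\omega)$.

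First I would invoke Proposition~\ref{p.lagrangiansphere} to conclude that this embedded Lagrangian sphere $L\subset F^{-1}(c)$ is nondisplaceable: there is no Hamiltonian diffeomorphism $\phi^1_H$ with $\phi^1_H(L)\cap L=\emptyset$. The second and essential step is a monotonicity observation: nondisplaceability is inherited by supersets. Concretely, since $L\subseteq F^{-1}(c)$, for any compactly supported Hamiltonian $H$ we have the inclusion $\phi^1_H(L)\subseteq \phi^1_H(F^{-1}(c))$, and therefore
\begin{equation*}
    \emptyset \neq \phi^1_H(L)\cap L \subseteq \phi^1_H(F^{-1}(c))\cap F^{-1}(c).
\end{equation*}
Hence $\phi^1_H(F^{-1}(c))\cap F^{-1}(c)\neq \emptyset$ for every $H$, which is exactly the statement that $F^{-1}(c)$ is nondisplaceable.

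I expect the only genuine subtlety to be the justification that a genuinely embedded (rather than merely immersed) Lagrangian sphere sits inside the fiber when $k\geq 2$; this is the reason the corollary restricts to multiplicity at least $2$, and it is supplied by the references to Zung \cite{zungarnold} and \vungoc\ \cite[Section~6]{vu2000bohr} cited above. Given that description, the remaining argument is the elementary set-theoretic monotonicity of (non)displaceability, so no further analytic input beyond Proposition~\ref{p.lagrangiansphere} is needed. One small point worth stating explicitly is that the chain-of-spheres picture guarantees the spheres are Lagrangian, so that Proposition~\ref{p.lagrangiansphere} applies verbatim; with that in hand the corollary follows immediately.
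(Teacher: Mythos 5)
Your proposal is correct and matches the paper's (implicit) argument exactly: the paper states the corollary as an immediate consequence of Proposition \ref{p.lagrangiansphere}, relying on the same structural fact that for multiplicity $k\geq 2$ the fiber is a chain of \emph{embedded} Lagrangian spheres, together with the same set-theoretic monotonicity $\phi^1_H(L)\cap L\subseteq \phi^1_H(F^{-1}(c))\cap F^{-1}(c)$. Your explicit spelling-out of the monotonicity step and the $k=1$ versus $k\geq 2$ dichotomy is exactly the content the paper leaves tacit.
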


Therefore studying (non)displaceability of focus-focus fibers is reduced to the case $k=1$, which is the generic case, see Smirnov \cite{smirnov2013focus}. There are also explicit examples for higher multiplicity. For the case $k=2$ see for instance De Meulenaere \& Hohloch \cite{de2021family} and Section \ref{s.octagon}.

\subsection{Displacing focus-focus fibers}
\label{s.displacinggeneralff}
In this subsection we explain how to displace focus-focus fibers in certain semitoric systems using the methods of Subsection \ref{s.generalization}.

Let $(M,\omega,F)$ be a semitoric system on a $4$-dimensional symplectic manifold $(M,\omega)$ and to reduce notation assume that we have a single focus-focus value $c$. Due to Subsection \ref{s.nondisplaceabilitybigk} we focus on the case of multiplicity $1$.

Let $\Delta_{\vec{\epsilon}}$ be a representative of the polytope invariant for the semitoric system. Without loss of generality we henceforth assume that $\vec{\epsilon}=-1$.

Let $h$ be the height invariant for the focus-focus value and $v_1,v_2$ the edges that intersect at the corner introduced by the focus-focus value in the polytope invariant, as sketched in Figure \ref{f.displacingfffiber}.
\begin{figure}
\begin{center}
    \includegraphics{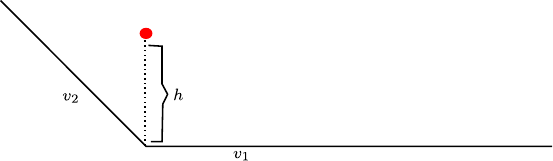}
    \caption{Part of the representative of the polytope invariant for $\vec{\epsilon}=-1$. The red dot represent the focus-focus value.}
    \label{f.displacingfffiber}
\end{center}
\end{figure}

Now assume that the corner introduced by the focus-focus fiber is a Delzant corner, and let $A$ be the integral affine transformation that takes it into the standard Delzant corner. By Remark \ref{l.conditionsfornodaltrade} we can apply a nodal trade (which is defined in Lemma \ref{l.nodaltrade}). Recall the affine distance from Equation \eqref{eq.affinedistance} and denote the affine length by $\text{aff}$. Without loss of generality assume that $\text{aff}(v_1)\geq \text{aff}(v_2)$. Suppose that $h<\text{aff}(v_2)$, and that there exists $R>0$ such that $R<\text{aff}(v_1)$ and $0<2h<R$. Furthermore, suppose that  the rectangle $[0,R]\times [0,h]$ starting at $(0,0)$ sits inside $A(\Delta_{\vec{\epsilon}})$, as is sketched in Figure \ref{f.ffonthestandarddelzantcorner}.
Then we obtain the following result:
\begin{lemma}
\label{l.displaceaingff}
The focus-focus fiber of the system $(M,\omega,F)$ described above is \textbf{displaceable}.
\begin{proof}
Recall that for the standard Delzant corner the corner is placed at the origin. 
After applying a nodal trade, as defined in Lemma \ref{l.nodaltrade}, the semitoric system with focus-focus fiber $F^{-1}(c)$ becomes a toric fibration $\pi:(M,\omega)\rightarrow \mathbb{R}^2$. Hereby the set $F^{-1}(c)$ is contained in $\pi^{-1}([0,h]\times [0,h])$. Since $h<\frac{R}{2}$, by Lemma \ref{l.generalizedprobes}, the set $\pi^{-1}([0,h]\times [0,h])$ is displaceable, in particular the focus-focus fiber is displaceable.
\end{proof}
\end{lemma}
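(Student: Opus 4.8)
The plan is to reduce the statement about the focus-focus fiber to the already-established toric displacement result of Lemma \ref{l.generalizedprobes}. The key conceptual device is the nodal trade: under the standing hypothesis that the corner introduced by the focus-focus value is a Delzant corner, Remark \ref{l.conditionsfornodaltrade} guarantees that Lemma \ref{l.nodaltrade} applies, so I may trade the node for an honest toric corner. First I would record that after applying the integral affine transformation $A$ taking the corner to the standard Delzant corner at the origin (with edges along $v_1=(1,0)$ and $v_2=(0,1)$), the eigenray of the node runs along the vertical direction to the boundary, and the collapsing and vanishing covectors span $\Lambda_b^*$, which is exactly the hypothesis of Lemma \ref{l.nodaltrade}. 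Performing the nodal trade replaces the semitoric fibration in a neighborhood of $F^{-1}(c)$ by a toric fibration $\pi:(M,\omega)\to\R^2$ that agrees with the original system away from a compact piece of the preimage of the eigenray (by Proposition \ref{p.nodalslide} and Symington's trade theorem), and in which the focus-focus value has been smoothened into an ordinary Delzant corner.

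Second, I would locate the fiber $F^{-1}(c)$ inside the new toric picture. The focus-focus point sits at the traded corner, which is now at the origin of $A(\Delta_{\vec\epsilon})$, so $F^{-1}(c)$ is contained in the preimage $\pi^{-1}([0,h]\times[0,h])$ of the small square at the corner; here $h$ is the height invariant, which controls how far into the polytope the relevant part of the fiber extends. This is where the geometric hypotheses enter: the assumption that the rectangle $[0,R]\times[0,h]$ based at $(0,0)$ sits inside $A(\Delta_{\vec\epsilon})$ provides exactly the affine embedding of $([0,R]\times[0,h],\mathcal{A}_0)$ into $(B,\mathcal{A})$ required by Lemma \ref{l.generalizedprobes}, and the numerical condition $0<2h<R$ is precisely $h<\tfrac{R}{2}$.

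Third, I would simply invoke Lemma \ref{l.generalizedprobes} with this embedded rectangle: since $h<\tfrac{R}{2}$, the set $\pi^{-1}(\mathrm{Im}([0,h]\times[0,h]))=\pi^{-1}([0,h]\times[0,h])$ is displaceable by a compactly supported Hamiltonian, which displaces a disk of radius $h$ inside the larger disk of radius $R$ via the function $G_h$ from Section \ref{what}. As $F^{-1}(c)\subset \pi^{-1}([0,h]\times[0,h])$, the focus-focus fiber is displaceable as well. The compactly supported displacing Hamiltonian lives in the chart and extends by zero to all of $M$.

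I expect the main obstacle to be the bookkeeping at the seam of the nodal trade rather than any of the estimates, which are immediate once the setup is in place. Concretely, one must check that the region where the traded toric fibration differs from the genuine semitoric system (the compact piece over the eigenline) does not interfere: the displacement must take place entirely inside the embedded chart $\mathrm{Im}(\Psi)$, and one has to ensure that the fiber $F^{-1}(c)$ and the relevant square sit on the side of the eigenray where the two fibrations genuinely agree, so that a Hamiltonian isotopy in the toric model transports to a Hamiltonian isotopy of the original $(M,\omega)$. Verifying that $F^{-1}(c)\subset\pi^{-1}([0,h]\times[0,h])$ after the trade—i.e.\ that the height invariant $h$ really bounds the extent of the fiber at the smoothened corner, and that the hypothesis $h<\mathrm{aff}(v_2)$ keeps the square clear of the opposite edge—is the one point that needs care; everything downstream is a direct application of Lemma \ref{l.generalizedprobes}.
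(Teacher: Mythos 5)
Your proposal is correct and takes essentially the same route as the paper's proof: apply the nodal trade at the Delzant corner (via Remark \ref{l.conditionsfornodaltrade} and Lemma \ref{l.nodaltrade}), observe that $F^{-1}(c)\subset \pi^{-1}([0,h]\times[0,h])$ in the resulting toric fibration, and conclude by Lemma \ref{l.generalizedprobes} since $h<\frac{R}{2}$. One small remark: the ``sidedness'' worry you flag as the main obstacle is not actually needed --- $F^{-1}(c)$ lies over the eigenray, i.e.\ precisely inside the region where the two fibrations differ, but this is harmless because the nodal trade produces a toric fibration of the same symplectic manifold, so displacing the saturated set $\pi^{-1}([0,h]\times[0,h])$ by a compactly supported Hamiltonian of $(M,\omega)$ displaces the subset $F^{-1}(c)$ with no matching across the seam required, which is exactly how the paper argues.
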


\begin{figure}
\begin{center}
    \includegraphics{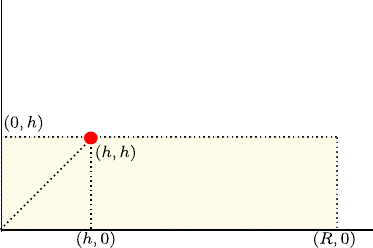}
    \captionof{figure}{Rectangle $[0,R]\times [0,h]$ sitting inside $A(\Delta_{\vec{\epsilon}})$. The red dot represents the focus-focus value.}
    \label{f.ffonthestandarddelzantcorner}
\end{center}
\end{figure}


\section{(Non)displaceability in three explicit systems}
\label{s.examples}
In this section we consider (non)displaceability of fibers of the coupled spin-oscillator, Subsection \ref{s.spin}, the coupled angular momenta, Subsection \ref{s.coupledangularmomenta}, and a semitoric system on the symplectic manifold given by the octagon, Subsection \ref{s.octagon}.
\subsection{(Non)displaceability within the coupled spin-oscillator}
\label{s.spin}
In this section, we study displaceability properties of fibers of the coupled spin-oscillator, a non-compact integrable system.

Let $\rho_1,\rho_2>0$ be positive constants. Consider the product manifold $M=\mathbb{S}^2\times \mathbb{R}^2$ with symplectic form $\omega=\rho_1\omega_{\mathbb{S}^2}\bigoplus \rho_2\omega_{\mathbb{R}^2}$, where $\omega_{\mathbb{S}^2}$ and $\omega_{\mathbb{R}^2}$ are the standard symplectic structures on the unit sphere and the Euclidean space respectively. Let $(x,y,z)$ be Cartesian coordinates on the unit sphere $S^{2}\subset \mathbb{R}^3$ and $(u,v)$ Cartesian coordinates on the plane $\mathbb{R}^2$.

\begin{definition}
A \textbf{coupled spin-oscillator} is a $4$-dimensional integrable system $(M,\omega,(L,H))$, where the momentum map $F=(L,H):M\rightarrow \mathbb{R}^2$ is given by 
\begin{equation}
L(x,y,z,u,v):=\rho_1z+\rho_2\frac{u^2+v^2}{2}, \quad H(x,y,z,u,v):=\frac{xu+yv}{2}.
\end{equation}
\end{definition}
Coupled spin-oscillators are semitoric systems, for more details and background see Pelayo \& \vungoc \ \cite{pelayo2012hamiltonian}. The system has exactly one focus-focus singularity at the point $m:=(0,0,1,0,0)$. In order to understand the displaceability properties of the fibers of these systems we look at a representative of the polytope invariant, see Figure \ref{f.CSO}. These were computed in Pelayo \& \vungoc\  \cite{pelayo2012hamiltonian} and Alonso \cite[Section $5.1$]{alonso2019symplecticthesis}.


\begin{figure}[h]
\begin{center}
    \includegraphics[]{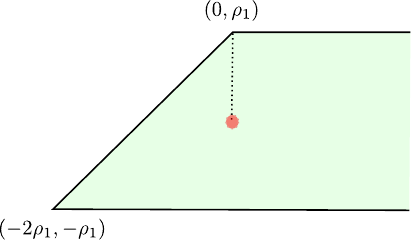}
    \caption{ Representative of the polytope invariant of the coupled-spin-oscillator with $\vec{\epsilon}=+1$. The red dot represents the focus-focus value and is placed at $(0,0)$. Notice that the polytope is \textbf{unbounded}.}
    \label{f.CSO}
\end{center}
\end{figure}

\begin{theorem}
\label{t.CSO}
All of the fibers of a coupled spin-oscillator are \textbf{displaceable}. 
\end{theorem}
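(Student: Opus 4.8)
The plan is to read everything off the explicit representative of the polytope invariant in Figure \ref{f.CSO} and to exploit decisively that this polytope is \emph{unbounded} in the $f_1=L$ direction; this unboundedness is what supplies the "enough space" of our general displacement criteria. I would split the fibers into two groups: the single focus-focus fiber, to be handled by Lemma \ref{l.displaceaingff}, and every other fiber, to be handled by the (generalized) method of probes on the complement of the cut, where the system is genuinely toric.

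For the focus-focus fiber I would proceed exactly as in Subsection \ref{s.displacinggeneralff}. Choose the representative so that the cut $l^{\vec\epsilon}$ runs into the unbounded region, verify from Figure \ref{f.CSO} that the corner created at the focus-focus value $(0,0)$ is a Delzant corner (so that Remark \ref{l.conditionsfornodaltrade} permits a nodal trade via Lemma \ref{l.nodaltrade}), and let $A$ be the integral affine transformation carrying that corner to the standard Delzant corner. Writing $v_1,v_2$ for the two edges at the corner with $\mathrm{aff}(v_1)\geq \mathrm{aff}(v_2)$, the key structural fact is that $v_1$ points into the unbounded direction, so $\mathrm{aff}(v_1)=+\infty$; hence for the height invariant $h$ one can always choose $R$ with $0<2h<R<\mathrm{aff}(v_1)$, and the only genuine constraint $h<\mathrm{aff}(v_2)$ is a single numerical inequality read off from the explicit coordinates. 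Granting it, Lemma \ref{l.displaceaingff} gives displaceability of the focus-focus fiber.

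For the remaining fibers I would work on $B\setminus l^{\vec\epsilon}$, where the global $\mathbb{T}^2$-action exists and the fibration is toric, and apply Lemma \ref{l.probes} together with Lemma \ref{l.generalizedprobes}. For a point $u$ lying on the unbounded side of the cut, a probe issued from the appropriate facet and running into the unbounded direction is a half-infinite segment, so the "less than halfway from the boundary" hypothesis of Lemma \ref{l.probes} holds vacuously and $F^{-1}(u)$ is displaceable; equivalently one fits an affine rectangle $[0,R]\times[0,a]$ with $R$ arbitrarily large and invokes the corollary to Lemma \ref{l.generalizedprobes}. The bounded portion of the polytope and the lower-dimensional boundary fibers (the elliptic circles over the edges and the elliptic-elliptic point at the bottom vertex) are displaced either by probes internal to that region satisfying the halfway estimate, or directly, since a single-point fiber is trivially displaceable.

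The main obstacle I anticipate is the bookkeeping forced by the \emph{cut}, which is a vertical barrier the probes may not cross while the unbounded direction is horizontal. Consequently the easy "probe to infinity" argument only covers fibers on the unbounded side of the cut, and I must still show that every fiber sitting on, near, or on the far side of the cut can be reached by an admissible probe that does not meet $l^{\vec\epsilon}$ and whose length makes the halfway condition hold. Resolving this should come down to choosing the sign $\vec\epsilon$ and the probe directions so that the bounded left region — which has finite vertical extent and is thin relative to the probe lengths available — is fully covered, after which only the single inequality $h<\mathrm{aff}(v_2)$ remains to be checked from Figure \ref{f.CSO}.
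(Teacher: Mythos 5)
Your overall architecture coincides with the paper's: the paper's proof of Theorem \ref{t.CSO} also splits the fibers into those off the eigenline segment (displaced by probes in direction $(1,0)$ from the facet $y=x+\rho_1$, with the halfway condition vacuous because the polytope is unbounded) and those on it (displaced via Section \ref{s.displacinggeneralff}, using that one edge at the cut corner has infinite affine length so $R$ may be taken arbitrarily large). Neither you nor the paper spells out the remaining finite conditions of Lemma \ref{l.displaceaingff}, so that omission is not the issue. One small slip to correct: the cut $l^{\vec{\epsilon}}$ is by definition a \emph{vertical} segment, while the unbounded direction of the coupled spin-oscillator polytope is \emph{horizontal} (the direction of $L$), so you cannot ``choose the representative so that the cut runs into the unbounded region''; the correct structural fact, which you do state afterwards, is that one of the two edges meeting the cut corner is the horizontal unbounded one, whence $\mathrm{aff}(v_1)=+\infty$.

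The genuine gap is your treatment of the fibers \emph{on} the cut segment other than the focus-focus fiber, together with the bounded region to the left of the cut. By insisting that probes not meet $l^{\vec{\epsilon}}$, you make the on-cut fibers unreachable by any probe in that representative, and Lemma \ref{l.displaceaingff} as you cite it only returns the focus-focus fiber; your fallback (``choosing the sign $\vec{\epsilon}$ and the probe directions'') is left entirely unexecuted, and it is not routine: in the bounded left region, cut-avoiding vertical probes miss the points at the halfway height, and horizontal probes at heights crossing the cut are forbidden by your own rule, so the halfway condition does not ``come down to'' a single inequality. The paper resolves both issues at once with a tool you already invoke inside Lemma \ref{l.displaceaingff}: perform the nodal trade of Lemma \ref{l.nodaltrade} (via Remark \ref{l.conditionsfornodaltrade}) \emph{first}. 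After the trade the fibration is toric over the whole polytope and agrees fiberwise with the original system away from a compact set over the eigenray, so the horizontal probes may sweep straight across the former cut and displace every original fiber off the eigenline segment, bounded left region included. For the fibers on the eigenline segment, observe that the proof of Lemma \ref{l.displaceaingff} displaces the entire set $\pi^{-1}([0,h]\times[0,h])$ via Lemma \ref{l.generalizedprobes}, not merely $F^{-1}(c)$; since the cut segment maps into that square, its fibers are displaced simultaneously with the focus-focus fiber. With these two adjustments your proposal closes up into the paper's argument; as written, the on-cut and far-side cases remain unproven.
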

\begin{proof}
There are two cases:
\begin{itemize}
\item 
First consider the fibers outside of the segment of the eigenline through the focus-focus value in Figure \ref{f.CSO}. By Lemma \ref{l.probes} all of these fibers are \textbf{displaceable} by a probe with direction $(1,0)$ starting at the facet $y=x+\rho_1$, possibly after getting rid of the segment of the eigenline to the focus-focus value by applying a nodal trade, see Lemma \ref{l.nodaltrade}.

\item To displace the focus-focus fiber and the fibers lying on the segment of the eigenline of Figure \ref{f.CSO} we use the results of Section \ref{s.displacinggeneralff}, since one of the of the edges has infinite affine length.
\end{itemize}
\end{proof}

Intuitively, it is the non-compactness of the $\mathbb{R}^2$ factor that allows us to displace every fiber.

 \subsection{(Non)displaceability within the coupled angular momenta}
\label{s.coupledangularmomenta}

In this section, we study displaceability properties of fibers of the following systems:

\begin{definition}
\label{d.couplemomenta}
Let $R_1,R_2\in \mathbb{R}^{>0}$ and $t\in [0,1]$ a parameter. Consider the product manifold $M:=\mathbb{S}^2\times \mathbb{S}^2$ with symplectic form $\omega:=-(R_1\omega_{\mathbb{S}^2}\bigoplus R_2\omega_{\mathbb{S}^2})$ where $\omega_{\mathbb{S}^2}$ is the standard symplectic form on $\mathbb{S}^2$.
Let $(x_1,y_1,z_1,x_2,y_2,z_2)$ be Cartesian coordinates on $M$ induced from the ambient $\R^3 \times \R^3$. The \textbf{coupled angular momenta system} is a family of $4$-dimensional completely integrable systems $(M,\omega,F_t:= (L,H_t))$,
where the smooth functions $L,H_t:M\rightarrow \mathbb{R}$ are given by
\begin{equation*}
\begin{cases}
L(x_1,y_1,z_1,x_2,y_2,z_2):=R_1z_1+R_2z_2, \\
H_t(x_1,y_1,z_1,x_2,y_2,z_2):=(1-t)z_1+t(x_1x_2+y_1y_2+z_1z_2)
\end{cases}
\end{equation*}
and $R_1<R_2$. The case $R_1>R_2$ is called \textbf{reverse coupled angular momenta} and the case $R_1=R_2$ is called the \textbf{Kepler problem}. 
\end{definition}

The system has four fixed points located at $(0,0,\pm 1,0 ,0, \pm 1)$. Three are of elliptic-elliptic type, namely $(0,0,\pm 1,0 ,0, 1)$, but $(0,0,1,0,0,-1)$ is of focus-focus type for $t\in\ ]t^{-},t^{+}[$ and degenerate for $t\in \{t^{-},t{+}\}$, where
\begin{equation*}
    t^{\pm}:=\frac{R_2}{2R_2+R_1\mp 2\sqrt{R_1R_2}}.
\end{equation*}
Note that at the parameter $t=\frac{1}{2}$ there is always a focus-focus singularity. For more details and proofs of these results, see for instance Alonso \cite[Section 6]{alonso2019symplecticthesis} and Alonso \& Dullin \& Hohloch \cite{alonso2019symplectic}.


\subsubsection{(Non)displaceability of the fibers}
We now study the (non)displaceability of the fibers of the systems $(M,\omega,F_t)$ in Definition \ref{d.couplemomenta}. Let us consider the map $\Psi:M\rightarrow M$ given by
\begin{equation}
\label{eq.HamDiffeoPsi}
    \Psi(x_1,y_1,z_1,x_2,y_2,z_2):=(-x_1,y_1,-z_1,x_2,-y_2,-z_2).
\end{equation}
Notice that $\Psi$ is a Hamiltonian diffeomorphism since it is the time-$1$ map of a suitable normalization of the Hamiltonian $G: M \to \R$, $G	(x_1,y_1,z_1,x_2,y_2,z_2):=y_1+x_2$.
Using $\Psi$, we obtain the following result:

\begin{proposition}
\label{p.general}
Consider $(a,b)\in F_t(M)$ with $a\neq 0$.
Then the fiber $(F_t)^{-1}(a,b)$ is displaceable for $t\in [0,1]$.
\end{proposition}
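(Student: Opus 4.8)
The plan is to displace the fiber directly with the explicit Hamiltonian diffeomorphism $\Psi$ introduced in Equation \eqref{eq.HamDiffeoPsi}, exploiting the way it interacts with the first component $L$ of the momentum map. The entire argument rests on one short computation together with the elementary fact that two level sets of $L$ over distinct values are disjoint.

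First I would compute $L\circ\Psi$. Since $\Psi$ acts on the height coordinates by $z_1\mapsto -z_1$ and $z_2\mapsto -z_2$, we obtain
\[
    L\circ\Psi = R_1(-z_1)+R_2(-z_2) = -(R_1z_1+R_2z_2) = -L,
\]
so that $\Psi$ reverses the sign of $L$. Consequently, for any point $p\in (F_t)^{-1}(a,b)$ we have $L(p)=a$ and hence $L(\Psi(p))=-a$. This means $\Psi$ maps the whole fiber into the level set $\{L=-a\}$, whereas $(F_t)^{-1}(a,b)$ itself lies in $\{L=a\}$. Under the hypothesis $a\neq 0$ these two level sets are disjoint, and therefore
\[
    \Psi\bigl((F_t)^{-1}(a,b)\bigr)\cap (F_t)^{-1}(a,b)=\emptyset.
\]
Because $\Psi$ is a Hamiltonian diffeomorphism — it is the time-$1$ map of the flow of a suitable normalization of $G=y_1+x_2$, as recorded just after Equation \eqref{eq.HamDiffeoPsi} — this exhibits the fiber as \textbf{displaceable} for every $t\in[0,1]$.

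There is essentially no obstacle in this argument. The behavior of $H_t\circ\Psi$ is completely irrelevant: moving $L$ off its value $a\neq 0$ already forces the image into a disjoint set no matter what happens to the second coordinate, so one never needs to track $H_t$. The only points to check carefully are the identity $L\circ\Psi=-L$, which is immediate from the coordinate expression of $L$, and the fact that $\Psi$ is genuinely Hamiltonian, which is supplied by the observation that it is generated by the normalization of $G$. The hypothesis $a\neq 0$ enters solely to guarantee $\{L=a\}\cap\{L=-a\}=\emptyset$; the remaining case $a=0$ — which is where the single nondisplaceable fiber of the system resides — cannot be treated by this reflection trick and must be addressed by the separate analysis carried out in the rest of Section \ref{s.coupledangularmomenta}.
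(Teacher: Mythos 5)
Your proposal is correct and is essentially the paper's own proof: both use the identity $L\circ\Psi=-L$ for the Hamiltonian diffeomorphism $\Psi$ of Equation \eqref{eq.HamDiffeoPsi} to show that $\Psi$ sends the fiber into the level set $\{L=-a\}$, which is disjoint from $\{L=a\}$ precisely when $a\neq 0$. Your additional remarks (that $H_t$ plays no role and that $a=0$ is exactly the case the trick cannot reach) are accurate glosses on the same argument.
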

\begin{proof}
For $p:=(x_1,y_1,z_1,x_2,y_2,z_2)\in M$ we have $L(p)=R_1z_1+R_2z_2$ and
\begin{equation*}
    L(\Psi(p))=-(R_1z_1+R_2z_2)=-L(p).
\end{equation*}
Therefore, whenever $a\neq 0$, we deduce
\begin{equation*}
    F_t^{-1}(a,b)\cap \Psi(F_t^{-1}(a,b))=\emptyset.
\end{equation*}
\end{proof}
\begin{corollary}
Let $R_1\neq R_2$ and $t\in \ ]t^{-},t^{+}[$, the focus-focus fiber of $(M,\omega,F_t)$ is displaceable. 
\end{corollary}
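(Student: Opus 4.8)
The plan is to read off the focus-focus fiber's critical value from the coupled angular momenta structure and then invoke the Hamiltonian diffeomorphism $\Psi$ already constructed in Proposition \ref{p.general}. The key observation is that Proposition \ref{p.general} displaces \emph{every} fiber $(F_t)^{-1}(a,b)$ with $a \neq 0$, so the whole task reduces to verifying that the focus-focus value has nonvanishing first coordinate, i.e.\ that $L \neq 0$ at the focus-focus point.

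First I would locate the focus-focus point. From the discussion preceding the statement, the focus-focus singularity of $(M,\omega,F_t)$ for $t \in \ ]t^{-},t^{+}[$ sits at the fixed point $m := (0,0,1,0,0,-1) \in \mathbb{S}^2 \times \mathbb{S}^2$. I would then simply evaluate the first component $L$ of the momentum map at $m$:
\begin{equation*}
L(0,0,1,0,0,-1) = R_1 \cdot 1 + R_2 \cdot (-1) = R_1 - R_2.
\end{equation*}
Next I would use the standing hypothesis $R_1 \neq R_2$ to conclude $L(m) = R_1 - R_2 \neq 0$. Writing the focus-focus value as $(a,b) := F_t(m)$, this says precisely that $a = R_1 - R_2 \neq 0$, so the focus-focus fiber $(F_t)^{-1}(a,b)$ falls under the hypothesis of Proposition \ref{p.general}.

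Finally I would apply Proposition \ref{p.general} directly to $(a,b)$ with $a \neq 0$: it gives $F_t^{-1}(a,b) \cap \Psi(F_t^{-1}(a,b)) = \emptyset$, where $\Psi$ is the Hamiltonian diffeomorphism from \eqref{eq.HamDiffeoPsi}. Since the focus-focus fiber is one of the fibers covered by that proposition, it is displaceable, and the range $t \in \ ]t^{-},t^{+}[$ is exactly the range in which the focus-focus point exists. This completes the argument.

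I do not expect any serious obstacle here; the corollary is essentially an evaluation-at-a-point together with an application of the preceding proposition. The only thing requiring care is the bookkeeping: confirming that the focus-focus point is really $(0,0,1,0,0,-1)$ (as opposed to one of the elliptic-elliptic points) and that $L$ there equals $R_1 - R_2$ rather than, say, $R_1 + R_2$. The hypothesis $R_1 \neq R_2$ is used precisely and only to guarantee $a \neq 0$; note that in the degenerate Kepler case $R_1 = R_2$ this argument breaks down exactly because $L(m) = 0$, which is consistent with the separate, more delicate treatment that case receives elsewhere in the paper.
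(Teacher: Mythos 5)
Your proposal is correct and follows exactly the paper's route: the paper's proof simply records that the focus-focus value is $(R_1-R_2,\,1-2t)$ and invokes Proposition \ref{p.general}, which is precisely your evaluation of $L$ at the focus-focus point $(0,0,1,0,0,-1)$ followed by the same application of $\Psi$. Your added bookkeeping (verifying $L(m)=R_1-R_2$ and noting the argument's failure in the Kepler case $R_1=R_2$) is sound and consistent with the paper's separate treatment of that case.
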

\begin{proof}
The focus-focus value is $(R_1-R_2,1-2t)$. Hence the result follows from Proposition \ref{p.general}.
\end{proof}
It remains to investigate the (non)displaceability properties of $F^{-1}(a,b)$ where $(a,b)\in F_t(M)$ with $a=0$. For the rest of this section assume $R_2>R_1$. The case $R_1>R_2$ is analogous. The case $R_2=R_1$ will be dealt with separately in Section \ref{s.keplerproblem}.

We consider the system for $t^{-}<t<t^{+}$, i.e., there are three elliptic-elliptic singularities and one singularity of focus-focus type. For the following statement, we will make use of the polytope invariant (cf.\ Section \ref{s.semitoric}) of the coupled angular momenta system. One of the representatives of the polytope invariant, after applying a horizontal translation, is drawn in Figure \ref{f.1}.

\begin{figure}[h]
\begin{center}
    \includegraphics[]{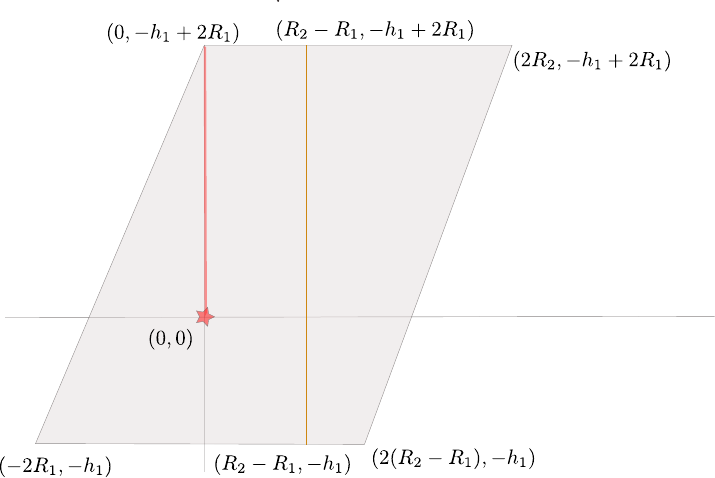}
    \caption{ A horizontal translation of a representative of the polytope invariant for $\vec{\epsilon}=1$ of the coupled angular momenta system where $t^{-}<t<t^{+}$. The red segment stands for the cut from above to the focus-focus value along the eigenline. The orange line represents the vertical line $x=R_2-R_1$. The value $h_1$ is the height invariant of the system.}
    \label{f.1}
\end{center}
\end{figure}

\begin{proposition}
\label{p.fibersofcoupledangularmomenta}
Let $R_1  \neq R_2$ and $t^{-}<t<t^{+}$. Then
\begin{itemize}
\item there only exists one nondisplaceable fiber in the coupled angular momenta system $F_t=(L,H_t)$, namely $(F_t^{-1}(f^{-1}))(R_2-R_1,-h_1+R_1)$, where $h_1$ is the height invariant of the system and $f$ is the map associated with Figure \ref{f.1}, i.e,  the horizontal translation of a representative of the polytope invariant (cf.\ Theorem \ref{th.straigheningHomeo}). Therefore the nondisplaceable fiber is a stem and nondisplaceable by symplectomorphisms. This fiber is independent of the choice of representative of the polytope invariant. 
\item All other fibers are displaceable. 
\item In particular, the focus-focus fiber is \textbf{displaceable}.
\end{itemize}
\end{proposition}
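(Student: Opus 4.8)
The plan is to split the statement into its three assertions and handle them in order, leaning on the Hamiltonian diffeomorphism $\Psi$ from Equation \eqref{eq.HamDiffeoPsi} together with the existence and stem results for partial symplectic quasi-states. First I would dispose of all fibers $F_t^{-1}(a,b)$ with $a \neq 0$: these are already displaceable by Proposition \ref{p.general}, so only the vertical line $a = 0$ (equivalently $x = R_2 - R_1$ after the horizontal translation $f$, drawn in orange in Figure \ref{f.1}) remains to be analyzed. The focus-focus value lies on this line, since it equals $(R_1 - R_2, 1 - 2t)$ and is carried to $x = R_2 - R_1$ under $f$.

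Next I would identify the single nondisplaceable fiber. By Theorem \ref{t.existsnondisplaceable} there is at least one nondisplaceable fiber, and by Theorem \ref{t.pseudo} there is a $\zeta$-pseudoheavy fiber $F_t^{-1}(y_0)$ for the partial symplectic quasi-state $\zeta$ on the closed $4$-manifold $(M,\omega)$. Since every fiber with $a \neq 0$ is displaceable, hence not pseudoheavy by Theorem \ref{t.properties}(3), the pseudoheavy fiber must sit over the line $a = 0$. To pin down its location I would invoke the symmetry $\Psi$ more carefully: $\Psi$ fixes the line $a = 0$ setwise (it sends $L \mapsto -L$, so $L = 0$ is preserved) and should act as a reflection of the quasi-state data along the fiber direction, forcing the nondisplaceable fiber to be the unique $\Psi$-symmetric one. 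I would then use the probe technique of Lemma \ref{l.probes} to displace every fiber strictly on one side of the candidate fiber $F_t^{-1}(f^{-1}(R_2 - R_1, -h_1 + R_1))$ along the line $x = R_2 - R_1$, displacing from the left using probes in direction $(1,0)$ and from the right using probes in direction $(-1,0)$ (after a nodal trade as in Lemma \ref{l.nodaltrade} to remove the eigenline cut). This shows every fiber over $a = 0$ except the designated one is displaceable; combined with the $a \neq 0$ case, the designated fiber is the \textbf{only} candidate to be nondisplaceable, and since a nondisplaceable fiber must exist, it is exactly this one. Being the unique nondisplaceable fiber, it is a \textbf{stem} in the sense of the definition preceding Theorem \ref{t.stem}, hence nondisplaceable by symplectomorphisms by Theorem \ref{t.stem}.

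For the third bullet I would simply note that the focus-focus value $(R_1 - R_2, 1 - 2t)$, after translation, has height $1 - 2t$ along the line $x = R_2 - R_1$, whereas the stem sits at height $-h_1 + R_1$. As long as these two heights differ — which holds for $R_1 \neq R_2$ because the height invariant $h_1$ positions the stem strictly away from the focus-focus value — the focus-focus fiber is distinct from the stem and therefore \textbf{displaceable}. Alternatively, the displaceability of the focus-focus fiber already follows directly from the Corollary to Proposition \ref{p.general}, which I would cite as confirmation.

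The main obstacle I anticipate is the precise identification of the nondisplaceable fiber's position, i.e.\ verifying that the stem sits at height exactly $-h_1 + R_1$ and that the probes genuinely reach every other fiber on the critical line from one side or the other. The symmetry argument pins down \emph{uniqueness} of a nondisplaceable fiber cleanly, but extracting its \emph{explicit coordinates} requires carefully tracking the affine geometry of the representative polytope in Figure \ref{f.1}, in particular confirming that after the nodal trade the two edges meeting at the straightened corner provide enough affine length on both sides of the candidate fiber for the probe condition ``less than halfway'' in Lemma \ref{l.probes} to be satisfiable. I expect the height invariant $h_1$ to enter precisely here, controlling how far the focus-focus value sits from the boundary and hence which fibers the upward and downward probes can cover.
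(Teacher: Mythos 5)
Your overall skeleton is the same as the paper's (Proposition \ref{p.general} kills all fibers with $a\neq 0$; probes kill everything else on the critical line except one fiber; Theorem \ref{t.existsnondisplaceable} then forces that fiber to be nondisplaceable, hence a stem, hence nondisplaceable by symplectomorphisms via Theorem \ref{t.stem}), but a concrete geometric error derails your middle step. You assert that the focus-focus value lies on the line $a=0$ and is carried by $f$ to $x=R_2-R_1$. It is not and it is not: the focus-focus value is $(R_1-R_2,1-2t)$ with first coordinate $R_1-R_2\neq 0$, and since $f$ preserves the first coordinate up to the horizontal translation sending $x=0$ to $x=R_2-R_1$, the focus-focus value and its vertical cut land at $x=0$ in Figure \ref{f.1} --- \emph{disjoint} from the orange line $x=R_2-R_1$. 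Consequently no nodal trade is needed at all: the paper simply runs the two vertical probes with directions $(0,\pm 1)$ along the line $x=R_2-R_1$, which never meet the eigenline segment, and these displace every fiber $F_t^{-1}(f^{-1}(R_2-R_1,b))$ except the midpoint of that vertical chord, $b=-h_1+R_1$. Your horizontal probes $(\pm1,0)$ would at best leave undisplaced the midpoints of the horizontal chords, and you never verify that this exceptional locus meets the line $x=R_2-R_1$ in exactly the point of height $-h_1+R_1$; with the vertical probes this identification is immediate.

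Two further steps of your proposal fail as written. First, the claim that $\Psi$ ``acts as a reflection of the quasi-state data'' along the fibers over $a=0$ is false: on $L^{-1}(0)$ one has $H_t(\Psi(p))=-H_t(p)+2tz_1z_2=-b-2tz^2$ (this is exactly the computation in the proof of Proposition \ref{p.displacingeasykepler}), so $\Psi$ does not even map the family of fibers over $L=0$ to itself, and there is no distinguished ``$\Psi$-symmetric'' fiber; uniqueness must come from probes plus Theorem \ref{t.existsnondisplaceable}, as in the paper. Second, in your (mistaken) picture where the cut lies along the probing line, the nodal trade cannot rescue the fibers over the cut: by Lemma \ref{l.nodaltrade} only the base data on the complement of the eigenray is preserved, so the post-trade toric fibration differs from the original semitoric one precisely over the eigenray, and displacing its fibers there says nothing about the original fibers over the cut --- this is why the paper uses nodal trades only to displace fibers \emph{off} the eigenline (as in Theorem \ref{t.CSO}) and handles fibers on the eigenline by the machinery of Lemma \ref{l.displaceaingff}. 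Finally, your fallback for the third bullet --- citing the Corollary to Proposition \ref{p.general} --- is correct and is exactly the paper's argument, since the focus-focus value has $L$-coordinate $R_1-R_2\neq 0$; the alternative reasoning about the focus-focus fiber having ``height $1-2t$ along the line $x=R_2-R_1$'' rests on the same misplacement and should be discarded.
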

\begin{proof} 
Note that the map $f$, defined as the composition of a horizontal translation with the straightening homeomorphism $f_{\vec{\epsilon}}$ of a representative of the polytope invariant, maps the vertical line $x=0$ in $F_t(M)$ to the vertical line $x=R_2-R_1$ in $f(F_t(M))$, see Alonso \& Dullin \& Hohloch \cite{alonso2019symplectic}.

We can apply Lemma \ref{l.probes} as long as the probe does not intersect the segment of the eigenline, since outside this segment the semitoric system is a toric fibration. Hence, in this polytope, by using a probe with direction $(0,\pm 1)$, every fiber of the form $F_t^{-1}(f^{-1}(R_2-R_1,b))$ with $b\neq -h_1+R_1$ is displaceable.
Furthermore, using Theorem \ref{t.existsnondisplaceable} and Proposition \ref{p.general}, we conclude that the fiber  $F_t^{-1}(f^{-1}(R_2-R_1,-h_1+R_1))$ is nondisplaceable. In particular it is a stem, and by Theorem \ref{t.stem} nondisplaceable by symplectomorphisms.
\end{proof}


\subsubsection{(Non)displaceability in the Kepler problem}
\label{s.keplerproblem}

Now we consider the case $R_1=R_2=:R$ which is known as the Kepler problem. Filling $R$ into the formula of $t^\pm$, we find $t^{-}=\frac{1}{5}$ and $t^{+}=1$.

We will study the system for $\frac{1}{5}<t<1$, where the system has precisely one focus-focus point.
By Proposition \ref{p.general}, every fiber of $F_t$ over $(a,b)$ with $a\neq 0$ is displaceable. Let us now consider a fiber $F_t^{-1}(0,b)$ with $b\in \mathbb{R}$.

\begin{proposition}
\label{p.displacingeasykepler}
Consider the Kepler problem with $\frac{1}{5}<t<1$. Then the fiber $F_t^{-1}(0,b)$ is displaceable for $b\in ([-1,-t[ \ \cup\ \mathbb{R}^{>0})\cap F_t(M)$.
In particular, if $\frac{1}{5}<t<\frac{1}{2}$, the focus-focus fiber $F_t^{-1}(0,1-2t)$ is displaceable.
\end{proposition}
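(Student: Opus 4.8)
The plan is to reduce to the vertical line $\{a=0\}$ and to displace its fibers by the method of probes inside a representative of the polytope invariant, the essential new feature---compared with the case $R_1\neq R_2$ of Proposition \ref{p.fibersofcoupledangularmomenta}---being that for $R_1=R_2$ the focus-focus value now sits on the very line we must probe. First I would note that Proposition \ref{p.general} already displaces every fiber over $(a,b)$ with $a\neq 0$, so only the fibers over $\{a=0\}=\{L=0\}$ remain. Evaluating $F_t$ at the fixed points shows that on this line there are exactly two distinguished values: the image $(0,-1)$ of the elliptic-elliptic fixed point $(0,0,-1,0,0,1)$, which is a Delzant corner, and the focus-focus value $(0,1-2t)$, the image of $(0,0,1,0,0,-1)$; every other point of $\{a=0\}$ is a regular value. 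Since for $R_1=R_2$ the straightening homeomorphism $f_{\vec{\epsilon}}$ of Theorem \ref{th.straigheningHomeo} preserves the first coordinate and needs no horizontal translation, both of these points stay on $\{x=0\}$ in any representative $\Delta_{\vec{\epsilon}}$ (compare Figure \ref{f.1}). Observe already that the affine midpoint of the segment from $(0,-1)$ to $(0,1-2t)$ is exactly $(0,-t)$, which is the threshold appearing in the statement.

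For each fiber I would pass to whichever representative ($\vec{\epsilon}=+1$ or $\vec{\epsilon}=-1$) puts the cut along the eigenline on the side \emph{away} from the point to be displaced, so that the cut serves as a barrier but never contains that point. In the lower range $-1\le b<-t$ I would probe from the Delzant corner $(0,-1)$ along the eigenline toward the focus-focus value, using the focus-focus value (equivalently, the near end of the cut) as the obstruction; this is the generalized probe of Lemma \ref{l.generalizedprobes}, based at the corner with its long side directed along $\{x=0\}$. The halfway condition $h<\tfrac{R}{2}$ should then amount to requiring $(0,b)$ to be closer, in affine distance, to $(0,-1)$ than to $(0,1-2t)$, which by the midpoint computation above is exactly $b<-t$. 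In the upper range $b>0$ I would instead probe from the upper part of $\Delta_{\vec{\epsilon}}$ by Lemma \ref{l.probes}, and for the fibers closest to the focus-focus value use the product-of-disks construction of Section \ref{s.displacinggeneralff} (Lemma \ref{l.displaceaingff}) to displace an entire neighborhood of the focus-focus value; this covers in particular the focus-focus fiber itself when $t<\tfrac12$, since then $1-2t>0$ falls into the displaceable range.

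The hard part, and the reason the argument is delicate here while it was routine for $R_1\neq R_2$, is precisely this coincidence of the focus-focus value with the eigenline through the elliptic-elliptic corner on the line $\{a=0\}$: the cut is a barrier lying \emph{along} the line we probe. Consequently the affine lengths entering the halfway conditions must be read off in the straightened polytope $\Delta_{\vec{\epsilon}}$ and not in $F_t(M)$, whose edges are irrational; one must verify that the chosen probe directions are integrally transverse to the relevant facets and that the rectangle of Lemma \ref{l.generalizedprobes}/\ref{l.displaceaingff} genuinely fits inside $\Delta_{\vec{\epsilon}}$ without meeting the cut. I expect the bulk of the work to lie in pinning the two thresholds $-t$ and $0$ exactly, and thereby in isolating the interval $[-t,0]$ that this method cannot reach and that must be treated separately when one determines the nondisplaceable fibers.
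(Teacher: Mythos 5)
Your route is genuinely different from the paper's, and it has real gaps. The paper proves this proposition in a few lines with the very Hamiltonian diffeomorphism $\Psi$ of Equation \eqref{eq.HamDiffeoPsi} that you already invoke for $a\neq 0$: computing $H_t\circ\Psi=-H_t+2tz_1z_2$ and using that $z_1=-z_2=:z$ on $L^{-1}(0)$ gives $H_t(\Psi(p))=-b-2tz^2$, hence
\begin{equation*}
F_t\bigl(\Psi(F_t^{-1}(0,b))\bigr)\subset \{0\}\times[-b-2t,\,-b],
\end{equation*}
an interval that misses $b$ exactly when $b<-t$ or $b>0$. The thresholds $-t$ and $0$ fall out of this algebra; no polytope invariant, probes, or cuts are involved. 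You had $\Psi$ in hand and only needed to evaluate $H_t\circ \Psi$ on $L^{-1}(0)$.

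Your probe-based plan breaks down at exactly the points you flag as delicate. First, the thresholds in the statement live in $(J,H_t)$-coordinates, while the halfway condition of Lemma \ref{l.probes} and the condition $h<\frac{R}{2}$ of Lemmas \ref{l.generalizedprobes} and \ref{l.displaceaingff} are measured in the straightened polytope, where the second coordinate is the nonlinear action integral $f^{(2)}_{\vec{\epsilon}}(0,\cdot)$ of Theorem \ref{th.straigheningHomeo}. The Euclidean midpoint $(0,-t)$ of the segment from $(0,-1)$ to $(0,1-2t)$ in $F_t(M)$ therefore carries no affine meaning, and your claim that the halfway condition ``is exactly $b<-t$'' is unjustified; making it precise would require computing the action integrals, which you do not do. Second, the product-of-disks step near the focus-focus value needs $2R-h<R$, i.e.\ $t<t_0$ (Step 1 of Proposition \ref{pr.keplerOneNondispl}); since the proposition is asserted for all $\frac{1}{5}<t<1$, and Lemma \ref{l.chekanov} shows that for $t>t_0$ the fibers just above the focus-focus value are \emph{non}displaceable, your prescription to ``use Lemma \ref{l.displaceaingff} for the fibers closest to the focus-focus value'' is false in that range --- there you would instead have to show that every $b>0$ corresponds to $y>R-h$ in the straightened coordinates (cf.\ Proposition \ref{p.displacingEP}), again an action computation that is absent. (It does happen to cover the final claim about the focus-focus fiber, since $t_0>\frac{1}{2}$.) Third, a structural point: Lemma \ref{l.probes} requires the initial point of a probe to lie in the interior of a facet, and any vertical probe through a point of $J^{-1}(0)$ would have to start at the bottom vertex; your substitute rectangle ``based at the corner with long side along $\{x=0\}$'' does not have its sides along the two facets of that corner, because after the normalization $A\circ T_{0,h-2R}$ of \eqref{eq.mapsfordelzantcorner} the line $J^{-1}(0)$ becomes the \emph{diagonal} of the square in Figure \ref{f.affinetrasnformationappliedtopolytopeinvariant}, not an edge, so Lemma \ref{l.generalizedprobes} does not apply in the way you use it.
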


\begin{proof}
Recall the Hamiltonian diffeomorphism $\Psi$ from Equation \eqref{eq.HamDiffeoPsi}.
Consider $p:=(x_1,y_1,z_1,x_2,y_2,z_2)\in M$ and set $b:=H_t(p)=(1-t)z_1+t(x_1x_2+y_1y_2+z_1z_2)$. Then
\begin{align*}
    H_t(\Psi(p)) & = -(1-t)z_1+t(-x_1x_2-y_1y_2+z_1z_2)
    \\
    & = -(1-t)z_1-t(x_1x_2+y_1y_2+z_1z_2)+2tz_1z_2
    \\
    &= -H_t(p)+2tz_1z_2
    \\
    &= -b + 2tz_1z_2.
\end{align*}
On $L^{-1}(0)$, we have $z_1=-z_2$. Setting $z:=z_1= -z_2$, we find that $H_t(\Psi(p))=-b-2tz^2$ on $L^{-1}(0)$.
Therefore we have 
\begin{equation*}
    F_t(\Psi(F_t^{-1}(0,b)))\subset \{0\}\times [-b-2t,-b].
\end{equation*}
For $b<-t$, we have $-b-2t>-t$ and hence
\begin{equation*}
    \Psi(F_t^{-1}(0,b))\cap F_t^{-1}(0,b) =\emptyset.
\end{equation*}
Moreover, for $b\in \mathbb{R}^{>0}$, we have $[-b-2t,-b]\cap \{b\}=\emptyset$,  hence the desired fibers are displaceable by $\Psi$.
Note that the focus-focus value is $(0,1-2t) \in \R^2$. Hence, for $t<\frac{1}{2}$, the focus-focus fiber $F_t^{-1}(0,1-2t)$ is displaceable.
\end{proof}

Let us now inquire about the (non)displaceability of the focus-focus fiber for $t\geq \frac{1}{2}$. First we need to establish some facts about the height invariant. From now on $h:=h_t$ is the height invariant associated with the Kepler problem $(M,\omega,F_t)$. In Alonso \& Dullin \& Hohloch \cite{alonso2019symplectic} the authors compute the height invariant of the Kepler problem and show that it is given by 
\begin{equation*}
 h=R\tilde{h}
\end{equation*}
where
\begin{equation*}
\tilde{h}:=2-\frac{2}{\pi}\left(2\arctan(e^{-v})-sech(v))\right),\quad v:=\text{arctanh} \left(\frac{1-3t}{2t}\right).
\end{equation*}
In order to understand the displaceability properties we need to understand how $h=h_t$ varies with $t$. Figure \ref{f.plotoftildeh} shows a plot of $\tilde{h}$.
\begin{figure}
\begin{center}
    \includegraphics[width=8cm]{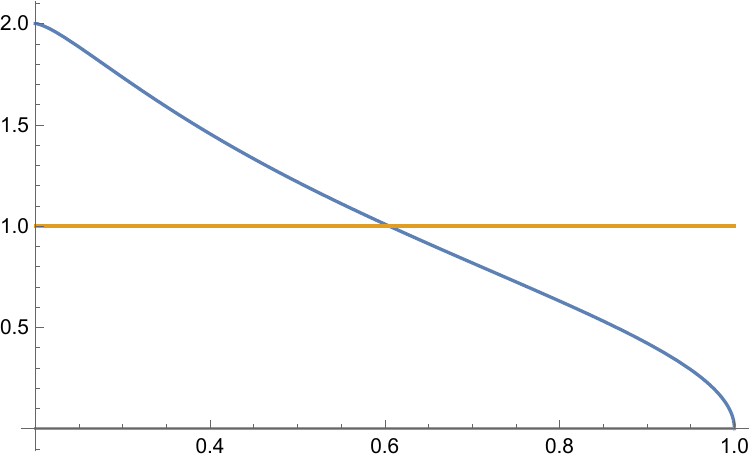}
    \caption{Plot of $\tilde{h}$ for $t\in \ ]t^{-},t^{+}[$ in blue. Plot of the constant line equal $1$ in yellow. Plot of the constant line equal $0$ in green.}
    \label{f.plotoftildeh}
\end{center}
\end{figure}
Let $t^{-}<t_0<t^{+}$ be the value determined by $\tilde{h}(t_0)=1$. Define the maps $T_{0,h-2R}: \R^2 \to \R^2$ and $A: \R^2 \to \R^2$ via
\begin{equation}
\label{eq.mapsfordelzantcorner}
 T_{0,h-2R}(x,y):=(x,y+(h-2R))
 \quad \mbox{and} \quad
     A := \begin{bmatrix}
  0 & -1\\
  1 & -1
    \end{bmatrix} \in GL(n,\mathbb{Z}).
\end{equation}

\begin{proposition}
\label{pr.keplerOneNondispl}
    For  $ \frac{1}{5}<t<t_0$, the Kepler problem $(M,\omega,F_t)$ has a unique nondisplaceable fiber. In particular it is a stem and nondisplaceable by symplectomorphisms.
    It is given by $F_t^{-1}(f_{\vec{\epsilon}}^{-1}((A\circ T_{0,h-2R})^{-1}(R,R)))$ where $A$ and $T_{0,h-2R}$ are the maps defined in \eqref{eq.mapsfordelzantcorner} and $f_{\vec{\epsilon}}$ is the straightening homeomorphism associated with
    the representative of the polytope invariant given in Figure \ref{f.polytopeinvaraintofkeplerproblem}. In particular, the focus-focus fiber is \textbf{displaceable}.
\end{proposition}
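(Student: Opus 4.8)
The plan is to adapt the argument of Proposition~\ref{p.fibersofcoupledangularmomenta} to the degenerate case $R_1=R_2=R$. The decisive new feature is that the focus-focus value is now $(0,1-2t)$, so it lies on the critical line $\{x=0\}$ and its fiber is no longer displaced for free by the involution $\Psi$ of \eqref{eq.HamDiffeoPsi}; it must instead be displaced by the method of Section~\ref{s.displacinggeneralff}, and this is exactly where the hypothesis $t<t_0$ is used. First I would dispose of the fibers off the critical line: by Proposition~\ref{p.general} every fiber over $(a,b)$ with $a\neq0$ is displaceable, and after the straightening homeomorphism $f_{\vec{\epsilon}}$ these are precisely the fibers lying off the vertical eigenline through the focus-focus value in the representative $\Delta_{\vec{\epsilon}}$ of Figure~\ref{f.polytopeinvaraintofkeplerproblem}. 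What remains is to analyze the fibers over the eigenline.

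On the eigenline I would argue as in Proposition~\ref{p.fibersofcoupledangularmomenta}: away from the eigenline cut the system is a genuine toric fibration, so Lemma~\ref{l.probes} applies, and probes in the directions $(0,\pm1)$ displace every regular fiber over the eigenline whose height is strictly less than halfway from the relevant facet. Passing to the normalized picture via the integral affine map $A\circ T_{0,h-2R}$ of \eqref{eq.mapsfordelzantcorner}, in which the edges framing the focus-focus corner have affine length $2R$ so that ``halfway'' is the affine distance $R$, the unique fiber left undisplaced by these probes is the one over $(A\circ T_{0,h-2R})^{-1}(R,R)$.

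The main obstacle is displacing the focus-focus fiber itself, which is the step using $t<t_0$. By the definition of $t_0$ and the monotonicity of $\tilde h$ in the range considered (see Figure~\ref{f.plotoftildeh}), one has $\tilde h(t)<1$, hence $h=R\tilde h<R$. After checking that the focus-focus corner is Delzant, so that a nodal trade is legitimate by Remark~\ref{l.conditionsfornodaltrade} and Lemma~\ref{l.nodaltrade}, the map $A\circ T_{0,h-2R}$ sends it to the standard Delzant corner at the origin with an adjacent edge of affine length $2R$. Since $h<R$ gives $2h<2R$, one may pick a rectangle length $R'\in(2h,2R)$ so that $[0,R']\times[0,h]$ sits inside $A(\Delta_{\vec{\epsilon}})$ and avoids the remaining cut; Lemma~\ref{l.displaceaingff}, through the nodal trade and the generalized probe of Lemma~\ref{l.generalizedprobes}, then displaces the focus-focus fiber. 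The delicate bookkeeping is precisely the verification that the corner is Delzant and that such a rectangle fits without crossing the cut, which is why the clean inequality $h<R$ suffices once the edge length $2R$ has been identified.

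Finally I would conclude by existence and rigidity. By the previous steps every fiber except the one over $(A\circ T_{0,h-2R})^{-1}(R,R)$ is displaceable, so since Theorem~\ref{t.existsnondisplaceable} guarantees at least one nondisplaceable fiber, that remaining fiber must be nondisplaceable, and it is therefore the unique nondisplaceable fiber. As all other fibers are displaceable it is a stem, whence Theorem~\ref{t.stem} gives that it is nondisplaceable by symplectomorphisms; the focus-focus fiber, already displaced above, is in particular not this fiber.
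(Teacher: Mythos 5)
Your overall architecture matches the paper's proof exactly (off-line fibers via Proposition \ref{p.general}, probes on the line $x=0$ leaving only the fiber over $(R,R)$, displacement of the focus-focus fiber via nodal trade and Lemma \ref{l.displaceaingff}, then Theorem \ref{t.existsnondisplaceable} plus Theorem \ref{t.stem} to conclude), but the step you flag as the main obstacle contains a genuine error: the inequality you extract from the definition of $t_0$ is backwards. The function $\tilde h$ is \emph{decreasing} on $]t^-,t^+[$ (it tends to $2$ as $t\to \frac15$ and to $0$ as $t\to 1$; see Figure \ref{f.plotoftildeh}), so for $\frac15<t<t_0$ one has $\tilde h(t)>1$, i.e., $h>R$ --- not $h<R$ as you claim. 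Indeed $h<R$ is what holds for $t>t_0$, and there the focus-focus fiber is \emph{nondisplaceable} (Lemma \ref{l.chekanov}); if your inequality were the right one, your argument would displace the focus-focus fiber precisely in the regime where it cannot be displaced.

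The source of the sign confusion is that you identified the height invariant $h$ with the length of the cut segment at the focus-focus value, which is correct in the $\vec\epsilon=-1$ setup of Section \ref{s.displacinggeneralff} but not here: the representative in Figure \ref{f.polytopeinvaraintofkeplerproblem} has $\vec\epsilon=+1$, so the cut runs \emph{upward} and its affine length is $2R-h$, since the vertical extent of the polytope over the focus-focus value is $2R$. The condition needed to run Lemma \ref{l.displaceaingff} in the square $A\circ T_{0,h-2R}\bigl(f_{\vec\epsilon}(F_t(M))\bigr)$ of side $2R$ is therefore $2(2R-h)<2R$, i.e., $2R-h<R$, which is equivalent to $\tilde h(t)>1$ and hence holds exactly for $t<t_0$ --- this is the paper's Step 1. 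As written, your rectangle cannot even exist: with the true inequality $h>R$ the interval $(2h,2R)$ from which you pick $R'$ is empty. Replacing $h$ by the correct cut length $2R-h$ throughout your ``main obstacle'' paragraph repairs the argument and reduces it to the paper's proof; the remaining steps (probes on the diagonal, the halfway obstruction at $(R,R)$, and the stem conclusion) are sound as you state them.
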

\begin{proof}
Consider the representative with $\vec{\epsilon}=1$ of the polytope invariant plotted in Figure \ref{f.polytopeinvaraintofkeplerproblem}. 
Furthermore, notice that applying the transformation $A\circ T_{0,h-2R}$ to the polytope invariant in Figure \ref{f.polytopeinvaraintofkeplerproblem} we obtain Figure \ref{f.affinetrasnformationappliedtopolytopeinvariant}. Furthermore the set $J^{-1}(0)$ is mapped to the diagonal in Figure \ref{f.affinetrasnformationappliedtopolytopeinvariant}.
\begin{figure}
\begin{center}
    \includegraphics[width=0.6\textwidth]{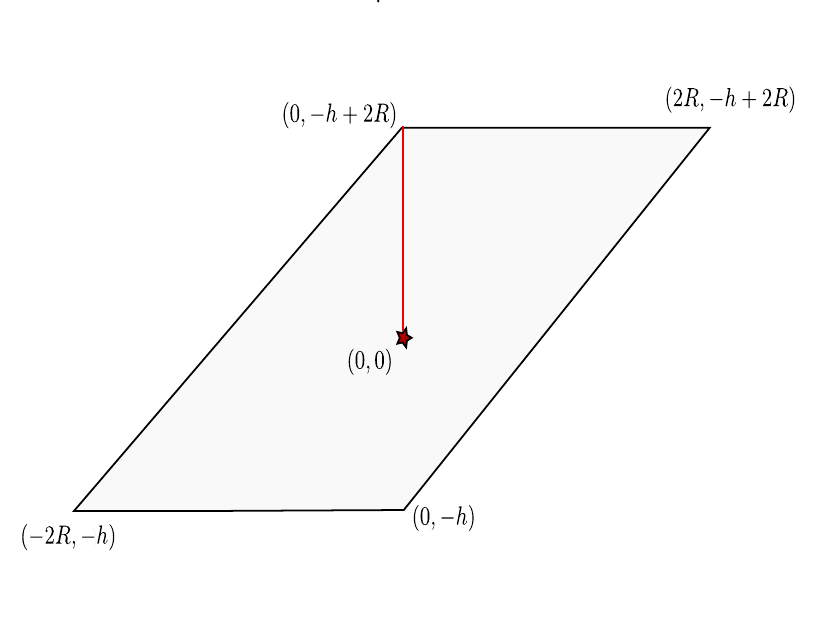}
    \caption{Representative of the polytope invariant with $\vec{\epsilon}=1$ for the Kepler problem for $\frac{1}{5}<t<1$. The cut along the eigenline is sketched in red.}
    \label{f.polytopeinvaraintofkeplerproblem}
\end{center}
\end{figure}

Step 1: Displaceability on the segment of the eigenline on the focus-focus value.
Due to the choice of $t_0$ we have that for $t^{-} < t< t_0$, $\tilde{h}(t)>1$ and thus $2R-h<R$. Hence we can apply Lemma \ref{l.displaceaingff} in Figure \ref{f.affinetrasnformationappliedtopolytopeinvariant} to conclude that the focus-focus fiber and every fiber of the form $F_t^{-1}(0,b)$, where $(0,b)\in F_t(M)$ with $b>1-2t$, is displaceable. 

 Step 2: Displaceability outside the segment of the eigenline for values in $J^{-1}(0)$.
Notice that outside the segment of the eigenline the semitoric system is a toric fibration. Therefore, use Lemma \ref{l.probes} to displace every fiber in the diagonal of $A\circ T_{0,h-2R}(f_{\vec{\epsilon}}(F_t(M))$ that does not lie over the red segment in Figure \ref{f.affinetrasnformationappliedtopolytopeinvariant} and is not the fiber over $(R,R)$.
\begin{figure}
\begin{center}
    \includegraphics[width=0.3\textwidth]{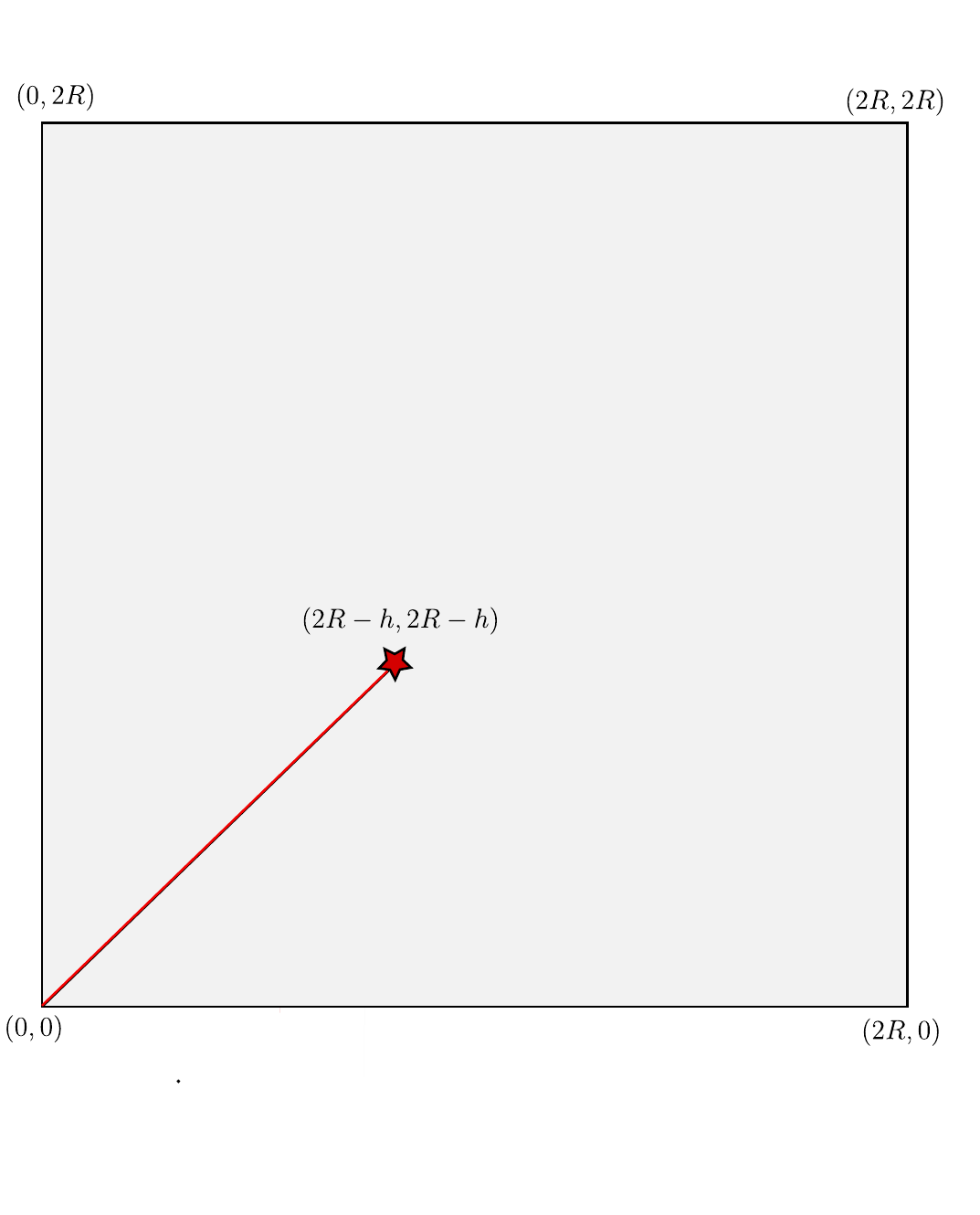}
    \caption{Image of the nodal trade of the polytope invariant by $A\circ T_{0,h-2R}$. The red line represents the image of the cut along the eigenline associated with the focus-focus fiber.}
    \label{f.affinetrasnformationappliedtopolytopeinvariant}
\end{center}
\end{figure}
We cannot displace the fiber over $(R,R)$ since for every probe passing trough it, the fiber is halfway the distance of the probe. 

The above results combined with Proposition \ref{p.general} show that all fibers except the fiber over $(R,R)$ in Figure \ref{f.affinetrasnformationappliedtopolytopeinvariant} are displaceable. Theorem \ref{t.existsnondisplaceable} requires the existence of at least one nondisplaceable fiber. Therefore the fiber over $(R,R)$ in Figure \ref{f.affinetrasnformationappliedtopolytopeinvariant} is nondisplaceable. In particular it is a stem, and nondisplaceable by symplectomorphisms. 
\end{proof}
We are left with understanding the cases $t\geq t_0$. In Theorem \ref{T.Keplert0} we will obtain a complete description of the (non)displaceability properties of the Kepler problem for $t=t_0$: there exists only one nondisplaceable fiber, and it is given by the focus-focus fiber, in particular it is a stem and hence nondisplaceable by symplectomorphisms. For $t>t_0$ we will see that there exist an infinite amount of nondisplaceable fibers, noticing that these fibers are equivalent to fibers already studied in the literature, see Auroux \cite{auroux2007mirror}. In particular the focus-focus fiber is nondisplaceable. This is done in Section \ref{s.anyamountnondisplaceable}.
 
We start with the following displaceability result:
\begin{proposition}
\label{p.displaceablefibersbelowfocusfocusvalue}
Consider the Kepler problem for $t_0\leq t<1$. Let $(c_0,c_0)$ denote the image of focus-focus value by $A\circ T_{0,h-2R}\circ f_{\vec{\epsilon}}$, where $A$ and $T_{0,h-2R}$ are the maps defined in \eqref{eq.mapsfordelzantcorner} and $f_{\vec{\epsilon}}$ is the  straightening homeomorphism associated with
    the representative of the polytope invariant given in Figure \ref{f.polytopeinvaraintofkeplerproblem}. The fibers $F_t^{-1}(f_{\vec{\epsilon}}^{-1}((A\circ T_{0,h-2R})^{-1}(c,c)))$, with $c>c_0$ are \textbf{displaceable}, i.e., the fibers below the focus-focus value on the line $x=0$ in $F_t(M)$ are displaceable.
\end{proposition}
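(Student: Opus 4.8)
The plan is to re-run Step 2 of the proof of Proposition \ref{pr.keplerOneNondispl} and to observe that the only thing that changes when $t\ge t_0$ is the position of the focus-focus value relative to the point $(R,R)$. Concretely, I would again pass to the representative of Figure \ref{f.polytopeinvaraintofkeplerproblem} and push it forward by $A\circ T_{0,h-2R}$ to the square of Figure \ref{f.affinetrasnformationappliedtopolytopeinvariant}, in which the image of $\{L=0\}$ is the diagonal, the focus-focus value is $(c_0,c_0)$, and---since $\vec{\epsilon}=1$, so that the cut runs upward from the focus-focus value in $F_t(M)$---the red cut is the diagonal segment $\{(c,c)\mid c\le c_0\}$ running from $(c_0,c_0)$ towards the origin. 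The fibers named in the statement are exactly those over the diagonal points $(c,c)$ with $c>c_0$; these lie strictly off the cut, so in a neighborhood of each of them the system is a genuine toric fibration and Lemma \ref{l.probes} is available.

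The one genuinely new input is the location of $(c_0,c_0)$ relative to the probe-center $(R,R)$ singled out in Proposition \ref{pr.keplerOneNondispl} (the unique diagonal point for which every probe is exactly halfway). Using $h=R\tilde{h}$ together with the normalization built into the maps \eqref{eq.mapsfordelzantcorner}, the focus-focus value is sent to $(c_0,c_0)$ with $c_0=R(2-\tilde{h})$, so that $c_0\ge R$ if and only if $\tilde{h}(t)\le 1$. Since $t_0$ is defined by $\tilde{h}(t_0)=1$ and $\tilde{h}$ decreases from $2$ to $0$ on $\,]t^{-},t^{+}[\,$ (Figure \ref{f.plotoftildeh}), we obtain $c_0\ge R$ precisely for $t_0\le t<1$, with equality only at $t=t_0$. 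In particular the target set $\{(c,c)\mid c>c_0\}$ is contained in $\{(c,c)\mid c>R\}$ and therefore never contains the center $(R,R)$.

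With these two observations the conclusion follows as in Proposition \ref{pr.keplerOneNondispl}: each target point $(c,c)$ with $c>c_0$ is displaceable by the probe issued from the facet of the square opposite the cut in the direction transverse to it---for instance from the right facet, at the interior point $(2R,c)$, in direction $(-1,0)$, which reaches $(c,c)$ after affine length $2R-c$ out of a total affine length $2R$, hence strictly less than halfway because $c>R$. This probe stays in the toric locus: at height $c>c_0$ it meets the diagonal only at its own target $(c,c)$, which is off the cut, so it never crosses the red segment. Lemma \ref{l.probes} then displaces $F_t^{-1}(f_{\vec{\epsilon}}^{-1}((A\circ T_{0,h-2R})^{-1}(c,c)))$ for every $c>c_0$, which is the assertion.

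The main obstacle is entirely concentrated in the second step: the whole statement rests on the inequality $c_0\ge R$, i.e.\ on the fact that for $t\ge t_0$ the focus-focus value has moved to or beyond the center of the square. If instead one had $c_0<R$ (which is exactly what happens for $t<t_0$), then the center $(R,R)$ would itself be one of the points $(c,c)$ with $c>c_0$ and would be nondisplaceable, so the statement would fail. Establishing $c_0=R(2-\tilde{h})$ carefully from the height-invariant computation of Alonso \& Dullin \& Hohloch, and confirming the monotonicity of $\tilde{h}$, is thus the crux; the probe estimate itself is routine once $c>R$ is known.
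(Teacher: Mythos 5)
Your proposal is correct and takes essentially the same route as the paper, whose entire proof is the remark that the argument is ``analogous to Step 2 in the proof of Proposition \ref{pr.keplerOneNondispl}'', i.e.\ probes in the square picture of Figure \ref{f.affinetrasnformationappliedtopolytopeinvariant} applied to diagonal points off the cut. Your explicit verification that the focus-focus value lands at $(c_0,c_0)$ with $c_0=2R-h=R(2-\tilde{h})\geq R$ exactly when $t\geq t_0$, so that every target $(c,c)$ with $c>c_0$ is strictly less than halfway along a horizontal probe of affine length $2R$ and the probe never meets the cut, correctly supplies the details the paper leaves implicit.
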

\begin{proof}
The proof is analogous to Step $2$ in the proof of Proposition \ref{pr.keplerOneNondispl}, it is an application of the method of probes.
\end{proof}

In order to obtain further displaceability results we need to look at another representative of the polytope invariant for the Kepler problem. We use the one given by Figure \ref{f.2ndPIkepler}, for more details see Alonso \& Dullin \& Hohloch \cite{alonso2019symplectic}. 
\begin{figure}
\begin{center}
    \includegraphics[width=0.4\textwidth]{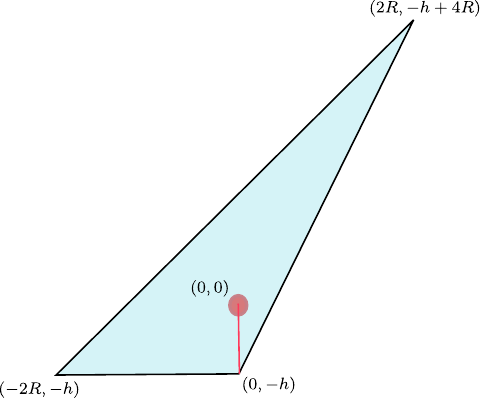}
    \caption{Representative of the polytope invariant with $\vec{\epsilon}=-1$ for the Kepler problem for $\frac{1}{5}<t<1$. The focus-focus values is represented in red. The cut along the eigenline is sketched in red.}
    \label{f.2ndPIkepler}
\end{center}
\end{figure}

\begin{proposition}
\label{p.displaceableaboveffvalue}
Consider the Kepler problem for $t_0\leq t<1$. Let $f_{\vec{\epsilon}}$ be the straightening homeomorphism associated with the polytope invariant given by Figure \ref{f.2ndPIkepler}. Then the fibers $F_t^{-1}(f_{\vec{\epsilon}}^{-1}(0,y))$ with $(0,y)\in f_{\vec{\epsilon}}(F_t(M))$ and $y>\frac{4}{3}R-h$ of the Kepler problem are \textbf{displaceable}. 
\end{proposition}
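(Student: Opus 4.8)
The plan is to apply the method of probes (Lemma \ref{l.probes}) in the \emph{second} representative of the polytope invariant depicted in Figure \ref{f.2ndPIkepler}, for which $\vec{\epsilon}=-1$. The whole point of passing to this representative is that the cut along the eigenline now emanates \emph{downward} from the focus-focus value, so that the portion of the vertical line $x=0$ lying above the focus-focus value is disjoint from the cut. On that portion the semitoric system restricts to an honest toric fibration, and hence Lemma \ref{l.probes} becomes available; the cut, which would otherwise act as a barrier for any probe, has been moved out of the way. This is exactly the mechanism already used in Proposition \ref{p.displaceablefibersbelowfocusfocusvalue}, but with the complementary sign of $\vec{\epsilon}$, so that the two propositions together displace the fibers on both sides of the nondisplaceable band.

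First I would fix the entry facet $F$ together with an integral direction $\lambda\in\mathbb{Z}^2$ that is integrally transverse to $F$, i.e.\ $|\langle\lambda,\eta\rangle|=1$ for the primitive normal $\eta$ of $F$, and such that the probe $p_{F,\lambda}(w)$ issued from an interior point $w\in F$ passes through the target point $(0,y)$ while staying inside $f_{\vec{\epsilon}}(F_t(M))$ and never crossing the cut. The natural candidate is a probe issued from the upper boundary and directed downward along $x=0$: since Lemma \ref{l.probes} declares displaceable exactly those points lying \emph{less than halfway from the entry facet}, such a downward probe will displace the fibers with large $y$, i.e.\ those nearest the top. Reading off the vertices and edge slopes of Figure \ref{f.2ndPIkepler} from the computation of Alonso \& Dullin \& Hohloch \cite{alonso2019symplectic} pins down both $F$ and the admissible $\lambda$, and confirms that $x=0$ meets the relevant facet in its interior.

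Next I would record the two affine lengths that Lemma \ref{l.probes} compares: the affine distance $d_{\mathrm{aff}}(w,(0,y))$ from the entry point down to the fiber, and the total affine length of the probe from $w$ until it is stopped (either by the exit boundary or by the cut at the focus-focus value). The criterion ``less than halfway from the boundary'' then becomes a single linear inequality in $y$, whose transition value is the affine midpoint of the vertical segment between the upper boundary and the barrier at the focus-focus value. After substituting the positions of the bounding edges and of the focus-focus value — which depend on $R$ and on the height invariant $h$ — this midpoint simplifies to $\tfrac{4}{3}R-h$, so that the inequality reads precisely $y>\tfrac{4}{3}R-h$, giving the claim for every admissible such $y$.

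The main obstacle is the affine-geometry bookkeeping rather than any conceptual difficulty: one must correctly identify the vertices and edge slopes of this second representative, check that the chosen $\lambda$ is genuinely integrally transverse and that the probe avoids the cut, and then carry out the affine-distance computation carefully enough to produce the exact constant $\tfrac{4}{3}R-h$. The appearance of $h$ reflects that the height invariant controls how far the focus-focus value — and thus the barrier terminating the probe — sits from the boundary, while the factor $\tfrac{4}{3}R$ comes from the slopes of the bounding edges in this representative.
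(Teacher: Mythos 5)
Your overall framing is right: the point of passing to the $\vec{\epsilon}=-1$ representative in Figure \ref{f.2ndPIkepler} is that the cut runs \emph{downward} from the focus-focus value, so that probes through points $(0,y)$ with $y>0$ can avoid the cut, and Lemma \ref{l.probes} is available there because the system is toric away from the cut. But your choice of probe is wrong, and the computation you sketch cannot produce the stated constant. You take a vertical probe along $x=0$, entering at the facet $y=x+(2R-h)$ at the point $(0,2R-h)$ and ``stopped by the cut at the focus-focus value'', which in this representative sits at $(0,0)$. Even granting that a probe truncated at a focus-focus value may be fed into Lemma \ref{l.probes} — the lemma as stated only covers probes in a toric polytope that run from a facet until they exit through the boundary, and the paper is careful never to let a probe touch the node or the cut — the affine midpoint of the segment from $(0,2R-h)$ to $(0,0)$ is $R-\frac{h}{2}$, not $\frac{4}{3}R-h$. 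These agree only when $h=\frac{2}{3}R$. For $t\geq t_0$ one has $h\leq R$, and for $t$ near $t_0$ (where $h$ is near $R$) one gets $R-\frac{h}{2}>\frac{4}{3}R-h$; for instance $h=R$ gives thresholds $\frac{R}{2}$ versus $\frac{R}{3}$. So your probe fails to displace exactly the fibers with $\frac{4}{3}R-h<y\leq R-\frac{h}{2}$, and the proposition is not proved.

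The paper instead uses a \emph{horizontal} probe with direction $(1,0)$: it enters at the slope-$1$ facet $y=x+(2R-h)$ at the point $\bigl(y-(2R-h),\,y\bigr)$, passes through $(0,y)$, and exits at the slope-$2$ facet $y=2x-h$ at $x=\frac{y+h}{2}$; since $y>\frac{4}{3}R-h\geq\frac{R}{3}>0$ (using $h\leq R$), the probe stays strictly above the node and never meets the cut, so Lemma \ref{l.probes} applies verbatim. Its affine length is $2R-\frac{y+h}{2}$ and the affine distance from the entry point to $(0,y)$ is $2R-h-y$, so the halfway condition $2R-h-y<\frac{1}{2}\left(2R-\frac{y+h}{2}\right)$ rearranges to exactly $y>\frac{4}{3}R-h$: the factor $\frac{4}{3}$ comes from the interplay of the slope-$1$ entry facet and the slope-$2$ exit facet under a horizontal probe, not from any vertical midpoint. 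A vertical probe along $x=0$ with direction $(0,-1)$ is indeed used in the paper, but only in the subsequent Proposition \ref{p.displacingEP}, and there it first requires a nodal slide (Proposition \ref{p.nodalslide}) to shrink the cut out of the probe's way; that argument yields the sharper threshold $y>R-h$, which is different from both your $R-\frac{h}{2}$ and the $\frac{4}{3}R-h$ of the present statement.
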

\begin{proof}
The idea of the proof is to use Lemma \ref{l.probes} on the polytope invariant given by Figure \ref{f.2ndPIkepler}, making sure that the probe does not intersect the segment of the eigenline associated with the focus-focus fiber. We can use Lemma \ref{l.probes} since outside the segment of the eigenline the semitoric system is toric. For this purpose we note that due the choice of $t$ we have $h\leq R$. 

Let $(0,y)\in f_{\vec{\epsilon}}(F_t(M))$ and $y>\frac{4}{3}R-h$. Notice that the vector $v:=(1,0)$ is integrally transverse to the facet given by $y=x+(2R-h)$. Therefore we can construct the probe $(x+k,y)$ where $y=x+(2R-h)>0$. The affine distance of this probe is $2R-\frac{(y+h)}{2}$ since it will end at the facet given by $y=2x-h$. Due to Lemma \ref{l.probes} the $(0,y)$ such that $(0,y)\in f_{\vec{\epsilon}}(F_t(M))$ and $y>\frac{4}{3}R-h$ give displaceable fibers, since they lie less than halfway along the probe. 
\end{proof}

We can actually improve the results of Proposition \ref{p.displaceableaboveffvalue} by using nodal slides:
\begin{proposition}
\label{p.displacingEP}
Consider the Kepler problem for $t_0\leq t<1$. Let $f_{\vec{\epsilon}}$ be the straightening homeomorphism associated with the polytope invariant given by Figure \ref{f.2ndPIkepler}. Then the fibers $F_t^{-1}(f_{\vec{\epsilon}}^{-1}(0,y))$ with $(0,y)\in f_{\vec{\epsilon}}(F_t(M))$ and $y>R-h$ of the Kepler problem are \textbf{displaceable}. 
\end{proposition}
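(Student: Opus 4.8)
The plan is to improve Proposition \ref{p.displaceableaboveffvalue} by first performing a \textbf{nodal slide} to relocate the focus-focus value along its eigenline, thereby freeing up more of the polytope for the method of probes. Recall that in the representative of Figure \ref{f.2ndPIkepler} the eigenline through the node is vertical (by the discussion in Section \ref{s.semitoric}), so by Proposition \ref{p.nodalslide} we may slide the node downward along the line $x=0$ without changing the symplectomorphism type of $(M,\omega)$, and the fibration stays fiberwise symplectomorphic away from a compact subset of the preimage of the eigenline. The effect on the polytope is to shorten the cut segment (the red eigenline segment) that acts as a barrier for probes, pushing the obstruction closer to the boundary. This is what will let us displace fibers that sit lower on the line $x=0$ than the bound $y>\frac{4}{3}R-h$ obtained in Proposition \ref{p.displaceableaboveffvalue}.

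First I would fix the representative from Figure \ref{f.2ndPIkepler} and identify the two relevant facets bounding the region of interest, namely the facet $y=x+(2R-h)$ (with integrally transverse direction $v=(1,0)$) and the facet $y=2x-h$ where the probes terminate, exactly as in the proof of Proposition \ref{p.displaceableaboveffvalue}. Next I would apply a nodal slide moving the node to the lowest admissible position on the eigenline $x=0$ for which the node still lies in the interior and the straightening/polytope data remain valid; since the manifold is unchanged up to symplectomorphism, displaceability of a fiber in the slid picture is equivalent to displaceability of the corresponding fiber in the original system. With the barrier shortened, I would then re-run the probe estimate of Proposition \ref{p.displaceableaboveffvalue}: a point $(0,y)$ is displaceable by the horizontal probe provided it lies strictly less than halfway along the probe measured in affine distance from the initial facet, using Lemma \ref{l.probes}. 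Recomputing the affine length of the probe with the relocated node and solving the ``less than halfway'' inequality should yield the sharper threshold $y>R-h$.

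The main obstacle I expect is \textbf{bookkeeping the affine geometry under the slide consistently with the monodromy}: a nodal slide changes the affine structure on the complement of the eigenline, so I must verify that the facets $y=x+(2R-h)$ and $y=2x-h$ (and in particular the integral-transversality condition $|\langle v,\eta\rangle|=1$ needed to invoke Lemma \ref{l.probes}) retain their slopes and integrality after the node is moved, and that the probe I construct genuinely avoids the shortened cut. Equivalently, I must confirm that the fiber over $(0,y)$ in the slid polytope corresponds to the fiber over $(0,y)$ in the original one, so that the displaceability transfers back. Once the transversality and non-intersection with the cut are checked, the final numerical step is a routine affine-distance computation: the probe from the facet $y=x+(2R-h)$ now reaches the far facet at affine distance whose midpoint corresponds precisely to the value $y=R-h$, giving displaceability for all $y>R-h$ as claimed.
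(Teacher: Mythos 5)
Your overall strategy coincides with the paper's: shrink the cut by a nodal slide (Proposition \ref{p.nodalslide}), observe that the slide amounts to passing to a parameter $\tilde{t}>t$ while preserving the fiber over $(0,y)$ up to fiberwise symplectomorphism, and then finish with the method of probes. The bookkeeping worries you raise are indeed unproblematic, for the reason you give: the slide changes nothing outside a compact neighborhood of the eigenline segment between the two node positions, so the facets $y=x+(2R-h)$ and $y=2x-h$, the integral transversality needed for Lemma \ref{l.probes}, and the identification of the fiber over $(0,y)$ (which sits above the node when $y>0$) all persist.

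The genuine gap is in your concluding computation. You re-run the \emph{horizontal} probe of Proposition \ref{p.displaceableaboveffvalue}, with direction $(1,0)$ from the facet $y=x+(2R-h)$ to the facet $y=2x-h$, and claim that recomputing its affine length ``with the relocated node'' yields the threshold $y>R-h$. But this probe's geometry is completely insensitive to the node's position: for the fibers in question one has $y>R-h\geq 0$ (recall $h\leq R$ for $t\geq t_0$), so the horizontal probe at height $y$ never meets the cut, which hangs downward from the node along $x=0$ --- and this was already true before any slide. Its affine length is $2R-\frac{y+h}{2}$ and the point $(0,y)$ lies at affine distance $2R-h-y$ from the initial facet, both determined solely by the two fixed facets; the halfway condition of Lemma \ref{l.probes} therefore returns $y>\frac{4}{3}R-h$ again, slide or no slide, so no improvement is possible along this route. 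What actually exploits the slide is a \emph{vertical} probe, direction $(0,-1)$ (integrally transverse to the facet $y=x+(2R-h)$, whose primitive normal is $(-1,1)$), running down the eigenline $x=0$: before the slide it is blocked by the node, but after sliding the node down (equivalently, since $\tilde{h}\to 0$ as $\tilde{t}\to 1$, one may make the cut arbitrarily short) its usable length approaches $2R$, and for fixed $y>R-h$ one chooses $\tilde{t}$ with $\tilde{h}<2\left(y-(R-h)\right)$ so that the halfway condition $2R-h-y<R-\frac{\tilde{h}}{2}$ holds. This vertical probe is exactly what the paper uses; with that single substitution your argument goes through.
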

\begin{proof}
The fibers we consider here correspond to fibers that sit above the segment of the eigenline $(x,x)$ in Figure \ref{f.affinetrasnformationappliedtopolytopeinvariant} with $x<2R-h$. In the literature these are known as Chekanov-type tori, see Auroux \cite{auroux2007mirror}. 

By applying a nodal slide, see Proposition \ref{p.nodalslide}, we can make the segment of the eigenline of Figure \ref{f.2ndPIkepler} as small as possible while preserving the fiber we are considering.  This corresponds to considering the fiber $(x,x)$ in the polytope invariant, given by Figure \ref{f.2ndPIkepler}, of the system for a  $\tilde{t}>t$. Therefore the fiber over $(x,x)$ is displaceable by the probe that starts in the facet $y=x+(2R-h)$ with direction $(0,-1)$.
Hence we obtain our desired result.
\end{proof}

We can now combine all of the previous results together to obtain a complete description of the (non)displaceability of the fibers of the Kepler problem $(M,\omega,F_{t_0})$.

\begin{theorem}
\label{T.Keplert0}
Consider the Kepler problem $(M,\omega,F_{t_0})$. The unique nondisplaceable fiber is the focus-focus fiber. All other fibers are displaceable. The focus-focus fiber is a stem and nondisplaceable by symplectomorphisms. 
\end{theorem}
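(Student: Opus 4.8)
The plan is to assemble Theorem \ref{T.Keplert0} entirely from the displaceability propositions already established for $t_0 \leq t < 1$ together with the existence of a nondisplaceable fiber from Theorem \ref{t.existsnondisplaceable}, specialized at $t = t_0$. The strategy is a ``sandwich'': show that every fiber \emph{except} the focus-focus fiber is displaceable, and then invoke the abstract existence result to force the focus-focus fiber to be the (necessarily unique) nondisplaceable one. First I would fix $t = t_0$, so that by the defining relation $\tilde h(t_0) = 1$ we have $h = R$, i.e.\ $2R - h = R$ and $R - h = 0$. This equality is what makes the focus-focus value land exactly at the obstruction point for the method of probes, and it is the reason $t_0$ is the critical parameter.

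Next I would partition all fibers into the standard families and displace each:
\begin{itemize}
    \item By Proposition \ref{p.general}, every fiber over $(a,b) \in F_{t_0}(M)$ with $a \neq 0$ is displaceable, using the Hamiltonian diffeomorphism $\Psi$ of Equation \eqref{eq.HamDiffeoPsi}. This clears everything off the vertical line $x = 0$.
    \item On the line $x = 0$ I would split at the focus-focus value $(0, 1 - 2t_0)$. The fibers \emph{below} it (the segment of the eigenline on the side of the node) are handled by Proposition \ref{p.displaceablefibersbelowfocusfocusvalue}, whose $(c,c)$ with $c > c_0$ correspond exactly to these fibers in the straightened picture of Figure \ref{f.affinetrasnformationappliedtopolytopeinvariant}.
    \item The fibers \emph{above} the focus-focus value are handled by Proposition \ref{p.displacingEP}: at $t = t_0$ the bound $y > R - h$ becomes $y > 0$, so every fiber strictly above the node on the line $x = 0$ is displaceable. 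Here the nodal-slide improvement of Proposition \ref{p.displacingEP} over the cruder Proposition \ref{p.displaceableaboveffvalue} is essential, since the naive probe bound $y > \frac{4}{3}R - h = \frac{4}{3}R - R = \frac{R}{3}$ would leave an uncovered strip $0 < y \leq \frac{R}{3}$.
\end{itemize}
The union of these three families is exactly the complement of the focus-focus fiber in $F_{t_0}(M)$.

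Having shown all non-focus-focus fibers are displaceable, I would apply Theorem \ref{t.existsnondisplaceable}, which guarantees that the integrable system $(M,\omega,F_{t_0})$ on the symplectic $4$-manifold $M = \mathbb{S}^2 \times \mathbb{S}^2$ possesses at least one nondisplaceable fiber. Since every fiber other than the focus-focus fiber has been displaced, the focus-focus fiber must be nondisplaceable, and it is the \emph{unique} such fiber. By the definition of a stem, this makes the focus-focus fiber a stem, and Theorem \ref{t.stem} then upgrades its nondisplaceability to nondisplaceability by the full group of symplectomorphisms.

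The main obstacle I anticipate is not any single deep argument but rather the bookkeeping at the boundary case $y = 0$ (equivalently, the focus-focus value itself): the propositions provide strict inequalities ($c > c_0$, $y > 0$), so one must verify that \emph{every} fiber strictly separated from the node is captured and that nothing is left in a measure-zero gap adjacent to the focus-focus value. The delicate point is confirming that the two one-sided families (above and below, in possibly \emph{different} representatives of the polytope invariant — Figure \ref{f.polytopeinvaraintofkeplerproblem} versus Figure \ref{f.2ndPIkepler}) together exhaust the line $x = 0$ minus the single node, which requires checking that the straightening homeomorphisms $f_{\vec\epsilon}$ identify the relevant fibers consistently across the two pictures. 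Once that matching is confirmed, the conclusion is forced purely formally by the existence theorem, so the real content lives entirely in the displaceability propositions already proved.
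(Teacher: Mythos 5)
Your proposal is correct and follows essentially the same route as the paper's proof: the same three-way decomposition via Proposition \ref{p.general} (fibers with $a\neq 0$), Proposition \ref{p.displaceablefibersbelowfocusfocusvalue} (below the node on $x=0$), and Proposition \ref{p.displacingEP} with $h=R$ (above the node), followed by the stem conclusion via Theorem \ref{t.stem}. Your explicit invocation of Theorem \ref{t.existsnondisplaceable} to force nondisplaceability of the focus-focus fiber, and your observation that the nodal-slide improvement of Proposition \ref{p.displacingEP} over Proposition \ref{p.displaceableaboveffvalue} is essential at $t_0$, are both implicit in (and consistent with) the paper's argument.
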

\begin{proof}
The idea of the proof is to combine the previous results to obtain that every fiber other than the focus-focus fiber is displaceable. Indeed now in detail:
\begin{itemize}
\item By Proposition \ref{p.general} every fiber of the form $F_{t_0}^{-1}(a,b)$ with $a\neq 0$ is displaceable.
\item By Proposition \ref{p.displaceablefibersbelowfocusfocusvalue} every fiber of $F$ below the focus-focus value on the line $x=0$ is displaceable. 
\item Recall that due to the definition of $t_0$, we have $R=h$. By Proposition \ref{p.displacingEP} every fiber of $F$ above the focus-focus value on the line $x=0$ is displaceable.
\end{itemize}
Therefore the focus-focus fiber is a stem, and hence nondisplaceable by symplectomorphisms, see Theorem \ref{t.stem}.
\end{proof}

\begin{remark}
Since the fiber for the Kepler problem $(M,\omega,F_{t_0})$ is actually monotone, one could use the methods of Hong \& Kim \& Lau \cite{hong2023immersed} or Rizzel \& Ekholm \& Tonkonog \cite{rizell2018refined} to obtain the potential function for the focus-focus fiber, and then obtain a nondisplaceability result by computing the critical points of the potential function. However the result proven in Theorem \ref{T.Keplert0} is stronger since we prove that the focus-focus fiber is a stem.  
\end{remark}


\subsubsection{The case $t>t_0$ for the Kepler problem.}
\label{s.anyamountnondisplaceable}
 In this subsection we show that if $t>t_0$ the number of nondisplaceable fibers for the Kepler problem is \textbf{infinite}. In particular the focus-focus fiber of the systems is \textbf{nondisplaceable}.
First we start with a weaker result:
\begin{proposition}
\label{prop:KeplerNonDispl}
    Let $N\in \mathbb{N}$ and $R>0$. Then, for $1-\frac{1}{2^{N+2}}<t<1$, the Kepler problem $(M,\omega,F_t=(L,H_t))$ has at least $2^{N-1}+1$ nondisplaceable fibers.
\end{proposition}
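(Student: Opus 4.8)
The plan is to exhibit the $2^{N-1}+1$ nondisplaceable fibers explicitly as the focus-focus fiber (the ``$+1$'') together with $2^{N-1}$ Chekanov-type tori situated on the segment of the eigenline lying above the focus-focus value. Throughout I would work in the representative of the polytope invariant from Figure~\ref{f.2ndPIkepler}, in which these fibers are precisely the $F_t^{-1}(f_{\vec{\epsilon}}^{-1}(0,y))$ over the line $x=0$, with the cut running upward from the focus-focus value. The key quantitative input is that the height invariant $h=R\tilde{h}(t)$ satisfies $\tilde{h}(t)\to 0$ as $t\to 1$, so that shrinking $1-t$ geometrically lengthens the affine room available along the eigenline; the hypothesis $1-2^{-(N+2)}<t<1$ is precisely what guarantees that $N$ ``generations'' of this room become available.

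I would argue by induction on $N$. For the base case $N=1$ (so $t\in\ ]\tfrac{7}{8},1[$) it suffices to produce two nondisplaceable fibers. The focus-focus fiber is nondisplaceable for all $t\geq t_0$: at $t=t_0$ this is Theorem~\ref{T.Keplert0}, and for $t>t_0$ it follows by identifying it, via a nodal slide (Proposition~\ref{p.nodalslide}), with the central monotone member of the Chekanov family treated by Auroux~\cite{auroux2007mirror}. A further nondisplaceable Chekanov torus is produced as follows: Propositions~\ref{p.general}, \ref{p.displaceablefibersbelowfocusfocusvalue} and~\ref{p.displacingEP}, together with the method of probes (Lemma~\ref{l.probes}), displace every fiber except those on a short subsegment of the eigenline, and Theorem~\ref{t.existsnondisplaceable} then forces a nondisplaceable fiber inside that subsegment. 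For the inductive step I would use a nodal slide to rescale the eigenline so that the sub-configuration above a suitable Chekanov torus becomes an affinely rescaled copy of a Kepler configuration satisfying the level-$N$ hypothesis; the inductive hypothesis applied to this copy, reflected through the symmetry $\Psi$ of Equation~\eqref{eq.HamDiffeoPsi} to account for the two eigenray directions emanating from the node, doubles the count (the shared central fiber being counted once) and yields the desired number of nondisplaceable fibers at the next level.

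The individual nondisplaceability statements for the Chekanov tori rest on Auroux~\cite{auroux2007mirror}: after the nodal trade and the nodal slides, these tori are symplectomorphic to the standard family near a nodal fiber, whose nondisplaceable members correspond to critical points of the associated superpotential, and the dyadic condition on $t$ is what makes the prescribed number of such critical points appear. The main obstacle I anticipate is twofold. First, making the self-similar rescaling precise, i.e.\ verifying that a nodal slide really does identify the truncated configuration with a rescaled Kepler configuration and that the parameter windows $]1-2^{-(N+2)},1[$ nest correctly under this rescaling. Second, the careful bookkeeping of the binary count, in particular ensuring that no fiber is double-counted when the two $\Psi$-symmetric halves are combined and that the nondisplaceable fibers forced by Theorem~\ref{t.existsnondisplaceable} are genuinely distinct at each generation. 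The analytic nondisplaceability of each Chekanov torus is then imported wholesale from Auroux, so the real work here is combinatorial and affine-geometric rather than Floer-theoretic.
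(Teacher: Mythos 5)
Your route is genuinely different from the paper's, but it contains gaps that I do not think can be repaired as stated. The central problem is the inductive ``self-similar rescaling'' step: a nodal slide (Proposition \ref{p.nodalslide}) is a surgery relating two almost toric fibrations of the \emph{same} symplectic manifold, fiberwise symplectomorphic away from a compact piece of the eigenline; it does not identify the truncated configuration above a chosen Chekanov torus with a rescaled Kepler system, and no such self-similarity exists in the paper or in Symington's results. Worse, even if some affine identification of base diagrams were arranged, nondisplaceability is a property relative to the ambient $(M,\omega)$ — a Hamiltonian isotopy of $M$ is free to move a fiber out of any sub-polytope — so an inductive hypothesis applied to a ``sub-configuration'' transfers nothing. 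The $\Psi$-doubling step is also incoherent: $\Psi$ from Equation \eqref{eq.HamDiffeoPsi} satisfies $L\circ\Psi=-L$ and, on $L^{-1}(0)$, sends the $H_t$-level $b$ into $[-b-2t,-b]$; it is the displacing map of Proposition \ref{p.general}, not a fibration-preserving symmetry exchanging ``two eigenray directions'' (in the polytope representative there is a single cut from the node). Finally, you misattribute the role of the dyadic hypothesis: in Auroux's and Fukaya--Oh--Ohta--Ono's analysis the nondisplaceable Chekanov tori form a \emph{continuum} (this is exactly Lemma \ref{l.chekanov}, by which the paper later upgrades Proposition \ref{prop:KeplerNonDispl} to uncountably many fibers for $t>t_0$); there is no mechanism by which the window $\left]1-2^{-(N+2)},1\right[$ makes ``$2^{N-1}+1$ critical points of the superpotential appear.'' Even your base case is unverified: it requires $t_0\leq \frac{7}{8}$, and the paper never locates $t_0$ numerically.

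The paper's actual proof is soft and avoids all of this. It takes the $t=1$, $R=1$ system $G=(L,H)$ on $\mathbb{S}^2\times\mathbb{S}^2$ and the level sets $S_c=G^{-1}(0,c)$, which are superheavy for partial symplectic quasi-states $\zeta_c$: Entov--Polterovich for $c=-1$ and Fukaya--Oh--Ohta--Ono for $c\in\left]-1,-\tfrac{1}{2}\right]$; by Lemma \ref{l.rescallingofquasistates} these survive rescaling of $\omega$. For each of the $2^{N-1}+1$ dyadic values $c\in A=\bigl\{-\tfrac{1}{2}-\tfrac{k}{2^N}\,\big|\,k=0,\dots,2^{N-1}\bigr\}$, Theorem \ref{t.pseudo} (Kawasaki--Orita) produces a $\zeta_c$-pseudoheavy, hence nondisplaceable, fiber $F_t^{-1}(p_c)$, and Theorem \ref{t.properties} forces $F_t^{-1}(p_c)\cap S_c\neq\emptyset$. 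Distinctness is then pure calculus: $|H_t-H|\leq 4(1-t)$ on $M$ gives $H_t(S_c)\subset[c-4(1-t),\,c+4(1-t)]$, and the hypothesis $t>1-\tfrac{1}{2^{N+2}}$ is exactly what makes these intervals disjoint for distinct $c\in A$ — this, not any superpotential count, is the role of the dyadic condition. If you want a Chekanov-torus proof, the honest version is to quote Lemma \ref{l.chekanov} directly for $t>t_0$, but you would still owe the estimate $t_0\leq 1-2^{-(N+2)}$ for all $N\geq 1$, which is precisely the kind of quantitative input the quasi-state argument is designed to sidestep.
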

To prove this result we will make use of the notions of symplectic quasi-states, superheavy, heavy, and pseudoheavy fibers summarized in Section \ref{s.states}. First note:
\begin{lemma}
\label{l.rescallingofquasistates}
    Let $(M,\omega)$ be a closed symplectic manifold and $\zeta$ a partial symplectic quasi-state in $(M,\omega)$. Let $R\neq 0$. Then $\zeta$ is a partial symplectic quasi-state for $(M,R \omega)$.
\end{lemma}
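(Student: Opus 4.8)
The plan is to verify directly that $\zeta$, regarded as the same map $C^{\infty}(M)\to\R$, satisfies each of the six defining axioms of a partial symplectic quasi-state with respect to the rescaled form $R\omega$; note that the underlying function space $C^{\infty}(M)$ is literally unchanged, so $\zeta$ is at least defined on the correct space. The crucial preliminary observation is that rescaling the symplectic form only rescales the Hamiltonian data in a controlled way: writing $X_H^{\omega}$ and $X_H^{R\omega}$ for the Hamiltonian vector fields of $H$ with respect to $\omega$ and $R\omega$, one has $X_H^{R\omega}=\tfrac{1}{R}X_H^{\omega}$, since $(R\omega)(\tfrac1R X_H^{\omega},\cdot)=\omega(X_H^{\omega},\cdot)=dH$. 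Two consequences follow that I would record first: the Poisson bracket rescales as $\{f,g\}_{R\omega}=\tfrac1R\{f,g\}_{\omega}$, and the $R\omega$-Hamiltonian flow of $H$ coincides with the $\omega$-Hamiltonian flow of $H/R$.

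First I would dispose of the three axioms that make no reference to the symplectic form at all. Normalization ($\zeta(a)\equiv a$), Stability (the two-sided bound by $\min_M$ and $\max_M$ of $H_1-H_2$), and Semi-Homogeneity ($\zeta(sH)=s\zeta(H)$ for $s>0$) are statements purely about the map $\zeta$ on $C^{\infty}(M)$ and about pointwise values of functions on $M$, so they transfer verbatim.

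Next I would treat the two axioms governed by the Hamiltonian diffeomorphism group. From $X_H^{R\omega}=\tfrac1R X_H^{\omega}=X_{H/R}^{\omega}$ it follows that the time-$1$ map of the $R\omega$-Hamiltonian isotopy generated by (a time-dependent) $H$ is exactly the $\omega$-Hamiltonian diffeomorphism generated by $H/R$; as $H$ ranges over all such functions so does $H/R$, whence $\text{Ham}(M,R\omega)=\text{Ham}(M,\omega)$ as groups of diffeomorphisms. Hamiltonian invariance, $\zeta(H\circ\phi)=\zeta(H)$, is therefore the identical condition for the two forms. Moreover displaceability of a subset is defined through this group, so a set is displaceable for $R\omega$ if and only if it is displaceable for $\omega$; hence the Vanishing axiom is unchanged. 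This argument is insensitive to the sign of $R$, so the case $R<0$, in which $R\omega$ is still a symplectic form, is covered by the same computation.

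Finally, Quasi-subadditivity concerns only the vanishing locus of the Poisson bracket: since $\{H_1,H_2\}_{R\omega}=\tfrac1R\{H_1,H_2\}_{\omega}$ and $R\neq0$, we have $\{H_1,H_2\}_{R\omega}=0$ if and only if $\{H_1,H_2\}_{\omega}=0$, so both the hypothesis and the conclusion $\zeta(H_1+H_2)\leq\zeta(H_1)+\zeta(H_2)$ are the same statement for the two forms. Having checked all six axioms, $\zeta$ is a partial symplectic quasi-state for $(M,R\omega)$. I do not anticipate a genuine obstacle here: the entire content reduces to the two rescaling identities above, and the only point demanding care is recording that the Hamiltonian diffeomorphism groups, and hence the notion of displaceability, coincide for $\omega$ and $R\omega$.
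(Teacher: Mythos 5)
Your proposal is correct and follows essentially the same route as the paper, whose proof consists precisely of the observation that $X$ with $R\omega(X,\cdot)=dH$ is the $\omega$-Hamiltonian vector field of $H/R$, leaving the axiom-by-axiom verification implicit. You have simply carried out that verification in full, including the useful explicit remarks that $\mathrm{Ham}(M,R\omega)=\mathrm{Ham}(M,\omega)$, that $\{f,g\}_{R\omega}=\tfrac{1}{R}\{f,g\}_{\omega}$, and that the argument covers $R<0$.
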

\begin{proof}
    This follows from the definition of a partial symplectic quasi-state and the fact that, if $X$ is a vector field satisfying $R \omega(X,\cdot) = dH$ for some $H\in C^{\infty}(M, \R)$, then $\omega(X,\cdot) = d\left(\frac{H}{R}\right) $, i.e., $X$ is the Hamiltonian vector field of $\frac{H}{R}$ w.r.t.\ $\omega$.
\end{proof}

Now consider the symplectic manifold $(\mathbb{S}^2\times \mathbb{S}^2,\omega_{\mathbb{S}^2}\oplus \omega_{\mathbb{S}^2})$ with Cartesian coordinates induced from $\R^3 \times \R^3$, and the Hamiltonians $G:=(J,H):\mathbb{S}^2\times \mathbb{S}^2\rightarrow \mathbb{R}^2$, where
\begin{equation*}
    \begin{cases}
    L(x_1,y_1,z_1,x_2,y_2,z_2):=z_1+z_2,\\
    H(x_1,y_1,z_1,x_2,y_2,z_2):= H_{1}= x_1x_2+y_1y_2+z_1z_2.
    \end{cases}
\end{equation*}
Then $(\mathbb{S}^2\times \mathbb{S}^2,\omega_{\mathbb{S}^2}\oplus \omega_{\mathbb{S}^2},G)$ is the Kepler problem for $R=1,t=1$ with a change of sign in the symplectic form.	
Set $S_c:= L^{-1}(0) \cap H^{-1}(c)= G^{-1}(0,c)$.
Entov \& Polterovich \cite{entov2009rigid} constructed a partial symplectic quasi-state $\zeta_{-1}$ such that $S_{-1}$ is $\zeta_{-1}$-superheavy. Furthermore, according to Fukaya \& Oh \& Ohta \& Ono \cite{fukaya2011spectral}, for every $c\in\ ]-1,-\frac{1}{2}]$, there exists a partial symplectic quasi-state $\zeta_{c}$ such that $S_{c}$ is $\zeta_{c}$-superheavy.

By Lemma \ref{l.rescallingofquasistates}, if  $R\neq 0$, then all $\zeta_{c}$ with $c\in \left[-1,-\frac{1}{2}\right]$ are partial symplectic quasi-states for $(\mathbb{S}^2\times \mathbb{S}^2, R(\omega_{\mathbb{S}^2}\oplus \omega_{\mathbb{S}^2}))$.

\begin{proof}[Proof of Proposition \ref{prop:KeplerNonDispl}]
Consider such an $S_c$ which is superheavy with respect to $\zeta_c$ as introduced above.
Let $N \in \N$ and set
\begin{equation*}
A:=\left\{ \left. -\frac{1}{2}-\frac{k}{2^N}\right| k \in \{0, \dots, 2^{N-1}\} \right\}.
\end{equation*}
By Theorem \ref{t.pseudo}, there exists, for all $c\in A$, a pseudoheavy fiber $F_t^{-1}(p_c)$ for some $p_c\in F_t(\mathbb{S}^2\times \mathbb{S}^2)$ for the integrable systems $(\mathbb{S}^2\times \mathbb{S}^2, R (\omega_{\mathbb{S}^2}\otimes \omega_{\mathbb{S}^2}),F_t)$ with $0\leq t\leq 1$ and $R\neq 0$.
In particular, by Theorem \ref{t.properties},
a pseudoheavy fiber has to intersect the corresponding superheavy fiber, i.e., 
$F^{-1}_t(p_c)\cap S_c\neq \emptyset$. Furthermore the set $F^{-1}_t(p_c)$ is nondisplaceable. Now we have to ensure that the sets $F^{-1}_t(p_c)$ are disjoint for different values of $c$. First notice that
\begin{align*}
    &\qquad  |H_t(x_1,y_1,z_1,x_2,y_2,z_2)-H(x_1,y_1,z_1,x_2,y_2,z_2)|\\
    & \quad = |(1-t)z_1+t(x_1x_2+y_1y_2+z_1z_2)-(x_1x_2+y_1y_2+z_1z_2)|\\
    & \quad = |(1-t)z_1-(1-t)(x_1x_2+y_1y_2+z_1z_2)|\\
    &\quad \leq (1-t)4.
\end{align*}
Therefore,
\begin{equation*}
    H_t(S_c)\subset [c-(1-t)4,\ c+(1-t)4].
\end{equation*}
Hence, if $t>1-\frac{1}{2^{N+2}}$, for $c\in A$ the sets $[c-(1-t)4,\ c+(1-t)4]$ do not intersect each other. Therefore the nondisplaceable fibers $F_t^{-1}(p_c)$ are disjoint. Hence we obtain the desired result.
\end{proof}
 
Proposition \ref{prop:KeplerNonDispl} tell us that the number of nondisplaceable fibers is increasing as $t\rightarrow 1$. In Lemma \ref{l.chekanov} we will improve this result by noticing that when $t>t_0$ the number of nondisplaceable fibers is actually \textbf{uncountable} and in particular the focus-focus fiber of the system is \textbf{nondisplaceable}.
\begin{lemma}
\label{l.chekanov}
Let $1>t>t_0$ and $f_{\vec{\epsilon}}$ be the straightening homeomorphism associated with the polytope invariant given by Figure \ref{f.2ndPIkepler}. Then the Lagrangian torus fibers $F_t^{-1}(f_{\vec{\epsilon}}^{-1}(0,y))$ of $(M,\omega,F_t)$ with $R-h\geq y>0$ are nondisplaceable. Furthermore, the focus-focus fiber of $(M,\omega,F_t)$ is nondisplaceable. 
\end{lemma}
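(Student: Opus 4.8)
The plan is to transfer the uncountable family of superheavy fibers $S_c$, $c\in[-1,-\tfrac12]$, of the limiting $t=1$ system (introduced just before Proposition \ref{prop:KeplerNonDispl}) onto the system $F_t$ by a nodal slide, and then conclude nondisplaceability from the chain superheavy $\Rightarrow$ heavy $\Rightarrow$ pseudoheavy $\Rightarrow$ nondisplaceable of Theorem \ref{t.properties}. First I would record that $t>t_0$ forces $\tilde{h}(t)<1$, hence $h<R$, so that the interval $0<y\leq R-h$ is non-empty; in the representative of Figure \ref{f.2ndPIkepler} the corresponding fibers lie on the (vertical) eigenline strictly above the node, on the side opposite to the cut, so following Auroux \cite{auroux2007mirror} they are exactly the Chekanov-type tori of $(M,\omega,F_t)$.

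The core step is the same nodal-slide mechanism used in the proof of Proposition \ref{p.displacingEP}, but pushed toward the degenerate endpoint $t=1$ rather than used to feed a probe. By Proposition \ref{p.nodalslide} a nodal slide produces a symplectomorphism of $(M,\omega)$ that is fiberwise away from a compact arc of the eigenline preimage; sliding the node as far down as possible (toward $t=1$, where $1-2t\to -1$ and $h\to 0$) keeps each fiber over a fixed $(0,y)$ with $0<y\leq R-h$ untouched and identifies it symplectically with the corresponding fiber of the $t=1$ system $(L,H)$, $H=x_1x_2+y_1y_2+z_1z_2$. Tracking the affine height along the slide, the fiber over $(0,y)$ is matched with $S_c=G^{-1}(0,c)$ for a value $c=c(y)$, and the point of the bookkeeping is to check that $y\in(0,R-h]$ corresponds precisely to $c\in[-1,-\tfrac12]$. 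By Lemma \ref{l.rescallingofquasistates} the partial symplectic quasi-states $\zeta_c$ (Entov--Polterovich for $c=-1$ and Fukaya--Oh--Ohta--Ono for $c\in(-1,-\tfrac12]$) persist after rescaling the symplectic form by $R$, so each such $S_c$ remains $\zeta_c$-superheavy for arbitrary $R>0$.

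Since superheaviness, and hence nondisplaceability through Theorem \ref{t.properties}, is invariant under symplectomorphisms, every fiber $F_t^{-1}(f_{\vec{\epsilon}}^{-1}(0,y))$ with $0<y\leq R-h$ is nondisplaceable, and letting $y$ range over $(0,R-h]$ yields the claimed uncountable family, improving Proposition \ref{prop:KeplerNonDispl}. For the focus-focus fiber I would argue by the endpoint $c=-1$: as $y\to 0^+$ the Chekanov tori shrink toward the node and, in the $t=1$ picture, correspond to the superheavy anti-diagonal fiber $S_{-1}$; I would then identify the focus-focus fiber with $S_{-1}$ and invoke its $\zeta_{-1}$-superheaviness to conclude nondisplaceability.

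I expect the main obstacle to be making this transfer rigorous precisely at and near the node. The fibers of interest lie over the eigenline itself, so one must verify that the slide of Proposition \ref{p.nodalslide} genuinely fixes each fiber over $(0,y)$ with $y>0$ (the failure locus being the compact arc between the old and new node positions, which lies below the fibers in question), and that the height-to-coupling correspondence lands inside the superheavy window $[-1,-\tfrac12]$ rather than outside it. The most delicate point is the focus-focus fiber, which sits at the node and is therefore never covered by the fiberwise symplectomorphism of a slide; justifying its identification with the singular superheavy fiber $S_{-1}$ — reconciling the pinched torus with the anti-diagonal sphere — is where the argument needs the most care.
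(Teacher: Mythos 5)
Your proposal takes a genuinely different route from the paper, but its two load-bearing steps both have gaps. The paper does not identify the fibers of $F_t$ with the superheavy fibers $S_c$ of the $t=1$ system at all: it proves nondisplaceability of the Chekanov-type tori directly, by invoking the computation of their Landau--Ginzburg potential function in Vianna \& Tonkonog \cite[Section 3]{tonkonog2015low} and Auroux \cite{auroux2007mirror}, observing that for $0<y\leq R-h$ the potential has a critical point (after a bulk deformation if necessary, in the sense of Fukaya \& Oh \& Ohta \& Ono \cite{fukaya2012toric}). Your transfer mechanism is not licensed by the tools you cite: Proposition \ref{p.nodalslide} relates two almost toric bases, i.e.\ finite slides within the semitoric range $t\in\,]t^{-},t^{+}[$, whereas for the Kepler problem $t^{+}=1$ is exactly the parameter at which the focus-focus point becomes \emph{degenerate}, so the $t=1$ fibration is not almost toric and ``sliding the node as far down as possible'' produces no symplectomorphism onto the $t=1$ system --- a limit of nodal slides is not a nodal slide. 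Moreover, a fiber $F_t^{-1}(f_{\vec{\epsilon}}^{-1}(0,y))$ is not literally any $S_c$: both sit in $L^{-1}(0)$, but $H_t|_{L^{-1}(0)}=(1-t)z_1+tH$ has different level sets from $H|_{L^{-1}(0)}$ when $t<1$, so your identification would require constructing an explicit Hamiltonian isotopy together with the $y\leftrightarrow c$ bookkeeping you acknowledge you have not done. The paper's Proposition \ref{prop:KeplerNonDispl} exists precisely because this identification is unavailable: from superheaviness of $S_c$ one can only extract, via Theorems \ref{t.pseudo} and \ref{t.properties}, that some pseudoheavy fiber of $F_t$ \emph{intersects} $S_c$, which locates nondisplaceable fibers only up to the error estimate $|H_t-H|\leq 4(1-t)$ --- hence the separate potential-function argument in Lemma \ref{l.chekanov}.

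The focus-focus step fails outright and cannot be repaired in the form you state. For $t_0<t<1$ the focus-focus fiber is a once-pinched torus, i.e.\ an \emph{immersed} Lagrangian sphere with a transverse double point, while $S_{-1}$ is the embedded antidiagonal sphere of the degenerate limit; these are not even homeomorphic, so no symplectomorphism of $(M,\omega)$ identifies them --- the ``delicate point'' you flag is a genuine obstruction, not a technicality. The paper's fix is soft: displaceability is an open property, so if a Hamiltonian diffeomorphism displaced the focus-focus fiber it would displace nearby fibers as well, in particular the Chekanov tori with small $y>0$, contradicting their nondisplaceability. Replacing your endpoint argument by this openness argument, and your transfer mechanism by the cited potential-function computations, recovers the paper's proof.
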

\begin{proof}
The Lagrangian torus fibers under consideration are Chekanov-type tori, see \cite{auroux2007mirror}. Furthermore due to the condition on $y$ the nondisplaceability follows from Vianna \& Tonkonog \cite[Section $3$]{tonkonog2015low} or Auroux \cite{auroux2007mirror}, where the potential function of these tori is computed. For this choice of $y$ the potential function has a critical point, introducing a so called bulk deformation if necessary. For more details see for example Fukaya \& Oh \& Ohta \& Ono \cite{fukaya2012toric}. Therefore these tori are nondisplaceable. The focus-focus fiber is nondisplaceble since the above mentioned tori are nondisplaceable and since displaceability is an open property.
\end{proof}
\begin{remark}
Notice that we can use the results of Lemma \ref{l.chekanov} to show that for $t=t_0$ the focus-focus fiber of the system $(M,\omega,F_t)$ is nondisplaceable. However the result we otained in Theorem \ref{T.Keplert0} is stronger since there we actually proved that the focus-focus fiber is a stem and hence nondisplaceable by symplectomorphisms.
\end{remark}

We can combine all the previous results to obtain a complete description of the (non)displaceability properties of the Kepler problem $(M,\omega,F_t)$ for $t>t_0$:
\begin{theorem}
\label{c.t0nondisplaceable}
Let $t_0<t<1$ and $f_{\vec{\epsilon}}$ be the straightening homeomorphism associated with the polytope invariant given by Figure \ref{f.2ndPIkepler}. Then 
\begin{itemize}
\item For $(x,y)\in F_t(M)$ with $x\neq 0$ the fiber $F_t^{-1}(x,y)$ is \textbf{displaceable}.
\item For $R-h<y\leq 2R-h$ the fiber $F_t^{-1}(f_{\vec{\epsilon}}^{-1}(0,y))$ is \textbf{displaceable}.  
\item For $0<y\leq R-h$ the fiber $F_t^{-1}(f_{\vec{\epsilon}}^{-1}(0,y))$ is \textbf{nondisplaceable}.  
\item The focus-focus fiber is \textbf{nondisplaceable}.
\item For $(x,y)\in F_t(M)$ and $y<1-2t$ the fiber $F_t^{-1}(0,y)$ is \textbf{displaceable}. 
\end{itemize}
\end{theorem}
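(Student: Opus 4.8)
The plan is to read this theorem as a synthesis: every assertion has already been proved in this subsection, and what remains is to organize the fibers of $F_t(M)$ into regions, attach the correct prior result to each region, and check that the regions are exhaustive and mutually consistent. Since any fiber lies either off the symmetry axis $\{x=0\}\subset F_t(M)$ or on it, I would first dispose of the off-axis fibers and then stratify the axis into the pieces below, at, and above the focus-focus value. The only real work is bookkeeping: the five bullets are phrased in three different coordinate systems, namely the intrinsic $F_t(M)$, the representative of Figure \ref{f.polytopeinvaraintofkeplerproblem} (with $\vec{\epsilon}=1$), and the representative of Figure \ref{f.2ndPIkepler} (with $\vec{\epsilon}=-1$), and these must be reconciled by tracking the image of $\{x=0\}$ and of the focus-focus value under the relevant straightening homeomorphisms.

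For the displaceable fibers I would invoke three earlier results. The first bullet is immediate from Proposition \ref{p.general}: the Hamiltonian diffeomorphism $\Psi$ reverses the sign of $L$, so any fiber with $x\neq 0$ is displaced by $\Psi$. For the axis fibers below the focus-focus value (the last bullet, $y<1-2t$ in $F_t(M)$) I would cite Proposition \ref{p.displaceablefibersbelowfocusfocusvalue}, which treats exactly the fibers lying below the node on $\{x=0\}$ by the method of probes after the nodal trade. For the second bullet, the fibers with $R-h<y\le 2R-h$ in the representative of Figure \ref{f.2ndPIkepler}, I would use Proposition \ref{p.displacingEP}, whose nodal-slide refinement pushes displaceability down to $y>R-h$; Proposition \ref{p.displaceableaboveffvalue} already covers $y>\tfrac{4}{3}R-h$, so only this refinement is needed, and $y\le 2R-h$ is simply the top of the polytope.

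The nondisplaceable fibers (the third and fourth bullets) are where the genuine content sits, and I would quote Lemma \ref{l.chekanov} verbatim rather than reprove anything: for $t>t_0$ the fibers $F_t^{-1}(f_{\vec{\epsilon}}^{-1}(0,y))$ with $0<y\le R-h$ are Chekanov-type tori whose Floer-theoretic potential, possibly after a bulk deformation, has a critical point and are therefore nondisplaceable, while the focus-focus fiber is nondisplaceable as a limit of such tori together with the openness of displaceability. The substance here is the potential-function computation of Auroux and of Tonkonog--Vianna already cited in Lemma \ref{l.chekanov}, so I would not redo it.

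The hard part, such as it is, will be the completeness-and-boundary check, not any new estimate. I would verify that pushing the axis $\{x=0\}\subset F_t(M)$ through $f_{\vec{\epsilon}}$ for Figure \ref{f.2ndPIkepler} sends the focus-focus value to $y=0$ and the above-node axis fibers bijectively onto the edge $x=0$, $0<y\le 2R-h$, using $h=h_t$ and the normalization of the height invariant. This makes the interval $(0,2R-h]$ split at $y=R-h$ into the nondisplaceable stratum $0<y\le R-h$ and the displaceable stratum $R-h<y\le 2R-h$, with the node at $y=0$ and the below-node fibers $y<0$ matching the last bullet's $y<1-2t$ in $F_t(M)$. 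The care required is precisely that bullets 1 and 5 live in intrinsic coordinates while bullets 2 and 3 live in the Figure \ref{f.2ndPIkepler} coordinates; once this dictionary is fixed and the partition is seen to be exhaustive with no gap or overlap, the five statements follow by assembling Propositions \ref{p.general}, \ref{p.displaceablefibersbelowfocusfocusvalue}, \ref{p.displacingEP} and Lemma \ref{l.chekanov}.
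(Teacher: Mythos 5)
Your proposal is correct and coincides with the paper's own (implicit) argument: the theorem is stated there without a separate proof, precisely as the assembly of Proposition \ref{p.general} (off-axis fibers), Proposition \ref{p.displaceablefibersbelowfocusfocusvalue} (axis fibers below the node), Proposition \ref{p.displacingEP} (axis fibers with $y>R-h$, refining Proposition \ref{p.displaceableaboveffvalue}), and Lemma \ref{l.chekanov} (the Chekanov-type tori with $0<y\leq R-h$ and, by openness of displaceability, the focus-focus fiber). Your added care about reconciling the intrinsic coordinates on $F_t(M)$ with the two straightened representatives, and checking the axis stratification is exhaustive, is exactly the bookkeeping the paper leaves tacit.
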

The content of Theorem \ref{c.t0nondisplaceable} is summarized in Picture \ref{f.t0nondisplaceable}.
\begin{figure}
\begin{center}
    \includegraphics[width=0.3\textwidth]{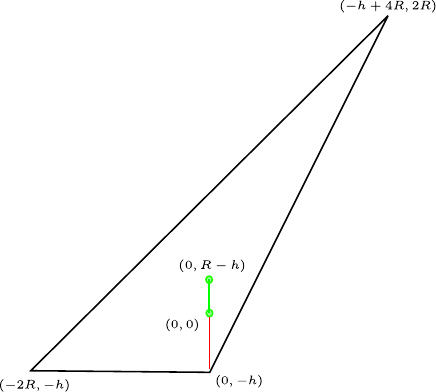}
    \caption{Representative of the polytope invariant with $\vec{\epsilon}=-1$ for the Kepler problem for $t_0<t<1$. The cut along the eigenline is sketched in red. The nondisplaceable fibers are identified in green, all the other fibers are displaceable.}
    \label{f.t0nondisplaceable}
\end{center}
\end{figure}

In the following lemma we obtain a simple result about the nonpseudoheaviness of the focus-focus fiber for the symplectic quasi-states $\zeta_{a}$ as $t\rightarrow 1$. 
\begin{lemma}
Let $a\in \ ]-1,-\frac{1}{2}]$. For $\frac{2}{3+a}<t<1$ the focus-focus fiber of the semitoric system $(M,\omega,F_t)$ is not pseudoheavy with respect to $\zeta_{a}$.
\end{lemma}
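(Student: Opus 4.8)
The plan is to exploit the superheavy set $S_a$ attached to $\zeta_a$ rather than to attempt a direct displacement of the focus-focus fiber: for $t$ this close to $1$ the focus-focus fiber is in fact nondisplaceable (cf.\ Theorem \ref{c.t0nondisplaceable}), so displacement cannot be the mechanism here. Write $\mathrm{FF}_t := F_t^{-1}(0,1-2t)$ for the focus-focus fiber. First I would recall that, by Lemma \ref{l.rescallingofquasistates}, $\zeta_a$ remains a partial symplectic quasi-state after the rescaling and sign change of the symplectic form relating the model $(\mathbb{S}^2\times\mathbb{S}^2,\omega_{\mathbb{S}^2}\oplus\omega_{\mathbb{S}^2},G)$ to the Kepler problem, and that $S_a=G^{-1}(0,a)$ is $\zeta_a$-superheavy. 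The definition of superheaviness gives $\zeta_a(F)\le \sup_{S_a}F$ for every $F\in C^{\infty}(M)$. Hence, if I can produce an open neighbourhood $U$ of $\mathrm{FF}_t$ with $U\cap S_a=\emptyset$, then any $F$ with $\operatorname{supp}F\subset U$ vanishes on $S_a$, so $\zeta_a(F)\le \sup_{S_a}F=0$; no such $F$ can satisfy $\zeta_a(F)>0$, and therefore $\mathrm{FF}_t$ fails the defining condition of $\zeta_a$-pseudoheaviness. Since $\mathrm{FF}_t$ and $S_a$ are compact, such a $U$ exists as soon as $\mathrm{FF}_t\cap S_a=\emptyset$, so everything reduces to a disjointness statement.

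Next I would verify the disjointness $\mathrm{FF}_t\cap S_a=\emptyset$ by a one-line coordinate computation. Both sets lie in $L^{-1}(0)=\{z_1+z_2=0\}$, on which $z_2=-z_1$ and $H_t=(1-t)z_1+tH_1$, where $H_1=x_1x_2+y_1y_2+z_1z_2$ is the Hamiltonian defining $S_a$. A point in the intersection would satisfy simultaneously $H_1=a$ and $H_t=1-2t$, which forces
\[
z_1=\frac{1-2t-ta}{1-t},\qquad\text{so}\qquad z_1+1=\frac{2-t(3+a)}{1-t}.
\]
Because $a>-1$ gives $3+a>0$ and because $1-t>0$, the sign of $z_1+1$ is that of $2-t(3+a)$; for $t>\tfrac{2}{3+a}$ this is negative, so the required value satisfies $z_1<-1$, which is impossible on $\mathbb{S}^2$. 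Hence no such point exists and $\mathrm{FF}_t\cap S_a=\emptyset$ precisely in the stated range, the borderline $t=\tfrac{2}{3+a}$ being exactly where $z_1=-1$, i.e.\ where $S_a$ would first touch the fiber.

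The main thing to get right is conceptual rather than computational: one must resist trying to displace the focus-focus fiber and instead observe that non-pseudoheaviness with respect to one specific quasi-state only requires separating the fiber from a single superheavy witness. The two points demanding care are (i) that the superheaviness of $S_a$ genuinely transfers to the rescaled, sign-changed symplectic form used for the Kepler problem, which follows from Lemma \ref{l.rescallingofquasistates} together with the fact that the superheavy inequality $\zeta_a(F)\le\sup_{S_a}F$ is a statement about the unchanged map $\zeta_a$ and the unchanged set $S_a$; and (ii) that $\mathrm{FF}_t$ and $S_a$, being fibers of proper maps on the compact manifold $\mathbb{S}^2\times\mathbb{S}^2$, are compact, so disjointness upgrades to separation by an open neighbourhood. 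With these in hand the argument closes with no further estimates.
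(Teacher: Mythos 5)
Your proposal is correct and follows essentially the same route as the paper: both arguments reduce the claim to the disjointness of the focus-focus fiber from the superheavy set $S_a$, via the same coordinate computation showing that an intersection point would need $z_1=\frac{1-(2+a)t}{1-t}<-1$ precisely when $t>\frac{2}{3+a}$. The only differences are cosmetic: where the paper directly cites Theorem \ref{t.properties} (pseudoheavy sets must intersect superheavy sets), you re-derive that intersection fact from the definitions via a compactness/separation argument, and your formula for $z_1$ is in fact the corrected version of the paper's displayed $z_1=\frac{1+(a-2)t}{1-t}$, which contains a sign slip while still leading to the same (correct) threshold $\frac{2}{3+a}$.
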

\begin{proof}
By Theorem \ref{t.properties} if the focus-focus fiber is $\zeta_a$-pseudoheavy then it intersects the superheavy fiber $S_{a}$. Therefore, there must exist a point $(x_1,y_1,z_1,x_2,y_2,z_2)\in \mathbb{S}^2\times \mathbb{S}^2$ such that 
\begin{equation}
1-2t=(1-t)z_1+ta,
\end{equation}
i.e., $z_1=\frac{1+(a-2)t}{1-t}$. If $t>\frac{2}{3+a}$ then $z_1<-1$, hence we obtain a contradiction and the desired result. 
\end{proof}

Now we summarize the results about the (non)displaceability properties of the focus-focus fiber of the systems $(M,\omega,F_t)$ in the following proposition:

\begin{proposition}Consider the Kepler problem $(M,\omega,F_t)$ for $\frac{1}{5}<t<1$. Then for
\begin{itemize}
\item $t<t_0$, the focus-focus fiber is \textbf{displaceable}.
\item $t=t_0$ the focus-focus fiber is a \textbf{stem}, hence nondisplaceable by symplectomorphisms.
\item $t>t_0$, the focus-focus fiber is \textbf{nondisplaceable}. 
\end{itemize}
\end{proposition}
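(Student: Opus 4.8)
The plan is to recognize that this proposition is a summary statement whose three cases each follow directly from a result already established earlier in this section, so the proof amounts to invoking the correct reference in each regime and checking that the three cases together exhaust the parameter range $\frac{1}{5}<t<1$. No new analytic content is needed: the probe displacements, the nodal trade and slide arguments, and the Floer-theoretic nondisplaceability of the Chekanov-type tori have all been carried out already.

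First, for $\frac{1}{5}<t<t_0$ I would cite Proposition \ref{pr.keplerOneNondispl}. There the unique nondisplaceable fiber is shown to be $F_t^{-1}(f_{\vec{\epsilon}}^{-1}((A\circ T_{0,h-2R})^{-1}(R,R)))$, i.e.\ the fiber over the image of the corner $(R,R)$ rather than over the focus-focus value. Since the focus-focus fiber is thus not the distinguished nondisplaceable fiber, it is displaceable; this is precisely the final assertion of that proposition. For $t=t_0$ I would invoke Theorem \ref{T.Keplert0}, which identifies the focus-focus fiber itself as the unique nondisplaceable fiber and shows it is a stem; by Theorem \ref{t.stem} any stem is nondisplaceable by the group of symplectomorphisms, giving the middle bullet. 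Finally, for $t_0<t<1$ I would appeal to Theorem \ref{c.t0nondisplaceable} (equivalently Lemma \ref{l.chekanov}), which establishes via the Chekanov-type torus argument that the focus-focus fiber is nondisplaceable throughout this range.

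The only point requiring a remark is that the three listed regimes partition $\frac{1}{5}<t<1$ with no gaps or overlaps, and this holds because $t_0$ was defined as the unique value in $]t^{-},t^{+}[\ =\ ]\tfrac{1}{5},1[$ satisfying $\tilde{h}(t_0)=1$ (with $\tilde h$ strictly monotone on this interval, as read off from Figure \ref{f.plotoftildeh}). I do not expect any genuine obstacle here: the statement is a faithful repackaging of the preceding results, and the mildest care is only in matching each regime to the sharp threshold $t_0$.
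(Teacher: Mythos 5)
Your proposal is correct and coincides with the paper's treatment: the proposition is stated there as a pure summary, with the three bullets following verbatim from Proposition \ref{pr.keplerOneNondispl}, Theorem \ref{T.Keplert0} (together with Theorem \ref{t.stem}), and Lemma \ref{l.chekanov}/Theorem \ref{c.t0nondisplaceable}, respectively. Your added remark that the regimes partition $]\tfrac{1}{5},1[$ via the defining property $\tilde{h}(t_0)=1$ is the same implicit bookkeeping the paper relies on (it uses $\tilde h(t)>1$ for $t<t_0$ in the proof of Proposition \ref{pr.keplerOneNondispl}), so nothing is missing.
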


\subsection{A semitoric system induced by the octagon}
\label{s.octagon}
Consider the toric symplectic manifold $(M,\omega)$ induced by the octagon $\Delta$ with vertices
\begin{equation*}
\{(1,0),(0,1),(2,0),(0,2),(1,3),(2,3),(3,1),(3,2)\}.
\end{equation*}
In De Meulenaere \& Hohloch \cite[Section $3.1$]{de2021family} it is shown that $M=\tilde L^{-1}(0)/N$, where
\begin{equation}
\label{eq.manifoldequations}
\tilde L^{-1}(0):=\left\{(z_1,...,z_8)\in \mathbb{C}^8 \left| \ \begin{aligned}
&|z_1|^2+|z_5|^2=6\\
&|z_2|^2+|z_5|^2+|z_7|^2=10\\
&|z_3|^2+|z_7|^2=6\\ 
&|z_4|^2-|z_5|^2+|z_7|^2=4\\
&|z_5|^2-|z_6|^2+|z_7|^2=2\\
&|z_5|^2-|z_7|^2+|z_8|^2=4
\end{aligned}
\right.
\right\}
\end{equation}
 and $N\simeq \mathbb{T}^6$ acts on $\mathbb{C}^8$ as 
\begin{align}
\label{eq.torusaction}
   & (t,z)\mapsto  \\ & \nonumber (e^{it_1}z_1,e^{it_2}z_2,e^{it_3}z_3,e^{it_4}z_4,e^{i(t_1+t_2-t_4-t_5+t_6)}z_5,e^{it_5}z_6,e^{i(t_2+t_3+t_4-t_5-t_6)}z_7,e^{it_6}z_8).
\end{align}
Points on $(M,\omega)$ are written as equivalence classes of the form $[z]=[z_1,...,z_8]$ with $z_k=x_k+iy_k\in \mathbb{C}$ for $1\leq k\leq 8$. The momentum map of the toric system $(M,\omega)$ is given as follows:
\begin{theorem}({De Meulenaere $\&$ Hohloch, \cite[Theorem 1.1]{de2021family}})
Let $F=(J,H):M\rightarrow \mathbb{R}^2$ be given by 
\begin{equation*}
    \begin{cases}
    J([z_1,...,z_8])=\frac{1}{2}|z_1|^2,\\
    H([z_1,...,z_8])=\frac{1}{2}|z_3|^2.
    \end{cases}
\end{equation*}
Then $F$ is a momentum map of an effective Hamiltonian $2$-torus action satisfying $F(M)=\Delta$. In particular, $F$ has eight elliptic-elliptic singular points. 
\end{theorem}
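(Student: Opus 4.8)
The plan is to read the theorem off the symplectic-reduction description $M = \tilde L^{-1}(0)/N$, which realizes $M$ as the toric manifold produced by the Delzant construction applied to $\Delta$. Write $\mu_j := \frac{1}{2}|z_j|^2$ for the components of the standard $\mathbb{T}^8$-moment map on $\mathbb{C}^8$, let $\nu_1,\dots,\nu_8 \in \mathbb{Z}^2$ denote the inward primitive normals to the eight facets of $\Delta$ (indexed so that the facet with normal $\nu_i$ is the one on which $\mu_i$ vanishes), and let $\beta \colon \mathbb{R}^8 \to \mathbb{R}^2$ be the linear map with $\beta(e_i) = \nu_i$. The point is that the six equations \eqref{eq.manifoldequations} are precisely the level-set equations for the moment map of the subtorus $N \cong \mathbb{T}^6$ of \eqref{eq.torusaction}, whose Lie algebra is $\ker\beta$; this is checked by confirming that each coefficient vector of \eqref{eq.manifoldequations} lies in $\ker\beta$.

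First I would verify that $J$ and $H$ descend and generate commuting circle actions. Since $N$ acts on $\mathbb{C}^8$ only through phases (see \eqref{eq.torusaction}), every $\mu_j$ is $N$-invariant, so $J = \mu_1$ and $H = \mu_3$ descend to smooth functions on $M = \tilde L^{-1}(0)/N$; moreover $\{\mu_1, \mu_3\} = 0$ on $\mathbb{C}^8$ and this survives reduction, giving $\{J, H\} = 0$. The functions $\mu_1$ and $\mu_3$ generate the coordinate circle actions $z_1 \mapsto e^{i\theta}z_1$ and $z_3 \mapsto e^{i\theta}z_3$, which commute with $N$ and preserve $\tilde L^{-1}(0)$, hence descend to a Hamiltonian $\mathbb{T}^2$-action on $M$ having $F = (J,H)$ as a moment map.

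The heart of the argument is the identification $F(M) = \Delta$, carried out by solving the six relations \eqref{eq.manifoldequations} (after dividing by $2$) to express every $\mu_j$ as an affine function of $(J,H)$. From $\mu_5 = 3 - J$ and $\mu_7 = 3 - H$ one obtains $\mu_2 = J + H - 1$, $\mu_4 = 2 - J + H$, $\mu_6 = 5 - J - H$ and $\mu_8 = 2 + J - H$. Each $\mu_j = \frac{1}{2}|z_j|^2$ is nonnegative, and the eight inequalities $\mu_j \geq 0$ become exactly $J \geq 0$, $H \geq 0$, $J \leq 3$, $H \leq 3$, $J + H \geq 1$, $J + H \leq 5$, $J - H \leq 2$ and $H - J \leq 2$, which are the half-planes cutting out the octagon $\Delta$. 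Since by the Atiyah--Guillemin--Sternberg convexity theorem $F(M)$ is the convex polytope on which all $\mu_j$ are nonnegative, this yields $F(M) = \Delta$. The same computation shows that the facets $\{\mu_1 = 0\}$ and $\{\mu_3 = 0\}$ are $\{J = 0\}$ and $\{H = 0\}$ with normals $\nu_1 = (1,0)$ and $\nu_3 = (0,1)$; as these form a $\mathbb{Z}$-basis of $\mathbb{Z}^2$, the two coordinate circles project to generators of the residual torus $\mathbb{T}^8/N \cong \mathbb{T}^2$, so the $\mathbb{T}^2$-action above is the full effective residual toric action. Finally, a toric system has only elliptic singular components, its rank-zero singular points project exactly to the vertices of the moment polytope, and $\Delta$ has eight vertices; hence $F$ has eight elliptic-elliptic singular points.

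I expect the main obstacle to be the conceptual bookkeeping of the reduction rather than any hard estimate: one must ensure that the two coordinate circles descend to an effective action coinciding with the residual toric action, which is exactly where the basis property of $\nu_1, \nu_3$ enters, and one must confirm that \eqref{eq.manifoldequations} really is the $N$-moment-map level set. Once these are in place, the image computation is a routine linear-algebra verification and the count of eight elliptic-elliptic points is immediate from the toric normal form and the shape of $\Delta$.
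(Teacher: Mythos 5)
Your proposal is correct and follows essentially the same route as the source: the paper itself imports this statement from De Meulenaere \& Hohloch without reproving it, and the underlying proof is exactly the Delzant-construction bookkeeping you perform, checking that the coefficient vectors of \eqref{eq.manifoldequations} span $\ker\beta$ for the action \eqref{eq.torusaction}, solving the level-set relations for the $\mu_j$ as affine functions of $(J,H)$, and reading off the octagon from the eight inequalities $\mu_j\geq 0$. The only spot worth tightening is the appeal to Atiyah--Guillemin--Sternberg for $F(M)=\Delta$: the inclusion $\Delta\subseteq F(M)$ is more directly obtained by noting that for any $(a,b)\in\Delta$ the prescribed nonnegative values $\mu_j$ determine moduli $\abs{z_j}=\sqrt{2\mu_j}$ giving a point of $\tilde L^{-1}(0)$ mapping to $(a,b)$, after which your vertex count and the effectiveness argument via $\nu_1=(1,0)$, $\nu_3=(0,1)$ forming a $\mathbb{Z}$-basis go through as written.
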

An integrable system $(M,\omega,F)$ is of \textbf{toric type} if $F:M\rightarrow \mathbb{R}^2$ is proper and if there exists an effective, Hamiltonian $\mathbb{T}^2$-action on $M$ whose momentum map is of the form $f\circ F$ where $f:F(M)\rightarrow f(F(M))$ is a diffeomorphism. 
After perturbing the integral $H$ by $X:=\mathfrak{R}(Z)$, where $Z[z_1,...,z_8]:=\overline{z_2z_3z_4}z_6z_7z_8$ and $\mathfrak{R}$ stands for the real part of the function, one is able to obtain a family of semitoric systems on $(M,\omega)$. 
\begin{theorem}({De Meulenaere $\&$ Hohloch, \cite[Theorem 1.2]{de2021family}})
    Set $F_t:=(J,H_t):=(J,(1-2t)H+t\gamma X):(M,\omega)\rightarrow \mathbb{R}^2$. Then, $(M,\omega,F_t)_{0\leq t\leq 1}$ is toric for $t=0$, of toric type for $0<t<t^{-},$ semitoric for $t^{-}<t<t^{+}$, and again of toric type for $t^{+}<t\leq 1$ where
\begin{equation*}
    0<t^{-}:=\frac{1}{2(1+24\gamma)}<\frac{1}{2}<t^{+}:=\frac{1}{2(1-24\gamma)}<1.
\end{equation*}
For all $t\in [0,1]$, the system $F_t$ has precisely eight fixed-points of which four are always elliptic-elliptic. The other four pass at $t=t^{-}$ from elliptic-elliptic via a Hamiltonian-Hopf bifurcation to focus-focus. At $t=t^{+}$, these four focus-focus points turn again back into elliptic-elliptic via a Hamiltonian-Hopf bifurcation. 
\end{theorem}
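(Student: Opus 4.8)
The plan is to use that throughout the family the first integral $J=\frac12|z_1|^2$ is untouched and keeps generating a genuine $2\pi$-periodic $S^1$-action, so that only the \emph{type} of the singularities of $F_t$ varies with $t$. First I would verify that $F_t=(J,H_t)$ is an integrable system for every $t\in[0,1]$. Because the perturbing function $Z=\overline{z_2z_3z_4}\,z_6z_7z_8$ does not involve $z_1$, it is invariant under the $z_1$-rotation generated by $J$, so $\{J,X\}=0$ and, with $\{J,H\}=0$, also $\{J,H_t\}=0$. That $H_t$ descends to $M=\tilde L^{-1}(0)/N$ follows from the total weight of $Z$ under \eqref{eq.torusaction} being zero: the phases contributed by $\overline{z_2},\overline{z_3},\overline{z_4},z_6,z_7,z_8$ add up to $-t_2-t_3-t_4+t_5+(t_2+t_3+t_4-t_5-t_6)+t_6=0$. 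Properness of $J$ is immediate from compactness of $M$, and $J,H_t$ are independent almost everywhere, so $F_t$ is always a candidate semitoric system; at $t=0$ it is by construction the toric system with $F_0(M)=\Delta$.

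Next I would pin down the rank-zero singularities. These are the points fixed by the $S^1$-action (so $X_J=0$) that are in addition critical for $H_t$. Since the perturbation is $S^1$-invariant it cannot create new fixed points, and I would show that for every $t$ the rank-zero set consists exactly of the eight points that at $t=0$ map to the vertices of $\Delta$. A discrete symmetry of the octagon that also preserves $Z$ organizes these eight points into two orbits of four, so the local analysis only has to be carried out on one representative of each orbit.

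The hard part will be the linear analysis at these representatives. In Darboux coordinates around such a point I would extract the quadratic parts of $J$ and $H_t$, linearize the Hamiltonian vector field $X_{H_t}$ on the symplectic complement of the $J$-orbit, and read off its spectrum. By the infinitesimal symplectic symmetry the eigenvalues come as $\{\pm\lambda_1,\pm\lambda_2\}$, so the characteristic polynomial is a quadratic in $\lambda^2$ whose coefficients are explicit in $t$ and $\gamma$; two purely imaginary pairs (elliptic-elliptic) correspond to a positive discriminant, a complex quadruple $\pm a\pm bi$ (focus-focus) to a negative one, and the Krein collision of the two imaginary pairs --- i.e.\ the Hamiltonian-Hopf bifurcation --- to its vanishing. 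For one orbit this discriminant stays positive for all $t$, so those four points remain elliptic-elliptic throughout; for the other orbit it is positive outside and negative inside an interval whose endpoints, obtained by solving the discriminant equation, are exactly $t^{\pm}=\frac{1}{2(1\pm 24\gamma)}$. Here the standing smallness assumption on $\gamma$ (say $24\gamma<\frac12$) guarantees $0<t^-<\frac12<t^+<1$. I expect this eigenvalue bookkeeping --- reducing to a single quadratic in $\lambda^2$ and checking that its discriminant factors as advertised --- to be the only genuinely delicate step.

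Finally I would assemble the global statement from the singularity types. At $t=0$ the system is toric. For $0<t<t^-$ and for $t^+<t\le1$ every singularity is non-degenerate of elliptic or regular type with no focus-focus (hence no hyperbolic) component; the absence of focus-focus makes the affine monodromy trivial, so a straightening diffeomorphism $f$ of $F_t(M)$ turns $f\circ F_t$ into the momentum map of an effective Hamiltonian $\mathbb{T}^2$-action, i.e.\ $F_t$ is of toric type. For $t^-<t<t^+$ the four focus-focus points obstruct any such completion, so the system is genuinely semitoric. This yields precisely the stated ranges, together with the bifurcation picture of the four non-persistent fixed points at $t^{\pm}$.
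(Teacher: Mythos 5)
This statement is not proven in the paper at hand: it is quoted from De Meulenaere \& Hohloch \cite[Theorem 1.2]{de2021family}, so your proposal has to be measured against the proof given there. Your skeleton does match that proof's route for the rank-zero part: the weight computation showing that $Z$ is invariant under the $N$-action of \eqref{eq.torusaction} and independent of $z_1$ (hence $\{J,H_t\}=0$ and $H_t$ descends to $M$) is correct, and classifying the eight fixed points via the spectrum $\{\pm\lambda_1,\pm\lambda_2\}$ of the linearized flow, with the type change governed by the sign of a discriminant whose vanishing yields $t^{\pm}=\frac{1}{2(1\pm 24\gamma)}$ (under the standing smallness assumption $\gamma<\frac{1}{48}$, equivalently $24\gamma<\frac{1}{2}$), is exactly the computation carried out in the cited reference.

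There are, however, two genuine gaps. First, and most importantly, \emph{semitoric} (Definition \ref{d.semitoric}) requires \emph{all} singularities of $F_t$ to be non-degenerate without hyperbolic components, not only the rank-zero ones; likewise, concluding ``of toric type'' on $(0,t^-)$ and $(t^+,1]$ from triviality of the monodromy presupposes that every rank-one singular point is non-degenerate of elliptic-transversal type. Your proposal analyzes only the eight fixed points and says nothing about the one-dimensional critical strata; in the cited paper the verification that the rank-one points are non-degenerate and non-hyperbolic for the relevant $t$ is a substantial separate computation (performed on the reduced spaces $J^{-1}(j)/\mathbb{S}^1$), and without it the trichotomy toric type / semitoric / toric type simply does not follow. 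Second, your inference that ``$\mathbb{S}^1$-invariance of the perturbation cannot create new fixed points'' does not pin down the rank-zero set: the fixed-point set of the $J$-circle is not discrete --- it contains the two spheres $J^{-1}(0)$ and $J^{-1}(3)$ over the vertical edges of $\Delta$, besides the four isolated fixed points over $(1,0),(2,0),(1,3),(2,3)$ --- and on those spheres the rank-zero points of $F_t$ are precisely the critical points of $H_t$ restricted to the sphere, whose number can a priori change with $t$. That each sphere carries exactly two such critical points for all $t\in[0,1]$, so that the total stays at eight, must be computed rather than deduced formally from equivariance. A minor further point: a sign change of the discriminant establishes the elliptic-elliptic/focus-focus transition, but asserting a Hamiltonian-Hopf bifurcation at $t^{\pm}$ also uses that the points are degenerate exactly there and that the two imaginary eigenvalue pairs collide transversally, which your sketch should at least record.
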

In more detail:
\begin{proposition}({De Meulenaere $\&$ Hohloch, \cite[Proposition 3.7]{de2021family}})
The focus-focus points of the semitoric systems $(M,\omega,F_t)$, for $t^{-}<t<t^{+}$ are 
\begin{center}
\begin{itemize}
\item $B:=[\sqrt{2},0,0,\sqrt{2},2,2\sqrt{2},\sqrt{6},\sqrt{6}]$;\\\item $D:=[2,\sqrt{2},0,0,\sqrt{2},\sqrt{6},\sqrt{6},2\sqrt{2}]$;\\ 
\item  $C:=[2,2\sqrt{2},\sqrt{6},\sqrt{6},\sqrt{2},0,0,\sqrt{2}]$; \\ 
\item $A:=[\sqrt{2},\sqrt{6},\sqrt{6},2\sqrt{2},2,\sqrt{2},0,0].$
\end{itemize}
\end{center}
Furthermore $A,B\in J^{-1}(1)$ and $C,D\in J^{-1}(2)$.
\end{proposition}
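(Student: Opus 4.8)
The plan is to identify the eight rank-$0$ critical points of $F_t$ explicitly, to show that the four listed points are among them and persist for all $t$, and finally to single them out as the focus-focus ones by a linearization argument, leaning on the cited Theorem of De Meulenaere \& Hohloch which already guarantees that exactly four fixed points become focus-focus on $]t^-,t^+[$.

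First I would check that $A,B,C,D$ genuinely represent points of $M=\tilde L^{-1}(0)/N$: substituting the proposed moduli into the six equations of \eqref{eq.manifoldequations} one verifies each identity (for instance for $B$ one gets $2+4=6$, $0+4+6=10$, $0+6=6$, $2-4+6=4$, $4-8+6=2$, $4-6+6=4$), and then $J=\tfrac12|z_1|^2$ gives $J(A)=J(B)=1$ and $J(C)=J(D)=2$. Each of the four representatives has exactly two coordinates equal to zero, so each lies in the preimage of a vertex of $\Delta$ under $F=(J,H)$; concretely $F(B)=(1,0)$, $F(A)=(1,3)$, $F(C)=(2,3)$, $F(D)=(2,0)$. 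Being preimages of vertices, they are fixed points of the toric $\mathbb{T}^2$-action of the unperturbed system $F_0$, hence rank-$0$ critical points of $F_0$.

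Next I would show that these four points stay rank-$0$ critical for the perturbed momentum map $F_t=(J,H_t)$ with $H_t=(1-2t)H+t\gamma X$ and $X=\mathfrak{R}(Z)$, $Z=\overline{z_2z_3z_4}\,z_6z_7z_8$. Since $J$ and $H=\tfrac12|z_3|^2$ are unchanged it suffices to see that $dX=0$ at each point. At every one of these four points two of the six factors of $Z$ vanish (e.g.\ $z_2=z_3=0$ at $B$, $z_7=z_8=0$ at $A$), so every first derivative of $Z$ still contains a vanishing factor; thus $dZ=0$, hence $dH_t=(1-2t)\,dH+t\gamma\,dX=0$, and $A,B,C,D$ remain rank-$0$ critical points of $F_t$ for all $t$. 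The decisive step is then to determine the type. By the cited Theorem exactly four of the eight fixed points are of focus-focus type for $t^-<t<t^+$, so it remains to pick them out. I would compute the linearization of $X_{H_t}$ on the symplectic normal slice at each fixed point: the quadratic part splits into the elliptic (oscillator) contribution of $(1-2t)H$ and the quadratic part of the perturbation $X$. At $A,B,C,D$ the perturbation couples the two transverse directions through a nonzero mixed second derivative of $Z$ — for example at $A$ one has $\partial^2 Z/\partial z_7\partial z_8=\overline{z_2z_3z_4}\,z_6=24\neq 0$ — and this off-diagonal term drives the two imaginary eigenvalue pairs together and then off the imaginary axis, producing a focus-focus quadruple $\pm\alpha\pm i\beta$ for $t^-<t<t^+$ and a degenerate collision exactly at $t=t^\pm$, matching the Hamiltonian-Hopf picture. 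By contrast, at the remaining four vertices either $z_1$ or $z_5$ vanishes; as $Z$ is independent of both $z_1$ and $z_5$, the perturbation produces no coupling between the transverse directions, so those fixed points stay elliptic-elliptic. This isolates $A,B,C,D$ as the focus-focus points and records their $J$-values.

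I expect the linearization in the third step to be the main obstacle. The difficulty is that $M$ is a symplectic quotient $\tilde L^{-1}(0)/N$, so one must first produce honest Darboux coordinates on a slice transverse to the fiber — carefully using the $N$-gauge to eliminate the six redundant phases — before the Hessian of $H_t$ and, in particular, of $X$ can be written down and the $4\times 4$ $t$-dependent Hamiltonian matrix diagonalized; verifying that the eigenvalues leave the imaginary axis precisely at $t^\pm$ is the technical heart of the argument, whereas Steps~1 and~2 are routine verifications.
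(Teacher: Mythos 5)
Your Steps 1 and 2 are sound, and the numerics check out: $A,B,C,D$ do satisfy the six equations of \eqref{eq.manifoldequations}, their images under $F_0=(J,H)$ are the vertices $(1,0),(1,3),(2,3),(2,0)$ of $\Delta$ with the stated $J$-values, and since exactly two of the six factors of $Z=\overline{z_2z_3z_4}\,z_6z_7z_8$ vanish at each of these points, every first Wirtinger derivative of $Z$ retains a vanishing factor, so $dX=0$ and the four points remain rank-zero for all $t$. Note for comparison that the paper itself offers no proof of this proposition at all: it is imported verbatim from De Meulenaere \& Hohloch \cite{de2021family}, so your attempt can only be measured against that reference, which indeed proceeds by the kind of explicit computation you set up.

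The genuine gap is in your Step 3, in both of its prongs. First, the exclusion argument fails because you analyze the wrong points: at the vertex preimages over $(0,1),(0,2),(3,1),(3,2)$ only \emph{one} factor of $Z$ vanishes. For instance over $(0,1)$ one has $z_1=z_2=0$ with $|z_3|^2=2$, $|z_4|^2=6$, $|z_5|^2=6$, $|z_6|^2=8$, $|z_7|^2=4$, $|z_8|^2=2$, so $dZ=\overline{z_3z_4}\,z_6z_7z_8\,d\overline{z_2}\neq 0$; the $z_2$-plane is tangent to $\tilde L^{-1}(0)$ and transverse to the $N$-orbit there (all defining functions and the action \eqref{eq.torusaction} are quadratic or linear in $z_2$, which vanishes), so $dX\neq 0$ on $M$ and hence $dH_t=t\gamma\,dX\neq 0$ for $t>0$. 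These vertex preimages are therefore \emph{not} fixed points of $F_t$ at all; the four persistent elliptic-elliptic fixed points move with $t$ inside the $\mathbb{S}^1$-fixed spheres $J^{-1}(0)=\{z_1=0\}$ and $J^{-1}(3)=\{z_5=0\}$, and "no coupling at those vertices" is beside the point. Second, your direct argument at $A,B,C,D$ --- nonzero mixed second derivative implies the eigenvalues leave the imaginary axis --- is false as a principle: the coupling $\partial^2Z/\partial z_7\partial z_8=24$ at $A$ is independent of $t$, yet for $0<t<t^{-}$ the system is of toric type and these very points are elliptic-elliptic; whether the linearization is focus-focus depends on the balance between the $(1-2t)$-part and the $t\gamma$-coupling, which is precisely what produces the window $]t^{-},t^{+}[$, and you concede this eigenvalue analysis ("the technical heart") without carrying it out. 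A correct shortcut that avoids the linearization entirely is available from your own setup: a focus-focus point of a semitoric system is an \emph{isolated} fixed point of the $\mathbb{S}^1$-action generated by $J$ (in the local normal form the circle is generated by $q_1=x_1\xi_2-x_2\xi_1$, whose vector field vanishes only at the origin); since $J$ is unperturbed, $M^{\mathbb{S}^1}$ is $t$-independent and consists of the two spheres $J^{-1}(0)$, $J^{-1}(3)$ together with exactly the four isolated points $A,B,C,D$. Combined with the cited count of four focus-focus points for $t^{-}<t<t^{+}$, this forces the focus-focus points to be $A,B,C,D$, with your Step 2 confirming they are indeed rank-zero for all $t$.
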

\begin{proposition}
    ({De Meulenaere $\&$ Hohloch, \cite[Proposition 1.3]{de2021family}}) At $t=\frac{1}{2}$, the system $F_{\frac{1}{2}}$ has precisely two focus-focus fibers, each of which contains precisely two focus-focus points so that each of these two fibers has the shape of a double pinched torus.
\end{proposition}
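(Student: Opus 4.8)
The plan is to exploit the fact that the second integral degenerates at the distinguished parameter. Since $H_t=(1-2t)H+t\gamma X$, at $t=\frac{1}{2}$ the coefficient $1-2t$ vanishes, so that $H_{1/2}=\frac{\gamma}{2}X=\frac{\gamma}{2}\,\mathfrak{R}(Z)$ with $Z[z]=\overline{z_2z_3z_4}\,z_6z_7z_8$. Because $\frac{1}{2}\in\ ]t^{-},t^{+}[$, the preceding results of De Meulenaere \& Hohloch \cite{de2021family} guarantee that $F_{1/2}$ has exactly four focus-focus points, namely the explicitly listed $A,B,C,D$. The entire argument then reduces to evaluating $F_{1/2}=(J,H_{1/2})$ at these four points and showing that their images collapse onto two distinct values, each attained by exactly two of the points.

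First I would carry out the short evaluation of $Z$ at each point. In every case the monomial $\overline{z_2z_3z_4}\,z_6z_7z_8$ contains a pair of vanishing coordinates: for $A=[\sqrt{2},\sqrt{6},\sqrt{6},2\sqrt{2},2,\sqrt{2},0,0]$ and $C=[2,2\sqrt{2},\sqrt{6},\sqrt{6},\sqrt{2},0,0,\sqrt{2}]$ the factors $z_7,z_8$ respectively $z_6,z_7$ are zero, while for $B=[\sqrt{2},0,0,\sqrt{2},2,2\sqrt{2},\sqrt{6},\sqrt{6}]$ and $D=[2,\sqrt{2},0,0,\sqrt{2},\sqrt{6},\sqrt{6},2\sqrt{2}]$ the factors $z_2,z_3$ respectively $z_3,z_4$ are zero. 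Hence $Z=0$, so $X=0$ and therefore $H_{1/2}=0$ at all four points, independently of $\gamma$. Combining this with the recorded values $J(A)=J(B)=1$ and $J(C)=J(D)=2$ gives $F_{1/2}(A)=F_{1/2}(B)=(1,0)$ and $F_{1/2}(C)=F_{1/2}(D)=(2,0)$.

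It remains to convert this coincidence of momentum values into the statement about fibers. Since $A,B$ both map to $(1,0)$ and $C,D$ both map to $(2,0)$, and since $(1,0)\neq(2,0)$, the two distinct fibers $F_{1/2}^{-1}(1,0)$ and $F_{1/2}^{-1}(2,0)$ each contain two of the four focus-focus points; as $A,B,C,D$ are the only focus-focus points of $F_{1/2}$, no other fiber carries a focus-focus singularity and no fiber carries more than two. Finally, invoking the local normal form at non-degenerate focus-focus singularities together with the connectedness of the fibers and the structure of singular semitoric fibers (a connected fiber with $k$ focus-focus points is a torus with $k$ pinches, cf.\ Section \ref{s.nondisplaceabilitybigk}), each of these two fibers is a doubly pinched torus.

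The computation itself is immediate once one notices that $1-2t$ vanishes at $t=\frac{1}{2}$; the step requiring the most care is the last one, where one must verify that the collapse accounts for \emph{all} focus-focus singularities, so that precisely two focus-focus fibers arise, and that connectedness together with non-degeneracy rules out any further pinching, so that the two distinguished fibers are genuinely doubly pinched tori rather than a more degenerate configuration.
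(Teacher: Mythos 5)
Your proposal is correct, and it reconstructs what is in fact the intended argument: the paper does not prove this proposition at all but imports it verbatim from De Meulenaere--Hohloch \cite[Proposition 1.3]{de2021family}, and your verification — at $t=\frac{1}{2}$ the coefficient $1-2t$ annihilates $H$, while $X=\mathfrak{R}(Z)$ vanishes at each of $A,B,C,D$ because the monomial $\overline{z_2z_3z_4}\,z_6z_7z_8$ always contains two zero coordinates, so with $J(A)=J(B)=1$ and $J(C)=J(D)=2$ the four focus-focus points collapse pairwise onto the two values $(1,0)$ and $(2,0)$ — matches the computation in that reference. The only external input, which you correctly identify as the delicate step, is the structure theorem for non-degenerate focus-focus fibers without hyperbolic components (a compact connected fiber containing exactly $k$ such points is a $k$-pinched torus, by Zung and \vungoc), which is precisely the toolkit the paper itself invokes in Section \ref{s.nondisplaceabilitybigk}.
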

Our goal is to study the symplectic topology of the fibers of these semitoric systems, in particular of the focus-focus fibers. By Proposition \ref{p.lagrangiansphere}, for $t=\frac{1}{2}$ the focus-focus fibers of the semitoric system are \textbf{nondisplaceable}, due to the presence of Lagrangian spheres.
\begin{lemma}
\label{l.fffibefsymplectomorphism}
    There exist symplectomorphisms of $(M,\omega)$ that interchange the focus-focus fibers of the semitoric system $(M,\omega,F_t)$ for each $t^{-}<t<t^{+}$.
\end{lemma}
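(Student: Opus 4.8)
The plan is to realize the (integral-affine) symmetries of the octagon $\Delta$ as honest symplectomorphisms of $M=\tilde L^{-1}(0)/N$ and to track their effect on the perturbing term, so that they descend to maps which permute the focus-focus values. Concretely, I would look for coordinate permutations (and a phase change) of $\mathbb{C}^8$ that preserve the level set $\tilde L^{-1}(0)$ of \eqref{eq.manifoldequations}, normalize the $N$-action of \eqref{eq.torusaction}, and hence descend to symplectomorphisms $\psi$ of $(M,\omega)$; the aim is to produce $\psi$ with $F_t\circ\psi=g\circ F_t$ for an affine $g$ permuting the four focus-focus values, since such a $\psi$ automatically carries the focus-focus fiber over $c$ to the one over $g(c)$. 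First I would record these values: with $J=\tfrac12|z_1|^2$, $H=\tfrac12|z_3|^2$ and $X=\mathfrak{R}(\overline{z_2z_3z_4}z_6z_7z_8)$, one checks that $X$ vanishes at each of $A,B,C,D$ (each has a zero among the relevant coordinates), so $F_t(A)=(1,3(1-2t))$, $F_t(B)=(1,0)$, $F_t(C)=(2,3(1-2t))$, $F_t(D)=(2,0)$, four distinct points for $t\neq\tfrac12$ collapsing to two at $t=\tfrac12$.

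Next I would produce the horizontal reflection. The permutation $\pi_h=(1\,5)(2\,4)(6\,8)$ preserves the six equations of \eqref{eq.manifoldequations} and preserves the Lie subalgebra $\mathfrak{n}\subset\mathbb{R}^8$ of the $N$-action (equivalently, the two linear relations among the weight vectors $w_k$ of \eqref{eq.torusaction}), so it descends to a symplectomorphism $\psi_h$ of $M$. Since it swaps $z_1\leftrightarrow z_5$ and fixes $z_3$, it sends $J\mapsto 3-J$ and $H\mapsto H$; and since it merely reorders the factors of $Z:=\overline{z_2z_3z_4}z_6z_7z_8$, it fixes $X$. Hence $H_t\circ\psi_h=H_t$ and $F_t\circ\psi_h=(3-J,H_t)$, so $\psi_h$ interchanges the focus-focus fibers over $(1,\cdot)$ and $(2,\cdot)$, i.e.\ the fibers through $A\leftrightarrow C$ and $B\leftrightarrow D$ (indeed one computes $\psi_h(A)=C$).

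The remaining interchange, within a fixed $J$-level, is where the perturbation causes trouble, and is the main obstacle. The natural candidate is the vertical reflection $\pi_v=(2\,8)(3\,7)(4\,6)$, which fixes $J$ and sends $H\mapsto 3-H$; but it sends $Z\mapsto\overline{Z}$, hence fixes $X$, so $H_t\circ\psi_v=3(1-2t)-(1-2t)H+t\gamma X$ is not a function of $(J,H_t)$ and $\psi_v$ fails to send $F_t$-fibers to $F_t$-fibers. The fix is to reverse the sign of $X$ by composing with the half-turn $s\colon z_6\mapsto -z_6$, which preserves $\tilde L^{-1}(0)$, commutes with $N$, and sends $Z\mapsto -Z$. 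Then $\Phi:=s\circ\psi_v$ satisfies $J\circ\Phi=J$, $H\circ\Phi=3-H$ and $X\circ\Phi=-X$, whence $F_t\circ\Phi=(J,\,3(1-2t)-H_t)$; this interchanges the fibers through $A\leftrightarrow B$ and $C\leftrightarrow D$. Composing $\psi_h$ and $\Phi$ then moves any one of the four focus-focus fibers to any other, which proves the claim for every $t^-<t<t^+$ (at $t=\tfrac12$ the single map $\psi_h$ already swaps the two double-pinched fibers).

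The only points requiring genuine verification are: (i) that $\pi_h,\pi_v$ preserve the six constraints of \eqref{eq.manifoldequations} and that $\pi_h,\pi_v,s$ normalize $N$ — these reduce to short linear-algebra checks, the normalization amounting to checking that the coordinate permutation preserves the two relations $\sum_k v_k w_k=0$ among the weights of \eqref{eq.torusaction}; and (ii) the sign bookkeeping in the transformation of $X$, which is the genuinely delicate step and the reason a bare octagon reflection does not suffice. I expect (ii) to be the crux, since it is exactly the interaction between the reflection and the perturbation $X$ that must be corrected by the phase factor $s$.
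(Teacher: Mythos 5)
Your proposal is correct and is essentially the paper's own proof: your $\psi_h$ is literally the paper's $\Psi_2(z_1,\dots,z_8)=(z_5,z_4,z_3,z_2,z_1,z_8,z_7,z_6)$, and your $\Phi=s\circ\psi_v$ agrees on $M$ with the paper's $\Psi_1(z_1,\dots,z_8)=(z_1,z_8,z_7,z_6,z_5,z_4,z_3,-z_2)$, since the two differ only by the element $t=(0,0,0,0,\pi,\pi)$ of the torus $N$ acting as $(z_6,z_8)\mapsto(-z_6,-z_8)$. Your write-up in fact makes explicit the verifications (preservation of $\tilde L^{-1}(0)$, normalization of $N$, and the equivariance $F_t\circ\psi_h=(3-J,H_t)$, $F_t\circ\Phi=(J,3(1-2t)-H_t)$ via the sign flip $X\mapsto -X$) that the paper's proof leaves to the reader.
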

\begin{proof}
    Consider the symplectomorphisms
    \begin{align*}
        & \Psi_1(z_1,...,z_8)=(z_1,z_8,z_7,z_6,z_5,z_4,z_3,-z_2),\\
        & \Psi_2(z_1,...,z_8)=(z_5,z_4,z_3,z_2,z_1,z_8,z_7,z_6)
    \end{align*}
    of $(\mathbb{C}^8,\omega_{0})$. Recall that $(M,\omega)$ is obtained as symplectic reduction of $(\mathbb{C}^8,\omega_{0})$ on a level set $\tilde{L}^{-1}(0)$, see Equation \eqref{eq.manifoldequations}, by a torus action of $\mathbb{T}^6$, see Equation \eqref{eq.torusaction}. Note that $\Psi_1$ and $\Psi_2$ preserve $\tilde L^{-1}(0)$ and are invariant under the action, hence they define symplectomorphisms on $(M,\omega)$. The result follows from considering the projections of $\Psi_1,\Psi_2$,$\Psi_1\circ \Psi_2$ and $\Psi_2\circ\Psi_1$ to $(M,\omega)$. The symplectomorphisms do not depend on $t$.
\end{proof}
Since (non)displaceability is invariant under symplectomorphism it is enough to consider one of the focus-focus fibers.
\subsubsection{Symplectic topology of the fibers of the toric system}
Before studying the symplectic topology of the fibers of the semitoric systems, we first need to understand the symplectic topology of the torus fibers given by the octagon $\Delta$.

\begin{lemma}
Every fiber except those over the points $\{(1,1),(1,2),(\frac{3}{2},\frac{3}{2}),(2,1),(2,2)\}$ in the toric system corresponding to $\Delta$ is displaceable by probes.
\end{lemma}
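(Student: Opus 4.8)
The plan is to read off the combinatorics of $\Delta$ and then apply McDuff's probe criterion (Lemma \ref{l.probes}) region by region, using the symmetry of $\Delta$ to reduce the casework. First I would record that $\Delta$ is the square $[0,3]\times[0,3]$ with its four corners truncated, so its eight facets lie on $x=0$, $x=3$, $y=0$, $y=3$ (the four straight edges, each spanning $1\le$ the free coordinate $\le 2$) and on $x+y=1$, $x-y=2$, $x+y=5$, $y-x=2$ (the four diagonal edges). The maps $(x,y)\mapsto(3-x,y)$, $(x,y)\mapsto(x,3-y)$, $(x,y)\mapsto(y,x)$ are integral affine transformations preserving $\Delta$ and generate a $D_4$-action; since displaceability by probes is invariant under $\operatorname{Aff}(2,\Z)$, it suffices to displace representatives under this group.

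Next I would treat the ``cross'' $\{1<y<2\}\cup\{1<x<2\}$. For $1<y<2$ the horizontal probe with direction $\lambda=(1,0)$ based on the facet $x=0$ crosses the whole slab and exits at $x=3$, so its affine midpoint sits at $x=\tfrac32$; hence every interior point $(x,y)$ with $1<y<2$ and $x<\tfrac32$ is displaceable, and the reflection $x\mapsto 3-x$ covers $x>\tfrac32$. A point $(\tfrac32,y)$, lying exactly at the midpoint of every horizontal probe, is instead displaced by the vertical probe $\lambda=(0,1)$ from $y=0$, whose midpoint is at $y=\tfrac32$, provided $y\neq\tfrac32$. Symmetrically the slab $1<x<2$ is handled by vertical and horizontal probes, so every interior point of the cross is displaceable except the center $(\tfrac32,\tfrac32)$.

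Then I would handle the four truncated corners, which by the $D_4$-symmetry reduce to the bottom-left triangle $T$ with vertices $(0,1),(1,0),(1,1)$; the only facet of $\Delta$ bounding $T$ is the diagonal $x+y=1$, with primitive inner normal $\eta=(1,1)$, and both $\lambda=(1,0)$ and $\lambda=(0,1)$ are integrally transverse to $\eta$ since $|\langle\lambda,\eta\rangle|=1$. The horizontal probe from $w=(1-b,b)$ (for $0<b<1$, so $w$ is interior to the facet) runs at height $y=b$ from $(1-b,b)$ to the opposite facet $x-y=2$ at $(2+b,b)$, with affine midpoint at $x=\tfrac32$; as every point of $T$ has first coordinate $\le 1<\tfrac32$, this displaces the interior of $T$ together with its relatively open edge $x=1$, while the symmetric vertical probe displaces the relatively open edge $y=1$. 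The single point of $T$ not reached this way is the inner vertex $(1,1)$: any horizontal or vertical probe aimed at it would have to emanate from an endpoint $(0,1)$ or $(1,0)$ of the facet, which is not interior, and no other transverse direction from $x+y=1$ passes through $(1,1)$ with $w$ interior. Transporting this by the symmetries produces the inner vertices $(1,1),(2,1),(1,2),(2,2)$ as the only corner points left undisplaced.

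Finally I would assemble the cases: the cross contributes everything with $1<x<2$ or $1<y<2$ except $(\tfrac32,\tfrac32)$, and the four corners contribute the remaining interior points except $(1,1),(2,1),(1,2),(2,2)$, so together they exhaust $\operatorname{int}(\Delta)\setminus\{(1,1),(1,2),(\tfrac32,\tfrac32),(2,1),(2,2)\}$, as claimed. The main thing to get right is the bookkeeping of affine distances: in each case one must verify that the target lies strictly less than halfway along an admissible probe (equivalently, that the midpoint of the chord cut out by the probe lands on the symmetry line $x=\tfrac32$ or $y=\tfrac32$) and that the base point $w$ genuinely lies in the open facet. The failure of the latter at precisely the five listed points is exactly what excludes them from the probe-displaceable set.
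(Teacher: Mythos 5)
Your proof is correct and follows essentially the same route as the paper, which simply invokes Lemma \ref{l.probes} with the axis-parallel probe directions $(\pm 1,0)$ and $(0,\pm 1)$ (illustrated in Figure \ref{f.displaceabilitytoricoctagon}); you have merely made the casework explicit, verifying that each base point lies in the open facet and that each target sits strictly before the affine midpoint $x=\tfrac32$ or $y=\tfrac32$. Note only that your closing remarks about $(1,1)$ being unreachable by probes are not needed for the statement, since the lemma does not claim the five excluded fibers are probe-nondisplaceable (their nondisplaceability is established later via quasi-states in Theorem \ref{t.quasistatesoctagon}).
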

\begin{proof}
    Application of Lemma \ref{l.probes}: depending on the value one can use the probe with direction $(\pm 1, 0)$ or $(0,\pm 1)$, see Figure \ref{f.displaceabilitytoricoctagon}. 
\end{proof}
\begin{figure}
\begin{center}
    \includegraphics[width=0.3\textwidth]{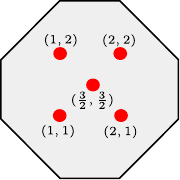}
    \caption{The fibers in the octagon that aren't displaceable by probes are highlighted at red.}
    \label{f.displaceabilitytoricoctagon}
\end{center}
\end{figure}
\noindent Now we study the (non)displaceability of the fibers of $\{(1,1),(1,2),(\frac{3}{2},\frac{3}{2}),(2,1),(2,2)\}$.
\begin{theorem}
\label{t.quasistatesoctagon}
    Let $(M,\omega)$ be the symplectic toric manifold given by $\Delta$. 
    Let $\{p_1,...,p_5\}=\{(1,1),(2,1),(\frac{3}{2},\frac{3}{2}),(1,2),(2,2)\}$.
    Then there exist $5$ different symplectic quasi-states $\zeta_1,...,\zeta_5$ for $(M,\omega)$ such that for each $p_i,\ i=1,..,5,$ the fiber $L_{p_i}$ over the point $p_i$ is \textbf{superheavy} with respect to $\zeta_i$. Therefore, the fibers $L_i$ are \textbf{nondisplaceble}.
\end{theorem}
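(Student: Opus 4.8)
The plan is to build the five quasi-states by symplectic reduction in the sense of Theorem \ref{t.reductionquasistates}, using the rotational symmetry of $\Delta$ to cut the work down to two model fibers. First I would note that the $90^{\circ}$ rotation $\sigma(x,y)=(3-y,x)$ about the center $(\tfrac32,\tfrac32)$ lies in $\mathrm{Aff}(2,\mathbb{Z})$ and permutes the vertices of $\Delta$, hence preserves $\Delta$; as in Lemma \ref{l.fffibefsymplectomorphism} it is realized by a symplectomorphism of $(M,\omega)$. It fixes $(\tfrac32,\tfrac32)$ and cyclically permutes the four corners $(1,1)\mapsto(2,1)\mapsto(2,2)\mapsto(1,2)$. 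Since the conjugate $H\mapsto\zeta(H\circ\psi)$ of a (partial) symplectic quasi-state by a symplectomorphism $\psi$ is again one, and carries a $\zeta$-superheavy set $X$ to the superheavy set $\psi^{-1}(X)$ for the conjugated quasi-state, it suffices to produce one quasi-state making $L_{(3/2,3/2)}$ superheavy and one making $L_{(1,1)}$ superheavy; transporting the latter by $\sigma,\sigma^2,\sigma^3$ then yields the remaining three corner quasi-states.

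For the center I would use the presentation $M=\tilde L^{-1}(0)/N$ of Equation \eqref{eq.manifoldequations}, compactifying each factor of $\mathbb{C}^8$ to a $2$-sphere so that $(M,\omega)$ appears as the symplectic reduction of the closed product $W=(\mathbb{S}^2)^8$ by $N\cong\mathbb{T}^6$ (the setup of Section \ref{s.reduction}), with quotient map $\pi\colon Z:=\tilde L^{-1}(0)\to M$. The product $E$ of the eight equators is superheavy in $W$ for a product quasi-state $\zeta_W$, the equator of $\mathbb{S}^2$ being Entov \& Polterovich \cite{entov2009rigid} superheavy and superheaviness being preserved under products; moreover $E$ is $\mathbb{T}^8$-invariant, hence $N$-invariant, so once one checks $E\subset Z$ its image $\pi(E)$ is a single toric fiber of $M$. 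The rotation $\sigma$ corresponds to a permutation of the eight coordinates leaving the defining equations invariant, under which $E$ is invariant, so $\pi(E)$ is $\sigma$-invariant and therefore lies over the unique interior fixed point $(\tfrac32,\tfrac32)$. Applying Theorem \ref{t.reductionquasistates}, after verifying that $\zeta_W$ satisfies the PB-inequality, that $Z$ is superheavy in $W$, and that $N$ acts freely on $Z$, produces a symplectic quasi-state $\zeta_3$ on $M$ for which $L_{(3/2,3/2)}=\pi(E)$ is superheavy.

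The corner $(1,1)$ is the genuinely hard case, which I expect to be the main obstacle, since it is not the balanced fiber: over $(1,1)$ the eight moduli $|z_k|^2$ are not all at the equator, so the reduction above only detects the central fiber. Here I would instead certify $L_{(1,1)}$ as superheavy through the bulk-deformed toric machinery of Fukaya \& Oh \& Ohta \& Ono \cite{fukaya2012toric}: the point $(1,1)$ sits at lattice distance $1$ from the three facets $\{x=0\}$, $\{y=0\}$ and $\{x+y=1\}$, which is precisely the balancing condition making it a critical point of an appropriately bulk-deformed potential function, and hence making $L_{(1,1)}$ superheavy for the associated spectral quasi-state $\zeta_1$. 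The delicate part is the explicit bulk-deformation computation certifying criticality at $(1,1)$; alternatively, to remain inside Theorem \ref{t.reductionquasistates}, one would have to exhibit a closed model $W'$ in which $L_{(1,1)}$ is the image of a superheavy fiber and in which the relevant level set is itself superheavy with a free action. Once $\zeta_1$ is available, the quasi-states $\zeta_2,\zeta_4,\zeta_5$ for $(2,1),(2,2),(1,2)$ are its transports under $\sigma$.

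Finally, the five quasi-states are distinct, being superheavy on five distinct fibers, and nondisplaceability is immediate from Theorem \ref{t.properties}: each $L_{p_i}$ is $\zeta_i$-superheavy, hence $\zeta_i$-heavy, hence $\zeta_i$-pseudoheavy, hence nondisplaceable from itself.
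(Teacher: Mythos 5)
Your symmetry trick is correct and genuinely useful (the affine map $\sigma(x,y)=(3-y,x)$ does preserve $\Delta$, is realized by a symplectomorphism via Delzant uniqueness, and conjugating quasi-states transports superheavy fibers), and the final deduction of nondisplaceability from Theorem \ref{t.properties} is fine. But your construction of the central quasi-state does not satisfy the hypotheses of Theorem \ref{t.reductionquasistates}. Requiring the product of equators $E$ to lie in $Z$ forces the sphere sizes: the equator levels must solve the six equations in \eqref{eq.manifoldequations}, giving $(|z_1|^2,\dots,|z_8|^2)=(3,4,3,4,3,4,3,4)$ and hence spheres with $|z_k|^2$ ranging over $[0,6]$ or $[0,8]$ accordingly. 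On $Z$, however, $|z_1|^2=2J\in[0,6]$ with $|z_5|^2=6-2J$, so over the edge $\{x=3\}$ of $\Delta$ the compactified level set contains points where $z_1$ sits at the pole at infinity of the first sphere while $z_5=0$. Since $t_1$ enters the action \eqref{eq.torusaction} only through the first and fifth factors, that circle fixes such points: the $\mathbb{T}^6$-action on $Z$ is not free, and moreover $d(|z_1|^2+|z_5|^2)=0$ there, so $0$ is not even a regular value of the compactified moment map. You cannot enlarge the spheres to escape this, because non-equatorial latitudes of $\mathbb{S}^2$ are displaceable, hence never superheavy — so either $E\subset Z$ (forced sizes, singular level) or the level is regular but you have no superheavy set to feed into the reduction. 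The paper's proof avoids exactly this dilemma by reducing from $\mathbb{CP}^2\times(\mathbb{CP}^1)^4$ in the style of Abreu \& Macarini \cite{abreu2013remarks}: the octagon is cut out of a strictly larger triangle by four pairs of parallel lines (Figure \ref{f.symplecticreductionoctagon}), so the circle fixed-point loci lie over lines outside the strips, the levels are regular, and the action is free.

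The second, larger gap is the corner $(1,1)$. Your premise that the reduction method ``only detects the central fiber'' is mistaken, and it pushes you into bulk-deformed Floer theory where you explicitly leave the decisive computation undone: certifying criticality of a bulk-deformed potential at $(1,1)$, and additionally verifying that the associated spectral functional is a symplectic quasi-state with the properties needed here, is precisely the content you would owe — ``lattice distance $1$ from three facets'' is a heuristic, not a proof. The paper instead handles all five points $p_i$ by one and the same reduction argument: the symplectic volumes of the $\mathbb{CP}^2$ and $\mathbb{CP}^1$ factors are free parameters of the presentation; the fiber over the barycenter of the triangle and the equators of the $\mathbb{CP}^1$'s are stems, hence superheavy for \emph{every} symplectic quasi-state (Polterovich \& Rosen \cite[Proposition 6.1.13]{polterovich2014function}); a product quasi-state as in Borman \cite[Theorem 3.1, Corollary 3.2]{borman2013quasi} satisfies the PB-inequality and makes the product of these stems superheavy; and choosing the volumes so that this product projects onto $p_i$ yields $\zeta_i$ via Theorem \ref{t.reductionquasistates}. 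In particular, rescaling so that the triangle's barycentric fiber lies over $(1,1)$ dispatches your ``genuinely hard case'' with no Floer-theoretic input at all. Combined with your $\sigma$-symmetry observation, two well-chosen presentations would suffice — but as written, your proposal has genuine gaps both in the model (hypotheses of the reduction theorem fail) and at the corners (the key computation is missing).
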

\begin{proof}
We present the argument for the fiber over $(\frac{3}{2},\frac{3}{2})$, the other ones are analogous. See Borman \cite{borman2013quasi} and Abreu \& Macarini \cite{abreu2013remarks} for more details on the general method.
\begin{itemize}
\item Notice that $(M,\omega)$ can be viewed as the symplectic reduction of $\mathbb{CP}^2 \times \mathbb{C}\mathbb{P}^1 \times \mathbb{CP}^1\times \mathbb{CP}^1\times \mathbb{CP}^1$ for an appropriate choice of subtorus with associated Lie algebra vectors and appropriate chosen symplectic volume of $\mathbb{CP}^2$ and $\mathbb{CP}^1$, see Figure \ref{f.symplecticreductionoctagon}. For more details see Section \ref{s.reduction} and Abreu \& Macarini \cite{abreu2013remarks}.
\item Notice that the central fiber in $(\mathbb{CP}^1,\omega_0)$ and the fiber over $(\frac{1}{3},\frac{1}{3})$ in $(\mathbb{CP}^{2},\omega_{FS})$ are stems, and in particular superheavy for any quasi-state, see Polterovich \& Rosen \cite[Proposition $6.1.13$]{polterovich2014function}.
\item For each $\mathbb{CP}^1$ and for $\mathbb{CP}^2$ we choose quasi-states as in Borman \cite[Theorem $3.1$]{borman2013quasi} to create a product quasi-state in $\mathbb{CP}^2\times \mathbb{CP}^1\times \mathbb{CP}^1\times \mathbb{CP}^1 \times \mathbb{CP}^1$, see Borman \cite[Corollary $3.2$]{borman2013quasi}, such that the product of the above mentioned fibers is superheavy with respect to this quasi state. Furthermore, this quasi-state satisfies the PB-inequality. 
\item Now apply Theorem \ref{t.reductionquasistates} to obtain the desired result. The symplectic volume of $\mathbb{CP}^1$ and $\mathbb{CP}^2$ are chosen such that the superheavy fiber obtained as the product of superheavy fibers projects onto the fiber over the point $(\frac{3}{2},\frac{3}{2})$ in $\Delta$. 
\end{itemize}
\end{proof}
\begin{figure}
\begin{center}
    \includegraphics[width=0.5\textwidth]{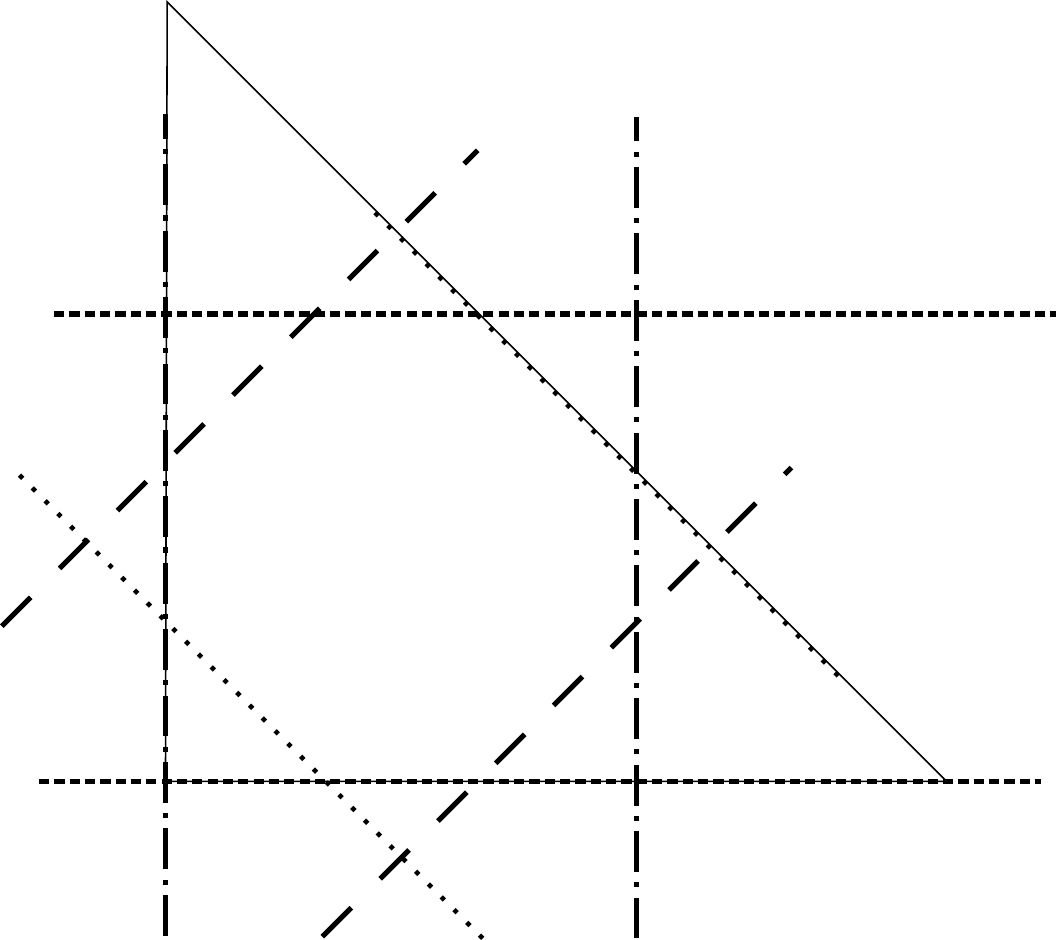}
    \caption{$(M,\omega)$ as the symplectic reduction of $\mathbb{CP}^2\times \mathbb{CP}^1\times \mathbb{CP}^1 \times \mathbb{CP}^1\times \mathbb{CP}^1$. The dotted lines come in pairs, and each pair corresponds to a reduction by $\mathbb{CP}^1$. The triangle corresponds to $\mathbb{CP}^2$.}
    \label{f.symplecticreductionoctagon}
\end{center}
\end{figure}

\subsubsection{Symplectic topology of the fibers of the semitoric systems}
Now that we understand the rigidity properties of the fibers of the toric system given by the octagon we are ready to study the semitoric case. When there are $4$ focus-focus values, for an appropriate choice of $\vec{\epsilon}$, the polytope invariant of the semitoric systems is given by the octagon, see Figure \ref{f.polytopeinvariantsemitoricoctagon}.
\begin{figure}
\begin{center}
    \includegraphics[width=0.2\textwidth]{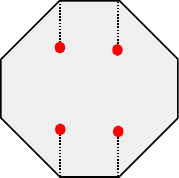}
    \caption{Representative of the polytope invariant for the semitoric system when $\vec{\epsilon}=(-1,1,-1,1)$. The dotted lines represent the eigenrays associated with the focus-focus fibers. The red circles represent the $4$ different focus-focus values.}
    \label{f.polytopeinvariantsemitoricoctagon}
\end{center}
\end{figure}
Throughout the text we use the polytope invariant of the semitoric for different choices of $\vec{\epsilon}$. We refer to Appendix \ref{s.computingpolytope} for a detailed computation of these representatives of the polytope invariant.
\begin{lemma}
\label{l.nondisplaceableexistslevelset}
Consider the semitoric system given by $(M,\omega,F_t=(J,H_t))$ for $t^{-}<t<t^{+}$. Then there exist nondisplaceable fibers $F_1,F_2,F_3$ such that $F_1\in J^{-1}(1)$, $F_2\in J^{-1}(2)$ and $F_3\in J^{-1}(\frac{3}{2})$.
\end{lemma}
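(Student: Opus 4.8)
The plan is to exploit the fact that $J$ is left unchanged by the perturbation, so the five symplectic quasi-states $\zeta_1,\dots,\zeta_5$ produced in Theorem \ref{t.quasistatesoctagon} for the symplectic manifold $(M,\omega)$ remain available in the semitoric setting: a (partial) symplectic quasi-state is an invariant of $(M,\omega)$ alone and does not depend on the choice of integrable system. Recall that, with respect to $\zeta_i$, the toric fiber $L_{p_i}$ over $p_i\in\{(1,1),(2,1),(\frac{3}{2},\frac{3}{2}),(1,2),(2,2)\}$ is superheavy. I would keep these superheavy subsets fixed and transport the information to the semitoric system $(M,\omega,F_t)$ through pseudoheaviness.

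For the key step, fix one quasi-state, say $\zeta_i$ making $L_{p_i}$ superheavy, and apply Theorem \ref{t.pseudo} of Kawasaki \& Orita to the integrable system $(M,\omega,F_t)$. This yields a value $y_i\in F_t(M)$ such that the semitoric fiber $F_t^{-1}(y_i)$ is $\zeta_i$-pseudoheavy. By Theorem \ref{t.properties}(4), a pseudoheavy set is nondisplaceable from every superheavy set for the same quasi-state, so taking the zero Hamiltonian gives $F_t^{-1}(y_i)\cap L_{p_i}\neq\emptyset$ (both fibers are compact, so closures may be dropped). Now I would use that $J$ is a common first integral of the toric system $F=(J,H)$ and of the semitoric system $F_t=(J,H_t)$: writing $y_i=((y_i)_1,(y_i)_2)$ and $p_i=((p_i)_1,(p_i)_2)$, any intersection point $q$ satisfies $J(q)=(y_i)_1$ because $q\in F_t^{-1}(y_i)$ and $J(q)=(p_i)_1$ because $q\in L_{p_i}$, forcing $(y_i)_1=(p_i)_1$. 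Hence $F_t^{-1}(y_i)\subseteq J^{-1}((p_i)_1)$.

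To conclude, by Theorem \ref{t.properties}(3) the pseudoheavy fiber $F_t^{-1}(y_i)$ is nondisplaceable from itself. Running the argument with $p_i=(1,1)$ produces a nondisplaceable fiber $F_1:=F_t^{-1}(y_i)\subseteq J^{-1}(1)$; with $p_i=(2,1)$ a nondisplaceable fiber $F_2\subseteq J^{-1}(2)$; and with $p_i=(\frac{3}{2},\frac{3}{2})$ a nondisplaceable fiber $F_3\subseteq J^{-1}(\frac{3}{2})$, exactly as required. The only point demanding care — and it is the conceptual crux rather than a computational obstacle — is the observation that the superheavy data of Theorem \ref{t.quasistatesoctagon} survive the passage from the toric to the semitoric system precisely because they are attached to $(M,\omega)$ and to the shared integral $J$, not to the perturbed integral $H_t$; everything else then follows formally from the abstract properties recorded in Theorem \ref{t.pseudo} and Theorem \ref{t.properties}.
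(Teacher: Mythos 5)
Your proposal is correct and follows essentially the same route as the paper's proof: both take the quasi-states $\zeta_i$ of Theorem \ref{t.quasistatesoctagon} (which, being attached to $(M,\omega)$, persist under the perturbation of $H$), apply Theorem \ref{t.pseudo} to $(M,\omega,F_t)$ to get $\zeta_i$-pseudoheavy semitoric fibers, and use Theorem \ref{t.properties} to force these fibers to intersect the superheavy toric fibers $L_{p_i}$ and hence, via the shared integral $J$, to lie in the correct level sets $J^{-1}(1)$, $J^{-1}(2)$, $J^{-1}(\tfrac{3}{2})$. Your write-up is in fact slightly more explicit than the paper's on two points it leaves implicit — that the intersection forces $(y_i)_1=(p_i)_1$ because $J$ is a common first integral, and that pseudoheaviness itself yields nondisplaceability via Theorem \ref{t.properties}(3) — but the argument is the same.
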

\begin{proof}
We prove that $F_1$ exists and that $F_1\in J^{-1}(1)$. The proof for $F_2$ and $F_3$ is analogous. 

Recall that for the toric system given by the octagon we have two $\textbf{superheavy}$ fibers that lie inside $J^{-1}(1)$, namely $L_{(1,1)}$ and $L_{(1,2)}$ with respect to the symplectic quasi-states $\zeta_{(1,1)}$ and $\zeta_{(1,2)}$, see Theorem \ref{t.quasistatesoctagon}.

Using Theorem \ref{t.pseudo} we obtain that for the semitoric systems given by $F_t$ there exist \textbf{pseudoheasvy} fibers $F^{-1}_t(y_{(1,1)})$ and $F_t^{-1}(y_{(1,2)})$, for some $y_{(1,1)},y_{(1,2)}\in \mathbb{R}^2$, with respect to $\zeta_{(1,1)}$ and $\zeta_{(1,2)}$. By Theorem \ref{t.properties} both fibers lie inside $J^{-1}(1)$, since they must intersect the corresponding \textbf{superheavy} fibers of the corresponding quasi-state. Therefore a \textbf{nondisplaceable} fiber for the semitoric system $(M,\omega,F_t)$ must exist inside $J^{-1}(1)$.
\end{proof}


Henceforth, throughout this subsection, we focus on the semitoric systems $(M,\omega,F_t)$ that have $4$ focus-focus values. 

\begin{lemma}
\label{l.displacingeverythingelse}
    Consider the semitoric system $(M,\omega,F_t)$, where $t^{-}<t<t^{+}$. All the fibers corresponding to points on the polytope invariant given by Figure \ref{f.polytopeinvariantsemitoricoctagon} that do not lie on the eigenrays and are not the points $\{(1,1),(2,1),(1,2),(2,2),(\frac{3}{2},\frac{3}{2})\}$ are \textbf{displaceable}.
\end{lemma}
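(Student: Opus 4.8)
The plan is to prove this exactly by the method of probes (Lemma \ref{l.probes}), as in the toric analysis preceding Theorem \ref{t.quasistatesoctagon}, the single new constraint being that, away from the eigenrays, $(M,\omega,F_t)$ is a toric fibration, so a probe is admissible only if it does not cross an eigenray, which acts as a barrier for the toric structure. Recall that the eigenrays are vertical and lie on the two lines $\{x=1\}$ and $\{x=2\}$. The first step I would take is to record the elementary fact that every vertical slice of the octagon $\Delta$ has midpoint $y=\frac{3}{2}$: on the three strips $0<x<1$, $1<x<2$, $2<x<3$ the lower/upper boundaries are $1-x$ and $x+2$, then $0$ and $3$, then $x-2$ and $5-x$, and each pair averages to $\frac{3}{2}$.

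Given this, for any point $u=(x_0,y_0)$ with $y_0\neq\frac{3}{2}$ and $x_0\notin\{1,2\}$, I would displace $F_t^{-1}(u)$ using the vertical probe through $u$ in direction $(0,1)$ or $(0,-1)$, whichever points toward the nearer horizontal boundary, issued from the interior of the corresponding flat or slanted facet (to all of which $(0,\pm1)$ is integrally transverse). Since $u$ lies strictly on one side of the midline $\{y=\frac{3}{2}\}$, the halfway hypothesis of Lemma \ref{l.probes} is met; and since the probe sits on the line $x=x_0$ with $x_0\neq 1,2$, it is parallel to, and disjoint from, every eigenray. This already displaces every fiber except those over the midline $\{y=\frac{3}{2}\}$ and over $\{x=1\}\cup\{x=2\}$.

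For the remaining points — those on $\{y=\frac{3}{2}\}$ other than the center, and those on $\{x=1\}\cup\{x=2\}$ that are neither on an eigenray nor among $(1,1),(1,2),(2,1),(2,2)$ — a vertical probe is useless, since $u$ sits at the midpoint or the probe runs along its own eigenray, so I would switch to a horizontal probe in direction $(\pm1,0)$ from the interior of the nearer of $\{x=0\}$, $\{x=3\}$ or of a slanted facet, to all of which $(\pm1,0)$ is integrally transverse; the halfway condition again holds because such $u$ lies strictly to one side of the center $x=\frac{3}{2}$. The main obstacle, and the real content of the lemma, is that a horizontal probe is perpendicular to the eigenrays and can be blocked at $\{x=1\}$ or $\{x=2\}$.

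To overcome this I would exploit the gap left by the cuts: with $\vec{\epsilon}=(-1,1,-1,1)$ the two focus-focus values on each of $\{x=1\}$, $\{x=2\}$ carry cuts of opposite sign, one reaching the top and one the bottom boundary, so the eigenrays leave an open vertical gap between the two values through which a horizontal probe may legally pass. Both $y=\frac{3}{2}$ and any height not covered by a cut lie in these gaps once the focus-focus values sit in the caps $\{y<1\}\cup\{y>2\}$, and then the probe crosses $\{x=1\}$ and $\{x=2\}$ inside the gaps and is admissible. When instead the focus-focus values sit in the band $\{1\le y\le 2\}$ and a gap is too narrow, I would first apply a nodal slide (Proposition \ref{p.nodalslide}), sliding the nodes along their vertical eigenlines toward the horizontal boundary so as to shrink the offending eigenrays and widen the gaps; this keeps $M$ symplectomorphic and, by the fiberwise part of Proposition \ref{p.nodalslide}, fixes every fiber off $\{x=1\}\cup\{x=2\}$, so, displaceability being a symplectic invariant, it suffices to displace the untouched fiber in the slid picture, where the horizontal probe is now unobstructed. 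Finally, the center $(\frac{3}{2},\frac{3}{2})$ and the four points $(1,1),(1,2),(2,1),(2,2)$ are precisely the positions where every probe through the point has it at its midpoint, so no probe displaces them; this matches the excluded list and completes the argument.
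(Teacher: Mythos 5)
Your proposal is correct and takes essentially the same approach as the paper: the paper's entire proof of this lemma is ``Application of the method of probes (Lemma \ref{l.probes})'', and your argument simply supplies the explicit probe bookkeeping (vertical probes off the midline $y=\tfrac{3}{2}$, horizontal probes through the gaps left by the opposite-sign cuts of $\vec{\epsilon}=(-1,1,-1,1)$). One small remark: the nodal-slide fallback is never actually needed, since every fiber requiring a horizontal probe lies at a height inside the gap on $\{x=1\}$ (respectively $\{x=2\}$), and by the symmetry of the height invariants the same height lies in the gap on the other line, so all the probes you need are already unobstructed.
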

\begin{proof}
    Application of the method of probes (Lemma \ref{l.probes}).
\end{proof}

\begin{lemma}
\label{l.octagoncentralnondisplaceable}
    Consider the semitoric system $(M,\omega,F_t)$, where $t^{-}<t<t^{+}$.
    The torus fiber corresponding to the point $(\frac{3}{2},\frac{3}{2})$ on a representative of the polytope invariant of the system $(M,\omega,F_t)$, where $t^{-}<t<t^{+}$, is nondisplaceable.
\end{lemma}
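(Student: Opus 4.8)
The plan is to combine the existence result of Lemma~\ref{l.nondisplaceableexistslevelset} with the displaceability result of Lemma~\ref{l.displacingeverythingelse}, using the fact that the first integral $J$ is preserved by the straightening homeomorphism in order to pin down the single surviving candidate on the level set $J^{-1}(\frac{3}{2})$.

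First I would recall that, by the third bullet of Theorem~\ref{th.straigheningHomeo}, the straightening homeomorphism $f_{\vec{\epsilon}}$ preserves the first coordinate $f_1 = J$. Hence the vertical line $\{x = \frac{3}{2}\}$ in any representative of the polytope invariant is exactly the image of the level set $J^{-1}(\frac{3}{2})$, and $J$ is constant on every fiber $F_t^{-1}(c)$. This identification is what allows the abstract level-set statement of Lemma~\ref{l.nondisplaceableexistslevelset} to be read off directly from the picture of the octagon.

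Next I would locate the eigenrays. Since the four focus-focus points satisfy $A,B \in J^{-1}(1)$ and $C,D \in J^{-1}(2)$, their images in the polytope invariant lie on the vertical lines $\{x=1\}$ and $\{x=2\}$. As the eigenline through a node is vertical, every eigenray associated with a focus-focus value is contained in $\{x=1\}\cup\{x=2\}$, so no eigenray meets the line $\{x = \frac{3}{2}\}$. Consequently, the only one of the five distinguished points $\{(1,1),(2,1),(1,2),(2,2),(\frac{3}{2},\frac{3}{2})\}$ lying on $\{x=\frac{3}{2}\}$ is $(\frac{3}{2},\frac{3}{2})$ itself. Applying Lemma~\ref{l.displacingeverythingelse}, I conclude that every fiber over a point of $\{x = \frac{3}{2}\}$ other than $(\frac{3}{2},\frac{3}{2})$ is displaceable.

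Finally I would invoke Lemma~\ref{l.nondisplaceableexistslevelset}, which guarantees a nondisplaceable fiber $F_3 \subset J^{-1}(\frac{3}{2})$. Since $F_3$ is nondisplaceable while every fiber in $J^{-1}(\frac{3}{2})$ except the one over $(\frac{3}{2},\frac{3}{2})$ is displaceable, $F_3$ must be precisely the fiber over $(\frac{3}{2},\frac{3}{2})$, which is therefore nondisplaceable. The one point that requires genuine care is the geometric observation that the eigenrays remain confined to the lines $\{x=1\}$ and $\{x=2\}$ and hence cannot obstruct the probe argument on the intermediate level set $\{x=\frac{3}{2}\}$; once this is in place, the statement follows by a direct elimination of candidates rather than any new Floer-theoretic input.
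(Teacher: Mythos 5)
Your proposal is correct and matches the paper's own proof essentially verbatim: the paper likewise first observes that the eigenlines through the focus-focus values (at $x=1$ and $x=2$) miss the line $\{x=\tfrac{3}{2}\}$, then displaces every fiber in $J^{-1}(\tfrac{3}{2})$ except the one over $(\tfrac{3}{2},\tfrac{3}{2})$ by probes (you route this through Lemma~\ref{l.displacingeverythingelse}, which is itself just Lemma~\ref{l.probes} applied to the octagon), and finally eliminates candidates using the nondisplaceable fiber guaranteed by Lemma~\ref{l.nondisplaceableexistslevelset}. The only difference is cosmetic, so there is nothing further to compare.
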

\begin{proof}
    First note that the fiber does not depend on the choice of representative of the polytope invariant since the eingenlines associated with the focus-focus values do not intersect the vertical line $x=\frac{3}{2}$. See Figure \ref{f.polytopeinvariantsemitoricoctagon} for a representative of the polytope invariant in the case $\vec{\epsilon}=(-1,1,-1,1)$.
    Applying Lemma \ref{l.probes} in a representative of the polytope invariant, we can displace every fiber in $J^{-1}(\frac{3}{2})$ that is not the fiber over the point $(\frac{3}{2},\frac{3}{2})$. Then use Lemma \ref{l.nondisplaceableexistslevelset}.

     Alternatively, we could do a nodal trade ( see Lemma \ref{l.nodaltrade}) on the representative of the polytope invariant for $\vec{\epsilon}=(-1,1,-1,1)$ and use Proposition \ref{t.quasistatesoctagon}.
\end{proof}

\begin{lemma}
    Consider the semitoric system $(M,\omega,F_t)$, where $t^{-}<t<t^{+}$. Suppose that the eigenlines associated with the focus-focus fibers in the representative of the polytope invariant given by Figure \ref{f.polytopeinvariantsemitoricoctagon} do not intersect the points $\{(1,1),(1,2),(2,1),(2,2)\}$. Then the torus fibers corresponding to the points 
    \begin{align*}
    \{(1,1),(1,2),(2,1),(2,2)\}
    \end{align*}
    are nondisplaceable.
\end{lemma}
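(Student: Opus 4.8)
The plan is to run, for each corner point $p\in\{(1,1),(1,2),(2,1),(2,2)\}$, the quasi-state argument already used in Lemma~\ref{l.nondisplaceableexistslevelset}, but to pin the resulting nondisplaceable fiber to the exact lattice point rather than only to its $J$-level. By Theorem~\ref{t.quasistatesoctagon} the toric fiber $L_p$ over $p$ is $\zeta_p$-superheavy for a symplectic quasi-state $\zeta_p$ on $(M,\omega)$; superheaviness is a property of the subset $L_p\subset M$ together with $\zeta_p$, not of any particular integrable system, so $\zeta_p$ is equally available for the semitoric system $(M,\omega,F_t)$. Applying Theorem~\ref{t.pseudo} to $(M,\omega,F_t)$ and $\zeta_p$ produces a $\zeta_p$-pseudoheavy fiber $F_t^{-1}(q_p)$; by Theorem~\ref{t.properties} it is nondisplaceable and cannot be displaced from $L_p$, so taking the identity Hamiltonian diffeomorphism (and using that $L_p$ is compact, hence closed) gives $F_t^{-1}(q_p)\cap L_p\neq\emptyset$. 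Since $J$ is a common integral of the toric and semitoric systems and $L_p\subset J^{-1}(J(p))$, the whole fiber $F_t^{-1}(q_p)$ lies in $J^{-1}(J(p))$, i.e.\ $q_p$ sits on the vertical line $\{x=J(p)\}$ of the polytope invariant.

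Next I would localize $q_p$ along that line. By Lemma~\ref{l.displacingeverythingelse} every fiber over $\{x=J(p)\}$ that is neither one of the five distinguished points nor on an eigenray is displaceable, so the nondisplaceable fiber $F_t^{-1}(q_p)$ forces $q_p$ to be one of $(J(p),1),(J(p),2)$ or to lie on an eigenray. The hypothesis enters precisely here: the focus-focus values on this line sit at $x\in\{1,2\}$ and their eigenrays are the vertical rays emanating from them, so the assumption that no eigenray meets $\{(1,1),(1,2),(2,1),(2,2)\}$ forces each focus-focus value on $\{x=J(p)\}$ to have height $<1$ or $>2$ with its cut pointing away from the strip $1\le y\le 2$; hence the segment $\{x=J(p),\ 1\le y\le 2\}$ is eigenray-free and the two candidate lattice points are unobstructed. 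To exclude the remaining possibility that $q_p$ lies on an eigenray at height $<1$ or $>2$, I would invoke the displacing results of Section~\ref{s.displacinggeneralff}: with the focus-focus value pushed out of the central strip there is a symplectic chart with enough room to displace the focus-focus fiber and the neighbouring eigenray (Chekanov-type) fibers by Lemma~\ref{l.displaceaingff}, so none of them can be the nondisplaceable fiber $F_t^{-1}(q_p)$. This leaves $q_p\in\{(J(p),1),(J(p),2)\}$.

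Finally I would pin $q_p$ to $p$ itself. Within a fixed level, say $J^{-1}(1)$, the superheavy fibers $L_{(1,1)}$ and $L_{(1,2)}$ are disjoint, and the pseudoheavy fibers produced for $\zeta_{(1,1)}$ and $\zeta_{(1,2)}$ meet $L_{(1,1)}$ and $L_{(1,2)}$ respectively; projecting to the reduced surface $J^{-1}(1)/\mathbb{S}^1\cong \mathbb{S}^2$, where each semitoric fiber becomes a single connected circle (a level set of the reduced Hamiltonian $\bar H_t$), these two produced circles should be distinct and therefore occupy the two available lattice points $(1,1)$ and $(1,2)$, each of which is then nondisplaceable. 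Running the same argument in $J^{-1}(2)$ gives $(2,1)$ and $(2,2)$, completing the four corners.

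I expect this last pinning step to be the main obstacle. Ruling out that a single connected semitoric fiber meets \emph{both} disjoint superheavy fibers $L_{(i,1)}$ and $L_{(i,2)}$ — equivalently, that the two pseudoheavy fibers are genuinely distinct and cover both lattice points — requires comparing the reduced Hamiltonian $\bar H_t$ with the toric height function on the reduced sphere, and this is where the precise geometry of the octagon, together with the symplectomorphisms interchanging distinguished fibers as in Lemma~\ref{l.fffibefsymplectomorphism}, would have to be brought in to transport nondisplaceability from one candidate point to the other. Everything preceding this (the superheavy/pseudoheavy production, the reduction to the correct $J$-level, and the localization to the two lattice points via Lemma~\ref{l.displacingeverythingelse} and Section~\ref{s.displacinggeneralff}) is routine given the results already established.
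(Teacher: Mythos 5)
Your proposal has a genuine gap, and it is exactly the one you flag yourself: the final pinning step. The quasi-state machinery run on the semitoric system only controls the $J$-coordinate of the pseudoheavy fiber. The superheavy set $L_p$ is a $2$-torus sitting inside the $3$-dimensional level $J^{-1}(J(p))$, and the fibers of $F_t$ foliate that level, so $L_p$ can a priori meet a whole $1$-parameter family of $F_t$-fibers; intersection with $L_p$ therefore does \emph{not} force $q_p$ to be the lattice point $p$ in straightened coordinates. Worse, nothing in the argument prevents the pseudoheavy fibers produced for $\zeta_{(1,1)}$ and $\zeta_{(1,2)}$ from being the \emph{same} fiber (a single $F_t$-torus in $J^{-1}(1)$ can intersect both disjoint toric tori), so even after your localization via Lemma \ref{l.displacingeverythingelse} you only obtain that \emph{at least one} of the two candidate points per $J$-level carries a nondisplaceable fiber, not both. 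The appeal to Lemma \ref{l.fffibefsymplectomorphism} to transport nondisplaceability between the two candidates is only gestured at: those symplectomorphisms are shown in the paper to interchange focus-focus fibers, and you would additionally have to verify that they map regular $F_t$-fibers to $F_t$-fibers realizing the symmetry $(i,1)\leftrightarrow(i,2)$ of the straightened polytope, which you do not do. A secondary soft spot is your exclusion of eigenray fibers via Section \ref{s.displacinggeneralff}: Lemma \ref{l.displaceaingff} has quantitative hypotheses (Delzant corner, $2h<R$, the rectangle fitting inside the polytope) that need to be checked from the hypothesis on the cuts, not just asserted.

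The paper's proof sidesteps all of this with one move you are missing: a nodal trade. Since the hypothesis guarantees the eigenrays avoid the four points, one can trade all four nodes (Lemma \ref{l.nodaltrade}, Remark \ref{l.conditionsfornodaltrade}) without changing the fibration over the considered values; the result is a genuinely toric system whose momentum polytope is the octagon $\Delta$, and whose fibers over $\{(1,1),(1,2),(2,1),(2,2)\}$ coincide with the semitoric fibers in question. Theorem \ref{t.quasistatesoctagon} then applies \emph{directly} to these fibers -- they are superheavy for suitable quasi-states, hence nondisplaceable -- with no need to produce a pseudoheavy fiber and localize it. In short: rather than running Theorem \ref{t.pseudo} on the semitoric system and trying to pin down where the abstract nondisplaceable fiber lives (which the available tools cannot do to lattice-point precision), the paper changes the fibration away from the four points so that the known toric superheaviness statement applies verbatim at those points.
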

\begin{proof}
    Apply a nodal trade (see Lemma \ref{l.nodaltrade}) not changing the fibration type over the considered values, and use Proposition \ref{t.quasistatesoctagon}.
\end{proof}

Let us try to understand the rigidity properties of the focus-focus fibers and of the fibers that lie over the eigenrays in Figure \ref{f.polytopeinvariantsemitoricoctagon}. We will consider the focus-focus fiber $F_0$ in $J^{-1}(1)$ with lower $H_t$ value. Due to Lemma \ref{l.fffibefsymplectomorphism} it is enough to understand the (non)displaceability properties of $F_0$ in order to understand the (non)displaceability properties of all focus-focus fibers of the system. Let $h$ be the height invariant associated with the focus-focus fiber $F_0$. We will focus on the cases $0<h\leq \frac{3}{2}$, the case $h>\frac{3}{2}$ is analogous. 

\subsubsection{Case $0<h<1$}
In this section we show that for $h<1$ the focus-focus fiber is \textbf{displaceable}. To this aim consider the polytope invariant which has a representative given by Figure \ref{f.polytopeinvariantdisplacebigh}.
\begin{figure}
\begin{center}
    \includegraphics[]{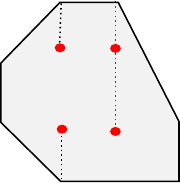}
    \caption{Representative of the polytope invariant for $\vec{\epsilon}=(-1,1,1,1)$. The dotted lines represent the eigenrays associated with the focus-focus values. The red dots represent the focus-focus values.}
    \label{f.polytopeinvariantdisplacebigh}
\end{center}
\end{figure}
After applying a suitable integral affine transformation, we obtain the polytope in Figure \ref{f.polytopetoapplynodalslide}.
\begin{figure}
\begin{center}
    \includegraphics[]{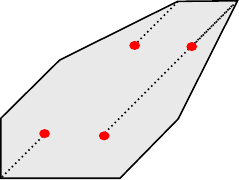}
    \caption{Image after applying a suitable integral affine transformation to a representative for the polytope invariant with $\vec{\epsilon}=(-1,1,1,1)$ and $h<1$. The dotted lines represent the eigenrays associated with the focus-focus values. The red dots represent the focus-focus values.}
    \label{f.polytopetoapplynodalslide}
\end{center}
\end{figure}

\begin{lemma}
If $h<1$ then the focus-focus fiber $F_0$ of the semitoric system is \textbf{displaceable}. Furthermore, every fiber in the eigenrays of the focus-focus fibers of Figure \ref{f.polytopeinvariantdisplacebigh} is \textbf{displaceable}.
\begin{proof}
Apply a big enough nodal slide on the eigenrays of the focus-focus fibers of Figure \ref{f.polytopetoapplynodalslide} that are not $F_0$ so that Lemma \ref{l.displaceaingff} ensures that the focus-focus fiber is displaceable. Notice that the way we displace the focus-focus fiber $F_0$ also displaces the fibers that lie on the eigenray associated with the focus-focus fiber $F_0$ in Figure \ref{f.polytopeinvariantdisplacebigh}. An analogous argument applies to the fibers on the eigenrays of the other focus-focus fibers. 

Alternatively, to displace these torus fibers, one could also use Lemma \ref{l.probes}, on the representative of the polytope invariant for $\vec{\epsilon}=(1,1,1,1)$ and $\vec{\epsilon}=(-1,-1,-1,-1)$.
\end{proof}
\end{lemma}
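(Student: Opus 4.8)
The plan is to arrange the hypotheses of Lemma \ref{l.displaceaingff} for the corner introduced by $F_0$, the only genuine work being to free up enough horizontal room for the rectangle that lemma requires. I would work in the representative of the polytope invariant with $\vec{\epsilon}=(-1,1,1,1)$ from Figure \ref{f.polytopeinvariantdisplacebigh}, and pass via a suitable integral affine transformation to Figure \ref{f.polytopetoapplynodalslide}, in which the corner produced by $F_0$ sits at the origin as a standard Delzant corner. Since this corner is Delzant, Remark \ref{l.conditionsfornodaltrade} guarantees that Lemma \ref{l.nodaltrade} applies, so a nodal trade is available at $F_0$ and the system becomes a toric fibration near that corner.

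The obstruction to invoking Lemma \ref{l.displaceaingff} directly is that the vertical cuts (eigenrays) attached to the other three focus-focus values act as barriers: they cross the region where a rectangle $[0,R]\times[0,h]$ with $2h<R$ would have to sit, and so a priori they limit the usable affine length $\text{aff}(v_1)$ along the edge emanating from the $F_0$-corner. To remove this obstruction I would apply Proposition \ref{p.nodalslide} to each of the three remaining focus-focus values, sliding their nodes along their eigenlines far away from the $F_0$-corner. By the fiberwise-symplectomorphism-on-the-complement statement of Proposition \ref{p.nodalslide}, these slides alter neither the fiber $F_0$ nor the fibers on its own eigenray, while they push the competing cuts out of the way and thereby make an arbitrarily long edge available. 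Here the hypothesis $h<1$ is exactly what is needed: it forces $2h<2$, so that after clearing the cuts one can select $R$ with $2h<R<\text{aff}(v_1)$ and still fit $[0,R]\times[0,h]$ inside the transformed polytope with $h<\text{aff}(v_2)$. With these inequalities in hand, Lemma \ref{l.displaceaingff} yields that $F_0$ is displaceable.

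For the \emph{furthermore} part, I would observe that the displacement produced by Lemma \ref{l.displaceaingff} is really the displacement of the whole set $F^{-1}(\text{Im}([0,h]\times[0,h]))$ coming from Lemma \ref{l.generalizedprobes}, not merely of $F_0$. Since the fibers lying on the eigenray of $F_0$ near its focus-focus value are precisely those of affine height at most $h$, they are contained in this displaced set and hence are displaced simultaneously. For the eigenrays of the other three focus-focus fibers I would run the identical argument with each of them playing the role of $F_0$, each corresponding corner being Delzant and the symmetry supplied by Lemma \ref{l.fffibefsymplectomorphism} reducing the bookkeeping; alternatively, I would apply Lemma \ref{l.probes} directly in the representatives with $\vec{\epsilon}=(1,1,1,1)$ and $\vec{\epsilon}=(-1,-1,-1,-1)$, where each such eigenray points outward and the fibers along it become displaceable by a probe transverse to the adjacent facet.

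The main obstacle is the spatial bookkeeping of the second paragraph: one must check that after the three nodal slides the rectangle $[0,R]\times[0,h]$ with $2h<R$ genuinely embeds in the transformed octagon and avoids every remaining cut, and that the slides can be taken large enough without moving any node past $F_0$ or onto the vertical line through it. The fiberwise statement in Proposition \ref{p.nodalslide} is what guarantees that $F_0$ is untouched by the slides, and the constraint $h<1$ is what makes the width $2h$ small enough relative to the affine length freed up along the edge of the octagon; verifying these two points carefully is the crux of the argument.
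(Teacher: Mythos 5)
Your proposal is correct and follows essentially the same route as the paper's proof: nodal slides (Proposition \ref{p.nodalslide}) applied to the three other nodes in the transformed representative of Figure \ref{f.polytopetoapplynodalslide} to clear the competing cuts, then Lemma \ref{l.displaceaingff} at the Delzant corner introduced by $F_0$, with the observation that the displaced set $F^{-1}(\text{Im}([0,h]\times[0,h]))$ from Lemma \ref{l.generalizedprobes} simultaneously sweeps away the fibers on the eigenray of $F_0$, and the same argument (or symmetry) handling the other eigenrays. You even reproduce the paper's alternative argument via probes (Lemma \ref{l.probes}) in the representatives for $\vec{\epsilon}=(1,1,1,1)$ and $\vec{\epsilon}=(-1,-1,-1,-1)$, and your explicit appeal to the fiberwise-symplectomorphism clause of Proposition \ref{p.nodalslide} makes precise a point the paper leaves implicit.
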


We can summarize the previous discussion in the following corollary:

\begin{corollary}
Consider the semitoric system given by $(M,\omega,F_t)$ and $t^{-}<t<t^{+}$ with height invariant $0<h<1$. Then there exist $5$ \textbf{nondisplaceable} fibers. These are the fibers over the points $\{(1,1),(1,2),(\frac{3}{2},\frac{3}{2}),(2,1),(2,2)\}$ in the polytope invariant given by Figure \ref{f.polytopeinvariantsemitoricoctagon}. All other fibers are \textbf{displaceable}. In particular the focus-focus fibers are \textbf{displaceable}. 
\end{corollary}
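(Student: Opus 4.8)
The plan is to read the statement off from the lemmas already established in this subsection, splitting the argument into a displaceability half and a nondisplaceability half, and keeping track of which representative of the polytope invariant is used for each fiber. Since displaceability is an intrinsic property of a fiber, I may freely pass between representatives.

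For displaceability I would first invoke Lemma \ref{l.displacingeverythingelse}: every fiber over a point of the representative in Figure \ref{f.polytopeinvariantsemitoricoctagon} that lies neither on an eigenray nor over one of the five points $\{(1,1),(1,2),(\frac{3}{2},\frac{3}{2}),(2,1),(2,2)\}$ is displaceable. The lemma immediately preceding this corollary, valid precisely because $0<h<1$, then shows that the focus-focus fiber $F_0$ together with every fiber over an eigenray of Figure \ref{f.polytopeinvariantdisplacebigh} is displaceable; feeding in the alternative representatives listed there (the sign vectors $(1,1,1,1)$ and $(-1,-1,-1,-1)$) cuts in both directions and thereby reaches every eigenray fiber, while Lemma \ref{l.fffibefsymplectomorphism} transports these conclusions to all four focus-focus fibers. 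Hence every fiber except possibly the five distinguished ones is displaceable, which in particular settles that the focus-focus fibers are displaceable.

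For nondisplaceability the central fiber over $(\frac{3}{2},\frac{3}{2})$ is handled directly by Lemma \ref{l.octagoncentralnondisplaceable}. For the four corner fibers over $\{(1,1),(1,2),(2,1),(2,2)\}$ I would appeal to the corner-fiber lemma stated above, whose quasi-state/symplectic-reduction input is Proposition \ref{t.quasistatesoctagon}, transported to the semitoric setting by a nodal trade (Lemma \ref{l.nodaltrade}) that does not change the fibration type near the corners. That lemma yields nondisplaceability once one knows that the eigenrays miss the four corner points.

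Verifying this last geometric condition throughout the regime $0<h<1$ is the main obstacle, because the two focus-focus values on each of the lines $x=1$ and $x=2$ lie on the vertical eigenlines through the corner points, so a priori a cut could pass through $(1,1)$ or $(1,2)$ (resp.\ $(2,1),(2,2)$). The resolution I would use is the symmetry of the octagon $\Delta$ about the horizontal line $y=\frac{3}{2}$: the partner on $x=1$ of the lower focus-focus fiber $F_0$ (at affine height $h<1$ above the bottom edge) sits at affine height $>2$, and likewise on $x=2$. Choosing $\vec{\epsilon}=(-1,1,-1,1)$ as in Figure \ref{f.polytopeinvariantsemitoricoctagon} then sends the lower cuts downward (below $y=1$) and the upper cuts upward (above $y=2$), so that all four cuts avoid $\{(1,1),(1,2),(2,1),(2,2)\}$ and the fibration is toric in a neighborhood of each corner. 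With this in hand the corner-fiber lemma applies and the four corner fibers are nondisplaceable; combined with the central fiber and the displaceability half, exactly the five listed fibers are nondisplaceable, and the focus-focus fibers are displaceable.
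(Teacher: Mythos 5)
Your proof is correct and assembles the corollary exactly as the paper (implicitly) does: Lemma \ref{l.displacingeverythingelse} plus the preceding $h<1$ lemma (nodal slides with Lemma \ref{l.displaceaingff}, or probes on the $\vec{\epsilon}=(1,1,1,1)$ and $\vec{\epsilon}=(-1,-1,-1,-1)$ representatives, transported to all four nodes by Lemma \ref{l.fffibefsymplectomorphism}) for the displaceability half, and Lemma \ref{l.octagoncentralnondisplaceable} together with the corner-fiber lemma based on Proposition \ref{t.quasistatesoctagon} and a nodal trade for the nondisplaceability half. Your additional verification that for $0<h<1$ the cuts in Figure \ref{f.polytopeinvariantsemitoricoctagon} miss $\{(1,1),(1,2),(2,1),(2,2)\}$ — since the partner focus-focus value on each of the lines $x=1$ and $x=2$ sits at height $3-h>2$ by the symmetry interchanging the focus-focus fibers — correctly fills in a hypothesis of the corner-fiber lemma that the paper leaves implicit.
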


\subsubsection{Case $h\geq 1$}
Let us consider the height invariant greater than or equal to $1$. Figure \ref{f.case3} shows a representative of the polytope invariant in the case $1<h<\frac{3}{2}$. Notice that in the case $h=\frac{3}{2}$ we have $2$ focus-focus values instead of $4$.
\begin{figure}
\begin{center}
    \includegraphics[width=0.3\textwidth]{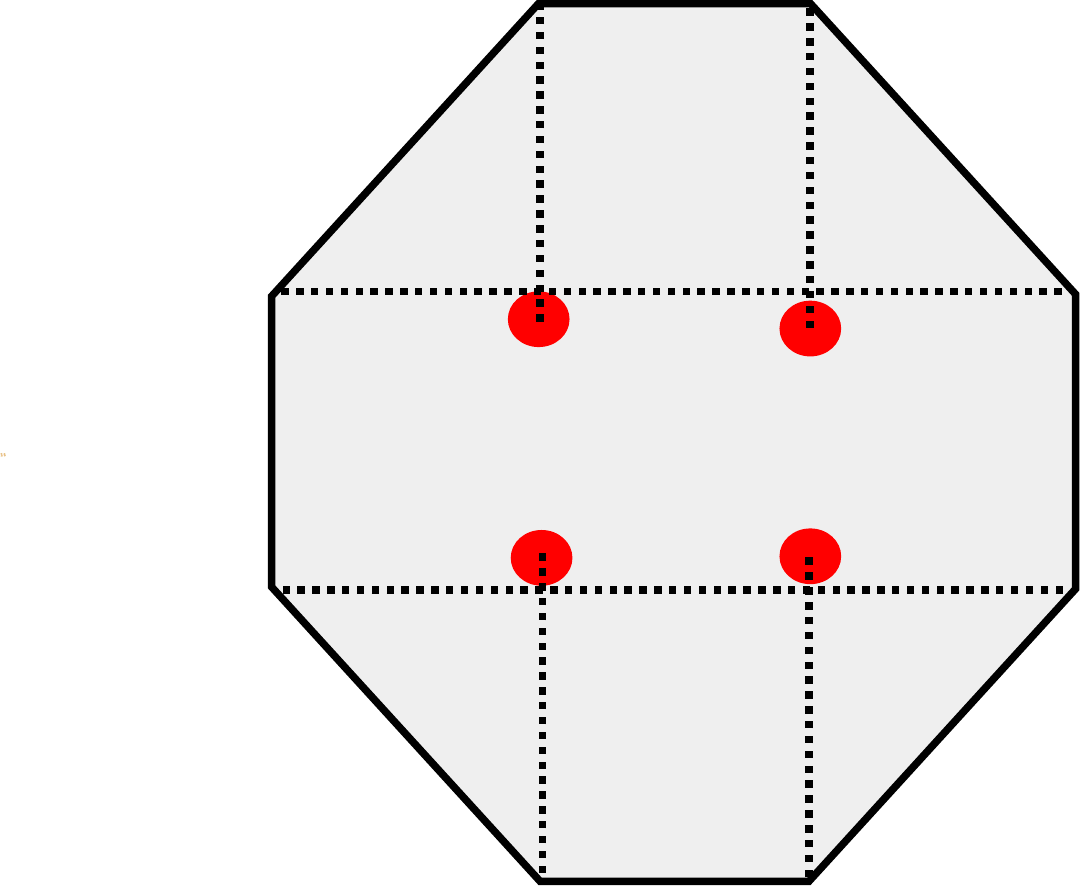}
    \caption{Polytope invariant for $\vec{\epsilon}=(-1,1,-1,1)$ with height invariant $h\geq 1$. The dotted horizontal lines indicate how big the eigenrays must be. The dotted vertical lines illustrate the eigenrays associated with the focus-focus values. The red dots represent the focus-focus values.}
    \label{f.case3}
\end{center}
\end{figure}

\subsubsection{The case $h=1$}
We show that the focus-focus fibers are \textbf{nondisplaceable}. Recall that by Lemma \ref{l.nondisplaceableexistslevelset} a \textbf{nondisplaceable} fiber exists in the level set $J^{-1}(1)$. 

Using polytopes for different choices of $\vec{\epsilon}$ and Lemma \ref{l.probes} we will prove that every fiber of $F_t$ lying inside $J^{-1}(1)$ that is not one of the focus-focus fibers must be \textbf{displaceable}. 
\begin{lemma}
\label{l.allotherlevelsetdisplaceable}
Consider the system $(M,\omega,F_t)$, where $t$ is such that $h=1$. Then every fiber in $J^{-1}(1)$ and in $J^{-1}(2)$ that is not a focus-focus fiber is \textbf{displaceable}.   
\end{lemma}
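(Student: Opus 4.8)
The plan is to displace every fiber of $J^{-1}(1)$ and $J^{-1}(2)$ other than the focus-focus fibers by the method of probes (Lemma \ref{l.probes}), carried out in three different representatives of the polytope invariant. The organising remark is that each straightening homeomorphism $f_{\vec\epsilon}$ preserves the first coordinate (Theorem \ref{th.straigheningHomeo}), so $J^{-1}(1)$ is, in every representative, the preimage of the vertical segment $\{x=1\}$ and $J^{-1}(2)$ the preimage of $\{x=2\}$. Consequently I may pass freely between representatives, and on each of these two segments the two focus-focus values split the regular fibers into three intrinsic families — below the lower value, between the two values, and above the upper value — which are independent of $\vec\epsilon$. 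For $h=1$ the representative of Figure \ref{f.polytopeinvariantsemitoricoctagon} (with $\vec\epsilon=(-1,1,-1,1)$) places the four focus-focus values at $(1,1),(1,2),(2,1),(2,2)$, so these three families correspond on $x=1$ (resp.\ $x=2$) to $y\in(0,1)$, $y\in(1,2)$ and $y\in(2,3)$.

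I would first treat the middle family. In the representative $\vec\epsilon=(-1,1,-1,1)$ the eigenrays of $(1,1),(2,1)$ point downward and those of $(1,2),(2,2)$ point upward, so on each line they occupy $[0,1]\cup[2,3]$ and leave the band $y\in(1,2)$ free of cuts. The corresponding fibers are thus off the eigenrays, and because $h=1$ forces the focus-focus values to coincide with the distinguished points $(1,1),(1,2),(2,1),(2,2)$, no middle fiber is one of the five distinguished points $\{(1,1),(2,1),(\tfrac{3}{2},\tfrac{3}{2}),(1,2),(2,2)\}$. Hence they are displaceable immediately by Lemma \ref{l.displacingeverythingelse}.

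For the lower family I would switch to the representative $\vec\epsilon=(1,1,1,1)$ and for the upper family to $\vec\epsilon=(-1,-1,-1,-1)$, whose shapes I read off from Appendix \ref{s.computingpolytope}. With all eigenrays pointing upward, the cuts on $\{x=1\}$ and $\{x=2\}$ lie entirely above the lower focus-focus value, so every fiber below it becomes a genuine off-cut toric fiber; symmetrically, all cuts pointing downward frees the upper fibers. In each case I would exhibit a probe — for instance a horizontal probe of direction $(1,0)$ issuing from a slanted edge of the polytope, or its mirror of direction $(-1,0)$ — that reaches the target fiber, and check from the affine length read off the representative that the target lies strictly less than halfway along the probe, so that Lemma \ref{l.probes} applies. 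The fibers over $x=2$ are handled identically, using the reflection $x\mapsto 3-x$ of the octagon. Together with the middle family this exhausts all non-focus-focus fibers of $J^{-1}(1)\cup J^{-1}(2)$.

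The main obstacle is this last step: flipping the signs $\vec\epsilon$ re-straightens the picture by the monodromy shear $T^{\pm 1}$, so the representatives $\vec\epsilon=(1,1,1,1)$ and $\vec\epsilon=(-1,-1,-1,-1)$ are no longer the octagon and their vertices must be taken from Appendix \ref{s.computingpolytope}. I must then verify, for each lower and upper fiber, both that the chosen probe meets none of the four cuts and that the halfway inequality of Lemma \ref{l.probes} holds; the hypothesis $h=1$ is exactly what guarantees enough affine room below, respectively above, the focus-focus values, and, as used above, keeps the middle family disjoint from the five distinguished points. The remaining verifications are routine applications of the method of probes, entirely parallel to Lemma \ref{l.displacingeverythingelse} and to the $h<1$ case treated earlier.
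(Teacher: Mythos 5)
Your proposal is correct and follows essentially the same route as the paper's proof: the same three-way split of each of the lines $x=1$ and $x=2$ into the bands below, between, and above the focus-focus values, handled respectively in the representatives $\vec{\epsilon}=(1,1,1,1)$, $\vec{\epsilon}=(-1,1,-1,1)$, and $\vec{\epsilon}=(-1,-1,-1,-1)$ via the method of probes (Lemma \ref{l.probes}), with $J^{-1}(2)$ treated symmetrically. The only cosmetic difference is that for the middle band you invoke Lemma \ref{l.displacingeverythingelse} rather than applying Lemma \ref{l.probes} directly to the $\vec{\epsilon}=(-1,1,-1,1)$ representative, but that lemma is itself proved by the same probe argument on the same polytope, so the content coincides.
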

\begin{proof}
We only consider the level set $J^{-1}(1)$ since the proof for the level set $J^{-1}(2)$ is analogous. 
There are $3$ cases to consider in order to displace the desired fibers:
\begin{itemize}
\item Applying Lemma \ref{l.probes} to Figure \ref{f.firstpolyteinvariantnondisplaceable} we conclude that every fiber of the form $(f_{\vec{\epsilon}}\circ F_t)^{-1}(1,b)$ with $b<1$ is \textbf{displaceable}, where $f_{\vec{\epsilon}}$ is the straightening homeomorphism associated with the polytope invariant of Figure \ref{f.firstpolyteinvariantnondisplaceable}.
\item Applying Lemma \ref{l.probes} to Figure \ref{f.secondpolytopeinvariantnondisplaceable} we conclude that every fiber of the form $(g_{\vec{\epsilon}}\circ F_t)^{-1}(1,b)$ with $b>2$ is \textbf{displaceable}, where $g_{\vec{\epsilon}}$ is the straightening homeomorphism associated with the polytope of Figure \ref{f.secondpolytopeinvariantnondisplaceable}. 
\item Applying Lemma \ref{l.probes} to Figure \ref{f.thirdpolytopeinvariantnondisplaceable}  \ref{l.probes} we conclude that every fiber of the form $(h_{\vec{\epsilon}}\circ F_t)^{-1}(1,b)$ with $1<b<2$, where $h_{\vec{\epsilon}}$ is the straightening homeomorphism associated with the polytope of Figure \ref{f.thirdpolytopeinvariantnondisplaceable}, is \textbf{displaceable}.
\end{itemize}
\begin{figure}
\begin{center}
    \includegraphics[width=0.3\textwidth]{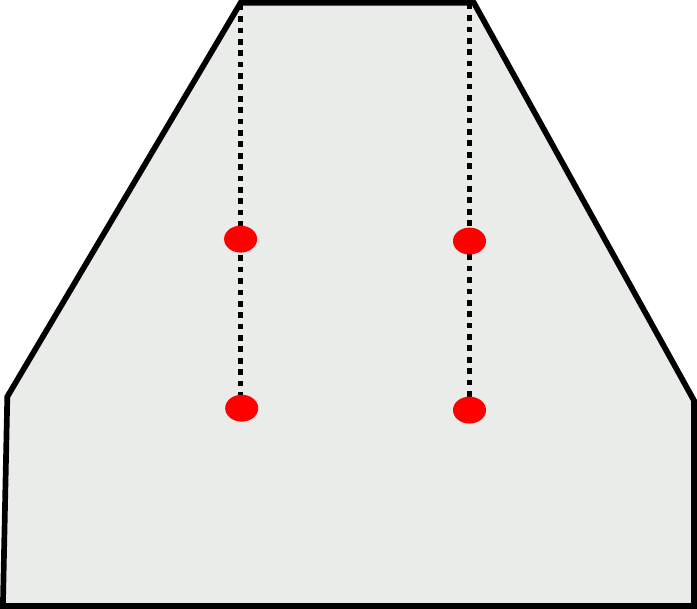}
    \caption{Representative of the polytope invariant for $\vec{\epsilon}=(1,1,1,1)$ and $h=1$. The dotted lines are the eigenrays associated with the focus-focus values. The red dots are the focus-focus values.}
    \label{f.firstpolyteinvariantnondisplaceable}
\end{center}
\end{figure}
\begin{figure}
\begin{center}
    \includegraphics[width=0.3\textwidth]{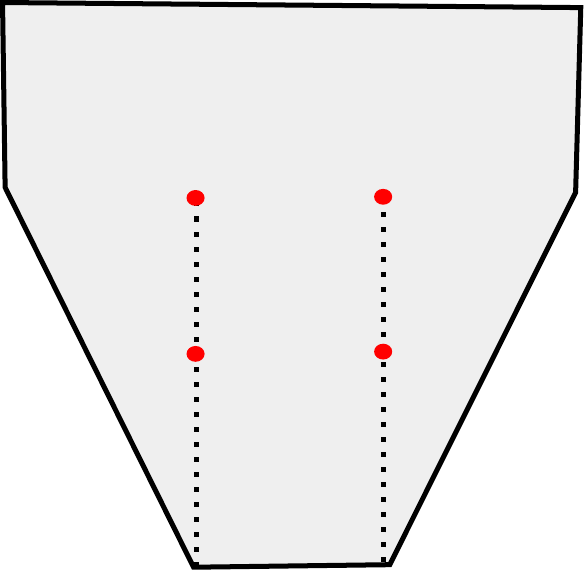}
    \caption{Representative of the polytope invariant for $\vec{\epsilon}=(-1,-1,-1,-1)$ and $h=1$. The dotted lines are the eigenrays associated with the focus-focus values. The red dots are the focus-focus values.}
    \label{f.secondpolytopeinvariantnondisplaceable}
\end{center}
\end{figure}
\begin{figure}
\begin{center}
    \includegraphics[width=0.3\textwidth]{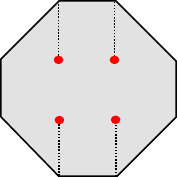}
    \caption{Representative for $\vec{\epsilon}=(-1,1,-1,1)$ and $h=1$. The dotted lines are the eigenrays associated with the focus-focus values. The red dots are the focus-focus values.}
    \label{f.thirdpolytopeinvariantnondisplaceable}
\end{center}
\end{figure}
\end{proof}

\begin{corollary}
\label{c.nondisplaceableff1}
Consider the semitoric system $(M,\omega,F_t)$, where $t$ is such that $h=1$. Then the focus-focus fibers are \textbf{nondisplaceable}. 
\end{corollary}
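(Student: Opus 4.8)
The plan is essentially to read off the conclusion from two results already established for this system: the existence of a nondisplaceable fiber in the level set $J^{-1}(1)$, and the displaceability of every fiber in $J^{-1}(1)$ that is not a focus-focus fiber. First I would invoke Lemma~\ref{l.nondisplaceableexistslevelset}, which (via Theorem~\ref{t.pseudo} applied to the superheavy octagon fibers $L_{(1,1)}$ and $L_{(1,2)}$ from Theorem~\ref{t.quasistatesoctagon}) furnishes a nondisplaceable fiber $F_1\subset J^{-1}(1)$.

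Next, since we are in the regime $h=1$, Lemma~\ref{l.allotherlevelsetdisplaceable} asserts that every fiber of $F_t$ lying in $J^{-1}(1)$ other than a focus-focus fiber is displaceable. The fiber $F_1$ produced in the previous step is nondisplaceable, so it cannot be one of these; hence $F_1$ is forced to be a focus-focus fiber contained in $J^{-1}(1)$ (recall that $J^{-1}(1)$ contains the two focus-focus points $A$ and $B$, lying in two distinct focus-focus fibers since we are in the regime $t\neq \tfrac12$ with four focus-focus values). In particular, at least one focus-focus fiber of the system is nondisplaceable.

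Finally, I would promote this to all of the focus-focus fibers using Lemma~\ref{l.fffibefsymplectomorphism}, which supplies symplectomorphisms of $(M,\omega)$ interchanging the focus-focus fibers, together with the fact that (non)displaceability is invariant under symplectomorphisms: if $\psi$ is a symplectomorphism and $\phi$ a Hamiltonian diffeomorphism displacing a set $A$, then $\psi\phi\psi^{-1}$ is again Hamiltonian and displaces $\psi(A)$, since $(\psi\phi\psi^{-1})(\psi(A))\cap\psi(A)=\psi\big(\phi(A)\cap A\big)=\emptyset$. As one focus-focus fiber is nondisplaceable, all of them are, which is the assertion of the corollary. I do not expect a genuine obstacle here: the substance lies in the two cited lemmas, and the only point to handle carefully is that Lemma~\ref{l.nondisplaceableexistslevelset} need not pin down \emph{which} fiber is nondisplaceable---it is enough that it must be a focus-focus fiber, after which Lemma~\ref{l.fffibefsymplectomorphism} closes the argument.
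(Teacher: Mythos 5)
Your argument is correct and coincides with the paper's own proof, which likewise combines Lemma~\ref{l.nondisplaceableexistslevelset} with Lemma~\ref{l.allotherlevelsetdisplaceable} to force the nondisplaceable fiber in $J^{-1}(1)$ to be a focus-focus fiber, and then spreads the conclusion to all focus-focus fibers via the symplectomorphisms of Lemma~\ref{l.fffibefsymplectomorphism}. Your explicit conjugation argument $(\psi\phi\psi^{-1})(\psi(A))\cap\psi(A)=\psi\bigl(\phi(A)\cap A\bigr)$ merely spells out the invariance the paper records as a standing remark after Lemma~\ref{l.fffibefsymplectomorphism}.
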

\begin{proof}
Combining Lemma \ref{l.allotherlevelsetdisplaceable} and Lemma \ref{l.nondisplaceableexistslevelset} we obtain that one of the focus-focus fibers in $J^{-1}(1)$ is \textbf{nondisplaceable}. Due to Lemma \ref{l.fffibefsymplectomorphism} all focus-focus fibers are \textbf{nondisplaceable}.
\end{proof}

\begin{corollary}
     Consider the semitoric system $(M,\omega,F_t)$ where $t$ is such that $h=1$. Then the focus-focus fibers and the fiber over the point $(\frac{3}{2},\frac{3}{2})$ in a representative of the polytope invariant are \textbf{nondisplaceable}. Every other fiber is displaceable. 
\end{corollary}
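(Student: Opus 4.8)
The plan is to assemble the corollary from results already established in the $h=1$ case, so that no genuinely new argument is needed. The nondisplaceability half is immediate: by Corollary \ref{c.nondisplaceableff1} (which applies precisely because $h=1$) all four focus-focus fibers are nondisplaceable, and by Lemma \ref{l.octagoncentralnondisplaceable} the fiber over $(\frac{3}{2},\frac{3}{2})$ is nondisplaceable. It therefore remains only to show that every other fiber is displaceable.

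Before partitioning the fibers I would record the geometric fact underlying the whole picture: when $h=1$, the two focus-focus values lying in $J^{-1}(1)$ sit at the points $(1,1)$ and $(1,2)$, and the two in $J^{-1}(2)$ sit at $(2,1)$ and $(2,2)$. This follows from the value of the height invariant together with the symmetry provided by Lemma \ref{l.fffibefsymplectomorphism}, and it is exactly why the proof of Lemma \ref{l.allotherlevelsetdisplaceable} displaces the heights $b<1$, $1<b<2$, and $b>2$ while omitting $b=1,2$. In particular the four focus-focus fibers coincide with four of the five distinguished points of the toric octagon, and the eigenrays they emit lie in the vertical lines $x=1$ and $x=2$, hence entirely inside $J^{-1}(1)\cup J^{-1}(2)$.

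With this in hand I would split the remaining fibers into three classes and dispatch each by a lemma already proved. A fiber over a point that is neither on an eigenray nor one of the five distinguished points $\{(1,1),(2,1),(\frac{3}{2},\frac{3}{2}),(1,2),(2,2)\}$ is displaceable by Lemma \ref{l.displacingeverythingelse}. A fiber over a point on an eigenray lies in $J^{-1}(1)\cup J^{-1}(2)$ and, if it is not one of the focus-focus fibers, is displaceable by Lemma \ref{l.allotherlevelsetdisplaceable}. The only distinguished points not yet accounted for are the four focus-focus values themselves and $(\frac{3}{2},\frac{3}{2})$, which are precisely the fibers we have set aside as nondisplaceable. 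Since these cases exhaust all fibers except the four focus-focus fibers and the fiber over $(\frac{3}{2},\frac{3}{2})$, the corollary follows.

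The only place where care is genuinely required is the bookkeeping at the junction of the two displaceability lemmas: one must check that the union of the scopes of Lemma \ref{l.displacingeverythingelse} and Lemma \ref{l.allotherlevelsetdisplaceable} really covers every point other than the five nondisplaceable ones, with no fiber slipping through---in particular that the eigenray fibers (including the boundary heights $b=1,2$ of the probe arguments, which are exactly the focus-focus values and are therefore correctly excluded) are all caught by Lemma \ref{l.allotherlevelsetdisplaceable}. Once this verification is made the statement is a formal consequence of Corollary \ref{c.nondisplaceableff1} together with Lemmas \ref{l.octagoncentralnondisplaceable}, \ref{l.displacingeverythingelse}, and \ref{l.allotherlevelsetdisplaceable}.
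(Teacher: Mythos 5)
Your proposal is correct and takes essentially the same approach as the paper, whose entire proof is to combine Lemma \ref{l.displacingeverythingelse}, Lemma \ref{l.allotherlevelsetdisplaceable}, Lemma \ref{l.octagoncentralnondisplaceable}, and Corollary \ref{c.nondisplaceableff1} --- exactly the four results you assemble. Your added bookkeeping (locating the focus-focus values at $(1,1),(1,2),(2,1),(2,2)$ when $h=1$, so that the eigenray fibers and the four distinguished non-central points are all caught by Lemma \ref{l.allotherlevelsetdisplaceable}) just makes explicit the coverage check the paper leaves implicit.
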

\begin{proof}
    Combine Lemma \ref{l.displacingeverythingelse} with Lemma \ref{l.allotherlevelsetdisplaceable}, Lemma \ref{l.octagoncentralnondisplaceable}, and Corollary \ref{c.nondisplaceableff1}.
\end{proof}

\subsubsection{The case $\frac{3}{2}\geq h>1$} This case is analogous to the previous one.

\begin{lemma}
Consider the system $(M,\omega,F_t)$, where $t$ is such that $\frac{3}{2}\geq h>1$. Then every fiber in $J^{-1}(1)$ and in $J^{-1}(2)$ that is not a focus-focus fiber is \textbf{displaceable}.   
\end{lemma}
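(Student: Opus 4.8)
The plan is to mirror the proof of Lemma \ref{l.allotherlevelsetdisplaceable} verbatim, replacing the representatives of the polytope invariant drawn there for $h=1$ by the corresponding ones for $1<h\leq\frac{3}{2}$. As in that lemma, it suffices to treat the level set $J^{-1}(1)$, the case $J^{-1}(2)$ being completely analogous (it is obtained from the symmetry of the octagon, cf.\ the symplectomorphisms of Lemma \ref{l.fffibefsymplectomorphism}). The only structural difference from the case $h=1$ is the height of the focus-focus values along their (vertical) eigenrays, so the task is to check that the probes used before still apply once the cuts are longer.

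First I would fix the three representatives corresponding to $\vec{\epsilon}=(1,1,1,1)$, $\vec{\epsilon}=(-1,-1,-1,-1)$ and $\vec{\epsilon}=(-1,1,-1,1)$, now with height invariant $1<h\leq\frac{3}{2}$ (the analogues of Figures \ref{f.firstpolyteinvariantnondisplaceable}, \ref{f.secondpolytopeinvariantnondisplaceable} and \ref{f.thirdpolytopeinvariantnondisplaceable}). Away from the eigenrays the system is a toric fibration, so Lemma \ref{l.probes} applies to any fiber whose displacing probe avoids the relevant vertical cut. I would then divide the fibers over the line $x=1$ into the vertical ranges determined by the two focus-focus values of $J^{-1}(1)$, and for each range select the representative (and a probe of direction $(\pm 1,0)$ or $(0,\pm 1)$) for which the cut lies on the far side of the target fiber. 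This way the probe never meets the eigenray and the fiber sits strictly less than halfway along it, so Lemma \ref{l.probes} displaces it. Together the three representatives expose every non-focus-focus fiber of $J^{-1}(1)$ to an admissible probe.

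The step I expect to be the main obstacle is confirming that the bound $h\leq\frac{3}{2}$ still leaves room for these probes: as $h$ increases the focus-focus value slides up its eigenline, the cut lengthens, and a probe that worked for $h=1$ could now be blocked or could push the fiber past the halfway point. I would handle this exactly as in the ranges treated for $h<1$ and $h=1$, namely by first applying a nodal slide (Proposition \ref{p.nodalslide}) to shorten the obstructing eigenray while keeping the fiber under consideration fixed, and only afterwards applying the probe. The hypothesis $h\leq\frac{3}{2}$ guarantees that the focus-focus values in $J^{-1}(1)$ never reach the central line $x=\frac{3}{2}$, so the cuts stay confined to $x=1$ and $x=2$ and do not interfere with one another; collecting the three ranges, and running the mirror argument for $J^{-1}(2)$, then yields that every non-focus-focus fiber in $J^{-1}(1)$ and $J^{-1}(2)$ is \textbf{displaceable}.
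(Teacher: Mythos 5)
For $1<h<\frac{3}{2}$ your argument is essentially the paper's: the proof there is declared analogous to that of Lemma \ref{l.allotherlevelsetdisplaceable} (the $h=1$ case, with its three representatives and probes via Lemma \ref{l.probes}), with the addition of nodal slides (Proposition \ref{p.nodalslide}) on the representatives for $\vec{\epsilon}=(1,1,1,1)$ and $\vec{\epsilon}=(-1,-1,-1,-1)$ in order to obtain probes of the necessary length --- exactly the fix you anticipate as the main obstacle.

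There is, however, a genuine gap at the endpoint $h=\frac{3}{2}$, which the hypothesis $\frac{3}{2}\geq h>1$ includes. At $h=\frac{3}{2}$ (i.e.\ $t=\frac{1}{2}$) the system has only \emph{two} focus-focus values instead of four, each fiber containing two focus-focus points, so $\vec{\epsilon}$ ranges over $\{-1,+1\}^2$ and the three representatives $\vec{\epsilon}=(1,1,1,1)$, $(-1,-1,-1,-1)$, $(-1,1,-1,1)$ that you fix at the outset simply do not exist. Moreover, each cut now carries multiplicity $2$, so the monodromy across it is $T^{\pm 2}$ rather than $T^{\pm 1}$, and the halfway condition for each probe must be rechecked in the new polytopes; the paper handles this case separately by switching to the representatives $\vec{\epsilon}=(1,1)$ and $\vec{\epsilon}=(-1,-1)$. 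Relatedly, your closing justification is off target: the focus-focus values lie on the vertical lines $x=1$ and $x=2$ for \emph{every} $t$, independently of $h$, since $h$ records the height of a focus-focus value along its vertical eigenline, not a horizontal position. What actually happens as $h\to\frac{3}{2}$ is that the two critical values over $x=1$ (and likewise over $x=2$) merge in height into a single doubly pinched fiber --- precisely the degeneration that your uniform treatment misses.
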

\begin{proof}
Let $h<\frac{3}{2}$.
The proof is analogous to the proof of Lemma \ref{l.allotherlevelsetdisplaceable} with the addition that one needs to do certain nodal slides, see Proposition \ref{p.nodalslide}, on the polytope invariants of $\vec{\epsilon}=(1,1,1,1)$ and $\vec{\epsilon}=(-1,-1,-1,-1)$ in order to obtain probes with the necessary length. If $h=\frac{3}{2}$ use the polytope invariant of $\vec{\epsilon}=(1,1)$ and $\vec{\epsilon}=(-1,-1)$. 
\end{proof}
\begin{lemma}
    Consider the system $(M,\omega,F_t)$, where $t$ is such that $h=\frac{3}{2}$. Then every fiber not in $J^{-1}(1)$, not in $J^{-1}(2)$ and not the fiber over the point $(\frac{3}{2},\frac{3}{2})$ in a representative of the polytope invariant is displaceable. 
\end{lemma}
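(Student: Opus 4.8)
The plan is to displace, by the method of probes (Lemma~\ref{l.probes}) supplemented with nodal slides (Proposition~\ref{p.nodalslide}), every fiber over a point $(x_0,y_0)$ of a representative of the polytope invariant satisfying $x_0\notin\{1,2\}$ and $(x_0,y_0)\neq(\tfrac32,\tfrac32)$. Since $h=\tfrac32$ corresponds to $t=\tfrac12$, the system has exactly two focus-focus values, at $(1,\tfrac32)$ and $(2,\tfrac32)$, each of multiplicity two, with vertical eigenrays contained in $x=1$ and $x=2$. An elementary computation with the edges of $\Delta$ shows that for \emph{every} $x_0\in(0,3)$ the vertical slice of $\Delta$ is an interval whose midpoint is $y=\tfrac32$; this symmetry is precisely what makes the midline hard. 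The fibers over the central line $x_0=\tfrac32$ (other than the center) were already displaced by vertical probes in the proof of Lemma~\ref{l.octagoncentralnondisplaceable}, so it remains to treat $x_0\in(0,1)\cup(1,\tfrac32)\cup(\tfrac32,2)\cup(2,3)$.

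First I would displace the fibers off the horizontal midline, i.e.\ those with $y_0\neq\tfrac32$, exactly as in Lemma~\ref{l.displacingeverythingelse}. For such a point the vertical probe with direction $(0,\pm1)$ is available: one checks that $(0,1)$ is integrally transverse to each (possibly slanted) edge of $\Delta$ that bounds the slice, and the vertical line through $x_0$ meets no eigenray because $x_0\notin\{1,2\}$. Since $y=\tfrac32$ is the midpoint of the slice, any $y_0\neq\tfrac32$ lies strictly less than halfway from one of the two bounding edges, and Lemma~\ref{l.probes} displaces the fiber.

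The main obstacle is the horizontal midline itself: the fibers over $(x_0,\tfrac32)$ with $x_0\in(0,3)\setminus\{1,\tfrac32,2\}$ are met exactly halfway by every vertical probe, while a horizontal probe along $y=\tfrac32$ is blocked because both focus-focus values lie on this line. To free $y=\tfrac32$, the plan is to apply a nodal slide. Decreasing the parameter slightly to some $\tilde t<\tfrac12$ separates each double focus-focus value into two simple ones, symmetric about $y=\tfrac32$ (by the symmetry $(x,y)\mapsto(x,3-y)$ of $\Delta$, compatible with Lemma~\ref{l.fffibefsymplectomorphism}), lying at heights $h'$ and $3-h'$ over $x=1$ and over $x=2$. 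By Proposition~\ref{p.nodalslide} the resulting system is fiberwise symplectomorphic to the original away from the eigenlines $x=1$ and $x=2$, so every fiber over $(x_0,\tfrac32)$ with $x_0\notin\{1,2\}$ is preserved. In the octagon representative of the $\tilde t$-system (Figure~\ref{f.polytopeinvariantsemitoricoctagon}, with $\vec{\epsilon}=(-1,1,-1,1)$) the cuts point outward from the center, so the line $y=\tfrac32$ lies strictly between them, in the toric region. Hence a horizontal probe with direction $(1,0)$ from the left edge $x=0$ (resp.\ $(-1,0)$ from the right edge $x=3$) is unobstructed, and since the horizontal extent at $y=\tfrac32$ is $[0,3]$ with midpoint $\tfrac32$, Lemma~\ref{l.probes} displaces every fiber over $(x_0,\tfrac32)$ with $x_0<\tfrac32$ (resp.\ $x_0>\tfrac32$). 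As displaceability is invariant under symplectomorphism, the corresponding fibers of the original system are displaceable.

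Combining these cases with Lemma~\ref{l.octagoncentralnondisplaceable} yields the claim. The one delicate point is the midline argument, where one must check that the nodal slide genuinely clears the line $y=\tfrac32$ of cuts while preserving the fibers under consideration; this is the same mechanism already exploited in Proposition~\ref{p.displacingEP}.
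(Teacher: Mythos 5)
Your proof is correct, but it takes a genuinely different---and substantially more complete---route than the paper. The paper's own proof of this lemma is one line: it is declared ``analogous to Lemma~\ref{l.displacingeverythingelse}'' and one is told to apply the method of probes to a representative of the polytope invariant. For the fibers with $y_0\neq\frac32$ the two arguments coincide: vertical probes at $x_0\notin\{1,2\}$ never meet the cuts and displace everything off the midline, exactly as in your first step. The divergence is on the midline $y=\frac32$, and here you have correctly isolated the real obstruction: since being the midpoint of a vertical slice is preserved by the shears $T^k$ relating different representatives, the image of a midline fiber bisects its vertical slice in \emph{every} representative, so vertical probes land exactly halfway; and at $h=\frac32$ both focus-focus values lie on the midline itself, so horizontal probes along $y=\frac32$ are blocked at $x=1$ and $x=2$ regardless of the choice of $\vec{\epsilon}$. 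A check of the remaining integrally transverse directions shows no straight probe in any representative reaches the fibers over $(x_0,\frac32)$ with $x_0\in[\frac12,\frac52]\setminus\{1,\frac32,2\}$ favorably, so the method of probes alone---which is all the paper's proof invokes---does not suffice for these fibers. Your nodal-slide step is precisely what fills this in: separating each multiplicity-two node into two simple nodes along the shared eigenline clears the midline of cuts, the full-length horizontal probe (affine length $3$, midpoint $\frac32$) then applies via Lemma~\ref{l.probes}, and Proposition~\ref{p.nodalslide} transfers displaceability back because the fibers in question lie outside the preimages of the eigenlines. This is the same mechanism the paper itself uses in Proposition~\ref{p.displacingEP} and in the preceding lemma (where nodal slides are performed for $1<h<\frac32$), so your argument is in the paper's spirit and is, arguably, what the one-line proof should have spelled out.

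Two points deserve more careful wording, though neither affects correctness. First, Definition~\ref{d.nodalslide} and Proposition~\ref{p.nodalslide} as stated concern a single node moving along its eigenline; separating the two coincident nodes of a double-pinched fiber is a mild (and standard) extension in which one of the two nodes is slid while the other is fixed, and you should present it as such rather than attributing the splitting to Proposition~\ref{p.nodalslide} outright. Second, the identification of the slid fibration with the system $F_{\tilde t}$ for some $\tilde t<\frac12$ is convenient but unnecessary: all you need is \emph{some} almost toric fibration of $(M,\omega)$, obtained from the $t=\frac12$ one by nodal slides, whose base diagram has the four simple nodes straddling the midline---symmetry of the split heights about $y=\frac32$ is likewise not needed, only that no cut meets the points $(1,\frac32)$ and $(2,\frac32)$.
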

\begin{proof}
    Analogous to the proof of Lemma \ref{l.displacingeverythingelse}. Apply the method of probes to a representative of the polytope invariant. 
\end{proof}
\begin{lemma}
    Consider the system $(M,\omega,F_t)$ where $t$ is such that $h=\frac{3}{2}$. Then the focus-focus fibers are nondisplaceable.
\end{lemma}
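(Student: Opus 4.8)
The plan is to mirror the argument used for the case $h=1$ in Corollary \ref{c.nondisplaceableff1}, since the structure at $h=\frac{3}{2}$ is identical once one observes that the displaceability statement of the immediately preceding lemma already covers this boundary value. At $h=\frac{3}{2}$ every fiber of $J^{-1}(1)$ (and symmetrically of $J^{-1}(2)$) which is not a focus-focus fiber is displaceable; the only modification relative to the strict inequality $\frac{3}{2}>h>1$ is that the probes are taken in the representatives for $\vec{\epsilon}=(1,1)$ and $\vec{\epsilon}=(-1,-1)$ rather than for $\vec{\epsilon}=(1,1,1,1)$ and $\vec{\epsilon}=(-1,-1,-1,-1)$, reflecting that the four focus-focus values have merged into two when $t=\frac{1}{2}$.

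First I would invoke Lemma \ref{l.nondisplaceableexistslevelset}, which guarantees a nondisplaceable fiber $F_1\in J^{-1}(1)$ for every $t\in\ ]t^{-},t^{+}[$, hence in particular for the value of $t$ realizing $h=\frac{3}{2}$. Combining this with the preceding lemma pins down $F_1$: since every fiber of $J^{-1}(1)$ other than the focus-focus fiber is displaceable, the nondisplaceable fiber whose existence is forced there can only be the focus-focus fiber itself. Finally, Lemma \ref{l.fffibefsymplectomorphism} furnishes symplectomorphisms of $(M,\omega)$ interchanging the focus-focus fibers, and as (non)displaceability is invariant under symplectomorphisms, nondisplaceability of the focus-focus fiber in $J^{-1}(1)$ propagates to the one in $J^{-1}(2)$, giving the claim.

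As an independent confirmation I would note that $h=\frac{3}{2}$ corresponds to $t=\frac{1}{2}$, where each focus-focus fiber is a doubly pinched torus, i.e. has multiplicity $2$. Nondisplaceability then follows at once from the Corollary to Proposition \ref{p.lagrangiansphere} through the embedded Lagrangian spheres constituting the fiber. This second route reaches the same conclusion and serves as a consistency check on the quasi-state and probe argument above.

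The main obstacle is bookkeeping at the degenerate configuration rather than any new idea. I must verify that Lemma \ref{l.nondisplaceableexistslevelset} and Lemma \ref{l.fffibefsymplectomorphism}, both stated uniformly for $t^{-}<t<t^{+}$, genuinely apply at $t=\frac{1}{2}$ despite the drop from four to two focus-focus values, and that the probe constructions of the preceding lemma remain valid once the two pairs of values have coalesced. Since both auxiliary lemmas are uniform in $t$ across the semitoric range, and the preceding displaceability lemma was already phrased to include $h=\frac{3}{2}$ via the $\vec{\epsilon}=(\pm 1,\pm 1)$ representatives, no essential difficulty arises and the proof is a short combination of the three cited results.
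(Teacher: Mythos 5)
Your proposal is correct, but note that your primary argument is not the paper's: the paper proves this lemma in one line, using exactly what you relegate to an ``independent confirmation''. Namely, at $h=\tfrac{3}{2}$ (i.e.\ $t=\tfrac{1}{2}$) each focus-focus fiber has multiplicity $2$, hence is a chain of embedded Lagrangian spheres, and nondisplaceability is immediate from Proposition \ref{p.lagrangiansphere} and its corollary --- no quasi-states, probes, or symplectomorphisms are needed. Your main route (existence of a nondisplaceable fiber in $J^{-1}(1)$ via Lemma \ref{l.nondisplaceableexistslevelset}, exhaustion of all non-focus-focus fibers in that level set by the immediately preceding displaceability lemma, then propagation to the fiber in $J^{-1}(2)$ via the symplectomorphisms of Lemma \ref{l.fffibefsymplectomorphism}) is nevertheless valid: all three ingredients are stated uniformly for $t^{-}<t<t^{+}$, their proofs are insensitive to the coalescence of the four focus-focus values into two, and the preceding lemma explicitly covers $h=\tfrac{3}{2}$ via the $\vec{\epsilon}=(\pm 1,\pm 1)$ representatives, as you correctly verify. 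As for what each approach buys: your route mirrors the $h=1$ case, where the multiplicity is $1$ and the topological argument is unavailable, so it yields a treatment uniform in $h\in[1,\tfrac{3}{2}]$; the paper's route is shorter, purely topological (an intersection-number argument through a Weinstein neighborhood), and independent both of the Floer-theoretic quasi-state machinery and of the probe computations in the representatives of the polytope invariant.
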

\begin{proof}
    The focus-focus fibers have multiplicity $2$, hence they are nondisplaceable, due to the presence of Lagrangian spheres, see Proposition \ref{p.lagrangiansphere}.
\end{proof}

Combining all the previous results we obtain the following corollary:
\begin{corollary}
    Consider the semitoric system $(M,\omega,F_t)$ where $t$ is such that $1\leq h\leq \frac{3}{2}$. Then the focus-focus fibers and the fiber over the point $(\frac{3}{2},\frac{3}{2})$ in a representative of the polytope invariant are \textbf{nondisplaceable}. Every other fiber is displaceable. 
\end{corollary}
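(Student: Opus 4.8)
The plan is to assemble the displaceability and nondisplaceability statements already proved in this subsection, splitting into the endpoint $h=\frac{3}{2}$ and the range $1\le h<\frac{3}{2}$ only for the treatment of the focus-focus fibers. First I would clear away the generic fibers: by Lemma \ref{l.displacingeverythingelse}, every fiber over a point of a representative of the polytope invariant that does not lie on an eigenray and is not one of the five points $\{(1,1),(2,1),(1,2),(2,2),(\frac{3}{2},\frac{3}{2})\}$ is displaceable, and this holds throughout $t^-<t<t^+$. What remains to be analyzed are the fibers on the two eigenrays, the four corner points $(1,1),(2,1),(1,2),(2,2)$, the central point $(\frac{3}{2},\frac{3}{2})$, and the focus-focus fibers themselves.

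Since the focus-focus values lie over $x=1$ and $x=2$ (namely $A,B\in J^{-1}(1)$ and $C,D\in J^{-1}(2)$) and their eigenlines are vertical, the eigenrays and the four corner points all sit inside $J^{-1}(1)\cup J^{-1}(2)$. I would therefore invoke the displaceability lemma of this subsection for $\frac{3}{2}\ge h>1$ (the $h=1$ case being Lemma \ref{l.allotherlevelsetdisplaceable}), which shows that every fiber in $J^{-1}(1)$ or $J^{-1}(2)$ that is not a focus-focus fiber is displaceable; this disposes at once of all eigenray fibers and of $(1,1),(2,1),(1,2),(2,2)$, which become displaceable because the eigenrays alter the fibration type near them. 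The central fiber over $(\frac{3}{2},\frac{3}{2})$ is nondisplaceable by Lemma \ref{l.octagoncentralnondisplaceable}, whose proof uses only that the eigenlines avoid the vertical line $x=\frac{3}{2}$ and so applies verbatim here.

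The remaining and most delicate point is the nondisplaceability of the focus-focus fibers over the full range. For $h=\frac{3}{2}$ each focus-focus fiber has multiplicity two, hence contains an embedded Lagrangian sphere, and is nondisplaceable by Proposition \ref{p.lagrangiansphere}. For $1\le h<\frac{3}{2}$ the multiplicity is one, so the sphere is only immersed and that obstruction is unavailable; instead I would argue indirectly. By Lemma \ref{l.nondisplaceableexistslevelset} a nondisplaceable fiber exists inside $J^{-1}(1)$, while the displaceability lemma just used shows that every fiber of $J^{-1}(1)$ other than the focus-focus fiber is displaceable; hence the focus-focus fiber in $J^{-1}(1)$ is nondisplaceable, and Lemma \ref{l.fffibefsymplectomorphism} then propagates nondisplaceability to the other focus-focus fibers. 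Combined with the generic displaceability of Lemma \ref{l.displacingeverythingelse}, this yields exactly the asserted dichotomy. I expect this multiplicity-one sub-range to be the main obstacle, since it is precisely where the Lagrangian-sphere argument fails and one must instead pair the quasi-state existence input (through Lemma \ref{l.nondisplaceableexistslevelset}) with the exhaustive probe-and-nodal-slide displacement of the rest of the level set.
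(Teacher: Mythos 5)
Your proposal is correct and takes essentially the same route as the paper: you assemble Lemma \ref{l.displacingeverythingelse} for the generic fibers, the level-set displaceability lemmas for $J^{-1}(1)$ and $J^{-1}(2)$ (Lemma \ref{l.allotherlevelsetdisplaceable} and its $\frac{3}{2}\geq h>1$ analogue), Lemma \ref{l.octagoncentralnondisplaceable} for the central fiber, and for the focus-focus fibers the combination of Lemma \ref{l.nondisplaceableexistslevelset} with Lemma \ref{l.fffibefsymplectomorphism} when the multiplicity is one, switching to the Lagrangian-sphere argument of Proposition \ref{p.lagrangiansphere} at $h=\frac{3}{2}$, exactly as the paper does. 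Your only loose phrasing is the gloss that the corner fibers ``become displaceable because the eigenrays alter the fibration type''---the actual mechanism is the probe arguments in the various representatives invoked by those level-set lemmas---but since you cite the correct lemmas this does not affect the argument.
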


We can summarize the previous discussion in the following statement:

\begin{corollary}
Consider the semitoric system $(M,\omega,F_t)$ for $t^{-}<t<t^{+}$.
\begin{itemize}
\item If the height invariant $h$ satisfies $1\leq h< \frac{3}{2}$ then there exist $5$ \textbf{nondisplaceable} fibers. The focus-focus fibers and the fiber over the point $(\frac{3}{2},\frac{3}{2})$ in a representative of the polytope invariant. All other fibers are displaceable. 
\item If $h=\frac{3}{2}$ then there exist $3$ \textbf{nondisplaceable} fibers. The focus-focus fibers and and the fiber over the point $(\frac{3}{2},\frac{3}{2})$ in a representative of the polytope invariant. All other fibers are displaceable. 
\end{itemize}
\end{corollary}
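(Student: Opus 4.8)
The plan is to obtain this statement as a pure bookkeeping combination of the two immediately preceding corollaries together with the count of focus-focus fibers from the classification of the system. The preceding corollary already asserts that, for every $t$ with $t^{-}<t<t^{+}$ and height invariant $1\leq h\leq \frac{3}{2}$, the nondisplaceable fibers are exactly the focus-focus fibers together with the fiber over $(\frac{3}{2},\frac{3}{2})$ in a representative of the polytope invariant, while all remaining fibers are displaceable. Thus the only thing left to do is to count how many distinct nondisplaceable fibers this produces in each of the two regimes $1\leq h<\frac{3}{2}$ and $h=\frac{3}{2}$.

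First I would record the correspondence between the value of the height invariant and the parameter $t$. By the cited results of De Meulenaere $\&$ Hohloch, the system $(M,\omega,F_t)$ has precisely four focus-focus fibers, each containing a single focus-focus point, whenever $t\neq\frac{1}{2}$, and at the special value $t=\frac{1}{2}$ these four values merge into exactly two focus-focus fibers, each a doubly pinched torus carrying two focus-focus points. As noted in the discussion preceding Figure \ref{f.case3}, the collision of the focus-focus values is exactly the statement $h=\frac{3}{2}$; hence the regime $1\leq h<\frac{3}{2}$ corresponds to $t\neq\frac{1}{2}$ (four focus-focus fibers), and $h=\frac{3}{2}$ corresponds to $t=\frac{1}{2}$ (two focus-focus fibers).

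It then remains only to check that these fibers are genuinely distinct, so that no double counting occurs. The four (resp.\ two) focus-focus fibers are distinct as fibers of $F_t$, and by the location of the focus-focus points they lie in $J^{-1}(1)$ and $J^{-1}(2)$, whereas the central fiber lies over $(\frac{3}{2},\frac{3}{2})\in J^{-1}(\frac{3}{2})$ and so is distinct from all of them. Consequently, in the regime $1\leq h<\frac{3}{2}$ one obtains $4+1=5$ nondisplaceable fibers, and in the regime $h=\frac{3}{2}$ one obtains $2+1=3$, which is exactly the claimed count; displaceability of every other fiber is supplied verbatim by the preceding corollary. I do not expect any genuine obstacle here: the entire content has already been established in the preceding lemmas and corollaries (nondisplaceability of the central fiber via Lemma \ref{l.octagoncentralnondisplaceable}, of the single-point focus-focus fibers via Lemma \ref{l.nondisplaceableexistslevelset} and Lemma \ref{l.fffibefsymplectomorphism}, and of the doubly pinched fibers via Proposition \ref{p.lagrangiansphere}), so the only step requiring care is faithfully matching the height-invariant ranges to the number of focus-focus fibers.
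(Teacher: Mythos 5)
Your proposal is correct and matches the paper's intent exactly: the paper gives no separate proof for this corollary, introducing it with ``we can summarize the previous discussion,'' and your argument---combining the immediately preceding corollary (nondisplaceability of the focus-focus fibers and the fiber over $(\frac{3}{2},\frac{3}{2})$, displaceability of all others, for $1\leq h\leq \frac{3}{2}$) with the count of focus-focus fibers (four single-pinch fibers for $t\neq\frac{1}{2}$, i.e.\ $1\leq h<\frac{3}{2}$, versus two doubly pinched fibers at $t=\frac{1}{2}$, i.e.\ $h=\frac{3}{2}$) and the distinctness check via $J^{-1}(1)$, $J^{-1}(2)$, $J^{-1}(\frac{3}{2})$---is precisely the bookkeeping the authors leave implicit. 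No gaps.
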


For different choices of $h$ we can summarize the (non)displaceability properties of the focus-focus fibers of the system in the following corollary:
\begin{corollary}
The focus-focus fibers have the following (non)displaceability properties with respect to different choices of the height invariant of $F_0$:
\begin{itemize}
\item If $h<1$ then the focus-focus fibers are \textbf{displaceable};
\item If $1\leq h\leq \frac{3}{2}$ then the focus-focus fibers are \textbf{nondisplaceable}.
\end{itemize}

\end{corollary}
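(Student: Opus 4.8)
The plan is to assemble the corollary directly from the case-by-case analysis carried out above, using Lemma~\ref{l.fffibefsymplectomorphism} to reduce everything to a single focus-focus fiber. By that lemma there are symplectomorphisms of $(M,\omega)$ interchanging all focus-focus fibers, and since (non)displaceability is preserved under symplectomorphisms, it suffices to decide the status of the distinguished fiber $F_0\in J^{-1}(1)$ with the lower $H_t$-value. I would therefore split the argument according to the height invariant $h$ of $F_0$.

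For $h<1$ I would simply invoke the displaceability result established earlier: after passing to the representative of the polytope invariant with $\vec{\epsilon}=(-1,1,1,1)$ and applying a sufficiently large nodal slide (Proposition~\ref{p.nodalslide}) to the eigenrays of the three focus-focus fibers other than $F_0$, one creates enough room for the rectangle condition of Lemma~\ref{l.displaceaingff} to hold, so $F_0$ is displaceable. By the reduction above all four focus-focus fibers are then displaceable.

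For the range $1\leq h\leq\frac{3}{2}$ I would separate $h<\frac{3}{2}$ from $h=\frac{3}{2}$. When $1\leq h<\frac{3}{2}$ the strategy is to show that $F_0$ is the unique candidate for a nondisplaceable fiber inside $J^{-1}(1)$: Lemma~\ref{l.nondisplaceableexistslevelset} guarantees (via the pseudoheavy/superheavy machinery of Section~\ref{s.states} and Theorem~\ref{t.quasistatesoctagon}) that at least one nondisplaceable fiber lives in $J^{-1}(1)$, while the probe arguments of Lemma~\ref{l.allotherlevelsetdisplaceable} --- run on several representatives of the polytope invariant for the choices $\vec{\epsilon}=(1,1,1,1)$, $(-1,-1,-1,-1)$, $(-1,1,-1,1)$, and inserting nodal slides (Proposition~\ref{p.nodalslide}) when $h>1$ to lengthen the probes --- displace every non-focus-focus fiber of $J^{-1}(1)$. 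Hence the nondisplaceable fiber must be $F_0$, and Lemma~\ref{l.fffibefsymplectomorphism} promotes this to all focus-focus fibers; this recovers Corollary~\ref{c.nondisplaceableff1} at the endpoint $h=1$. For $h=\frac{3}{2}$ the two focus-focus fibers each carry two focus-focus points and hence contain an embedded Lagrangian sphere, so nondisplaceability follows immediately from Proposition~\ref{p.lagrangiansphere}.

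The main obstacle is the intermediate subcase $1\leq h<\frac{3}{2}$: here one cannot use the Lagrangian-sphere shortcut and must instead verify that the probes really displace \emph{every} other fiber in the relevant level set. The delicate point is that each probe must avoid the eigenray cuts associated with the focus-focus values (outside these cuts the system is a genuine toric fibration, which is what makes Lemma~\ref{l.probes} applicable), and for $h$ strictly between $1$ and $\frac{3}{2}$ the probes are too short unless one first slides the nodes; checking that a single well-chosen combination of $\vec{\epsilon}$ and nodal slides simultaneously covers the whole level set, leaving only $F_0$, is where the real work lies.
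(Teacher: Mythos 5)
Your proposal is correct and follows essentially the same route as the paper: the corollary is just an assembly of the preceding case analysis, and you invoke exactly the paper's ingredients --- the nodal-slide displacement of $F_0$ via Lemma~\ref{l.displaceaingff} for $h<1$, the combination of Lemma~\ref{l.nondisplaceableexistslevelset} with the probe-based Lemma~\ref{l.allotherlevelsetdisplaceable} (plus nodal slides for $1<h<\frac{3}{2}$), the Lagrangian-sphere argument of Proposition~\ref{p.lagrangiansphere} at $h=\frac{3}{2}$, and Lemma~\ref{l.fffibefsymplectomorphism} to propagate the conclusion to all focus-focus fibers. The only cosmetic imprecision is your phrase ``the nondisplaceable fiber must be $F_0$'': the argument only pins it down to one of the two focus-focus fibers in $J^{-1}(1)$, but since Lemma~\ref{l.fffibefsymplectomorphism} interchanges them this does not affect the conclusion.
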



\appendix
\section{Polytope invariant for the semitoric system induced by the octagon}
\label{s.computingpolytope}
In order to compute the representatives of the polytope invariant of the semitoric system of section \ref{s.octagon} we first need to recall some definitions and results. 

Let $(M,\omega)$ be a closed symplectic manifold of dimension $2n$. The Liouville measure on $(M,\omega)$ is the measure induced by the volume form $\frac{1}{(2\pi)^n}\frac{\omega^n}{n!}$.
The Duistermaat-Heckman measure $\mu_{J}$ for a Hamiltonian $J:M\rightarrow \mathbb{R}$ inducing an $\mathbb{S}^1$-action is defined as $\mu_{J}([a,b])=\text{vol}(J^{-1}([a,b]))$ where $\text{vol}$ is taken with respect to the Liouville measure in $M$. By Duistermaat \& Heckman \cite{duistermaat1982variation} 
\begin{equation*}
\mu_{J}=\rho_{J}(x)\frac{|dx|}{2\pi}
\end{equation*}
where the density $\rho_{J}(x)$, sometimes called the Duistermaat-Heckman function, is a continuous function, equal to the symplectic volume of the reduced orbifold $J^{-1}(x)/\mathbb{S}^1$.

Let $J$ be the momentum map of an effective Hamiltonian $\mathbb{S}^1$-action on $(M,\omega)$. For each subgroup $G\subset \mathbb{S}^1$, let $M^{G}$ be the set of points in $M$ whose stabilizer is $G$. The connected components of $M^{\mathbb{S}^1}$ are symplectic submanifolds, hence either points or surfaces, see Karshon \cite{karshon1996periodic} for more details.

\begin{lemma}
({Chaperon, \cite{chaperon1983quelques}})
Let $(M,\omega,J)$ be as above.
For each $p\in M^{\mathbb{S}^1}$ there exist neighborhoods $U\subset M$ of $p$, $U_0\subset \mathbb{C}^2$ of $(0,0)$, and a symplectomorphism $\Psi:(U,\omega)\rightarrow (U_0,\omega_0)$, where $\omega_0=\frac{i}{2}(dz_1\wedge d\overline{z}_1+dz_2\wedge d\overline{z}_2)$ making the following diagram commute
\begin{center}
\begin{tikzcd}
{(U,\omega)} \arrow[rd, "J"] \arrow[rr, "\Psi"] &            & {(U_0,\omega_0)} \arrow[ld, "J_0"'] \\
                                                & \mathbb{R} &                                    
\end{tikzcd}
\end{center}
with $J_0(z_1,z_2)=J(p)+\frac{m_1}{2}|z_1|^2+\frac{m_2}{2}|z_2|^2$.
\end{lemma}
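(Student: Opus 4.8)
The plan is to reduce the whole statement to the \emph{linearization} of the circle action at the fixed point $p$ and then read off the momentum map from the linear model. Since $\mathbb{S}^1$ is compact and fixes $p$, the two tasks are: (i) build an $\mathbb{S}^1$-equivariant Darboux chart identifying a neighborhood of $p$ with a neighborhood of the origin in the linear model, and (ii) compute the momentum map of that model, which is a short bookkeeping calculation. The value $J(p)$ then reappears as the unavoidable additive constant in the momentum map.

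First I would analyze the \emph{isotropy representation}. Because $p\in M^{\mathbb{S}^1}$, the group acts linearly and symplectically on $(T_pM,\omega_p)$ via $\sigma\colon \mathbb{S}^1\to \mathrm{Sp}(T_pM,\omega_p)\cong\mathrm{Sp}(4,\R)$. Every symplectic $\mathbb{S}^1$-representation splits into two-dimensional weight spaces on which the circle rotates; choosing a compatible complex structure identifies $(T_pM,\omega_p)$ with $(\C^2,\omega_0)$ so that $\sigma$ takes the form $(z_1,z_2)\mapsto(e^{im_1t}z_1,e^{im_2t}z_2)$ for integers $m_1,m_2$, with $(m_1,m_2)\neq(0,0)$ (and $\gcd(m_1,m_2)=1$) by effectiveness of the action.

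Next comes the step I expect to be the main obstacle: producing a symplectomorphism $\Psi$ from a neighborhood $U$ of $p$ onto a neighborhood $U_0\subset\C^2$ of $0$ that \emph{intertwines} the $\mathbb{S}^1$-action on $M$ with the linear model $\sigma$. This is the equivariant Darboux--Weinstein normal form. I would start from any chart whose differential at $p$ realizes the identification above, average it over $\mathbb{S}^1$ against the Haar measure to obtain an honestly $\mathbb{S}^1$-equivariant local diffeomorphism (Bochner linearization), and then run a Moser isotopy to upgrade it to a symplectomorphism. The delicate point is that every construction must be carried out equivariantly: the primitive of $\omega-\omega_0$ used in the Moser argument has to be chosen $\mathbb{S}^1$-invariant, again by averaging over the group. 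Compactness of $\mathbb{S}^1$ is exactly what makes all of these averagings available and is the engine of the whole argument.

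Finally I would identify the momentum map. For the linear model the infinitesimal generator is $X=\sum_{j=1}^{2}m_j\bigl(x_j\,\partial_{y_j}-y_j\,\partial_{x_j}\bigr)$, and a direct contraction gives
\[
\iota_X\omega_0=d\Bigl(\tfrac12\sum_{j=1}^{2}m_j|z_j|^2\Bigr),
\]
so (up to a sign absorbed into the $m_j$) the function $\tfrac12(m_1|z_1|^2+m_2|z_2|^2)$ is a momentum map for $\sigma$. Since $\Psi$ is equivariant, both $J\circ\Psi^{-1}$ and the model momentum map generate the \emph{same} $\mathbb{S}^1$-action on the connected set $U_0$, hence differ by a constant; evaluating at $0=\Psi(p)$ fixes that constant to be $J(p)$. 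Therefore $J\circ\Psi^{-1}=J_0$ with $J_0(z_1,z_2)=J(p)+\tfrac{m_1}{2}|z_1|^2+\tfrac{m_2}{2}|z_2|^2$, which is precisely the asserted formula and makes the triangle commute.
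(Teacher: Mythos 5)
Your argument is correct, and in fact the paper offers no proof of this lemma at all: it is quoted verbatim from Chaperon \cite{chaperon1983quelques}, with even the coprimality of $m_1,m_2$ stated only as a remark afterwards. Your route --- splitting the isotropy representation into symplectic weight spaces, Bochner averaging plus an $\mathbb{S}^1$-equivariant Moser isotopy (with invariant primitive vanishing at $p$, so the isotopy fixes $p$) to produce the equivariant Darboux chart, then computing $\iota_X\omega_0$ in the linear model and pinning down the additive constant by equivariance and evaluation at $\Psi(p)=0$ --- is precisely the standard proof of this local normal form, with the only convention-level point (the sign of the weights relative to the paper's convention $\omega(X_H,\cdot)=dH$) already flagged by you.
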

Since the action is effective, the integers $m_1$ and $m_2$ are relatively prime. We call the integers $m_1$ and $m_2$ the \textbf{isotropy weights} at $p$. 
\begin{theorem}({\vungoc, \cite[Theorem 3.8]{vu2007moment}})
\label{t.formulapolytope}Let $(M,\omega,\Phi)$ be a semitoric system and $f_{\vec{\epsilon}}$ a straightening homeomorphism associated with a representative of the polytope invariant. Furthermore, let $x\in \mathbb{R}$ be a regular value of $J$. 
If $\alpha^{+}(x)$ (resp.\ $\alpha^{-}(x)$) denotes the slope of the top (resp. bottom) boundary of the polytope $f_{\vec{\epsilon}}\circ \Phi(M)$, then the derivative of the Duistermaat-Heckman function is 
\begin{equation*}
\rho_{J}'(x)=\alpha^{+}(x)-\alpha^{-}(x)
\end{equation*}
and is locally constant on $J(M)\setminus \{\pi_{x}(f_{\vec{\epsilon}}(\Sigma_0(\Phi))\}\in \mathbb{R}$, where $\Sigma_0(\Phi)$ is the set of critical values of $\Phi$ of maximal corank and $\pi_x$ is the projection $(x,y)\mapsto x$. If $(x,y)\in \Sigma_0(\Phi)$ then 
\begin{equation*}
\rho'_{J}(x+0)-\rho'_{J}(x-0)=-\sum_{j}k_j-e^{+}-e^{-},
\end{equation*}
where the sum runs over the set of all indices $j$ such that $\pi_{x}(c_j)=x$ and $e^{+}$ (resp. $e^{-}$) is non-zero if and only if an elliptic top vertex (resp.\ a bottom vertex) projects down onto $x$. If this occurs then 
\begin{equation*}
e^{\pm}=-\frac{1}{a^{\pm}b^{\pm}}\geq 0,
\end{equation*}
where $a^{\pm},b^{\pm}$ are the isotropy weights for the $\mathbb{S}^1$-action at the corresponding vertices. 
\end{theorem}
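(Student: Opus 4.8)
The plan is to compute the Duistermaat--Heckman density $\rho_J$ in two stages: first identify it with the affine length of the vertical slices of the polytope, and then localise the jumps of its derivative at the maximal-corank critical values by means of Chaperon's normal form.

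First I would reduce the identity $\rho_J'=\alpha^+-\alpha^-$ to the toric Duistermaat--Heckman picture. On the cut-open region $\Phi(M)\setminus l^{\vec{\epsilon}}$ the straightening homeomorphism $f_{\vec{\epsilon}}$ of Theorem~\ref{th.straigheningHomeo} turns the system into a genuine toric one, so there $\widetilde{\Phi}:=f_{\vec{\epsilon}}\circ\Phi$ is the momentum map of a $\mathbb{T}^2$-action and the pushforward of the Liouville measure is Lebesgue measure on $\widetilde{\Phi}(M)$. Since $f_{\vec{\epsilon}}$ preserves the first coordinate $J$ and the shears $T^{k(c)}$ defining it have determinant $1$, the map $f_{\vec{\epsilon}}$ is measure-preserving and respects the fibres of $\pi_x$; hence $\mu_J$ is the pushforward of Lebesgue measure under $\pi_x$, and Fubini gives $\rho_J(x)=m^+(x)-m^-(x)$, the affine length of the vertical slice of $\widetilde{\Phi}(M)$ over $x$ between its bottom and top boundary functions $m^\pm$. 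Differentiating yields $\rho_J'(x)=\alpha^+(x)-\alpha^-(x)$, and since the boundary of a rational convex polytope is piecewise linear these slopes are locally constant, with breaks only where a vertex projects, i.e.\ at the $x$-coordinates of the points of $\Sigma_0(\Phi)$.

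For the jump formula I would localise near each rank-zero critical point $p$ with $\pi_x(\Phi(p))=x$. By Chaperon's lemma there are Darboux coordinates in which $J=J(p)+\tfrac{m_1}{2}|z_1|^2+\tfrac{m_2}{2}|z_2|^2$ with isotropy weights $m_1,m_2$, and a standard local Duistermaat--Heckman computation shows that crossing $x=J(p)$ the derivative $\rho_J'$ jumps by $\tfrac{1}{m_1 m_2}$. It then remains to read off the weights for each type of critical point and sum. At an elliptic top (resp.\ bottom) boundary vertex the weights are precisely $a^\pm,b^\pm$; since such a vertex sits at an interior value of $J$ its two edges run in opposite $x$-directions, so $a^\pm b^\pm<0$ and the contribution is $\tfrac{1}{a^\pm b^\pm}=-e^\pm$ with $e^\pm=-\tfrac{1}{a^\pm b^\pm}\ge 0$. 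At a focus-focus point the periodic integral is the angular-momentum quadratic $x_1\xi_2-x_2\xi_1$, which is indefinite, so in Chaperon coordinates its weights have opposite sign and normalise to $(+1,-1)$, giving a contribution $\tfrac{1}{(+1)(-1)}=-1$; summing over the $k_j$ focus-focus points in the fibres over $x$ yields $-\sum_j k_j$. Adding the contributions produces $\rho_J'(x+0)-\rho_J'(x-0)=-\sum_j k_j-e^+-e^-$.

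The hard part will be the sign bookkeeping in the second stage: one must verify that the periodic Hamiltonian at a focus-focus point really has indefinite signature, so that its weights are $(+1,-1)$ and it contributes $-1$ rather than $+1$, and simultaneously identify which elliptic fixed points are genuine top or bottom vertices and confirm that their weights are of opposite sign so that $e^\pm\ge 0$. As a consistency check, the focus-focus term can also be obtained directly from the defining shear $T^{k(c)}$ of Theorem~\ref{th.straigheningHomeo}, which changes the boundary slope across the cut $l_j$ by exactly $k_j$; one must finally note that rank-one (elliptic-transverse) values contribute no jump, which is automatic because along a facet of $\widetilde{\Phi}(M)$ the boundary slope is constant.
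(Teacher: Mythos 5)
Note first that the paper contains no proof of this statement: it is quoted verbatim from \vungoc\ \cite{vu2007moment}, so the only meaningful comparison is with the original source. Your proposal essentially reconstructs that original argument: identify $\rho_J(x)$ with the length of the vertical slice of the straightened image (using that a toric momentum map pushes the normalized Liouville measure to Lebesgue measure, that the cut set $l^{\vec{\epsilon}}$ has measure zero, and that $f_{\vec{\epsilon}}$ preserves the first coordinate and acts by vertical shears $T^{k(c)}$ of determinant one), then compute the jumps of $\rho_J'$ from the isotropy weights at the rank-zero points, with the monodromy shear as a cross-check. Your weight and sign bookkeeping is correct: the focus-focus weights are $(+1,-1)$, each such point contributes $-1$, and a top or bottom vertex at an interior value of $J$ has $a^{\pm}b^{\pm}<0$ and contributes $1/(a^{\pm}b^{\pm})=-e^{\pm}$. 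One concrete warning about the step you call a ``standard local Duistermaat--Heckman computation'': the jump $\rho_J'(J(p)+0)-\rho_J'(J(p)-0)=\frac{1}{m_1m_2}$ is true, but extracting it by integrating the Chaperon model over a carelessly chosen neighborhood fails. For instance, at a focus-focus point (weights $(1,-1)$, so $J=s_1-s_2$ with $s_i=\frac{1}{2}|z_i|^2$) the pushforward over the polydisk $\{s_1\leq \epsilon\}\cap\{s_2\leq\epsilon\}$ is $\epsilon-|x|$, whose kink at $x=0$ is $-2$ rather than the correct $-1$, because the corner of the region also projects onto $J(p)$ and contributes a spurious boundary kink; the round region $\{s_1+s_2\leq\epsilon\}$ gives $\frac{1}{2}(\epsilon-|x|)$ and the correct jump. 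So you must either choose the localizing region so that its boundary creates no kink over $J(p)$, or simply invoke the Guillemin--Lerman--Sternberg jump formula. With that point repaired or properly attributed, your proof is complete and follows the same route as the cited one.
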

With Theorem \ref{t.formulapolytope}, 
we are able to compute representatives of the polytope invariant (see Section \ref{s.semitoric}) for the semitoric system given in Section \ref{s.octagon}.  We focus on the case of $4$ focus-focus values $(A,B,C,D)$. Furthermore, recall that there are $4$ elliptic-elliptic values. We label them $e_1$, $e_2$, $e_3$, $e_4$. We now compute a representative of the polytope invariant for the case $\vec{\epsilon}=(-1,1,-1,1)$, the other cases are analogous. 


\begin{proposition}
Let $(M,\omega,F_t)$ for $t^{-}<t<t^{+}$ be the semitoric system defined in Section \ref{s.octagon}. Furthermore assume that $t\neq \frac{1}{2}$ so there are $4$ focus-focus values of $(M,\omega,F_t)$.
A representative of the polytope invariant for $\vec{\epsilon}=(-1,1,-1,1)$ of the semitoric system $(M,\omega,F_t)$ is the octagon given by Figure \ref{f.polytopeinvariantsemitoricoctagon}.
\end{proposition}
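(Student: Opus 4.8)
The plan is to leverage that the perturbation $F_t=(J,H_t)$ changes only the second component, so the $\mathbb{S}^{1}$-action generated by $J$ --- and in particular its Duistermaat--Heckman function $\rho_J$ --- is the same as for the toric octagon at $t=0$. For a toric system $\rho_J(x)$ is the width of the slice $\Delta\cap(\{x\}\times\mathbb{R})$, so I first read off from the octagon that $\rho_J(x)=1+2x$ on $[0,1]$, $\rho_J(x)=3$ on $[1,2]$ and $\rho_J(x)=7-2x$ on $[2,3]$; hence $\rho_J'\equiv 2$ on $(0,1)$, $\rho_J'\equiv 0$ on $(1,2)$ and $\rho_J'\equiv-2$ on $(2,3)$. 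This function is an invariant of the $\mathbb{S}^{1}$-action and therefore available for every $t$.

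Next I would invoke V\~u Ng\k{o}c's formula, Theorem \ref{t.formulapolytope}. Writing $\alpha^{\pm}(x)$ for the slopes of the top and bottom boundaries of a representative $f_{\vec{\epsilon}}\circ F_t(M)$, the formula gives $\alpha^{+}(x)-\alpha^{-}(x)=\rho_J'(x)$ away from the projections of the rank-$0$ values, forcing the slope difference to be $2,0,-2$ on the three intervals. The rank-$0$ values are the four elliptic-elliptic values, which project to $x=0$ and $x=3$, and the four focus-focus values, which project to $x=1$ and $x=2$ because $A,B\in J^{-1}(1)$ and $C,D\in J^{-1}(2)$. For $t\neq\frac{1}{2}$ each focus-focus fibre carries a single focus-focus point, so the monodromy is $T^{1}$ and the jump term in Theorem \ref{t.formulapolytope} equals $-\sum_j k_j=-2$ at both $x=1$ and $x=2$; this matches the drop of $\rho_J'$ by $2$ there and forces $e^{\pm}=0$, i.e.\ no elliptic vertex projects to $x=1$ or $x=2$, as expected.

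I would then reconstruct the boundary by integrating the slopes. On $(0,1)$ there are no focus-focus values, so the bottom and top boundaries run from the two elliptic-elliptic vertices bounding the vertical edge over $x=0$ (the minimum of $J$, of length $\rho_J(0)=1$); using the vertical-translation and $T^{k}$ freedom of $\mathcal{T}$ I normalise so that the lower vertex is $(0,1)$ and $\alpha^{-}|_{(0,1)}=-1$, whence $\alpha^{+}|_{(0,1)}=+1$ and the upper vertex is $(0,2)$. Crossing $x=1$ I split the total jump according to $\vec{\epsilon}=(-1,1,-1,1)$: the value with $\epsilon=-1$ has a downward cut, so the relevant multiplicity is $k(c)=-1$ on the lower side and the jump relation of Theorem \ref{th.straigheningHomeo} raises $\alpha^{-}$ by $1$ (from $-1$ to $0$), while the value with $\epsilon=+1$ has an upward cut, lowering $\alpha^{+}$ by $1$ (from $+1$ to $0$). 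Repeating this at $x=2$ raises $\alpha^{-}$ from $0$ to $+1$ and lowers $\alpha^{+}$ from $0$ to $-1$. Integrating, with width $\rho_J(3)=1$ at the right end, yields the bottom chain $(0,1)\to(1,0)\to(2,0)\to(3,1)$, the top chain $(0,2)\to(1,3)\to(2,3)\to(3,2)$ and the two vertical edges over $x=0,3$, that is, exactly the octagon $\Delta$. Finally I would record that $\epsilon=-1$ cuts produce the bottom corners and $\epsilon=+1$ cuts the top corners, so ordering the four focus-focus values by $x$-coordinate --- the two over $x=1$ first (lower, then upper), then the two over $x=2$ --- reproduces precisely $\vec{\epsilon}=(-1,1,-1,1)$.

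The step I expect to be the main obstacle is the sign and side bookkeeping of the previous paragraph: correctly assigning each $\epsilon_i$ to the top or bottom boundary and to the correct sign of the shear $T^{k(c)}$, so that the four jumps assemble into $\Delta$ rather than into a $T^{k}$-sheared variant. As a cross-check I would include the nodal-trade viewpoint: the four corners of $\Delta$ at $(1,0),(1,3),(2,0),(2,3)$ are Delzant, so by Remark \ref{l.conditionsfornodaltrade} each may be related by a nodal trade (Lemma \ref{l.nodaltrade}) to a focus-focus value whose eigenray runs into the polytope toward that corner; this relates the toric base to the semitoric one without changing the underlying polygon, and the eigenray directions (upward from the bottom corners, downward from the top corners) are exactly the data encoded by $\vec{\epsilon}=(-1,1,-1,1)$.
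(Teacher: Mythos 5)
Your proposal is correct and follows essentially the same route as the paper's own proof: both normalize using the $\mathcal{T}$-freedom (lower-left vertex at $(0,1)$, bottom slope $-1$), apply \vungoc's Duistermaat--Heckman formula (Theorem \ref{t.formulapolytope}) to obtain the slope differences $\alpha^{+}-\alpha^{-}$ and the jumps $-\sum_j k_j=-2$ at $x=1,2$, and use the shear relation $T^{k(c)}$ from Theorem \ref{th.straigheningHomeo} to propagate the top and bottom slopes across the cuts according to the signs in $\vec{\epsilon}=(-1,1,-1,1)$. The only differences are expository: you make explicit the $t$-independence of $\rho_J$ and its computation from the $t=0$ octagon (which the paper leaves implicit in the right-hand sides $2$, $-2$, $-2$ of its slope equations), and you add the $e^{\pm}=0$ and nodal-trade consistency checks.
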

\begin{proof}
Consider the image of $F(M)$ under the straightening homeomorphism $f_{\vec{\epsilon}}$.
By abuse of notation we identify each $e_i$, $i=1,..,4$, with its image under $f_{\vec{\epsilon}}$.
Let $c_i$ for $i=1,...,4$ be the new vertices caused by the cut at the focus-focus critical values $(A,B,C,D)$, respectively. In order to obtain the representative of the polytope invariant we need to compute the slopes of the edges connecting these new introduced vertices with the vertices coming from the elliptic-elliptic values. Let $l_1$ be the edge connecting $e_2$ to $c_1$, $l_2$ connect $c_1$ to $c_3$, $l_3$ connect $c_3$ to $e_4$, $l_4$ connect $e_4$ to $e_3$, $l_5$ connect $c_4$ to $e_3$, $l_6$ connect $c_2$ to $c_4$, $l_7$ connect $e_1$ to $c_2$ and $l_8$ connect $e_1$ to $e_2$. Notice that $l_8$ and $l_4$ are vertical. 

First notice that due to use of an integral affine transformation that leaves a vertical line invariant we may assume that $e_1=(0,1)$ and that $l_7$ has slope $-1$. 
Let $\text{sl}$ denote the slope of an edge. Using Theorem \ref{t.formulapolytope}, 
we obtain
\begin{align}
\label{eq.polytopeinvariant1}
 (\text{sl}(l_3)-\text{sl}(l_5))-(\text{sl}(l_2)-\text{sl}(l_6))& =-2\\
\label{eq.polytopeinvariant2}
 \text{sl}(l_1)-\text{sl}(l_7)& =2\\
\label{eq.polytopeinvariant3}
-(\text{sl}(l_3)-\text{sl}(l_5))& =2.
\end{align}
Since $\text{sl}(l_7)=-1$, we get $\text{sl}(l_1)=1$. 
In order to determine the slope of the other edges we use the monodromy of the system, more specifically, using item four of Theorem \ref{th.straigheningHomeo} we obtain $\text{sl}(l_2)=0$. Then using item four of Theorem \ref{th.straigheningHomeo} iteratively and equations \eqref{eq.polytopeinvariant1}, \eqref{eq.polytopeinvariant2}, \eqref{eq.polytopeinvariant3} we obtain the desired result.
\end{proof}

\printbibliography


\vspace{10mm}

\noindent 
  Sonja Hohloch $\&$ Pedro Santos\\
  \\
  University of Antwerp\\
  Department of Mathematics\\
  Middelheimlaan 1\\
  B-2020 Antwerpen, Belgium\\
  \\
  {\em E\--mail}: \texttt{sonja DOT hohloch AT uantwerpen DOT be} \\
  {\em E\--mail}: \texttt{pedro DOT santos AT uantwerpen DOT be}
  
\end{document}